\theoremstyle{definition}
\newcommand*\pFq[6][8]{
  \begingroup
  \pFqmuskip=#1mu\relax
  \mathcode`\,=\string"8000
  \begingroup\lccode`\~=`\,
  \lowercase{\endgroup\let~}\pFqcomma
  {}_{#2}F_{#3}{\left[\genfrac..{0pt}{}{#4}{#5};#6\right]}
  \endgroup
}
\newcommand{\pFqcomma}{\mskip\pFqmuskip}
\newcommand\myeq{\mathrel{\stackrel{\makebox[0pt]{\mbox{\normalfont\tiny def}}}{=}}}
\newtheorem{thm}{Theorem}[section]
\newtheorem{lem}[thm]{Lemma}
\newtheorem{df}[thm]{Definition}
\newtheorem{prop}[thm]{Proposition}
\newtheorem{cor}[thm]{Corollary}
\newtheorem{rem}[thm]{Remark}
\newtheorem{defprop}[thm]{Definition-Proposition}
\newcommand{\C}{\mathbb{C}}
\newcommand{\R}{\mathbb{R}}
\newcommand{\Z}{\mathbb{Z}}
\newcommand{\ZZ}{\mathcal{Z}}
\newcommand{\Zp}{\mathbb{Z}_+}
\newcommand{\Zpe}{\mathbb{Z}_+^{\epsilon}}
\newcommand{\N}{\mathbb{N}}
\newcommand{\p}{\mathfrak{p}}
\newcommand{\X}{\mathfrak{X}}
\newcommand{\SSS}{\mathcal{S}}
\newcommand{\TR}{\widetilde{R}}
\newcommand{\TS}{\widetilde{S}}
\newcommand{\GT}{\mathbb{GT}}
\newcommand{\U}{\mathcal{U}}
\newcommand{\Tau}{\mathcal{T}}
\newcommand{\GTp}{\mathbb{GT}^+}
\newcommand{\Y}{\mathfrak{Y}}
\newcommand{\PP}{\mathcal{P}}
\newcommand{\OO}{\mathcal{O}}
\newcommand{\LL}{\mathcal{L}}
\newcommand{\hatx}{\widehat{x}}
\newcommand{\haty}{\widehat{y}}
\newcommand{\hatl}{\widehat{l}}
\newcommand{\adm}{\textrm{adm}}
\newcommand{\ort}{\textrm{ort}}
\newcommand{\princ}{\textrm{princ}}
\newcommand{\compl}{\textrm{compl}}
\newcommand{\degen}{\textrm{degen}}
\newcommand{\Conf}{\textrm{Conf}}
\newcommand{\fin}{\textrm{fin}}
\newcommand{\Res}{\textrm{Res}}
\numberwithin{equation}{section}
\begin{document}

\title{BC type Z-measures and determinantal point processes}

\author{Cesar Cuenca}

\address{Department of Mathematics, MIT, Cambridge, MA, USA}
\email{cuenca@mit.edu}

\date{}

\begin{abstract}

The (BC type) $z$-measures are a family of four parameter $z, z', a, b$ probability measures on the path space of the nonnegative Gelfand-Tsetlin graph with Jacobi-edge multiplicities. We can interpret the $z$-measures as random point processes $\PP_{z, z', a, b}$ on the punctured positive real line $\X = \R_{>0}\setminus\{1\}$. Our main result is that these random processes are determinantal and moreover we compute their correlation kernels explicitly in terms of hypergeometric functions.

For very special values of the parameters $z, z'$, the processes $\PP_{z, z', a, b}$ on $\X$ are essentially scaling limits of Racah orthogonal polynomial ensembles and their correlation kernels can be computed simply from some limits of the Racah polynomials. Thus, in the language of random matrices, we study certain analytic continuations of processes that are limits of Racah ensembles, and such that they retain the determinantal structure. Another interpretation of our results, and the main motivation of this paper, is the representation theory of big groups. In representation-theoretic terms, this paper solves a natural problem of harmonic analysis for several infinite-dimensional symmetric spaces.

\end{abstract}

\maketitle

\setcounter{tocdepth}{1}
\tableofcontents

\section*{Introduction}

A wide range of probabilistic and statistical mechanics models are integrable in the sense that observables of the system admit closed forms that can be analyzed in different limit regimes. The tools that are used to obtain closed form formulas for the observables, or correlation functions, are typically combinatorial or algebraic in nature and very often they originate from representation theory.

Some notable examples of integrable probability ensembles come from Schur measures, \cite{Ok1, OkRe}. The first kind of Schur measures that appeared in the literature are known as $z$-measures and they originated from the problem of harmonic analysis for the infinite symmetric group, see \cite{Ol1} for an introduction. A more sophisticated class of probability measures, known as $zw$-measures, originated from harmonic analysis of the infinite unitary group; various aspects of the $z$- and $zw$-measures were studied in the past decade, see for example \cite{BO1, BO2, BO3}.

All of the theory we have mentioned above is based on representation theory of \textit{type A}, that is, the algebraic machinery behind the scenes comes from the symmetric and unitary groups. In this paper we work with the `type BC' analogues of $zw$-measures that we call \textit{BC type $z$-measures}; they come from harmonic analysis of the infinite symplectic, orthogonal groups and other infinite-dimensional symmetric spaces. In a degenerate case, the BC type $z$-measures are equivalent to a scaling limit of the Racah orthogonal polynomial ensembles, which are at the top of the hierarchy of all discrete orthogonal polynomial ensembles of the classical Askey scheme.

Our analysis of the BC type $z$-measures is based on a map from the space where they are defined to the space of simple point configurations in $\R_{>0}\setminus\{1\}$. Under this map, the BC type $z$-measures define a four-parameter family of stochastic point processes $\PP_{z, z', a, b}$, with infinitely many points, in the punctured positive real line. We began a study on these processes in \cite{C}, where we constructed a continuous-time Markov chain that preserves the $z$-measures. In this paper, the random processes themselves are our subject of study: our main result states that they are \textit{determinantal point processes} and moreover we compute explicit correlation kernels in terms of special functions. The resulting kernel has appeared in the literature before, in \cite{BO1}, and is known as the \textit{hypergeometric kernel} in view of the fact that it can be expressed in terms of the hypergeometric function $_2F_1$. The previous appearance of the hypergeometric kernel was in the problem of harmonic analysis of the infinite-dimensional unitary group (`type A' analogue of our problem), but it is far from obvious that the same kernel should show up in our context; the author cannot offer at this point a conceptual explanation for it.

The principal theme of this paper is the `approximation' of the point processes $\PP_{z, z', a, b}$ on the continuous state space $\R\setminus\{1\}$ by a sequence $\{\LL^{(N)}_{z, z', a, b}\}_{N\geq 1}$ of point processes, with a finite number of points, on a quadratic lattice. For any fixed $N$, the point processes $\LL^{(N)}_{z, z', a, b}$ are determinantal and a correlation kernel for it can be expressed in terms of a finite system of orthogonal polynomials on a quadratic lattice, known as the \textit{Wilson-Neretin polynomials}. In addition, we need to have suitable expressions of the correlation kernels of $\LL^{(N)}_{z, z', a, b}$ for the limit transition $N\rightarrow\infty$. To obtain such expressions, we make use of a discrete Riemann-Hilbert problem, which has been used before in other problems from asymptotic representation theory. Finally, the expressions that admit a limit as $N\rightarrow\infty$ are obtained after massive calculations and use of recent formulas \cite{Mi1} that relate several generalized hypergeometric functions  $_4F_3$.

Below we give a more detailed account on the results of this paper.

\subsection*{BC type $z$-measures}

Let us fix two real parameters $a \geq b \geq -1/2$, and momentarily fix also a positive integer $N\in\N$. In this introduction, we only discuss BC type $z$-measures of level $N$ (or simply $z$-measures of level $N$), which are certain probability measures on the countable space of $N$-positive signatures $\GTp_N = \{\lambda = (\lambda_1, \ldots, \lambda_N) : \lambda_1 \geq \ldots \geq \lambda_N \geq 0, \ \lambda_i \in \Z \ \forall i\}$. The BC type $z$-measures depend on $a, b$ and on two additional complex parameters $z, z'$ subject to certain constraints, see Definition $\ref{Uadm}$ below. For example, the reader can think of the case $z' = \overline{z} \in \C \setminus \R$, or of the degenerate case $z = n\in\N$, $z' > n-1$ which gives rise to the Racah orthogonal polynomial ensemble. Explicitly, $z$-measures of level $N$ are
\begin{equation*}
P_N(\lambda | z, z', a, b) = \frac{1}{Z_N(z, z', a, b)}\cdot\widehat{\Delta}_N(l_1, \ldots, l_N)^2\cdot\prod_{i=1}^N{W(l_i  |  z, z', a, b; N)}, \ \lambda \in \GTp_N,
\end{equation*}
where we denoted by $l_i = \lambda_i + N - i$, $1\leq i\leq N$, the shifted coordinates of $\lambda = (\lambda_1, \ldots, \lambda_N)$; also $\widehat{\Delta}_N(l_1, \ldots, l_N) = \prod_{1\leq i < j\leq N}{\left(\left(l_i + \frac{a + b + 1}{2}\right)^2 - \left(l_j + \frac{a+b+1}{2}\right)^2\right)}$, the weight function is
\begin{equation*}
\begin{gathered}
W(x | z, z', a, b; N) = \left(x + \frac{a+b+1}{2}\right)\frac{\Gamma(x + a + b + 1)\Gamma(x + a + 1)}{\Gamma(x + b + 1)\Gamma(x + 1)}\\
\times\frac{1}{\Gamma(z - x + N)\Gamma(z' - x + N)\Gamma(z + x + N + a + b + 1)\Gamma(z' + x + N + a + b + 1)},
\end{gathered}
\end{equation*}
and $Z_N(z, z', a, b) > 0$ is a normalization constant that makes $P_N$ a probability measure. Observe that the weight function $W(\cdot | z, z', a, b; N)$ is not necessarily nonnegative on $\Z_{\geq 0}$ for all complex values of $z, z'$, but it is for some special paremeters, for example when $z' = \overline{z}$ and $z = n\in\N$, $z' > n - 1$ as before.

In what follows, we construct several point processes associated to the $z$-measures of level $N$. The reader can refer to \cite{DVJ, L} for generalities on point processes; see also \cite[Sec. 4.2]{AGZ} and \cite{B0} for generalities on determinantal point processes.

\subsection*{Point processes in the quadratic half-lattice}

Let $\epsilon = \frac{a + b + 1}{2} \geq 0$ and $\Zpe = \{\epsilon^2, (1 + \epsilon)^2, (2 + \epsilon)^2, \ldots\}$ be a quadratic half-lattice. Let $\Conf_{\fin}(\Zp^{\epsilon})$ be the space of multiplicity-free point configurations on $\Zpe$ with finitely many points. The space $\Conf_{\fin}(\Zp^{\epsilon})$ has a canonical sigma algebra and a probability measure on it determines a stochastic point process on $\Zpe$.

Under any map $\PP^{(N)}: \GTp_N \rightarrow \Conf_{\fin}(\Zpe)$, the pushforwards of $z$-measures of level $N$ yield random point processes $\PP^{(N)} = \PP^{(N)}_{z, z', a, b}$ on $\Zpe$ that depend on the parameters $z, z', a, b$, and that we denote by the same letter $\PP^{(N)}$.
Our goal is to obtain a point process with infinitely many points on the state space $\R_{>0}\setminus\{1\}$ as a natural scaling limit of the processes $\PP^{(N)}$ with finitely many points in the quadratic lattice $\Zpe$ (the scaling is of order $N^2$).
The most obvious map that we can consider is
\begin{equation*}
\OO^{(N)}(\lambda) = \{(\lambda_1 + N - 1 + \epsilon)^2, (\lambda_2 + N - 2 + \epsilon)^2, \ldots, (\lambda_N + \epsilon)^2\}, \ \forall \lambda\in\GTp_N.
\end{equation*}
It turns out that the processes $\OO^{(N)}$ do not have the desired scaling limit as $N$ tends to infinity. The point processes that we need arise from a different map that we now describe.

An $N$-positive signature $\lambda\in\GTp_N$ can be described uniquely by its Frobenius coordinates. If we let $d = d(\lambda)$ be the largest positive integer such that $\lambda_d \geq d$, then the Frobenius coordinates $(p_1, \ldots, p_d \ | \ q_1, \ldots, q_d)$ of $\lambda$ are given by
\begin{equation*}
p_i = \lambda_i - i, \hspace{.1in} q_i = \lambda_i' - i, \hspace{.1in}1\leq i\leq d,
\end{equation*}
where $\lambda_i' = |\{n \in \Z_{>0} : \lambda_n \geq i\}|$ for all $i$. It is clear that $p_1 > \ldots > p_d\geq 0$ and $q_1 > \dots > q_d \geq 0$. Now consider the map $\LL^{(N)} : \GTp_N \rightarrow \Conf_{\fin}(\Zpe)$ given by
\begin{equation*}
\LL^{(N)}(\lambda) = \{(N + p_1 + \epsilon)^2 > \ldots > (N + p_d + \epsilon)^2 > (N - 1 - q_d + \epsilon)^2 > \ldots > (N - 1 - q_1 + \epsilon)^2\}.
\end{equation*}
The point processes $\LL^{(N)}$ have a natural scaling limit that we describe next.

\subsection*{Point processes on the continuous state space $\R_{>0}\setminus\{1\}$}

Let $\X = \R_{>0} \setminus\{1\} = (0, 1) \sqcup (1, \infty)$ and $\Conf(\X)$ be the space of multiplicity free point configurations on $\X$, with its canonical $\sigma$-algebra. The point processes $\LL^{(N)}$ described above are probability measures on the space $\Conf_{\fin}(\Zpe)$. Consider the composition
\begin{eqnarray*}
\mathfrak{j}_N : \Zpe &\longrightarrow& \frac{1}{\left(N + \epsilon - \frac{1}{2}\right)^2}\Zpe \hookrightarrow\X\\
x &\longmapsto& \frac{x}{\left(N + \epsilon - \frac{1}{2}\right)^2},
\end{eqnarray*}
which induces a map $\mathfrak{j}_N: \Conf_{\fin}(\Zpe)\rightarrow\Conf(\X)$, denoted by the same letter (the normalization by $\left(N+\epsilon - \frac{1}{2}\right)^2$ is chosen so that $\frac{\Zpe}{(N+\epsilon - 1/2)^2}\subset\X$). Denote by $\widetilde{\PP}^{(N)}$ the pushforward of $\LL^{(N)}$ under the map $\mathfrak{j}_N$ above; then $\widetilde{\PP}^{(N)}$ is a probability measure on $\Conf(\X)$ and determines a point process on $\X$ that we also denote by $\widetilde{\PP}^{(N)}$. 

Then the point processes $\widetilde{\PP}^{(N)}$ converge to certain point process $\PP = \PP_{z, z', a, b}$ in the sense that all the correlation functions of $\widetilde{\PP}^{(N)}$ converge to the corresponding correlation functions of $\PP$, as $N$ tends to infinity. The process $\PP$ is our hero: the main result of this paper is a complete characterization of the stochastic point process $\PP$ for very general parameters $z, z', a, b$.

Let us remark that the point process $\PP$ has an intrinsic definition, which is independent of the finite point processes $\widetilde{\PP}^{(N)}$. However we are able to thoroughly study $\PP$ because we can realize it as a scaling limit of the processes $\widetilde{\PP}^{(N)}$, see Section $\ref{sec:scalinglimit}$ for details.

\subsection*{The main theorem}

Let $a\geq b\geq -1/2$ and $(z, z')\in\C^2$ be an admissible pair (see Definition $\ref{Uadm}$ below, or simply think of the cases $z' = \overline{z}\in\C\setminus\R$ or $z=n\in\N$, $z'>n-1$ for now). The point processes $\PP = \PP_{z, z', a, b}$ on $\X$ are determinantal. Moreover an explicit correlation kernel $K^{\PP}(x, y)$ for $\PP$ is given by the expressions in $(\ref{KernelP})$ and $(\ref{KernelP2})$ below. For example, if $x, y > 1$, $x\neq y$, then
\begin{equation}\label{eq:kernelintro}
K^{\PP}(x, y) = \sqrt{\psi(x)\psi(y)}\cdot\frac{R(x)S(y) - R(y)S(x)}{x - y},
\end{equation}
where
\begin{eqnarray*}
\psi(x) &=& \frac{\sin(\pi z)\sin(\pi z')}{2\pi^2}x^{b}(x - 1)^{-z-z'},\\
R(x) &=& \left(1 - \frac{1}{x}\right)^{z'}\pFq{2}{1}{z' + b ,,, z'}{z + z' + b}{\frac{1}{x}},\\
S(x) &=& \frac{2}{x}\left(1 - \frac{1}{x}\right)^{z'}\frac{\Gamma(1+z)\Gamma(1+z')\Gamma(1+z+b)\Gamma(1+z'+b)}{\Gamma(1+z+z'+b)\Gamma(2+z+z'+b)}\pFq{2}{1}{z' + b + 1 ,,, z' + 1}{z + z' + b + 2}{\frac{1}{x}}.
\end{eqnarray*}
The kernel $K^{\PP}: \X\times\X \rightarrow \R$ is known as the \textit{hypergeometric kernel}, the reason being that $K^{\PP}$ can be expressed in terms of the hypergeometric functions $_2F_1$, see \cite{BO1} for another problem in which this kernel appears. Observe that the expression for $K^{\PP}$ in $(\ref{eq:kernelintro})$ is only defined for $x \neq y$ due to the singularities on the diagonal. However it admits the following analytic continuation:
\begin{equation*}
K^{\PP}(x, x) = \psi(x)\cdot \left(R'(x)S(x) - R(x)S'(x)\right) \ \forall x>1.
\end{equation*}

\subsection*{Harmonic analysis on big groups}

We follow the exposition of \cite{OkOl1}.

An \textit{infinite symmetric space} $G/K$ is an inductive limit of Riemannian symmetric spaces $G(n)/K(n)$ of rank $n$, with respect to a natural chain of maps $G(1) \rightarrow \dots \rightarrow G(n)\rightarrow G(n+1)\rightarrow\dots$ that is compatible with the inclusions $K(n)\subset G(n)$. Some examples of such infinite symmetric spaces are the following
\begin{enumerate}
	\item $G/K = U(2\infty)/U(\infty)\times U(\infty) =  \lim_{n\rightarrow\infty}{(U(2n)/U(n) \times U(n))}$.
	\item $G/K = O(\infty)\times O(\infty)/O(\infty) = \lim_{n\rightarrow\infty}{(O(\widetilde{n})\times O(\widetilde{n})/ O(\widetilde{n}))}$.
	\item $G/K = Sp(\infty)\times Sp(\infty)/Sp(\infty) = \lim_{n\rightarrow\infty}{(Sp(n)\times Sp(n)/Sp(n))}$.
\end{enumerate}

(In $(2)$, we wrote $\widetilde{n}$ to indicate that the rank of the Riemannian symmetric space in question is $\lfloor \widetilde{n}/2 \rfloor$, and $\widetilde{n}$ can be either $2n$ or $2n+1$; either one leads to the same $G/K$, up to isomorphism.)

The natural problem of noncommutative harmonic analysis for finite or compact groups asks for the decomposition of the regular representation into irreducible representations. In this context, the solution is given by the well known Peter-Weyl theorem. In the infinite-dimensional context, it is not even clear what the problem of harmonic analysis should be. The difficulty is that there is no analogue of the Haar measure. The problem of harmonic analysis for the infinite-dimensional unitary group $U(\infty)$ was posed by G. Olshanski in \cite{Ol}. He used certain completion $\mathfrak{U}$ of $U(\infty)$, which admits an analogue of the Haar measure. We hope to explain the representation theoretic picture for the symmetric spaces listed above elsewhere, but for now let us only describe the problem in terms of spherical functions.

A \textit{spherical function} of the infinite symmetric space $G/K$ is a function $\phi: G \rightarrow \C$ satisfying
\begin{itemize}
	\item $\phi$ is a $K$-bi-invariant function, i.e., $\phi(k_1gk_2) = \phi(g)$ for all $g\in G$, $k_1, k_2\in K$.
	\item $\phi$ is normalized by $\phi(I) = 1$, where $I\in G$ is the unit element.
	\item $\phi$ is positive definite, i.e., for any $g_1, \ldots, g_n\in G$, the matrix $[\phi(g_i g_j^{-1})]_{i, j=1}^n$ is positive definite.
\end{itemize}

It is clear that the space of spherical functions of $G/K$ is convex. The extreme points of the set of spherical functions are the \textit{irreducible (or extreme) spherical functions}. The space of irreducible spherical functions for all three spaces (1), (2), (3) above are isomorphic; that space will be denoted by $\Omega_{\infty}$. In fact, it was shown in \cite{OkOl1} that $\Omega_{\infty}$ can be embedded as the subspace of $\R_+^{\infty}\times\R_+^{\infty}\times\R_+$ consisting of triples $\omega = (\alpha; \beta; \delta)$ satisfying
\begin{equation*}
\begin{gathered}
\alpha_1 \geq \alpha_2 \geq \dots \geq 0, \hspace{.2in} 1 \geq \beta_1 \geq \beta_2 \geq \dots \geq 0,\\
\infty > \delta \geq \sum_{i=1}^{\infty}{(\alpha_i + \beta_i)}.
\end{gathered}
\end{equation*}
Let us denote by $\phi^{\omega}$ the irreducible spherical function corresponding to $\omega\in\Omega_{\infty}$.

In analogy to \cite{Ol} it is possible, for each of the three infinite symmetric spaces above, to construct a family of natural \textit{generalized quasi-regular representations of $G/K$}, each with a distinguished cyclic $K$-invariant vector. The members of the family of generalized quasi-regular representations $\{(T^z, v^z)\}_z$ (we let $v^z$ be the distinguished vector of the representation $T^z$) of $G/K$ are parameterized by complex numbers $z$ satisfying $\Re z > -\frac{b}{2}$, where $b = 0, -\frac{1}{2}, \frac{1}{2}$ for (1), (2) and (3), respectively. Each representation $T^z$ is determined completely by its spherical function $\phi^z$ defined by $\phi^z(\cdot) = (T^z(\cdot)v^z, v^z)$. As it turns out, for our three cases of interest, the spherical functions $\phi^z : G \rightarrow\R$ admit analytic continuations in the variables $z$ and $z' = \overline{z}$ to the domain $\{\Re (z+z') > -b\}$, where $b = 0, -\frac{1}{2}, \frac{1}{2}$ for the spaces (1), (2), (3) above. Furthermore, one can write down a spherical function $\phi^{z, z', a, b} : G\rightarrow\R$ depending on four parameters $z, z', a, b$ (subject to some constraints) that reduces to the representation-theoretic spherical functions of $G/K$ upon specialization of the parameters $a, b$ as follows:
\begin{enumerate}
	\item $(a, b) = (0, 0)$.
	\item $(a, b) = (\pm\frac{1}{2}, -\frac{1}{2})$.
	\item $(a, b) = (\frac{1}{2}, \frac{1}{2})$.
\end{enumerate}

(In (2),  $a = -\frac{1}{2}$ if $\widetilde{n} = 2n$ and $a = \frac{1}{2}$ if $\widetilde{n} = 2n+1$.)

A general result from functional analysis implies that all spherical functions $\phi^{z, z', a, b}$ of $G/K$ are mixtures of the irreducible spherical functions $\{\phi^{\omega}\}_{\omega\in\Omega_{\infty}}$, in the sense that there exists a Borel probability measure $\pi_{z, z', a, b}$ on $\Omega_{\infty}$ such that
\begin{equation*}
\phi^{z, z', a, b} = \int_{\Omega_{\infty}}{\phi^{\omega}d\pi_{z, z', a, b}(\omega)}.
\end{equation*}
The probability measure $\pi_{z, z', a, b}$ on $\Omega_{\infty}$ is the \textit{$z$-measure with parameters $z, z', a, b$} and also known as the \textit{spectral measure of $\phi^{z, z', a, b}$}.

The probability measures $\pi_{z, z', a, b}$ describe how the spherical function $\phi^{z, z', a, b}$ is `decomposed' into irreducible spherical functions. A (far reaching) generalization of the problem of harmonic analysis for the infinite symmetric spaces (1), (2), (3) above is therefore the following:\\

\textbf{Problem:} Describe the spectral measures $\pi_{z, z', a, b}$ on $\Omega_{\infty}$ for the most general set of parameters $z, z', a, b$ possible.\\

The content of the present article solves the problem just posed. In fact, the pushforwards of the measures $\pi_{z, z', a, b}$ under the map
\begin{equation}\label{eqn:mapi}
\begin{gathered}
\mathfrak{i} : \Omega_{\infty} \longrightarrow \Conf(\X)\\
\omega = (\alpha; \beta; \delta) \mapsto \left( \{(1 + \alpha_i)^2\}_{i\geq 1} \sqcup \{(1 - \beta_j)^2\}_{j\geq 1} \right) \setminus \{0, 1\}.
\end{gathered}
\end{equation}
define the point processes $\PP_{z, z', a, b}$ on $\X$, and our main result is a complete characterization of these processes, see Section $\ref{sec:scalinglimit}$ below for more details.

In particular, Theorem $\ref{maintheorem}$ with $b = 0, -\frac{1}{2}, \frac{1}{2}$ solves the problem of harmonic analysis, stated above for the infinite symmetric spaces in (1), (2) and (3), respectively.

The reader is referred to \cite{BO4} for a self-contained survey of the problem of harmonic analysis of the infinite symmetric space $(G, K) = (U(\infty)\times U(\infty), U(\infty))$ and some applications.

\subsection*{Racah orthogonal polynomial ensemble}

We have mentioned that BC type $z$-measures are esentially analytic continuations of scaling limits of Racah orthogonal polynomial ensembles. Let us clarify the meaning of that statement and briefly explain how the main result of this paper can be deduced from limits of Racah polynomials in the degenerate case.

Let $z = k\in\N$ and $z'> k-1$ (similar considerations hold if $z' = k$ and $z > k-1$). For any $N\geq 1$, the corresponding $z$-measure $P_N = P_N(\cdot | n, z', a, b)$ of level $N$ is supported on the set
\begin{equation*}
\GTp_N(k) \myeq \{ \lambda\in\GTp_N : k \geq \lambda_1 \geq \cdots \geq \lambda_N \geq 0 \}.
\end{equation*}
The process $\OO^{(N)} = \OO^{(N)}_{k, z', a, b}$ on $\Zpe$ coming from $P_N$ lives in the finite space $\Z^{\epsilon}_{N+k-1} \myeq \{\epsilon^2, (1+\epsilon)^2, \ldots, (N+k-1+\epsilon)^2\}$. The corresponding $z$-measure $\pi = \pi_{k, z', a, b}$ is supported on the finite dimensional simplex
\begin{equation*}
\Omega_{\infty}(k) \myeq \{\omega\in\Omega_{\infty} : \alpha = \delta = 0, \ \beta_{k+1} = \beta_{k+2} = \dots = 0 \}.
\end{equation*}
The point process $\PP = \PP_{k, z', a, b}$ that corresponds to $\pi$ under the map $\mathfrak{i}$ given in $(\ref{eqn:mapi})$ lives in the open interval $(0, 1)$. Such process is the scaling limit of the processes $\LL^{(N)}_{k, z', a, b}$, as described previously. In this special case, it will also be the scaling limit of some processes $\mathcal{Y}^{(N)} = \mathcal{Y}^{(N)}_{k, z', a, b}$, which come from the pushforwards of $\OO^{(N)}$ under the maps
\begin{equation*}
\begin{gathered}
\Conf(\Z^{\epsilon}_{\leq N+k-1}) \rightarrow \Conf(\Z^{\epsilon}_{\leq N+k-1})\\
X \mapsto \Z^{\epsilon}_{\leq N+k-1} \setminus X.
\end{gathered}
\end{equation*}

Clearly $\mathcal{Y}^{(N)}$ is a point process on $\Z^{\epsilon}_{\leq N+k-1}$ with exactly $k$ points. Simple calculations show that $\mathcal{Y}^{(N)}$ is a Racah orthogonal polynomial ensemble associated to the parameters
\begin{equation}\label{eqn:racah}
\alpha = -k-N, \hspace{.1in} \beta = -z'-N-a, \hspace{.1in} \gamma = b, \hspace{.1in} \delta = a.
\end{equation}
In other words, if we denote $\left(y + \frac{a+b+1}{2}\right)^2$ by $\haty$, for any $y\in\{0, 1, \ldots, N+k-1\}$, then
\begin{equation*}
\begin{gathered}
\textrm{Prob}\left( \mathcal{Y}^{(N)}_{k, z', a, b} = \{\haty_1, \ldots, \haty_k\} \right) = \textrm{const}\cdot\prod_{1\leq i<j\leq k}{(\haty_i - \haty_j)^2}\cdot\prod_{i=1}^k{w_R(y_i(y_i+a+b+1)\ |\ \alpha, \beta, \delta, \gamma)},\\
w_R(y(y+a+b+1)) \myeq \left(y + \frac{\gamma + \delta + 1}{2} \right)\frac{\Gamma(y+\gamma+\delta+1)\Gamma(y+\gamma+1)\Gamma(y+\alpha+1)\Gamma(\beta-\gamma-y)}{\Gamma(y+1)\Gamma(y+\delta+1)\Gamma(y-\alpha+\gamma+\delta+1)\Gamma(-\beta-\delta-y)},
\end{gathered}
\end{equation*}
where $\alpha, \beta, \delta, \gamma$ are related to $k, z', a, b$ via the identification of parameters in $(\ref{eqn:racah})$. The weight function $w_R(y(y+a+b+1)\ |\ \alpha, \beta, \gamma, \delta)$ on the quadratic lattice $\{n(n+a+b+1) : n\in\Zp\}$ corresponds to the classical \textit{Racah orthogonal polynomials} $\mathfrak{R}_n(y(y+a+b+1) | \alpha, \beta, \gamma, \delta)$, see \cite[Ch. 9.2]{KLS} and \cite[Ch. 3]{NSU}. One deduces that $\mathcal{Y}^{(N)}$ is a determinantal process with correlation functions of the form
\begin{equation*}
\begin{gathered}
\textrm{Prob}\left( \{ \haty_1, \ldots, \haty_p \} \subseteq \mathcal{Y}^{(N)}_{k, z', a, b} \right) = \textrm{const}\cdot\mathbf{1}_{\{ p \leq k \}}\cdot\det_{1\leq i, j\leq p}\left[ K^{\mathcal{Y}^{(N)}}(\haty_i, \haty_j) \right],\\
K^{\mathcal{Y}^{(N)}}(\hatx, \haty) = \frac{\sqrt{w_R(\widetilde{x})w_R(\widetilde{y})}}{H_{k-1}}\cdot\frac{\mathfrak{R}_k(\widetilde{x})\mathfrak{R}_{k-1}(\widetilde{y})  -  \mathfrak{R}_{k-1}(\widetilde{x})\mathfrak{R}_k(\widetilde{y})}{\hatx - \haty},
\end{gathered}
\end{equation*}
where $\widetilde{x} = x(x+a+b+1)$, $\widetilde{y} = y(y+a+b+1)$, $\hatx = (x+(a+b+1)/2)^2$, $\haty = (y + (a+b+1)/2)^2$ and $H_{k-1}$ is the squared norm of $\mathfrak{R}_{k-1} = \mathfrak{R}_{k-1}(\cdot | \alpha, \beta, \gamma, \delta)$.

The point process $\PP = \PP_{k, z', a, b}$ has exactly $k$ points and is a scaling limit of the processes $\mathcal{Y}^{(N)}$, all of which have $k$ points. It is easily justified in this case that $\PP$ is a determinantal point process and a calculation of its correlation kernel $K^{\PP}$ would follow from a suitable limit of the Racah polynomials and of the kernel $K^{\mathcal{Y}^{(N)}}$ above.

Alternatively the same kernel $K^{\PP}$ can be read from Theorem $\ref{maintheorem}$ after setting $z = k$ (in that theorem, note $\psi_{>1}(x) = 0$ for $z=k\in\N$, which implies that there is no particles in $(1, \infty)$). The general point processes $\PP_{z, z', a, b}$ that we consider ($z, z'$ not integers) can be considered as analytic continuations of the limits $\PP_{k, z', a, b}$ of Racah ensembles $\mathcal{Y}^{(N)}$, but we need much more work to study them.

\subsection*{Other results}

In addition to the main result described above, other results in this article are:
\begin{itemize}
	\item The point process $\OO^{(N)}$ is a \textit{discrete orthogonal polynomial ensemble}. Moreover we find a correlation kernel $K^{\OO^{(N)}}$ in terms of the generalized hypergeometric function $_4F_3(1)$, see Theorem $\ref{thm:Okernel}$.
	\item The point process $\LL^{(N)}$ is an \textit{$L$-ensemble}. Moreover we find an explicit correlation kernel $K^{\LL^{(N)}}$ in terms of the generalized hypergeometric function $_4F_3(1)$, see Theorem $\ref{thm:Lkernel}$.
\end{itemize}

\subsection*{Organization of the paper}

In the next section, we introduce some terminology that is used throughout the paper. In section $\ref{zprocesses}$, we introduce the BC type $z$-measures of level $N$ for a large set of parameters $z, z', a, b$, and introduce the point processes $\OO^{(N)}$, $\LL^{(N)}$ on the state space $\Zpe$. In section $\ref{sec:OO}$, we prove that $\OO^{(N)}$ is a discrete orthogonal polynomial ensemble and find a correlation kernel. In section $\ref{sec:Lensemble}$, we prove that $\LL^{(N)}$ is an $L$-ensemble. After recalling the theory of the discrete Riemann-Hilbert problem in section $\ref{sec:DRHP}$, we find a correlation kernel for $\LL^{(N)}$ in section $\ref{kernelL}$. The point processes $\LL^{(N)}$ yield a point process $\PP$ with infinitely many particles in a continuous state space, under a suitable scaling limit. We construct these point processes as scaling limits of the point processes $\LL^{(N)}$ in section $\ref{sec:scalinglimit}$. In section $\ref{sec:maintheoremkernel}$, we prove the main theorem of this paper, which is an explicit formula for the correlation kernel of $\PP$. Finally, the appendix contains several technical proofs of various statements throughout the paper.

\subsection*{Acknowledgments}

I would like to thank Alexei Borodin and Grigori Olshanski for many helpful discussions, e-mail correspondence and comments on a draft of this paper.

\section{Notation and Terminology}\label{sec:notation}

We collect here some of the terminology and notation that is used throughout the paper, for the reader's convenience.

\begin{itemize}[leftmargin=*]
	\item The parameters $a, b$ in this paper are real numbers, and we define
\begin{equation*}
\epsilon \myeq \frac{a+b+1}{2}.
\end{equation*}
We assume throughout that $a, b$ satisfy
\begin{equation*}
a \geq b \geq -1/2,
\end{equation*}
which implies $\epsilon \geq 0$. This restriction allows us to use the main results from \cite{OkOl1, OlOs}.

	\item In Section $\ref{zprocesses}$ below, we define several subsets of $\C^2$ from which we take pairs $(z, z')$ as parameters of the $z$-measures and for some proofs involving analytic continuations. Some of these sets are the following \textit{domains} of $\C^2$:
\begin{eqnarray*}
\U &\myeq& \{(z, z')\in\C^2 : \Re (z+z'+b) > -1\},\\
\U_0 &\myeq& \{ (z, z')\in\U : \{z, z', z + b, z' + b \} \cap \{\ldots, -3, -2, -1\} = \emptyset \}.
\end{eqnarray*}
Moreover we have the more complicated set of \textit{admissible parameters} defined by
\begin{equation*}
\U_{\adm} \myeq \{(z, z')\in\U_0 : (z, z'), (z+2\epsilon, z'+2\epsilon)\in\mathcal{Z}\},
\end{equation*}
where $\mathcal{Z}$ is defined below in $(\ref{defnZ})$. The set $\U_{\adm}$ is not a domain of $\C^2$. The following inclusions hold, see Lemma $\ref{UadmsubsetUzero}$ below:
\begin{equation*}
\U_{\adm}\subset\U_0\subset\U.
\end{equation*}
Given any $(z, z')\in\U$, we denote $\Sigma \myeq z+z'+b$. Observe that if $(z, z')\in\U_{\adm}$, then $\Sigma\in (-1, +\infty)$.

	\item We write $\Zp$ and $\N$ for the set of nonnegative and positive integers, respectively. We also write
\begin{equation*}
\Zpe \myeq \{(n + \epsilon)^2 : n\in\Zp\}
\end{equation*}
for the quadratic half-lattice. For any $x\in\Z$, we let
\begin{equation*}
\hatx \myeq (x+\epsilon)^2.
\end{equation*}
Very often, starting in Section $\ref{sec:OO}$, we shall need the splitting $\Zpe = \Z^{\epsilon}_{\geq N}\sqcup\Z^{\epsilon}_{< N}$ into the sets $\Z^{\epsilon}_{\geq N} \myeq \{(N + \epsilon)^2, (N + 1 + \epsilon)^2, \ldots\}$ and $\Z^{\epsilon}_{< N} \myeq \{\epsilon^2, (1 + \epsilon)^2, \ldots, (N-1+\epsilon)^2\}$, for some $N\in\N$.

	\item For any $N\in\N$, the set $\GTp_N$ of \textit{positive signatures of length $N$} (also known as the set of \textit{$N$-positive signatures}) is defined as
\begin{equation*}
\GTp_N \myeq \{\lambda = (\lambda_1, \ldots, \lambda_N) \in \Zp^N : \lambda_1 \geq \cdots \geq \lambda_N \geq 0\}.
\end{equation*}
To each positive $N$-signature $\lambda = (\lambda_1 \geq \cdots \geq \lambda_N)\in\GTp_N$ we can associate the $N$-tuples $l = (l_1 > \cdots > l_N) \in \Zp^N$ and $\hatl = (\hatl_1 > \cdots > \hatl_N) \in (\Zpe)^N$ via the equations
\begin{equation*}
\begin{gathered}
l_i = \lambda_i + N - i, \hspace{.2in} \hatl_i = (l_i + \epsilon)^2, \hspace{.1in} \forall \ 1\leq i\leq N.
\end{gathered}
\end{equation*}
Below we shall often be working with a positive signature of length $N$ denoted by $\lambda\in\GTp_N$. We denote its two associated $N$-tuples given as above by $l = (l_1 > \ldots > l_N)$ and $(\hatl_1 > \ldots > \hatl_N)$.

	\item We use the following simplified notation for products of (ratios of) Gamma functions
\begin{eqnarray*}
\Gamma\left[ a_1, \ a_2, \ldots, \ a_m  \right] &\myeq& \Gamma(a_1)\Gamma(a_2)\cdots\Gamma(a_m),\\
\Gamma\left[ \begin{split}
a_1 &,& a_2 &,& \cdots &,& a_m\\
b_1 &,& b_2 &,& \cdots &,& b_n
\vphantom{\frac12}\end{split}
\right] &\myeq& \frac{\Gamma(a_1)\Gamma(a_2)\cdots\Gamma(a_m)}{\Gamma(b_1)\Gamma(b_2)\cdots\Gamma(b_n)}.
\end{eqnarray*}

	\item If $x = (x_1, \ldots, x_N)$ is any $N$-tuple of complex numbers, then the \textit{Vandermonde determinant} is
\begin{equation*}
\Delta_N(x) = \Delta_N(x_1, \ldots, x_N) \myeq \prod_{1\leq i<j\leq N}{(x_i - x_j)}.
\end{equation*}
\end{itemize}

\section{BC type $z$-measures}\label{zprocesses}

For any $N\in\N$, the set of \textrm{$N$-positive signatures} is $\GTp_N \myeq \{\lambda = (\lambda_1 \geq \ldots \geq \lambda_N)\in\Z^N\}$. Let us begin by considering the following function on $N$-positive signatures $\lambda\in\GTp_N$, which depends on certain parameters $z, z'\in\C$, $a \geq b \geq -1/2$, and is given by
\begin{equation}\label{zmeasureN}
P_N'(\lambda | z, z', a, b) = \left(\Delta_N(\hatl)\right)^2\cdot\prod_{i=1}^N{w(l_i  |  z, z', a, b; N)},
\end{equation}
where $\Delta_N(\hatl) = \prod_{1\leq i<j\leq N}{(\hatl_i - \hatl_j)}$ is the Vandermonde determinant on the variables $\hatl_i = (l_i + \epsilon)^2$, $l_i = \lambda_i + N - i$, $1\leq i\leq N$, and
\begin{equation}\label{weight}
\begin{gathered}
w(x | z, z', a, b; N) = (x + \epsilon)\times \Gamma\left[ \begin{split}
x + 2\epsilon &,& x + a + 1\\
x + 1 &,& x + b + 1
\vphantom{\frac12}
\end{split}
\right]\\
\times\frac{1}{\Gamma[z - x + N, \ z' - x + N, \ z + x + N + 2\epsilon, \ z' + x + N + 2\epsilon]}.
\end{gathered}
\end{equation}
When there is no risk of confussion, we write $w(x | z, z', a, b; N)$ simply as $w(x)$, for simplicity of notation. Observe that the conditions on the real parameters $a, b$ ensure that $\Gamma(x + 2\epsilon)$ and $\Gamma(x + a + 1)$ are well defined for all $x\in\Zp$ and therefore $P_N'(\lambda | z, z', a, b)$ is well defined for all $\lambda\in\GTp_N$.

The following equality
\begin{equation}\label{partitionfunction}
\sum_{\lambda\in\GTp_N}{P_N'(\lambda | z, z', a, b)} = S_N(z, z', a, b),
\end{equation}
where
\begin{equation*}
S_N(z, z', a, b)= \prod_{i=1}^N{\frac{\Gamma[b + z + z' + i, \ a + i, \ i]}{\Gamma[z + i, \ z + b + i, \ z' + i, \ z' + b + i, \ z + z' + a + b + N + i]}},
\end{equation*}
was proved in \cite{OlOs} for all $z, z' \in \{u \in \C : \Re u > - \frac{(1+ b)}{2}\}$ and extended by analytic continuation, in \cite{C}, for all pairs $(z, z')$ in the complex domain
\begin{equation}
\U \myeq \left\{ (z, z') \in \C^2 : \Re(z + z' + b) > -1 \right\}.
\end{equation}

Define also the subdomain $\U_0 \subset \U$ consisting of those pairs $(z, z')$ for which $S_N(z, z', a, b)$ never vanishes; clearly such set is
\begin{equation}\label{subdomain}
\U_0 \myeq \left\{ (z, z')\in\U : \{z, z', z + b, z' + b \} \cap \{\ldots, -3, -2, -1\} = \emptyset \right\}.
\end{equation}
For any pair $(z, z')\in\U_0$, we can therefore define the (complex) measure on $\GTp_N$ given by
\begin{equation}\label{zmeasure}
P_N(\lambda | z, z', a, b) = \frac{P_N'(\lambda | z, z', a, b)}{S_N(z, z', a, b)}.
\end{equation}
From $(\ref{partitionfunction})$, the measure $P_N$ has finite total measure equal to $1$. Next, we find very general conditions on complex pairs $(z, z')$ which guarantee that $\{P_N(\lambda | z, z', a, b)\}_{\lambda\in\GTp_N}$ is a probability measure on $\GTp_N$.

By following common terminology on $zw$-measures, see for example \cite{BO1}, define the sets
\begin{eqnarray}
\ZZ &\myeq& \ZZ_{\princ} \sqcup \ZZ_{\compl} \sqcup \ZZ_{\degen}\label{defnZ}\\
\ZZ_{\princ} &\myeq& \{(z, z')\in\C^2 \setminus \R^2 : z' = \overline{z}\}\nonumber\\
\ZZ_{\compl} &\myeq& \{(z, z')\in\R^2 :  n < z, z' < n + 1 \textrm{ for some }n\in\Z\}\nonumber\\
\ZZ_{\degen} &\myeq& \bigsqcup_{n=1}^{\infty}{\ZZ_{\degen, n}}\nonumber\\
\ZZ_{\degen, n} &\myeq&\{(z, z')\in\R^2 : z = n,\ z' > n - 1, \  \textrm{ or }\ z' = n,\ z > n - 1\}.\nonumber
\end{eqnarray}

The subscripts `princ', `compl' and `degen' stand for \textit{principal, complementary} and \textit{degenerate}.

The reason why the definitions above are important for us is the following statement.
\begin{lem}[{{\cite[Lemma 7.9]{Ol}}}]\label{propposdomain}
Let $(z, z')\in\C^2$, then
\begin{itemize}
	\item $(\Gamma(z - k)\Gamma(z' - k))^{- 1} \geq 0$ for all $k\in\Z$ iff $(z, z')\in\ZZ$.
	\item $(\Gamma(z - k)\Gamma(z' - k))^{- 1} > 0$ for all $k\in\Z$ iff $(z, z')\in\ZZ_{\princ}\sqcup\ZZ_{\compl}$.
	\item If $(z, z')\in\ZZ_{\degen, n}$, then $(\Gamma(z - k)\Gamma(z' - k))^{- 1} = 0$ for $k\in\Z$, $k \geq n$ and $(\Gamma(z - k)\Gamma(z' - k))^{- 1} > 0$ for $k\in\Z$, $k\leq n-1$.
\end{itemize}
\end{lem}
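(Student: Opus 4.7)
The plan relies on a direct sign analysis of the real-valued function $f(k) := 1/(\Gamma(z-k)\Gamma(z'-k))$ viewed as a function of $k \in \Z$, together with the classical facts that $1/\Gamma$ is entire, real-valued on $\R$, strictly positive on $(0,\infty)$, vanishes precisely on the nonpositive integers, and takes sign $(-1)^{n+1}$ on the open interval $(-n-1,-n)$ for $n \geq 0$. In particular its sign flips across each pole of $\Gamma$.

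The three sufficiency assertions reduce to a case-by-case verification on the components of $\ZZ$. If $(z,z') \in \ZZ_{\princ}$ then $\Gamma(z-k)\Gamma(z'-k) = |\Gamma(z-k)|^2 > 0$, so $f(k)>0$ strictly. If $(z,z') \in \ZZ_{\compl}$ with $n<z,z'<n+1$, the shifts $z-k$ and $z'-k$ lie in the same open unit interval, so $\Gamma(z-k)$ and $\Gamma(z'-k)$ share a sign and $f(k)>0$. If $(z,z') \in \ZZ_{\degen,n}$ with $z=n$ and $z'>n-1$, then $f(k)=0$ as soon as $k\geq n$ thanks to the vanishing of $1/\Gamma(n-k)$, while for $k\leq n-1$ one has $n-k\geq 1$ and $z'-k > n-1-k \geq 0$, keeping both gamma factors strictly positive.

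For necessity, the first step is to reduce the hypothesis \emph{$f(k)\in\R_{\geq 0}$ for every $k\in\Z$} to the statement that $(z,z')$ is a real pair or a complex-conjugate pair. The idea is that $\overline{f(k)} = 1/(\Gamma(\bar z - k)\Gamma(\bar{z'} - k))$, so the equality $f(k)=\overline{f(k)}$ at every integer, combined with the location of the zeros of $k\mapsto 1/(\Gamma(z-k)\Gamma(z'-k))$ in the variable $k$, forces $\{z,z'\}=\{\bar z,\bar{z'}\}$ as a two-element multiset, i.e., either both real or $z'=\bar z$. Having made this reduction, the interval sign analysis above handles the real case: if $z,z'$ are non-integer and lie in distinct open unit intervals $(m,m+1)$ and $(m',m'+1)$ with $m<m'$, then choosing $k=m+1$ gives $z-k\in(-1,0)$ (so $\Gamma(z-k)<0$) while $z'-k\geq 0$ (so $\Gamma(z'-k)>0$), producing $f(k)<0$, contradicting the hypothesis; hence non-integer real pairs belong to $\ZZ_{\compl}$. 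If instead $z$ is a positive integer $n$, the surviving factor $1/\Gamma(z'-k)$ must be nonnegative for every $k\leq n-1$, which forces $z' \geq n-1$, placing $(z,z')$ in $\ZZ_{\degen,n}$ (or in $\ZZ_{\degen,n-1}$ after re-indexing, in the boundary case $z'=n-1$). The strict/non-strict parts of the three bullets then follow by simply tracking when one of the gamma factors meets a pole.

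The main obstacle I anticipate is the reduction to real or conjugate pairs in the necessity half: it is the only step that is not a transparent unit-interval sign computation, and it must be treated either via pole comparison for the meromorphic function $k\mapsto 1/(\Gamma(z-k)\Gamma(z'-k))$ (using that $1/\Gamma$ has order one, so that the identity of values at all integers propagates to an identity of meromorphic functions in $k$) or via direct inspection of the imaginary part of $f(k)$ as $k\to\pm\infty$. Every other piece of the proof is a routine unfolding of the sign-alternation rule for $1/\Gamma$.
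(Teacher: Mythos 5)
First, a remark on the comparison you were asked to be judged against: the paper does not prove this lemma at all — it is quoted, as the bracketed attribution indicates, from \cite[Lemma 7.9]{Ol} — so your proposal has to stand on its own. The sufficiency direction and the third bullet of your argument are fine: the unit-interval sign analysis of $1/\Gamma$ is exactly what is needed there. The genuine gap is in the necessity direction, and it sits precisely at the step you yourself flag as the main obstacle: the reduction to ``$(z,z')$ is a real pair or a conjugate pair.'' Neither of your two proposed routes works. The order-one/propagation route fails because $1/\Gamma$ has order $1$ but \emph{maximal type}, so two such entire functions agreeing at every integer need not coincide ($\sin\pi w$ is the standard obstruction; Carlson's theorem is inapplicable), and you cannot compare zero sets of $g(w)=1/(\Gamma(z-w)\Gamma(z'-w))$ and its conjugate when you only know their values on $\Z$; ``inspection of the imaginary part as $k\to\pm\infty$'' is not yet an argument. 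The elementary fix is the functional equation
\[
f(k+1)=(z-k-1)(z'-k-1)\,f(k),\qquad f(k):=\frac{1}{\Gamma(z-k)\Gamma(z'-k)},
\]
which shows that whenever $f(k)\neq 0$ the monic quadratic $P(t)=(t-z)(t-z')$ is real and positive at $t=k+1$. If $f$ never vanishes on $\Z$, then $P$ is real at all integers, hence has real coefficients, hence $z,z'$ are both real or complex conjugates, and positivity of $P$ at every integer then places $(z,z')$ in $\ZZ_{\princ}$ or $\ZZ_{\compl}$. If $f$ vanishes at some integer, then one of $z,z'$ is an integer and the surviving factor $1/\Gamma(z'-k)$ is analyzed directly (its reality for two consecutive nonvanishing values forces $z'\in\R$ as well).

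Two further problems in your integer case. The deduction that nonnegativity of $1/\Gamma(z'-k)$ for all $k\leq n-1$ ``forces $z'\geq n-1$'' is false when $z'$ is itself an integer: for $(z,z')=(5,2)$ one has $f(k)\geq 0$ for every $k\in\Z$ although $z'=2<4$; the pair does lie in $\ZZ_{\degen,2}$, but only via the other branch of the definition, which your case split never reaches. Finally, your restriction to ``$z$ a \emph{positive} integer $n$'' silently skips the nonpositive integers, and this is exactly where the lemma as transcribed in this paper fails to be a literal ``iff'': with $\ZZ_{\degen}=\sqcup_{n\geq 1}\ZZ_{\degen,n}$ the pair $(z,z')=(0,\tfrac12)$ satisfies $f(k)\geq 0$ for all $k\in\Z$ yet belongs to no piece of $\ZZ$. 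In \cite{Ol} the degenerate series is indexed by all integers; a complete proof should either use that convention or note the discrepancy.
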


From the lemma above, we can define a set of complex pairs $(z, z')$ such that the numbers $P_N'(\lambda | z, z', a, b)$ are nonnegative real values for all positive $N$-signatures.

\begin{df}\label{Uadm}
The \textit{admisible domain} $\U_{\adm}\subset\C^2$ is defined as the set of complex pairs $(z, z')\in\C^2$ such that $\Re(z + z' + b) > -1$ and both $(z, z')$, $(z + 2\epsilon, z' + 2\epsilon)$ belong to $\ZZ$. A pair $(z, z')\in\U_{\adm}$ is called an \textit{admissible pair}.
\end{df}

Thus for $(z, z')\in\U_{\adm}\cap\U_0$, then $P_N'(\lambda | z, z', a, b) \geq 0$ and not all of them are zero. Then $(\ref{partitionfunction})$ shows $S_N(z, z', a, b) > 0$ and therefore the ratios $(\ref{zmeasure})$ define a probability measure on $\GTp_N$. We actually only need to require $(z, z')\in\U_{\adm}$, which automatically implies $(z, z')\in\U_0$, as the following lemma implies.

\begin{lem}\label{UadmsubsetUzero}
The inclusion $\U_{\adm} \subset \U_0$ holds, where $\U_0$ is defined in $(\ref{subdomain})$ and $\U_{\adm}$ in Definition $\ref{Uadm}$.
\end{lem}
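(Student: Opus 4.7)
To show $\U_{\adm}\subset\U_0$, I fix $(z,z')\in\U_{\adm}$ and verify that none of $z$, $z'$, $z+b$, $z'+b$ lies in $\{-1,-2,-3,\ldots\}$. The $z\leftrightarrow z'$ symmetry in the definitions of $\ZZ$ and $\U_{\adm}$ reduces this to checking $z$ and $z+b$. My plan uses only the conditions $(z,z')\in\ZZ$ and $\Re(z+z'+b)>-1$ that are built into $\U_{\adm}$; somewhat surprisingly, the second admissibility condition $(z+2\epsilon,z'+2\epsilon)\in\ZZ$ is not needed for this particular inclusion. That $z$ itself is not a negative integer is immediate from $(z,z')\in\ZZ$ by a three-way case analysis: in $\ZZ_{\princ}$ the value $z$ is non-real; in $\ZZ_{\compl}$ it lies strictly between two consecutive integers; and in $\ZZ_{\degen,n}$ it either equals the positive integer $n\geq 1$ or satisfies $z>n-1\geq 0$.

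For $z+b$, I argue by contradiction, supposing $z+b=-k$ for some integer $k\geq 1$. Since $b\in\R$, this forces $z=-k-b\in\R$, so $(z,z')\in\ZZ_{\compl}\cup\ZZ_{\degen}$. In the degenerate subcases the preceding analysis gives $z>0$, whence $z+b>b\geq -1/2>-1$, contradicting $z+b=-k\leq -1$. Hence $(z,z')\in\ZZ_{\compl}$, so $z,z'\in(n,n+1)$ for some integer $n$. The estimate $z=-k-b\leq -k+1/2\leq -1/2$ (using $b\geq -1/2$ and $k\geq 1$) forces $n\leq -1$, whence $z'<n+1\leq 0$. On the other hand, rearranging $\Re(z+z'+b)>-1$ using $z+b=-k$ gives $z'>k-1\geq 0$, contradicting $z'<0$. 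The only minor obstacle is recognizing that the strict bound $\Re(z+z'+b)>-1$ built into $\U$, rather than the more elaborate condition on $(z+2\epsilon,z'+2\epsilon)$, is the ingredient that actually closes the argument.
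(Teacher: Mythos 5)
Your proof is correct, and it uses the same basic ingredients as the paper's: a case analysis over $\ZZ_{\princ}$, $\ZZ_{\compl}$, $\ZZ_{\degen}$ combined with $b\geq -\tfrac12$ and $\Re(z+z'+b)>-1$ (and, as you note, the second admissibility condition on $(z+2\epsilon,z'+2\epsilon)$ plays no role). The organization differs — you split by which quantity is tested and argue by contradiction for $z+b$, while the paper splits by the three components of $\ZZ$ and treats all four quantities within each case — but the substance is essentially the same. One genuine point in your favor: in the complementary case the paper asserts that $n\geq -1$ follows from $b\geq -\tfrac12$ and $\Re(z+z'+b)>-1$ alone, and this is false once $b>1$ (e.g.\ $b=2$, $z=z'=-1.4$ gives $z+z'+b=-0.8>-1$ with $n=-2$). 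Your version instead supposes $z+b=-k$ with $k\geq 1$, deduces $z\leq -\tfrac12$ and hence $n\leq -1$ and $z'<0$, and contradicts the bound $z'>k-1\geq 0$ coming from $\Re(z+z'+b)>-1$; this works for every $b\geq -\tfrac12$, so your argument is not only correct but repairs a small gap in the published proof.
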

\begin{proof}
Let $(z, z')\in\U_{\adm}$. Then $\Re(z + z' + b) >  - 1$, which implies $(z, z')\in\U$. By the definition of $\U_0$, we simply need to show that none of the four complex numbers $z, z', z + b, z' + b$ is a negative integer. We argue in three cases.

If $(z, z')\in\ZZ_{\princ}$, then $z, z'\notin\R$ which implies that none of $z, z', z + b, z' + b$ is a real number.

If $(z, z')\in\ZZ_{\compl}$, then there exists $n\in\Z$ for which $n < z, z' < n+1$. Coupled with the conditions $b \geq -1/2$ and $\Re(z + z' + b) > -1$, we have $n \geq -1$. Therefore $z, z' > -1$ and so $z, z'$ cannot be negative integers. Moreover $z + b, z' + b > -1 +b \geq -3/2$, so $z+b, z'+b$ cannot take on integer values less than $-2$. If $z + b = -1$, then $n = -1$ in the analysis above, so $-1 < z, z' < 0$. Then $z + z' + b = -1 + z' < -1$, a contradiction. Similarly $z' + b \neq -1$ and we are done with this case.

If $(z, z')\in\ZZ_{\degen, n}$ for some $n\in\N$, then we can assume without loss of generality that $z = n$ and $z' > n - 1$. In particular, $z' > z - 1 = n - 1 \geq 0$, thus implying that $z$ and $z'$ are both positive and not negative integers. Moreover $z' + b > z + b - 1 = n + b - 1 \geq 1 - 1/2 - 1 = -1/2$, thus implying that $z+b$ and $z' + b$ are both larger than $-1/2$ and therefore not negative integers.
\end{proof}

In view of Lemmas $\ref{propposdomain}$ and $\ref{UadmsubsetUzero}$, it follows that the numbers $\{P_N(\lambda|z, z', a, b) : \lambda\in\GTp_N\}$ define a probability measure on $\GTp_N$, whenever $(z, z')\in\U_{\adm}$. We call such probability measures the \textit{BC type $z$-measures of level $N$} (associated to the parameters $z, z', a, b$). Let $\Conf_{\fin}(\Zpe)$ be the space of simple (multiplicity free) finite point configurations on $\Zpe$.
Any map $\GTp_N \rightarrow \Conf_{\fin}(\Zpe)$ pushforwards the BC type $z$-measures of level $N$ into point processes on the discrete state space $\Zpe$. We consider two such maps.

The most natural map $\GTp_N \rightarrow \Conf_{\fin}(\Zpe)$ determines a random point configuration with exactly $N$ points. In fact, consider the map
\begin{equation}\label{Omap}
\begin{array}{r@{}l}
\OO : \GTp_N &{}\longrightarrow \Conf_{\fin}(\Zpe)\\
\lambda = (\lambda_1 \geq \cdots \geq\lambda_N) &{}\longrightarrow \OO(\lambda) = \hatl = (\hatl_1 > \cdots > \hatl_N),
\end{array}
\end{equation}
where we do not write $N$ in the definition of $\OO$ for simplicity, and recall that $\hatl$ is one of the $N$-tuples associated to $\lambda$, see Section $\ref{sec:notation}$. Under the map above, the pushforward of the $z$-measure $P_N$ is a probability measure on simple $N$-point configurations in $\Zpe$, therefore determining a point process on $\Zpe$ that we denote by $\OO^{(N)}$.

Next we describe a different map $\GTp_N \rightarrow \Conf_{\fin}(\Zpe)$ that gives rise to a different point process. Recall that $\lambda\in\GTp_N$ can be described by its \textit{Frobenius coordinates}. If we let $d = d(\lambda)$ be the side-length of the largest square that can be inscribed in the Young diagram of $\lambda$, then the Frobenius coordinates $(p_1, \ldots, p_d \ | \ q_1, \ldots, q_d)$ of $\lambda$ are defined by
\begin{equation}\label{frobenius}
\begin{gathered}
p_i(\lambda) = p_i \myeq \lambda_i - i, \hspace{.2in} q_i(\lambda) = q_i \myeq \lambda_i' - i, \hspace{.2in}\forall i = 1, 2, \ldots, d,
\end{gathered}
\end{equation}
where $\lambda_i'$ is the cardinality of the set $\{j \in \{0, 1, \ldots, N\} : \lambda_j \geq i\}$, or equivalently the length of the $i$-th column in the Young diagram of $\lambda$. Observe that $p_1 > \ldots > p_d \geq 0$ and $q_1 > \ldots > q_d \geq 0$. A \textit{balanced point configuration in $\Zp^{\epsilon}$ with respect to $\Z^{\epsilon}_{<N}$} is a subset of $\Zpe$ with an equal number of points in either of the sets
\begin{equation}\label{Zlessgreater}
\begin{gathered}
\Z^{\epsilon}_{<N} \myeq \{\epsilon^2, (1 + \epsilon)^2, \ldots, (N - 1 + \epsilon)^2\},\\
\Z^{\epsilon}_{\geq N} \myeq \{(N + \epsilon)^2, (N + 1 + \epsilon)^2, \ldots\} = \Zp^{\epsilon} \setminus \Z^{\epsilon}_{<N}.
\end{gathered}
\end{equation}
For example, the empty configuration (the one with no points) is balanced (with respect to $\Z^{\epsilon}_{<N}$), and any balanced point configuration in $\Zpe$ has at most $2N$ points. Consider the map
\begin{equation}\label{Lmap}
\begin{array}{r@{}l}
\LL : \GTp_N &{}\longrightarrow \Conf_{\fin}(\Zpe)\\
\lambda = (p_1, \ldots, p_d | q_1, \ldots, q_d) &{}\longrightarrow \LL(\lambda) = \{(N + p_i + \epsilon)^2 \}_{1\leq i\leq d} \sqcup\{(N - 1 - q_i + \epsilon)^2\}_{1\leq i\leq d},
\end{array}
\end{equation}
where, again, we do not write $N$ in the definition of $\LL$ for simplicity. Evidently $\LL(\lambda)$ is a balanced point configuration with $d = d(\lambda)$ points in each of the sets $\Z_{<N}^{\epsilon}$ and $Z_{\geq N}^{\epsilon}$. The pushforward of the $z$-measure $P_N$, under the map above, determines a probability measure on balanced point configurations in $\Zpe$ that we also denote by $\LL^{(N)}$.

The connection between the point configurations $\OO^{(N)}$ and $\LL^{(N)}$, as well as a summary of our definitions, is given in the following statement.

\begin{defprop}\label{relprocesses}
Let $N\in\N$, and consider parameters $a \geq b\geq -1/2$, $(z, z')\in\U_{\adm}$. The probability measure $P_N = P_N(\cdot | z, z', a, b)$ defined in $(\ref{zmeasure})$ is called the \textit{BC type $z$-measure of level $N$} (or simply \textit{$z$-measure of level $N$}) associated to the parameters $z, z', a, b$.

The pushforwards of $P_N$ under the maps $\OO$ and $\LL$, defined in $(\ref{Omap})$ and $(\ref{Lmap})$ respectively, are point processes on $\Zpe$ that we denote by $\OO^{(N)}_{z, z', a, b}$ and $\LL^{(N)}_{z, z', a, b}$, or simply by $\OO^{(N)}$ and $\LL^{(N)}$. The process $\OO^{(N)}$ is an $N$-point configuration, while $\LL^{(N)}$ is a balanced configuration on $\Zpe$ with respect to $\Z^{\epsilon}_{<N}$.

The following relation holds
\begin{equation}\label{symdifference}
\LL(\lambda) = \OO(\lambda) \bigtriangleup \Z_{<N}^{\epsilon} = (\OO(\lambda) \setminus \Z_{<N}^{\epsilon}) \cup (\Z_{<N}^{\epsilon} \setminus \OO(\lambda)),
\end{equation}
for any $\lambda\in\GTp_N$. We denote this relation by
\begin{equation*}
\LL^{(N)} = (\OO^{(N)})^{\bigtriangleup}.
\end{equation*}
\end{defprop}
\begin{proof}
The only statement to prove is $(\ref{symdifference})$, for any $\lambda\in\GTp_N$. If $\lambda$ is the ``empty'' signature $(0^N)$, then evidently $\OO(\lambda) = \{\epsilon^2, (1 + \epsilon)^2, \ldots, (N - 1 + \epsilon)^2\} = \Z_{<N}^{\epsilon}$, $\LL(\lambda) = \emptyset$, and therefore $(\ref{symdifference})$ is evident. Assume otherwise that $\lambda \neq (0^N)$. Then $d = d(\lambda)$ can be characterized as the largest integer in $\{1, 2, \ldots, N\}$ such that $\lambda_d \geq d$, thus implying that $\lambda_{d+1} \leq d$. In this case, we have that the set $\{\lambda_1 + N - 1, \ldots, \lambda_d + N - d\}$ is a subset of $\{N, N+1, \ldots\}$ of size $d$ and also the set $\{\lambda_{d+1} + N - d - 1, \lambda_{d+2} + N - d - 2, \ldots, \lambda_N\}$ is a subset of $\{0, 1, \ldots, N - 1\}$ of size $N - d$. It is easy to check the equalities of sets
\begin{equation*}
\begin{gathered}
\{\lambda_1 + N - 1, \ldots, \lambda_d + N - d\} = \{N + p_1, \ldots, N + p_d\}\\
\{\lambda_{d+1} + N - d - 1, \lambda_{d+2} + N - d - 2, \ldots, \lambda_N\} \sqcup \{N - 1 - q_1, \ldots, N - 1 - q_d\} = \{0, 1, \ldots, N - 1\},
\end{gathered}
\end{equation*}
where $(p_1, \ldots, p_d | q_1, \ldots, q_d)$ are the Frobenius coordinates of $\lambda$. From the definitions of the sets $\OO(\lambda)$ and $\LL(\lambda)$, identity $(\ref{symdifference})$ is now obvious.
\end{proof}

\section{The discrete orthogonal polynomial ensemble $\OO^{(N)}$}\label{sec:OO}

In this section, we prove that the $N$-point process $\OO^{(N)}$ is a discrete orthogonal polynomial ensemble, and therefore a determinantal point process. The main result of this section is Theorem $\ref{thm:Okernel}$, where we compute explicitly a correlation kernel $K^{\OO^{(N)}}: \Zp^{\epsilon}\times\Zpe \longrightarrow \R$ for the point process $\OO^{(N)}$ in terms of generalized hypergeometric functions. As we shall do hereinafter, we consider four parameters $z, z', a, b$ satisfying $a\geq b\geq -1/2$ and $(z, z')\in\U_{\adm}$.

\subsection{The point process $\OO^{(N)}$ is a discrete orthogonal polynomial ensemble}

Define the (two-sided) quadratic lattice
\begin{equation*}
\Z^{\epsilon} \myeq \{\ldots, (-2 + \epsilon)^2, (-1 + \epsilon)^2, \epsilon^2, (1 + \epsilon)^2, (2 + \epsilon)^2, \ldots\}
\end{equation*}
and the function $W(\cdot | z, z', a, b; N) : \Z^{\epsilon} \rightarrow \R_{\geq 0}$ given by 
\begin{equation}\label{weightW}
\begin{gathered}
W(\hatx | z, z', a, b; N) = (x + \epsilon)\times \Gamma\left[ \begin{split}
x + 2\epsilon &,& x + a + 1\\
x + 1 &,& x + b + 1
\vphantom{\frac12}
\end{split}
\right]\\
\times\frac{1}{\Gamma[z - x + N, \ z' - x + N, \ z + x + N + 2\epsilon, \ z' + x + N + 2\epsilon]},
\end{gathered}
\end{equation}
where $\hatx = (x + \epsilon)^2$, for all $x\in\Z$. We may write $W(\hatx)$, instead of $W(\hatx | z, z', a, b; N)$ if the parameters $z, z', a, b$, and the natural number $N$ are implicit. Observe that the image of the function $W$ is included in $\R_{\geq 0}$ by our assumption on the parameters $z, z', a, b$. Due to the factor $\Gamma(x+1)$ in the denominator of $W(\hatx)$, it follows that $W(\widehat{-n}) = W((-n + \epsilon)^2) = 0$ for all $n\in\N$, and therefore we do not lose any information if we restrict $W$ to $\Zpe \subset \Z^{\epsilon}$; thus we can think of $W(\hatx)$ as a weight function on $\Zpe$.

\begin{prop}\label{Ptildeterminantal}
The point process $\OO^{(N)} = \OO^{(N)}_{z, z', a, b}$ is a discrete orthogonal polynomial ensemble in $\Zp^{\epsilon}$, with weight function $W(\hatx | z, z', a, b; N) = W(\hatx)$, i.e.,
\begin{equation}\label{discretepoly}
\mathbb{P}^{\OO^{(N)}}\left(\{\hatx_1 ,\ldots, \hatx_N\} \right) = \textrm{const}\cdot \left|\Delta_N(\hatx_1, \ldots, \hatx_N)\right|^2\cdot\prod_{i=1}^N{W(\hatx_i | z, z', a, b; N)}
\end{equation}
for any set $\{\hatx_1, \ldots, \hatx_N\}\subset\Zpe$ of size $N$, where $\Delta_N(\hatx_1, \ldots, \hatx_N) = \prod_{1\leq i < j\leq N}{(\hatx_i - \hatx_j)}$ and ``const'' is a suitable normalization constant.
\end{prop}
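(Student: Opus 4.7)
The claim is essentially a bookkeeping statement, so the plan is to unpack the definitions carefully and check that the pushforward formula falls out.

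First I would verify that the map $\OO : \GTp_N \to \Conf_{\fin}(\Zpe)$ defined in $(\ref{Omap})$ is a bijection between $\GTp_N$ and the collection of $N$-element subsets of $\Zpe$. This has two parts: the map $\lambda \mapsto l = (\lambda_1 + N - 1, \ldots, \lambda_N)$ is a well-known bijection from $\GTp_N$ onto strictly decreasing $N$-tuples in $\Zp$, and then $l_i \mapsto \hatl_i = (l_i + \epsilon)^2$ is a bijection because $x \mapsto (x + \epsilon)^2$ is strictly increasing on $\Zp$ thanks to our standing assumption $\epsilon \geq 0$. So an $N$-point configuration $\{\hatx_1 > \cdots > \hatx_N\} \subset \Zpe$ comes from a unique $\lambda \in \GTp_N$ with $\hatl_i = \hatx_i$.

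Next I would observe that comparing the definitions $(\ref{weight})$ and $(\ref{weightW})$ gives the identity $w(x \mid z, z', a, b; N) = W(\hatx \mid z, z', a, b; N)$ whenever $\hatx = (x + \epsilon)^2$ and $x \in \Zp$; these are literally the same expression written in two sets of coordinates. Likewise the Vandermonde factor $\Delta_N(\hatl)^2$ in $(\ref{zmeasureN})$ equals $\Delta_N(\hatx_1, \ldots, \hatx_N)^2$ (the square removes the sign ambiguity coming from the ordering of the configuration). Combining this with the normalization identity $(\ref{partitionfunction})$ and the definition $(\ref{zmeasure})$, the pushforward of $P_N$ along $\OO$ takes the form
\begin{equation*}
\mathbb{P}^{\OO^{(N)}}(\{\hatx_1, \ldots, \hatx_N\}) = \frac{1}{S_N(z, z', a, b)} \cdot |\Delta_N(\hatx_1, \ldots, \hatx_N)|^2 \cdot \prod_{i=1}^{N} W(\hatx_i \mid z, z', a, b; N),
\end{equation*}
which is exactly $(\ref{discretepoly})$ with the explicit constant $1/S_N(z, z', a, b)$.

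Since $(z, z') \in \U_{\adm}$, Lemma $\ref{propposdomain}$ together with Lemma $\ref{UadmsubsetUzero}$ guarantees that $W \geq 0$ on $\Zpe$ and that $S_N > 0$, so the above expression is a bona fide probability weight. Thus $\OO^{(N)}$ is a discrete orthogonal polynomial ensemble in the sense of $(\ref{discretepoly})$. There is no real obstacle here: the only point that requires a small verification is that $\OO$ is a bijection onto the configuration space (so that the pushforward has no multiplicities and no Jacobian contributions), after which the statement is immediate from the definition of $P_N$.
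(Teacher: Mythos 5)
Your proof is correct and follows essentially the same route as the paper: unwind the definitions $(\ref{Omap})$, $(\ref{zmeasureN})$, $(\ref{zmeasure})$, identify $w(x_i) = W(\hatx_i)$, and read off the constant $1/S_N(z,z',a,b)$. The paper's version is terser and does not spell out the bijectivity of $\OO$ or the positivity argument; your extra verifications are harmless and do not change the content.
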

\begin{proof}
Assume without loss of generality that $\hatx_1 > \ldots > \hatx_N$. By the definition of the map $\OO$ in $(\ref{Omap})$, the pushforward probability measure $\mathbb{P}^{\OO^{(N)}}$ is defined as
\begin{eqnarray*}
\mathbb{P}^{\OO^{(N)}}\left(\{\hatx_1, \ldots, \hatx_N\}\right) &=& P_N\left( (x_1 + 1 - N \geq \ldots \geq x_N)\ |\ z, z', a, b; N\right)\\
&=& \frac{P_N'\left( (x_1 + 1 - N \geq \ldots \geq x_N) \ |\ z, z', a, b; N\right)}{S_N(z, z', a, b)}\\
&=& \frac{(\Delta_N(\hatx_1, \ldots, \hatx_N))^2\cdot\prod_{i=1}^N{w(x_i | z, z', a, b; N)}}{S_N(z, z', a, b)}.
\end{eqnarray*}
Thus $\mathbb{P}^{\OO^{(N)}}$ has the right form in $(\ref{discretepoly})$, for $\textrm{const} = S_N(z, z', a, b)^{-1}$ and because $W(\hatx | z, z', a, b; N) = w(x | z, z', a, b; N)$.
\end{proof}

It follows from well known theory of random matrices \cite{Me} that the discrete orthogonal polynomial ensemble $\OO^{(N)}$ is a determinantal point process, whose correlation kernel $K^{\OO^{(N)}}: \Zpe\times\Zp^{\epsilon}\rightarrow\R$ can be expressed in terms of the orthogonal polynomials in $\Zpe$ with respect to the weight function $W(\hatx)$. Furthermore, by general theory of orthogonal polynomials on a quadratic lattice \cite[Ch. 3]{NSU}, the kernel $K^{\OO^{(N)}}$ should admit explicit formulas in terms of generalized hypergeometric functions. Next we review some background on the orthogonal polynomials in $\Zpe$ with respect to the weight function $W(\hatx)$ and then we prove the main result, Theorem $\ref{thm:Okernel}$, in Section $\ref{mainorthogonal}$ below.

\subsection{Generalized hypergeometric functions}\label{sec:hypergeom}

The formulae we obtain below for the correlation kernel $K^{\OO^{(N)}}$ are in terms of the \textit{generalized hypergeometric functions} $_{p+1}F_p$. These are holomorphic functions on the unit disk that depend on a parameter $p\in\N$, and can be defined by the absolutely convergent series
\begin{equation}\label{hypergeomdef}
\pFq{p+1}{p}{a_1 ,, a_2 ,, \ldots ,, a_{p+1}}{b_1 ,, \ldots ,, b_p}{u} \myeq \sum_{k=0}^{\infty}{\frac{(a_1)_k(a_2)_k\cdots(a_{p+1})_k}{(b_1)_k\cdots(b_p)_k}\frac{u^k}{k!}},
\end{equation}
where $(x)_k \myeq x(x+1)\cdots (x+k-1)$ if $k\geq 1$ and $(x)_0 \myeq 1$. The definition $(\ref{hypergeomdef})$ makes sense for $b_1, \ldots, b_p \notin\Z_{\leq 0}$. When $p = 1$, the function $_2F_1$ is simply referred to as the \textit{hypergeometric function}. 

We shall need the generalized hypergeometric functions evaluated at $u = 1$. In this case, the sum $(\ref{hypergeomdef})$ converges absolutely if $\Re(\sum_{i=1}^p{b_i} - \sum_{i=1}^{p+1}{a_i}) > 0$. When some $a_i$ is a negative integer, the sum in $(\ref{hypergeomdef})$ becomes finite and therefore admits an analytic continuation in $u$ to the complex plane. If $p = 3$, $a_1 = -N$ for some $N\in\Zp$, the argument is $u = 1$, and moreover $\sum_{i=1}^p{b_i}-\sum_{i=1}^{p+1}{a_i} = 1$, then the generalized hypergeometric function
\begin{equation*}
\pFq{4}{3}{-N ,, a_2 ,, a_3 ,, a_4}{b_1 ,, b_2 ,, b_3}{1} = \sum_{k=0}^{N}{(-1)^k\frac{(a_2)_k(a_3)_k(a_4)_k}{(b_1)_k(b_2)_k(b_3)_k}{N \choose k}},
\end{equation*}
is called a \textit{terminating Saalsch\"{u}tzian $_4F_3(1)$ series}. For more details and for many identities relating these functions, the reader may consult the classical book \cite{B}; see also the Appendix for some applications of these identities.

\subsection{Wilson-Neretin polynomials}

We review the theory of the family of orthogonal polynomials in $\Zpe$ with respect to the weight function $W(\hatx)$, which was first worked out in \cite{N}. We follow the presentation in \cite[Sec. 8]{BO2} and \cite[Sec. 6.3]{C}.

Take arbitrary complex numbers $a_1, a_2, a_3, a_4$ and $\alpha$, and consider the weight function
\begin{equation}\label{weightneretin}
\widehat{w}(t \ |\ a_1, a_2, a_3, a_4; \alpha) \myeq \frac{t+\alpha}{\prod_{i=1}^4{\Gamma(a_i + \alpha + t)\Gamma(a_i - \alpha - t)}}, \hspace{.2in} t\in\Z.
\end{equation}

\begin{prop}\label{orthpolyinfo}
The polynomials
\begin{equation}\label{orthogonalpolys}
\begin{gathered}
Q_n((t+\alpha)^2) = \Gamma\left[ \begin{split}
2-a_1-a_2+n &,& 2-a_1-a_3+n &,& 2-a_1-a_4+n\\
2-a_1-a_2 &,& 2-a_1-a_3 &,& 2-a_1-a_4
\vphantom{\frac12}
\end{split}\right]\\
\times\pFq{4}{3}{-n,, n+3-a_1-a_2-a_3-a_4 ,, 1-a_1+t+\alpha ,, 1-a_1-t-\alpha}{2-a_1-a_2 ,, 2-a_1-a_3 ,, 2-a_1-a_4}{1}.
\end{gathered}
\end{equation}
are orthogonal with respect to the weight $\widehat{w}(t)$, and their squared norms are
\begin{equation}\label{squarednormH}
\begin{gathered}
H_n = \sum_{t\in\Z}{Q_n^2((t+\alpha)^2)\widehat{w}(t)} = \frac{\sin(2\pi\alpha)\prod_{1\leq i\neq j\leq 4}{\sin(\pi(a_i+a_j))}}{2\pi^6\sin(\pi(a_1+a_2+a_3+a_4))}\\
\times\frac{n!\prod_{1\leq i\neq j\leq 4}{\Gamma(2-a_i-a_j+n)}}{(3-a_1-a_2-a_3-a_4+2n)\Gamma(3-a_1-a_2-a_3-a_4+n)}
\end{gathered}
\end{equation}
\end{prop}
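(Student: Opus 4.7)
The plan is to reproduce, in the normalization used here, results originally due to Neretin \cite{N} on orthogonality and norms of the ``Wilson--Neretin'' $_4F_3$ polynomials, following the exposition in \cite[Section 8]{BO2} and \cite[Section 6.3]{C}. Both the orthogonality and the norm formula will be extracted from the classical theory of Wilson polynomials via a residue/contour argument, combined with a balanced $_4F_3(1)$ summation of the kind reviewed in Section \ref{sec:hypergeom}.

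First I would verify that $Q_n((t+\alpha)^2)$ really is a polynomial of degree $n$ in $(t+\alpha)^2$. The key identity is
\[
(1-a_1+t+\alpha)_k (1-a_1-t-\alpha)_k = \prod_{j=0}^{k-1}\bigl[(1-a_1+j)^2 - (t+\alpha)^2\bigr],
\]
so each summand of the terminating $_4F_3$ is a polynomial of degree $k$ in $(t+\alpha)^2$; the outer sum is then manifestly of degree $n$, and the leading coefficient is nonzero by the Saalsch\"utz-balanced structure of the parameters in $(\ref{orthogonalpolys})$.

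For the orthogonality $\sum_{t\in\Z} Q_n Q_m \widehat{w}(t) = H_n\delta_{n,m}$, I would invoke the classical Wilson polynomials on $[0,\infty)$, which are orthogonal with respect to a continuous weight $\bigl|\prod_{i=1}^4 \Gamma(a_i+is)/\Gamma(2is)\bigr|^2$ and admit the same $_4F_3$ representation as $Q_n$ with argument $1-a_1+is$ replacing $1-a_1+t+\alpha$. Shifting the contour of integration and collecting residues at the poles $is = a_1+\alpha+t$ for $t\in\Z$ transforms the Wilson orthogonality integral into the bilateral sum $\sum_{t\in\Z}$ with weight $\widehat{w}$, with the reflection formula $\Gamma(z)\Gamma(1-z) = \pi/\sin(\pi z)$ producing the trigonometric prefactors in $(\ref{squarednormH})$; orthogonality of $Q_n$ for $n\neq m$ then transfers directly. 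An alternative route, equally viable, is to verify that the $Q_n$ are eigenfunctions (with distinct eigenvalues) of the self-adjoint second-order divided-difference operator associated to $\widehat{w}$, from which orthogonality of distinct eigenspaces is automatic.

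For the squared norm $H_n$, I would first compute $H_0 = \sum_{t\in\Z}\widehat{w}(t)$ via a bilateral Dougall-type ($_5H_5$) summation, which produces precisely the sine prefactor $\sin(2\pi\alpha)\prod_{i\neq j}\sin(\pi(a_i+a_j))/\sin(\pi(a_1+a_2+a_3+a_4))$ appearing in $(\ref{squarednormH})$; the general $H_n$ then follows either by the shift $a_1\mapsto a_1-n$, which relates $\widehat{w}$ to a weight whose zeroth norm recovers $H_n$ up to the gamma ratios already visible in $(\ref{orthogonalpolys})$, or by induction via the three-term recurrence together with the leading coefficient read off from the $_4F_3$ expression for $Q_n$. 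The main technical obstacle is bookkeeping: tracking sine and gamma factors through the contour deformation and justifying absolute convergence of the bilateral series $\sum_{t\in\Z}$ under the parameter restrictions in force here. Both difficulties are routine given the hypergeometric machinery available, but the sheer volume of computation is the reason the detailed verification is usually delegated to Neretin's original paper.
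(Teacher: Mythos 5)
The paper offers no proof of Proposition \ref{orthpolyinfo}: it is quoted as a known result of Neretin \cite{N}, following the expositions in \cite[Section 8]{BO2} and \cite[Section 6.3]{C}, so there is no in-paper argument to compare yours against. Your sketch follows the standard route in that literature and its individual steps check out: the factorization $(1-a_1+t+\alpha)_k(1-a_1-t-\alpha)_k=\prod_{j=0}^{k-1}\bigl[(1-a_1+j)^2-(t+\alpha)^2\bigr]$ does show that each $Q_n$ is a polynomial of degree $n$ in $(t+\alpha)^2$, with leading coefficient agreeing with Remark \ref{kn}; deforming the Wilson orthogonality contour onto the residues at the poles indexed by $t\in\Z$ converts the continuous orthogonality into the bilateral discrete one with weight $\widehat{w}$; and Dougall's very-well-poised bilateral ${}_5H_5$ summation for $H_0=\sum_{t\in\Z}\widehat{w}(t)$ is indeed the source of the trigonometric prefactor in $(\ref{squarednormH})$. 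The only caveat is that what you have written is a plan rather than a proof: the contour deformation must start from a parameter regime where the Wilson integral converges and then be extended by analytic continuation, the absolute convergence of the bilateral sums (which is what limits the number of existing orthogonal polynomials to a finite system) needs to be pinned down, and the sine/Gamma bookkeeping for general $H_n$ is only gestured at. Since the paper itself delegates all of this to \cite{N}, your outline is at an acceptable level of detail for the role the proposition plays here, but it should be presented as a citation with a sketch, not as a self-contained proof.
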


\begin{rem}
The orthogonality relation for the polynomials $Q_n$ and the closed expression for $H_n$ are valid whenever the infinite sums defining them converge.
\end{rem}
\begin{rem}\label{kn}
The polynomials $Q_n$ are not monic. Indeed the leading coefficient of $Q_n$ is
\begin{equation*}
k_n \myeq (n+3-a_1-a_2-a_3-a_4)_n = \frac{\Gamma(2n+3-a_1-a_2-a_3-a_4)}{\Gamma(n+3-a_1-a_2-a_3-a_4)}.
\end{equation*}
\end{rem}

The orthogonal polynomials of Proposition $\ref{orthpolyinfo}$ can be expressed in terms of the well-known \textit{Wilson polynomials}. In the setting of discrete orthogonal polynomials on a quadratic lattice, they were studied by Neretin, see \cite{N}. We call them the \textit{Wilson-Neretin polynomials}.

Before moving on, let us mention that, although the classical Wilson polynomials and the Wilson-Neretin polynomials above admit the same explicit formula, there are two important differences between them. First, the Wilson-Neretin polynomials form a \textit{finite system} of orthogonal polynomials, meaning that only finitely many orthogonal polynomials exist, and the number of them depends on the number of finite moments of the weight function. In contrast, the Wilson polynomials form an infinite system. Second, the Wilson-Neretin polynomials are orthogonal with respect to a weight function on the \textit{discrete space} $\Zpe$, whereas the Wilson polynomials are orthogonal with respect to a weight function on the positive real line.

\subsection{An explicit correlation kernel $K^{\OO^{(N)}}$}\label{mainorthogonal}

For simplicity of notation in the formulas to follow in this section and in the rest of the paper, let us denote
\begin{equation*}
\Sigma \myeq z + z' + b.
\end{equation*}
The following theorem is the main statement of this section.

\begin{thm}\label{thm:Okernel}
A correlation kernel $K^{\OO^{(N)}}$ for the process $\OO^{(N)}$ is given by
\begin{equation}\label{corrKtil}
K^{\OO^{(N)}}(\widehat{x}, \widehat{y}) = \frac{\sqrt{W(\hatx)W(\haty)}}{h_{N-1}}\cdot\frac{\mathfrak{p}_N(\hatx)\mathfrak{p}_{N-1}(\haty) - \mathfrak{p}_{N-1}(\hatx)\mathfrak{p}_N(\haty)}{\hatx - \haty}, \hspace{.2in}\widehat{x}, \widehat{y}\in\Zp^{\epsilon},
\end{equation}
where the function $W : \Zp^{\epsilon} \longrightarrow \R_{\geq 0}$ is defined by the formula in $(\ref{weightW})$, and
\begin{equation}\label{pNfirst}
\begin{gathered}
\mathfrak{p}_N(\widehat{x}) \myeq \Gamma\left[ \begin{split}
N + a + 1 &,& -z+1 &,& -z'+1 &,& 1-N-\Sigma\\
a + 1 &,& -z-N+1 &,& -z'-N+1 &,& 1-\Sigma
\vphantom{\frac12}\end{split}
\right]\\
\times\pFq{4}{3}{-N,, 1-N-\Sigma ,, x + 2\epsilon ,, -x}{1 + a,, -z-N+1 ,, -z'-N+1}{1},
\end{gathered}
\end{equation}
\begin{equation}\label{pNfirst1}
\begin{gathered}
\mathfrak{p}_{N-1}(\hatx) \myeq \Gamma\left[ \begin{split}
N + a &,& -z &,& -z' &,& -N-\Sigma\\
a + 1 &,& -z-N+1 &,& -z'-N+1 &,& -1-\Sigma
\vphantom{\frac12}\end{split}
\right]\\
\times\pFq{4}{3}{-N+1,, -N-\Sigma ,, x + 2\epsilon ,, -x}{1 + a,, -z-N+1 ,, -z'-N+1}{1},
\end{gathered}
\end{equation}
\begin{equation}\label{hN1}
h_{N-1} \myeq \frac{1}{2}\cdot\Gamma\left[ \begin{split}
&& N+a &,& N &,& \Sigma+1 &,& \Sigma+2\\
N+\Sigma+a+1 &,& N+\Sigma+1 &,& 1+z &,& 1+z' &,& 1+z+b &,& 1+z'+b
\vphantom{\frac12}\end{split}
\right].
\end{equation}
We also have
\begin{equation}\label{corrKtil2}
K^{\OO^{(N)}}(\widehat{x}, \widehat{y}) = \frac{\sqrt{W(\hatx)W(\haty)}}{h_{N-1}}\cdot \frac{\widetilde{\mathfrak{p}}_N(\hatx)\mathfrak{p}_{N-1}(\haty) - \mathfrak{p}_{N-1}(\hatx)\widetilde{\mathfrak{p}}_N(\haty)}{\hatx - \haty}, \hspace{.2in}\widehat{x}, \widehat{y}\in\Zp^{\epsilon},
\end{equation}
where
\begin{equation*}
\widetilde{\mathfrak{p}}_N(\widehat{x}) = \Gamma\left[ \begin{split}
x+1 &,& x + N + 2\epsilon\\
x + 2\epsilon &,& x-N+1
\vphantom{\frac12}\end{split}\right]\pFq{4}{3}{-N ,,, -N-a ,,, z ,,, z'}{\Sigma+1 ,,, x-N+1 ,,, -x-N-a-b}{1}.\nonumber
\end{equation*}
\begin{rem}
Like our notation above suggests, $\mathfrak{p}_{N-1}(\hatx), \mathfrak{p}_N(\hatx)$ are the $(N-1)$-st and $N$-th orthogonal polynomials in the quadratic lattice $\Zpe$ with respect to the weight $W(\hatx)$, at least when $\Re\Sigma > 0$. Also $h_{N-1}$ is the squared norm of $\mathfrak{p}_{N-1}$, namely $h_{N-1} = \sum_{\hatx\in\Zpe}{\mathfrak{p}_{N-1}(\hatx)^2W(\hatx)}$. We obtain the more general orthogonal polynomials $\mathfrak{p}_n$, $n = 0, 1, \ldots, N$, and more general squared norms $h_n$, $n = 0, 1, \ldots, N-1$, both in terms of terminating Saalsch\"{u}tzian $_4F_3(1)$ series, in the proof below.
\end{rem}
\begin{rem}
The expression $(\ref{pNfirst})$ for $p_N(\widehat{x})$ is not well-defined if $\Sigma = 0$ because of the factor $\frac{\Gamma(1 - N - \Sigma)}{\Gamma(1 - \Sigma)} = \frac{1}{(1 - N - \Sigma)(2 - N - \Sigma)\cdots(-\Sigma)}$. Therefore the expression for the kernel $(\ref{corrKtil})$ is not well-defined in this case, and the formula in $(\ref{corrKtil2})$ provides an analytic continuation of $K^{\OO^{(N)}}$.
\end{rem}
\begin{rem}\label{analiticityfns}
For any $x\in\Zp$, the functions $\mathfrak{p}_{N-1}(\hatx), \widetilde{\mathfrak{p}}_N(\hatx)$ are analytic on the variables $z, z'$ on the domain $(z, z')\in\U$, while $\mathfrak{p}_N(\hatx)$ is analytic on the subdomain $(z, z')\in \U\setminus\{\Sigma = 0\}$. Finally, $h_{N-1}$ is analytic on $(z, z')\in\U$, but $h_{N-1}\neq 0$ only in the subdomain $\U_0$.
\end{rem}
\end{thm}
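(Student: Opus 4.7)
The plan is to identify the orthogonal polynomials underlying the ensemble $\OO^{(N)}$ with suitably rescaled Wilson-Neretin polynomials from Proposition~\ref{orthpolyinfo}, and then apply the Christoffel-Darboux identity. First, set $\alpha=\epsilon$, $a_1=1-\epsilon$, $a_2=1+b-\epsilon$, $a_3=z+N+\epsilon$, $a_4=z'+N+\epsilon$ in Proposition~\ref{orthpolyinfo}. Using the reflection formula $\Gamma(y)\Gamma(1-y)=\pi/\sin(\pi y)$ on the factors $\Gamma(1-2\epsilon-x)^{-1}$ and $\Gamma(-a-x)^{-1}$ in $\widehat{w}$, verify that for $x\in\Zp$ one has $\widehat{w}(x\mid a_1,a_2,a_3,a_4;\epsilon)=c\cdot W(\hat x)$, where $c$ is an explicit nonzero constant proportional to $\sin(2\pi\epsilon)\sin(\pi a)$; both sides vanish for $x\in\Z_{<0}$ due to $1/\Gamma(x+1)=0$. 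Consequently the orthogonal polynomials for $W$ on $\Zpe$ coincide with the $Q_n$ at these $a_i$'s, and the squared norms under $W$ are $H_n/c$.

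Since $\OO^{(N)}$ is a discrete orthogonal polynomial ensemble by Proposition~\ref{Ptildeterminantal}, the Christoffel-Darboux formula gives
$$K^{\OO^{(N)}}(\hat x,\hat y)=\sqrt{W(\hat x)W(\hat y)}\cdot\frac{c\,k_{N-1}}{k_N\,H_{N-1}}\cdot\frac{Q_N(\hat x)Q_{N-1}(\hat y)-Q_{N-1}(\hat x)Q_N(\hat y)}{\hat x-\hat y},$$
where $k_n$ is the leading coefficient of $Q_n$ in the variable $\hat x$. Introducing the monic orthogonal polynomials $\mathfrak{p}_n:=Q_n/k_n$ and defining $h_{N-1}:=H_{N-1}/(c\,k_{N-1}^2)$ (the squared norm of $\mathfrak{p}_{N-1}$ under $W$), this rewrites as the stated form~(\ref{corrKtil}). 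The closed forms (\ref{pNfirst}) and (\ref{pNfirst1}) then follow by substituting the chosen $a_i$'s into (\ref{orthogonalpolys}) and using the Pochhammer identities $k_N=(1-N-\Sigma)_N=\Gamma(1-\Sigma)/\Gamma(1-N-\Sigma)$ and $k_{N-1}=(-N-\Sigma)_{N-1}=\Gamma(-1-\Sigma)/\Gamma(-N-\Sigma)$ from Remark~\ref{kn}. The explicit expression (\ref{hN1}) for $h_{N-1}$ is obtained by converting all six sine factors in (\ref{squarednormH}) to Gamma ratios via reflection, combining with the Gamma products in (\ref{squarednormH}) and with $c^{-1}k_{N-1}^{-2}$; in particular, the $\sin(2\pi\epsilon)\sin(\pi a)$ contributed by $c^{-1}$ cancels against $\sin(2\pi\alpha)\sin(\pi(a_1+a_2))$ appearing in the sine product at the numerator of $H_{N-1}$.

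For the second expression (\ref{corrKtil2}), the issue is that the first form has an apparent singularity at $\Sigma=0$ through the Gamma ratio $\Gamma(1-N-\Sigma)/\Gamma(1-\Sigma)$ inside $\mathfrak{p}_N$, so an alternative form is required for analytic continuation. To obtain it, apply a Whipple/Sears-type transformation of the terminating Saalsch\"{u}tzian ${}_4F_3(1)$ in (\ref{pNfirst}); see Bailey~\cite{B} for the classical theory of such transformations. After bookkeeping of the resulting Gamma prefactor, one verifies the polynomial identity $\widetilde{\mathfrak{p}}_N(\hat x)\equiv\mathfrak{p}_N(\hat x)$ for $(z,z')\in\U\setminus\{\Sigma=0\}$, and since $\widetilde{\mathfrak{p}}_N$ is analytic on all of $\U$, formula (\ref{corrKtil2}) provides the required continuation.

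The main obstacle is selecting, among the (iterated) Whipple/Sears transformations of a Saalsch\"{u}tzian ${}_4F_3(1)$, the specific one that yields $\widetilde{\mathfrak{p}}_N$ with the argument $x$ relocated from the upper to the lower parameter row and with exactly the Gamma prefactor appearing in (\ref{corrKtil2}); as a cross-check one can verify the identity at a generic point of $(z,z',a,b)$ for small $N$ and then appeal to analytic continuation. The remaining steps — parameter matching against Proposition~\ref{orthpolyinfo}, computation of $k_n$, and the sines-to-Gammas cancellation in $h_{N-1}$ — are mechanical but involve extensive Gamma-function bookkeeping.
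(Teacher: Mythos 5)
Your identification of the weight with the Wilson--Neretin weight, the computation of $k_N,k_{N-1}$, and the Christoffel--Darboux derivation of $(\ref{corrKtil})$ all match the paper's Step 2. However, your treatment of $(\ref{corrKtil2})$ contains a genuine error: the identity $\widetilde{\mathfrak{p}}_N(\hatx)\equiv\mathfrak{p}_N(\hatx)$ on $\U\setminus\{\Sigma=0\}$ is \emph{false}, and no Whipple/Sears transformation will produce it. A quick sanity check: $\widetilde{\mathfrak{p}}_N$ is manifestly regular at $\Sigma=0$ (its ${}_4F_3$ has $\Sigma+1$ in the bottom row), whereas $\mathfrak{p}_N$ as given by $(\ref{pNfirst})$ has a simple pole there through the factor $\Gamma(1-N-\Sigma)/\Gamma(1-\Sigma)$; two functions agreeing on an open set cannot have different singular behavior. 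What the Sears-type transformation (Bailey's $S(1,2,3)=S(4,5,6)$) actually gives is $(\ref{almostpN})$: the same Gamma prefactor as $\widetilde{\mathfrak{p}}_N$ but with $\Sigma$, not $\Sigma+1$, as the bottom parameter. The paper then uses the elementary contiguity relation $(\ref{trivial4F3})$ to show
\begin{equation*}
\widetilde{\mathfrak{p}}_N(\hatx) \;=\; \mathfrak{p}_N(\hatx) \;+\; \frac{N(N+a)zz'}{\Sigma(\Sigma+1)}\,\mathfrak{p}_{N-1}(\hatx),
\end{equation*}
and $(\ref{corrKtil2})$ follows only because the discrepancy is a multiple of $\mathfrak{p}_{N-1}$, which cancels in the antisymmetrized numerator $\widetilde{\mathfrak{p}}_N(\hatx)\mathfrak{p}_{N-1}(\haty)-\mathfrak{p}_{N-1}(\hatx)\widetilde{\mathfrak{p}}_N(\haty)$. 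Your plan to "verify $\widetilde{\mathfrak{p}}_N\equiv\mathfrak{p}_N$" would fail at the verification stage, and the cross-check at small $N$ you propose would reveal the disagreement.

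A secondary gap: you treat $\mathfrak{p}_N$ and the Christoffel--Darboux rewriting as unconditionally valid, but the weight $W$ has only finitely many moments; $\mathfrak{p}_N$ exists (i.e.\ the relevant sums converge) only when $\Re\Sigma>0$, while admissible parameters only guarantee $\Sigma>-1$. For $-1<\Sigma\le 0$ the formula $(\ref{corrKtil})$ must be justified by analytic continuation in $(z,z')$ from the region $\Re\Sigma>0$, starting from the always-valid finite sum $\sum_{m=0}^{N-1}\mathfrak{p}_m(\hatx)\mathfrak{p}_m(\haty)/h_m$. Likewise, when $a\in\Z$ or $a+b\in\Z$ your proportionality constant $c$ vanishes and the relation $\widehat{w}=c\,W$ degenerates, so that case also requires a separate continuation argument in $a,b$. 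These are handled in Steps 1--3 of the paper's proof and cannot be omitted.
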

\begin{proof}
The proof is divided into several steps.\\

\textbf{Step 1.}
Due to general theory of random matrices, see \cite[Sec. 5.2]{Me}, the discrete orthogonal polynomial ensemble $\OO^{(N)}$ has kernel $K^{\OO^{(N)}}$ given by the \textit{normalized Christoffel-Darboux kernel}
\begin{equation}\label{CDkernel}
K^{\OO^{(N)}}(\hatx, \haty) = \sqrt{W(\hatx)W(\haty)}\sum_{m=0}^{N-1}{\frac{\p_m(\hatx)\p_m(\haty)}{h_m}},
\end{equation}
where $\p_0 = 1, \p_1, \ldots, \p_{N-1}$ are the orthogonal polynomials in $\Zpe$ with respect to the weight function $W(\hatx)$ (they come from the Gram-Schmidt orthogonalization in the Hilbert space $L^2(\Zp^{\epsilon}, \sum_{\hatx\in\Zp^{\epsilon}} W(\hatx)\delta_{\hatx})$, of the basis $1, \hatx, \hatx^2, \ldots, \hatx^{N-1}$) and $h_m \equiv \sum_{\widehat{t}\in\Zpe}{\left(\p_m(\widehat{t})\right)^2W(\widehat{t})}$. If the $N$-th orthogonal polynomial $\p_N$ exists, then $(\ref{CDkernel})$ can be rewritten as
\begin{equation}\label{CDkernel2}
\frac{\sqrt{W(\hatx)W(\haty)}}{h_{N-1}}\cdot\frac{\p_N(\hatx)\p_{N-1}(\haty) - \p_{N-1}(\hatx)\p_N(\haty)}{\hatx - \haty}.
\end{equation}

In this first step, we find out when these orthogonal polynomials $\p_m$ exist, given the weight $W(\hatx)$. We also find a condition which guarantees $h_{N-1} < \infty$.

From the Gram-Schmidt orthogonalization process, it is easy to see that the $m$-th polynomial $\p_m$ exists if the weight function $W(\hatx)$ has finite $(2m - 1)$ moments, i.e., if $\sum_{\hatx\in\Zpe}{\widehat{x}^{2m - 1}W(\hatx)} < \infty$. Thanks to the identity $\Gamma(z)\Gamma(1 - z) = \frac{\pi}{\sin(\pi z)}$, we can rewrite $W(\hatx)$ as
\begin{equation}\label{Wrewritten}
\begin{gathered}
W(\hatx) = (x + \epsilon)\times\frac{\Gamma(x + 2\epsilon)\Gamma(x + a + 1)}{\Gamma(x + b  +1)\Gamma(x + 1)}\times\frac{\sin(\pi z)\sin(\pi z')}{\pi^2}\\
\times\frac{\Gamma(x + 1 - z - N)\Gamma(x + 1 - z' - N)}{\Gamma(x + z + N + 2\epsilon)\Gamma(x + z' + N + 2\epsilon)}, \ x\in\Zp.
\end{gathered}
\end{equation}
By well-known asymptotics of the Gamma function, we have $W(\hatx)\sim x^{1-2(\Sigma + 2N)}$, as $x\rightarrow\infty$. Therefore $\sum_{\hatx\in\Zp^{\epsilon}}{\widehat{x}^{2m - 1}W(\hatx)} < \infty$ iff $\sum_{x\in\Zp}{x^{2(2m - 1) + 1 - 2(\Sigma + 2N)}} = \sum_{x\in\Zp}{x^{-1 + 4m -2\Sigma -4N}} < \infty$ iff $4m - 2\Re\Sigma - 4N < 0$ iff $\Re\Sigma > 2m - 2N$. In particular, $\p_{N-1}$ exists if $\Re\Sigma > -2$ and $\p_N$ exists if $\Re\Sigma > 0$.

Finally, the squared norm $h_m = (\p_m, \p_m)_{L^2(\Zp^{\epsilon}, W\cdot\mu)}$ is finite if the weight $W(\hatx)$ has a finite $(2m)$-th moment, i.e., $\sum_{\hatx\in\Zp^{\epsilon}}{\hatx^{2m}W(\hatx)} < \infty$. By a similar reasoning as above, $h_m < \infty$ if $\Re\Sigma > 2m - 2N + 1$; in particular, $h_{N-1} < \infty$ whenever $\Re\Sigma > -1$.\\

\textbf{Step 2.} In this step, we prove that equation $(\ref{corrKtil})$ is a correlation kernel $K^{\OO^{(N)}}$ for the determinantal process $\OO^{(N)}$, under the assumption $\Re\Sigma > 0$. The idea is to compute the orthogonal polynomials $\p_n(\hatx)$ in $\Zpe$ with respect to the weight $W(\hatx)$, compute the squared norms $h_n$, and then plug them into equation $(\ref{CDkernel2})$ to find a correlation kernel. Our main tool is Proposition $\ref{orthpolyinfo}$. Under the identification of variables
\begin{equation*}
\begin{gathered}
t = x, \hspace{.1in} \alpha = \epsilon, \hspace{.1in} (t+\alpha)^2 = \hatx,\\
a_1 = 1 - \epsilon, \hspace{.1in} a_2 = b+1-\epsilon,\hspace{.1in} a_3 = z+N+\epsilon,\hspace{.1in} a_4 = z'+N+\epsilon,
\end{gathered}
\end{equation*}
the following equality is clear
\begin{equation}\label{ftow}
W(\hatx) = \widehat{w}(x)\cdot\frac{\pi^2}{\sin(\pi a)\sin\pi(a+b)}, \textrm{ for all } x\in\Z,
\end{equation}
where for simplicity we wrote $\widehat{w}(x)$ instead of $\widehat{w}(x \ |\ 1 - \epsilon, b + 1 - \epsilon, z + N + \epsilon, z' + N + \epsilon; \epsilon)$, as defined in $(\ref{weightneretin})$. Temporarily we can assume that neither $a$ nor $a + b$ are integers, in order for equation $(\ref{ftow})$ to make sense.
 
From Step 1, since we are under the assumption $\Re\Sigma > 0$, all the monic orthogonal polynomials $\p_0 = 1, \p_1, \ldots, \p_{N-1}, \p_N$ with respect to $W(\hatx)$ exist. Thanks to Proposition $\ref{orthpolyinfo}$, we obtain the equalities
\begin{equation}\label{ptoq}
\p_n(\hatx) = \frac{Q_n((x+\epsilon)^2)}{k_n},
\end{equation}
where the polynomials $Q_n((x + \epsilon)^2)$ is given $(\ref{orthogonalpolys})$ and the leading coefficients $k_n$ are given in Remark $\ref{kn}$, respectively. The resulting explicit formula is

\begin{equation}\label{orthpol}
\begin{gathered}
\p_n(\hatx) = \Gamma\left[ \begin{split}
1+a+n &,& 1-z-N+n &,& 1-z'-N+n &,& n+1-2N-\Sigma\\
1+a &,& 1-z-N &,& 1-z'-N &,& 2n+1-2N-\Sigma
\vphantom{\frac12}
\end{split}\right]\\
\times\pFq{4}{3}{-n,, n+1-2N-\Sigma ,, x+2\epsilon ,, -x}{1+a ,, 1-z-N ,, 1-z'-N}{1} \ \forall n \leq N.
\end{gathered}
\end{equation}
This gives the desired formulas $(\ref{pNfirst})$ and $(\ref{pNfirst1})$ when $n = N-1$ and $n = N$, respectively.

From step $1$, $h_n < \infty$ for all $n\leq N-1$ because $\Re\Sigma > -1$.
To obtain an expression for the squared norm $h_n$, use the relations $(\ref{ftow}), (\ref{ptoq})$:
\begin{eqnarray*}
h_n = \sum_{\hatx\in\Zpe}{\left(\p_n(\hatx)\right)^2W(\hatx)} &=& \frac{\pi^2}{k_n^2\sin(\pi a)\sin\pi(a+b)}\sum_{x\in\Zp}{Q_n^2({(x+\epsilon)^2})\widehat{w}(x)}\\
&=& \frac{\pi^2 H_n}{k_n^2\sin(\pi a)\sin\pi(a+b)}.
\end{eqnarray*}
We point out that $W(\hatx) = \widehat{w}(x) = 0$ for $x\in\{-1, -2, \ldots\}$, reason why $H_n$ appears as a semi-infinite sum in this setting and as a two-sided infinite sum in the setting of Proposition $\ref{orthpolyinfo}$. Plugging in the expression for $H_n$ in Proposition $\ref{orthpolyinfo}$ (under the identification of parameters given above), and applying the identity $\Gamma(z)\Gamma(1-z) = \frac{\pi}{\sin(\pi z)}$ several times, we arrive at
\begin{equation}\label{hnorm}
\begin{gathered}
h_n = \frac{1}{2}\cdot\Gamma\left[ \begin{split}
n+1 &,& n+a+1 &,& 2N+\Sigma-2n &,& 2N+\Sigma-2n-1\\
&&2N+\Sigma-n &,& 2N+\Sigma+a-n
\vphantom{\frac12}
\end{split}\right]\\
\times\frac{1}{\Gamma(z+N-n)\Gamma(z'+N-n)\Gamma(z+b+N-n)\Gamma(z'+b+N-n)} \ \forall n\leq N-1.
\end{gathered}
\end{equation}
When $n = N-1$, $(\ref{hnorm})$ becomes the formula for $H_{N-1}$ given in the proposition.

We need to remove the condition that $a, a+b\notin\Z$. All we need to justify is that the orthogonal polynomials $\p_n(\hatx)$ on $\Zpe$ with respect to the weight $W(\hatx)$ are still given by $(\ref{orthpol})$ and their squared norms $h_n$ are still given by $(\ref{hnorm})$ in the case that $a\in\Z$ or $a+b\in\Z$. First of all observe that, for any $\hatx\in\Zpe$, then $W(\hatx)$ is holomorphic in $\{\Re a, \Re b > -1\}$, as a function of $a, b$. The orthogonal polynomials $\p_n(\hatx)$, $n = 0, 1, \ldots$, are determined recursively so that $\p_n(\hatx)$ is the unique monic polynomial in $\hatx$ that is monic, of degree $n$, and the following equations are satisfied
\begin{equation*}
\sum_{\hatx\in\Zpe}{\p_n(\hatx)\p_m(\hatx)W(\hatx)} = 0, \ \forall m = 0, 1, \ldots, n-1.
\end{equation*}
The arguments above show that all equations above are satisfied for parameters $a, b$ in the complex domain $\{\Re\Sigma > 0\}\cap\{\Re a, \Re b > -1\}\cap\{a\notin\Z\}\cap\{a+b\notin\Z\}$. Since the right-hand side of $(\ref{orthpol})$ is analytic, for all $n = 0, 1, \ldots, N-1$, in the larger domain $\{\Re\Sigma > 0\}\cap\{\Re a, \Re b > -1\}$, then the identities above extend for any $a, b$ in the larger domain and so $(\ref{orthpol})$ is the $n$-th orthogonal polynomial with respect to the weight function $W(\hatx)$ even when $a\in\Z$ or $a+b\in\Z$.

By a similar reasoning, we can show that the squared norms $h_n$ are given by $(\ref{hnorm})$ even when $a\in\Z$ or $a+b\in\Z$. The key is to treat $a$ and $b$ as complex parameters and use the equation $\sum_{\hatx\in\Zpe}{\p_n(\hatx)^2W(\hatx)} = h_n$ to do an analytic continuation.\\

\textbf{Step 3.}
In this step, we prove Theorem $\ref{thm:Okernel}$, under the assumption that $\Sigma \neq 0$ (and $\Re\Sigma > -1$). Observe that step 1 shows that the polynomials $p_N(\hatx)$, $0\leq n\leq N-1$, are well-defined. Moreover the squared norms $h_n$, $0\leq n\leq N - 1$ are finite. Thus $(\ref{CDkernel})$ implies that a correlation kernel $K^{\OO^{(N)}}$ is given by
\begin{equation}\label{corkernel1}
K^{\OO^{(N)}}(\hatx, \haty) = \sqrt{W(\hatx)W(\haty)}\cdot\sum_{n=0}^{N-1}{\frac{\p_n(\hatx)\p_n(\haty)}{h_n}}.
\end{equation}
The goal is to have a simplified expression for $(\ref{corkernel1})$, rather than leaving it as a sum of $N$ generalized hypergeometric functions.

In step 1, see $(\ref{CDkernel2})$, we mentioned that if the polynomial $\p_n(\hatx)$ exists, then we have the equality
\begin{equation}\label{cokernel2}
\sum_{n=0}^{N-1}{\frac{\p_n(\hatx)\p_n(\haty)}{h_n}} = \frac{1}{h_{N-1}}\cdot\frac{\p_N(\hatx)\p_{N-1}(\haty) - \p_N(\haty)\p_{N-1}(\hatx)}{\hatx - \haty}.
\end{equation}
In step 1, we also showed that $\p_N$ exists in the domain $\{\Re\Sigma > 0\}$. Thus $(\ref{cokernel2})$ holds if $\Re\Sigma > 0$. In step 2, we moreover found an explicit formula for $\p_N(\hatx)$, see $(\ref{orthpol})$, which shows that $\p_N(\hatx)$ admits an analytic continuation on the domain $(z, z')\in\U_0 \setminus \{\Sigma = 0\}$. Thus, by analytic continuation, $(\ref{cokernel2})$ is true if we think that $\p_N(\hatx)$, $\p_N(\haty)$ are the explicit expressions in the right-hand side of $(\ref{orthpol})$. Therefore we have finished the proof of Theorem $\ref{thm:Okernel}$ under the assumption that $\Sigma \neq 0$.\\

\textbf{Step 4.}
Finally we prove $(\ref{corrKtil2})$, which gives an analytic continuation of Theorem $\ref{thm:Okernel}$ and removes the singularities in $\Sigma = 0$. By use of well known identities involving generalized hypergeometric functions, the expression for $\p_N(\hatx)$ in $(\ref{pNfirst})$ is equal to
\begin{equation}\label{almostpN}
\mathfrak{p}_N(\hatx) = \Gamma\left[ \begin{split}
x + N + 2\epsilon &,& x+1\\
x + 2\epsilon &,& x-N+1
\vphantom{\frac12}
\end{split}
\right]\pFq{4}{3}{-N ,,, -N-a ,,, z ,,, z'}{\Sigma ,,, x-N+1 ,,, -x-N-a-b}{1}.
\end{equation}
Similarly, we have
\begin{equation}\label{almostpN1}
\mathfrak{p}_{N-1}(\hatx) = \Gamma\left[ \begin{split}
x + N + a+b &,& x+1\\
x + 2\epsilon &,& x-N+2
\vphantom{\frac12}
\end{split}
\right]\pFq{4}{3}{-N + 1 ,,, -N-a+1 ,,, z+1 ,,, z'+1}{\Sigma + 2 ,,, x-N+2 ,,, -x-N-a-b+1}{1}.
\end{equation}
Details are spelled out in the Appendix.

By analytic continuation, we are done if we showed the identity
\begin{equation}\label{step5eq}
\mathfrak{p}_N(\hatx)\mathfrak{p}_{N-1}(\haty) - \mathfrak{p}_{N-1}(\hatx)\mathfrak{p}_N(\haty)  = \widetilde{\mathfrak{p}}_N(\hatx)\mathfrak{p}_{N-1}(\haty)  - \mathfrak{p}_{N-1}(\hatx)\widetilde{\mathfrak{p}}_N(\haty) 
\end{equation}
for any $x, y\in\Zp$ and points $(z, z')$ where all the functions $\p_N, \p_{N-1}, \widetilde{\p}_N$ are defined, that is in the complex domain $(z, z')\in\U_0\setminus\{\Sigma = 0\}$. For this purpose, we use the following identity, which is an easy consequence of the series representation for $_4F_3$:
\begin{equation}\label{trivial4F3}
\begin{gathered}
\pFq{4}{3}{a_1 ,,, a_2 ,,, a_3 ,,, a_4}{b_1 ,,, b_2 ,,, b_3}{1} = \pFq{4}{3}{a_1 ,,, a_2 ,,, a_3 ,,, a_4}{b_1+1 ,,, b_2 ,,, b_3}{1}\\
+ \frac{a_1a_2a_3a_4}{b_1(b_1+1)b_2b_3}\pFq{4}{3}{a_1+1 ,,, a_2+1 ,,, a_3+1 ,,, a_4+1}{b_1+2 ,,, b_2+1 ,,, b_3+1}{1}.
\end{gathered}
\end{equation}
The identity above holds as an equality of analytic functions whenever $\Re(\sum_i{b_i} - \sum_i{a_i}) > 0$ and $b_1, b_2, b_3 \notin \Z_{\leq 0}$. In particular, for the choice of parameters $a_1 = -N$, $a_2 = -N-a$, $a_3 = z$, $a_4 = z'$, $b_1 = \Sigma$, $b_2 = x - N + 1$, $b_3 = -x-N-a-b$, and taking equations $(\ref{almostpN})$, $(\ref{almostpN1})$, into account, identity $(\ref{trivial4F3})$ becomes
\begin{equation*}
\widetilde{\mathfrak{p}_N}(\hatx) = \mathfrak{p}_N(\hatx) + \frac{N(N+a)zz'}{\Sigma(\Sigma+1)}\cdot{\mathfrak{p}}_{N-1}(\hatx).
\end{equation*}
The latter equation immediately implies the desired equation $(\ref{step5eq})$, and we are done.
\end{proof}

From the proof above, we obtain the following corollary which will be useful below in Section $\ref{kernelL}$.

\begin{cor}\label{cor:analytic}
If $\Re\Sigma > -1$, then the orthogonal polynomials $\mathfrak{p}_0 = 1, \mathfrak{p}_1, \ldots, \mathfrak{p}_{N-1}$ with respect to the weight function $W:\Zpe\rightarrow\R_{\geq 0}$ in $(\ref{weightW})$ exist. Also, the squared norms $h_0, h_1, \ldots, h_{N-1}$ are finite. Moreover $1/h_0, \ldots, 1/h_{N-1}$, as functions of $(z, z')$, are holomorphic on $\U_0$. For any fixed $\hatx\in\Zpe$, all the functions $\mathfrak{p}_0(\hatx), \ldots, \mathfrak{p}_{N-1}(\hatx)$ are holomorphic on $\U$. Let $(z, z')\in\C^2$ be such that $z+z' > -\frac{1}{2}$, $\{z, z'\}\cap\{\ldots, -2, -\frac{3}{2}, -1, -\frac{1}{2}\} = \emptyset$, and $\hatx\in\Zpe$ be fixed, then $\mathfrak{p}_0(\hatx), \ldots, \mathfrak{p}_{N-1}(\hatx)$ and $1/h_0, \ldots, 1/h_{N-1}$ are all continuous in the real variables $a\in (-1, \infty)$ and $b$ in a real neighborhood of $-\frac{1}{2}$.
\end{cor}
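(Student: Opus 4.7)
The plan is to read off all four assertions directly from the explicit formulas $(\ref{orthpol})$ for $\mathfrak{p}_n(\hatx)$ and $(\ref{hnorm})$ for $h_n$ obtained in Step 2 of the proof of Theorem $\ref{thm:Okernel}$; the entire argument is a bookkeeping of apparent singularities. For existence and finiteness I would invoke Step 1 of that same proof, where we showed that the monic orthogonal polynomial $\mathfrak{p}_n$ exists iff $W$ has finite $(2n-1)$-th moment (equivalently $\Re\Sigma>2n-2N$) and that $h_n<\infty$ iff $\Re\Sigma>2n-2N+1$. For every $n\le N-1$ both bounds are implied by $\Re\Sigma>-1$, giving the first two claims.

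Next, to prove holomorphy of $\mathfrak{p}_n(\hatx)$ on the domain $\U$, I would rewrite $(\ref{orthpol})$ by absorbing the Gamma prefactors into the terminating $_4F_3$-sum term by term. The identity $\Gamma(1-z-N+n)/\Gamma(1-z-N)=(1-z-N)_n$ combines with the $1/(1-z-N)_k$ appearing in the $k$-th summand to produce the polynomial $(k+1-z-N)_{n-k}$ for each $0\le k\le n$, eliminating all apparent poles in $z$; an identical manipulation handles $z'$. The remaining $\Sigma$-dependent factor telescopes to
$$\frac{\Gamma(n+1-2N-\Sigma)}{\Gamma(2n+1-2N-\Sigma)}\cdot(n+1-2N-\Sigma)_k=\frac{\Gamma(n+k+1-2N-\Sigma)}{\Gamma(2n+1-2N-\Sigma)},$$
whose only singularities in $\Sigma$ are poles at $\Sigma\in\{n+k+1-2N,\dots,2n-2N\}\subseteq\{\dots,-3,-2\}$, all outside $\U$. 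The $a$-dependent pieces are positive and finite on $a>-1$, so each summand, hence $\mathfrak{p}_n(\hatx)$ itself, is holomorphic on $\U$.

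For the holomorphy of $1/h_n$ on $\U_0$, inverting $(\ref{hnorm})$ puts in the numerator the six Gammas $\Gamma(2N+\Sigma-n)$, $\Gamma(2N+\Sigma+a-n)$, $\Gamma(z+N-n)$, $\Gamma(z'+N-n)$, $\Gamma(z+b+N-n)$, $\Gamma(z'+b+N-n)$, and in the denominator the positive finite factors $\Gamma(n+1)$, $\Gamma(n+a+1)$, $\Gamma(2N+\Sigma-2n)$, $\Gamma(2N+\Sigma-2n-1)$ (for $n\le N-1$ and $a\ge -1/2$ these arguments have real part $\ge 1$). The first two numerator Gammas have poles only at $\Sigma\le -N-1$, resp.\ $\Sigma\le -N-1/2$, both below $-1$ and so outside $\U$; the other four numerator Gammas have poles at $z,z',z+b,z'+b\in\{n-N,n-N-1,\ldots\}\subseteq\{-1,-2,\ldots\}$, which is precisely the set excised from $\U_0$ in $(\ref{subdomain})$. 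This yields the third claim.

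Finally, for the continuity statement the assumption $z+z'>-1/2$ forces $\Sigma=z+z'+b>-1$ for all $b$ in some neighborhood of $-1/2$, and the condition $\{z,z'\}\cap\{-\tfrac12,-1,-\tfrac32,-2,\ldots\}=\emptyset$ guarantees, after possibly shrinking this neighborhood, that $z+b,z'+b\notin\{-1,-2,\dots\}$ throughout. Combined with $a\in(-1,\infty)$, all Gamma functions appearing in $(\ref{orthpol})$ and in the expression for $1/h_n$ above remain bounded away from their poles on this product neighborhood, so joint continuity in the real parameters $(a,b)$ follows from continuity of $\Gamma$ on its regular locus. The only nontrivial point of the argument is the cancellation of apparent poles in $\mathfrak{p}_n(\hatx)$ via the Pochhammer rearrangement described above, but this is mechanical once set up; no new idea beyond those used in the proof of Theorem $\ref{thm:Okernel}$ is required.
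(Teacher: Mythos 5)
Your proposal is correct and follows essentially the same route as the paper, which simply cites Step 1 of the proof of Theorem \ref{thm:Okernel} for existence and finiteness and then reads the holomorphy and continuity claims off the explicit formulas $(\ref{orthpol})$ and $(\ref{hnorm})$; you merely make explicit the Pochhammer cancellations and the location of the Gamma poles that the paper leaves implicit. The only detail worth noting is that the identification of the formula $(\ref{orthpol})$ with the actual Gram--Schmidt polynomial on all of $\U$ rests on the analytic continuation already carried out in Step 2, which you implicitly rely on.
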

\begin{proof}
The first two statements follow from Step 1 in the proof above. The latter three statements follow from $(\ref{orthpol})$ and $(\ref{hnorm})$. The real neighborhood of $-\frac{1}{2}$ in the last statement can be taken to be the connected component of $\R\setminus \left((\Z - z) \cup (\Z - z')\right)$ to which $-\frac{1}{2}$ belongs.
\end{proof}

\section{The $L$-ensemble $\LL^{(N)}$}\label{sec:Lensemble}

In this section we recall the definition of $L$-ensembles, a special kind of determinantal point processes. We also prove that $\LL^{(N)}$ is an $L$-ensemble. The calculation of the correlation kernel $K^{\LL^{(N)}}$ is delayed until Section $\ref{kernelL}$ below, after we discuss the discrete Riemann-Hilbert problem.

\subsection{$L$-ensembles}\label{Lsubsection}

Let $\X$ be a countable space, $\Conf_{\fin}(\X)$ be the space of simple, finite point configurations on $\X$.
Let  $L$ be an operator on the Hilbert space $\ell^2(\X)$ and let us denote its matrix $L = [L(x, y)]_{x, y\in\X}$ by the same letter. For each $X\in\Conf_{\fin}(\X)$, let $L_X$ be the submatrix of size $|X|\times|X|$ of $L$ consisting of the rows and columns indexed by elements of $X$. The determinants $\det L_X$ are the finite principal minors of $L$. We assume $L$ satisfies:

\begin{enumerate}[label=(\alph*)]
	\item $L$ is a trace class operator.
	\item All finite principal minors $\det L_X$ are nonnegative.
\end{enumerate}

We agree that $\det L_{\emptyset} = 1$. The previous assumptions imply
\begin{equation*}
0 < \sum_{X\in\textrm{Conf}_{\fin}(\X)}{\det L_X} = \det(1 + L) < \infty.
\end{equation*}
We can then associate a finite, multiplicity free point process to $L$ given by
\begin{equation*}
\textrm{Prob}(X) = \frac{\det L_X}{\det(1 + L)}, \hspace{.1in}X\in\textrm{Conf}_{\fin}(\X).
\end{equation*}
Such processes are called {\it $L$-ensembles}. The matrix $L$ is said to be the \textit{$L$-matrix} of the ensemble. It is known that $L$-ensembles are determinantal processes. The following proposition is proved in \cite[Prop. 2.1]{BO3}, see also \cite{BR} for a linear-algebraic proof in the case that $\X$ is finite.

\begin{prop}\label{kernelLensemble}
Let $L$ be an operator on $\ell^2(\X)$ that satisfies (a) and (b) above. Then the $L$-ensemble defined above is a determinantal point process, whose correlation kernel is the matrix of the operator $K = L(1 + L)^{-1}$.
\end{prop}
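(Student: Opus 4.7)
The plan is to reduce to the finite-dimensional case by truncation, prove the identity there via a combination of Fredholm expansion and Jacobi's complementary-minor formula, and then pass to the limit using trace-class continuity.

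First, I would verify that $\mathrm{Prob}$ is a genuine probability measure. Under assumptions (a) and (b), the Plemelj--Smithies / Fredholm expansion reads $\det(1+L) = \sum_{X \in \mathrm{Conf}_{\fin}(\X)} \det L_X$, with the sum absolutely convergent because $L$ is trace class. Combined with assumption (b) this gives both positivity and normalization.

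Next, by the definition of correlation functions (densities with respect to counting measure on $\X$), for any $X = \{x_1,\ldots,x_n\} \subset \X$,
\begin{equation*}
\rho_n(x_1,\ldots,x_n) \;=\; \sum_{Y \supseteq X} \mathrm{Prob}(Y) \;=\; \frac{1}{\det(1+L)} \sum_{Y \supseteq X} \det L_Y.
\end{equation*}
To identify the point process as determinantal with kernel $K = L(1+L)^{-1}$, I would therefore aim to prove the key identity
\begin{equation*}
\sum_{Y \supseteq X} \det L_Y \;=\; \det(1+L) \cdot \det\!\left[K(x_i,x_j)\right]_{i,j=1}^{n}.
\end{equation*}

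For the key identity, the strategy is to work first with a finite truncation: pick an increasing sequence of finite subsets $\X_m \uparrow \X$ and let $L^{(m)}$ be the compression $P_{\X_m} L P_{\X_m}$. On the finite set $\X_m$, use $K = I - (1+L)^{-1}$ to write
\begin{equation*}
\det\!\left[K(x_i,x_j)\right]_{x_i,x_j \in X} \;=\; \sum_{Z \subseteq X} (-1)^{|Z|} \det\!\left[((1+L^{(m)})^{-1})_{z,z'}\right]_{z,z' \in Z},
\end{equation*}
and apply Jacobi's complementary-minor identity $\det((1+L^{(m)})^{-1})_Z = \det((1+L^{(m)})_{\X_m \setminus Z}) / \det(1+L^{(m)})$. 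Expanding the numerator again by Fredholm, each complementary minor becomes $\sum_{W \subseteq \X_m \setminus Z} \det L_W$. A short inclusion--exclusion over $Z \subseteq X$ then collapses the alternating sum into $\sum_{Y \supseteq X, Y \subseteq \X_m} \det L_Y$, which is precisely the finite-dimensional version of the key identity. One can alternatively carry out this manipulation directly via the Schur-complement formula applied to the block decomposition of $L^{(m)}$ with respect to $X \sqcup (\X_m \setminus X)$; the two approaches are equivalent.

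Finally, I would pass to the limit $m \to \infty$. Since $L$ is trace class, $L^{(m)} \to L$ in trace norm, and standard results on Fredholm determinants give $\det(1+L^{(m)}) \to \det(1+L)$, as well as convergence of individual matrix entries of $K^{(m)} = L^{(m)}(1+L^{(m)})^{-1}$ to those of $K$; the sums $\sum_{Y \supseteq X, Y \subseteq \X_m} \det L_Y$ converge to $\sum_{Y \supseteq X} \det L_Y$ by monotone convergence (using (b)). The main technical obstacle is precisely this limiting step: the finite-dimensional algebraic identities are elementary, but justifying the termwise convergence of the Fredholm and Jacobi expansions --- and in particular verifying that the invertibility of $1+L^{(m)}$ passes to $1+L$ in the relevant sense --- is where the trace-class assumption must be used carefully. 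Once the key identity is established, Proposition \ref{kernelLensemble} follows immediately from the formula for $\rho_n$ above.
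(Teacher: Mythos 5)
Your argument is correct and is essentially the standard proof of this fact; the paper itself offers no proof, citing \cite{BO3} (Prop.\ 2.1) for the trace-class case and \cite{BR} for the finite linear-algebraic identity, and your combination of the Fredholm expansion, Jacobi's complementary-minor formula, the inclusion--exclusion collapse, and the trace-norm limit is exactly what those references do. One small simplification: invertibility of $1+L$ need not be ``passed to the limit'' --- it follows at once from $\det(1+L)=\sum_Y\det L_Y\geq\det L_{\emptyset}=1>0$, which holds by (a) and (b) directly.
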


\subsection{Relation between orthogonal polynomial ensembles and $L$-ensembles}\label{orthLsubsection}
Fix $N\in\N$ and a locally finite (therefore countable) set $\X\subset\R$ with a splitting $\X = \X_I \sqcup \X_{II}$ and where $\X_{II}$ is finite and of size $|\X_{II}| = N$. Let $h_I: \X_I \rightarrow \R$, $h_{II}: \X_{II} \rightarrow \R$ be two maps satisfying 
\begin{enumerate}[label=(\roman*)]
	\item $h_I(x) \geq 0$ for all $x\in\X_{I}$ and $\sum_{x\in\X_I}{\frac{(h_I(x))^2}{1 + x^2}} < \infty$.
	\item $h_{II}(x) > 0$ for all $x\in\X_{II}$.
\end{enumerate}
To the pair of functions $(h_I, h_{II})$, we associate the function $f: \X \rightarrow \R_{\geq 0}$ by
\begin{equation}\label{funcf}
    f(x) =
	\begin{cases}
                  \frac{(h_I(x))^2}{\prod_{y\in\X_{II}}{(x - y)^2}} &\textrm{if } x\in\X_{I},\\
                  \frac{1}{(h_{II}(x))^2\cdot\prod_{\substack{y\in\X_{II}\\ y\neq x}}{(x - y)^2}} &\textrm{if } x\in\X_{II}.
          \end{cases}
\end{equation}

Next we associate a discrete orthogonal polynomial ensemble $\PP_{\ort}$ to the function $f$ and an $L$-ensemble $\PP_L$ to the pairs of functions $(h_I, h_{II})$. Later we make a connection between these point processes.\\

We denote by $\PP_{\ort}$ the $N$-point orthogonal polynomial ensemble on $\X$ with weight function $f:\X\rightarrow \R_{\geq 0}$. Let us check that $f$ indeed defines such an ensemble.

In fact, observe that $f$ is nonnegative and is strictly positive for all points in the set $\X_{II}$ of size $|\X_{II}| = N$ (and therefore there is at least one set in the ensemble with positive probability). We still need to check that the weight $f(x)$ has a finite $(2N - 2)$-nd moment moment. In fact, $\sum_{x\in\X}{x^{2N-2}f(x)} < \infty$ if and only if $\sum_{x\in\X_I}{x^{2N-2}f(x)} < \infty$ because $\X_{II}$ is finite, and by the definition $(\ref{funcf})$, the latter inequality is equivalent to $\sum_{x\in\X_I}{\frac{(h_I(x))^2}{1+x^2}} < \infty$, which is true by the assumption (i).\\

We now construct an $L$-ensemble $\PP_{L}$ associated to $(h_I, h_{II})$. Observe that the splitting $\X = \X_I \sqcup \X_{II}$ allows us to write the Hilbert space decomposition
\begin{equation*}
\ell^2(\X) = \ell^2(\X_I) \oplus \ell^2(\X_{II}).
\end{equation*}
We can then define an operator $L$ on $\ell^2(\X)$ by giving its matrix in the form

\begin{center}
$L = \begin{bmatrix}
    L_{I, I} & L_{I, II}\\
    L_{II, I} & L_{II, II}
\end{bmatrix},$
\end{center}
where $L_{I, I}$ is the matrix of an operator from $\ell^2(\X_I)$ to $\ell^2(\X_I)$, $L_{I, II}$ is the matrix of an operator from $\ell^2(\X_{II})$ to $\ell^2(\X_I)$, etc. In this notation, let us consider the operator $L$ whose matrix is given by
\begin{equation}\label{generalLmatrixdef}
L = \begin{bmatrix}
    0 & A\\
    -A^* & 0
\end{bmatrix}, \ A(x, y) = \frac{h_I(x)h_{II}(y)}{x - y}, \ x\in\X_I, \ y\in\X_{II},
\end{equation}
where $A^*$ is the transpose of $A$. By condition (i), the columns in $A$ are vectors in $\ell^2(\X_I)$; together with the fact that $A$ has finitely many columns, the operator $A: \ell^2(\X_{II}) \longrightarrow \ell^2(\X_I)$ is of trace class. It then follows that $L$ is of trace class. Since the matrix $L$ is skew-Hermitian, then its finite principal minors are the determinants of skew-Hermitian matrices and therefore nonnegative. Hence we can define the point process $\PP_L$ from the operator $L$ by the general setup of Section $\ref{Lsubsection}$ above.

The following proposition that relates the processes $\PP_{\ort}$ and $\PP_L$ follows from a simple computation.

\begin{prop}[\cite{BO1}, Prop. 5.7]\label{complementation}
Let $(\PP_{\ort})^{\Delta}$ be the simple, finite point process in $\X$ such that the probability of $X\in\Conf_{\fin}(\X)$ is the probability of $X\Delta\X_{II}$ with respect to the the point process $\PP_{\ort}$, i.e.,
\begin{equation*}
\mathbb{P}^{\PP_{\ort}^{\Delta}}\left( X \right) = \mathbb{P}^{\PP_{\ort}}\left( X \Delta \X_{II} \right).
\end{equation*}
Then $(\PP_{\ort})^{\Delta}$ and $\PP_L$ have the same distribution.
\end{prop}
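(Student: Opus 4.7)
The plan is to show that $\PP_L$ and $(\PP_{\ort})^{\Delta}$ coincide by verifying they assign the same probability to every finite configuration $X \in \Conf_{\fin}(\X)$. Using the splitting $\X = \X_I \sqcup \X_{II}$, decompose $X = X_I \sqcup X_{II}'$ with $X_I \subseteq \X_I$ and $X_{II}' \subseteq \X_{II}$, and set $Y := X \bigtriangleup \X_{II} = X_I \sqcup (\X_{II}\setminus X_{II}')$. By Proposition~\ref{kernelLensemble}, $\mathbb{P}^{\PP_L}(X)$ is proportional to $\det L_X$, while $\mathbb{P}^{(\PP_{\ort})^{\Delta}}(X) = \mathbb{P}^{\PP_{\ort}}(Y)$ is proportional to $V(Y)^2 \prod_{y \in Y} f(y)$, where I write $V(S) := \prod_{u,v \in S,\, u < v}(v-u)$ for the Vandermonde product. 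The goal is to check these two unnormalized weights are proportional with a constant of proportionality independent of $X$.

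First I would exploit the block-antidiagonal form of $L$ in $(\ref{generalLmatrixdef})$: the principal minor $\det L_X$ vanishes unless $|X_I| = |X_{II}'| =: m$, and in that case $\det L_X = (\det A_{X_I,X_{II}'})^2$ (the signs from reordering the $2m$ rows and from the $-A^*$ block combine trivially after squaring, since $A$ is real). Conveniently, $|Y| = |X_I| + N - |X_{II}'|$ equals $N$ precisely under the same balance condition $|X_I| = |X_{II}'|$, so both processes are automatically supported on the same set of configurations. On this common support, Cauchy's determinant identity evaluates $\det A_{X_I,X_{II}'}$ as a product of $h_I(x)$, $h_{II}(y)$ and Vandermondes $V(X_I)$, $V(X_{II}')$ divided by $\prod_{x\in X_I,\,y\in X_{II}'}(x-y)$.

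Next I would expand $V(Y)^2 \prod_{y\in Y} f(y)$ using the factorization
\[
V(Y)^2 \;=\; V(X_I)^2\, V(\X_{II}\setminus X_{II}')^2 \prod_{\substack{x \in X_I \\ y \in \X_{II}\setminus X_{II}'}}(x-y)^2
\]
and the piecewise formula $(\ref{funcf})$ for $f$. Two collapses do all the work. On the $X_I$-side, the factor $\prod_{z \in \X_{II}}(x-z)^{-2}$ appearing in $f(x)$ combines with the cross-term above to leave exactly $\prod_{y \in X_{II}'}(x-y)^{-2}$, reproducing the denominator of $A_{X_I,X_{II}'}$. On the $\X_{II}$-side, the Vandermonde factorization $V(\X_{II})^2 = V(X_{II}')^2\, V(\X_{II}\setminus X_{II}')^2 \prod_{y \in X_{II}',\, z\in \X_{II}\setminus X_{II}'}(y-z)^2$, together with the analogous splitting of $\prod_{z \in \X_{II},\, z \neq y}(y-z)^2$ appearing in $f(y)$ for $y \in \X_{II}\setminus X_{II}'$, converts the remaining denominators into the symmetric building blocks of the Cauchy formula.

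After these telescopings I anticipate obtaining the clean identity
\[
\frac{V(Y)^2 \prod_{y \in Y} f(y)}{\det L_X} \;=\; \frac{1}{V(\X_{II})^2 \prod_{y\in \X_{II}} h_{II}(y)^2},
\]
which depends only on the fixed data $\X_{II}$ and $h_{II}$ and not on $X$. Since both $\PP_L$ and $(\PP_{\ort})^{\Delta}$ are probability measures on $\Conf_{\fin}(\X)$, this $X$-independent proportionality forces equality. The argument is essentially a long algebraic identity rather than anything conceptual; the main obstacle will simply be patient bookkeeping of which Vandermonde and cross-term factors survive the collapse, and confirming that the sign coming from the block-antidiagonal determinant and from Cauchy's formula is indeed absorbed by the overall square.
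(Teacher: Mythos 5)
Your computation is correct and is exactly the ``simple computation'' the paper alludes to (it defers the proof to \cite{BO1}, Prop.\ 5.7, which proceeds the same way): the balance condition $|X_I|=|X_{II}'|$ matches $|X\bigtriangleup\X_{II}|=N$, $\det L_X=(\det A_{X_I,X_{II}'})^2$ via the Cauchy determinant, and the Vandermonde/cross-term collapse yields the $X$-independent ratio $V(\X_{II})^{-2}\prod_{y\in\X_{II}}h_{II}(y)^{-2}$, so normalization forces equality. No gaps.
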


\subsection{The point process $\LL^{(N)}$ is an $L$-ensemble}\label{sec:specificL}

Fix $N\in\N$ and $(z, z')\in\U_{\adm}$. Specialize the general setting of the previous section as follows
\begin{equation*}
\begin{gathered}
\X = \Zpe,\hspace{.1in} \X_I = \Z^{\epsilon}_{\geq N},\hspace{.1in} \X_{II} = \Z^{\epsilon}_{<N},\\
f(\hatx) = W(\hatx | z, z', a, b; N) = W(\hatx).
\end{gathered}
\end{equation*}
The general setting then pushes us to define certain functions $h_I: \X_I \rightarrow \R_{\geq 0}$, $h_{II}: \X_{II} \rightarrow \R_{>0}$. Let us change the notation slightly and define functions $\psi_{\geq N}: \Z^{\epsilon}_{\geq N} \rightarrow \R_{\geq 0}$, $\psi_{< N} : \Z_{<N}^{\epsilon} \rightarrow \R_{>0}$, that depend on parameters $z, z', a, b$ and the positive integer $N$, as follows
\begin{equation*}
\begin{gathered}
\psi_{\geq N}(\hatx) = W(\hatx)\cdot\prod_{\haty \in \Z^{\epsilon}_{<N}}{(\hatx - \haty)^2} \ \forall \ \hatx\in\Z^{\epsilon}_{\geq N},\\
\psi_{< N}(\hatx) = \frac{1}{W(\hatx)\cdot\prod_{\substack{\haty \in \Z^{\epsilon}_{<N} \\ y\neq x}}{(\hatx - \haty)^2}} \ \forall \ \hatx\in\Z^{\epsilon}_{<N}.
\end{gathered}
\end{equation*}
More explicitly,
\begin{equation}\label{eqn:psigreaterN}
\begin{gathered}
\psi_{\geq N}(\hatx) = (x + \epsilon)\cdot\frac{\Gamma(x + N + 2\epsilon)^2}{\Gamma(x - N + 1)^2}\cdot\Gamma\left[ \begin{split}
x + 1 &,& x + a + 1\\
x + 2\epsilon &,& x + b + 1
\vphantom{\frac12}
\end{split}
\right]\\
\times\frac{1}{\Gamma\left[ -x + z + N, \ x + z + N + 2\epsilon, \ -x + z' + N, \ x + z' + N + 2\epsilon \right]} \ \forall \ \hatx\in\Z^{\epsilon}_{\geq N},
\end{gathered}
\end{equation}
\begin{equation}\label{eqn:psismallerN}
\begin{gathered}
\psi_{< N}(\hatx) = 4(x + \epsilon)\cdot\frac{1}{\Gamma(N - x)^2\Gamma(x + N + 2\epsilon)^2}\cdot\Gamma\left[ \begin{split}
x + 2\epsilon &,& x + b + 1\\
x + 1 &,& x + a + 1
\vphantom{\frac12}
\end{split}
\right]\\
\times\Gamma\left[-x + z + N, \ x + z + N + 2\epsilon, \ -x + z' + N, \ x + z' + N + 2\epsilon\right] \ \forall \ \hatx\in\Z^{\epsilon}_{<N}.
\end{gathered}
\end{equation}

In the special case here, the functions $h_I, h_{II}$ from the general setup are
\begin{equation*}
h_I = \sqrt{\psi_{\geq N}}, \ h_{II} = \sqrt{\psi_{<N}}.
\end{equation*}

Next, by following the general setting, we can define a discrete orthogonal polynomial ensemble $\PP_{\ort}$ from the weight function $W(\hatx)$ and an $L$-ensemble $\PP_L$ from the pair of functions $(\psi_{\geq N}, \psi_{<N})$. It is clear that $\PP_{\ort}$ is the process $\OO^{(N)}$ studied in Section $\ref{sec:OO}$. Moreover, thanks to the Propositions $\ref{relprocesses}$ and $\ref{complementation}$, we deduce that $\PP_L$ is the $L$-ensemble $\LL^{(N)}$ which is ``complementary'' to $\OO^{(N)}$ with respect to $\Z^{\epsilon}_{<N}$. According to the general setup, we can find the $L$-matrix that corresponds to the $L$-ensemble $\LL^{(N)}$. With respect to the splitting $\Zpe = \Z^{\epsilon}_{\geq N}\sqcup\Z^{\epsilon}_{< N}$, the $L$-matrix of $\LL^{(N)}$ is given by, cf. $(\ref{generalLmatrixdef})$,
\begin{equation}\label{Lmatrixdef}
L_N = \begin{bmatrix}
    0 & A_N\\
    -\left(A_N\right)^* & 0
\end{bmatrix},\textrm{ where } A_N(\hatx, \haty) = \frac{\sqrt{\psi_{\geq N}(\hatx)\psi_{<N}(\haty)}}{\hatx - \haty}, \ \hatx\in\Z^{\epsilon}_{\geq N},\ \haty\in\Z^{\epsilon}_{<N}.
\end{equation}

This section is summarized in the following statement.

\begin{prop}\label{LNensemble}
The point process $\LL^{(N)}$ is an $L$-ensemble whose associated $L$-matrix is $L_N$, defined in $(\ref{Lmatrixdef})$ above. Then a correlation kernel for $\LL^{(N)}$ is given by $K^{\LL^{(N)}} = L_N(1+L_N)^{-1}$.
\end{prop}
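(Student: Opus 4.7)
The plan is to apply the general framework of Section \ref{orthLsubsection} verbatim, which from a pair $(h_I, h_{II})$ produces both an orthogonal polynomial ensemble $\PP_{\ort}$ and an $L$-ensemble $\PP_L$ that are linked by Proposition \ref{complementation}, and then to identify these two generic objects with $\OO^{(N)}$ and $\LL^{(N)}$ respectively. Taking $\X = \Zpe$, $\X_I = \Z^{\epsilon}_{\geq N}$, $\X_{II} = \Z^{\epsilon}_{<N}$, $h_I = \sqrt{\psi_{\geq N}}$, $h_{II} = \sqrt{\psi_{<N}}$, a direct substitution into (\ref{funcf}) shows that the induced function $f$ on $\X$ equals the weight $W(\hatx)$ of (\ref{weightW}); consequently $\PP_{\ort} = \OO^{(N)}$ by Proposition \ref{Ptildeterminantal}, and the generic $L$-matrix in (\ref{generalLmatrixdef}) is exactly $L_N$ as defined in (\ref{Lmatrixdef}).

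Before Proposition \ref{complementation} can be invoked I would verify its two prerequisites (i), (ii) from Section \ref{orthLsubsection}. For (ii), strict positivity of $\psi_{<N}$ on $\Z^{\epsilon}_{<N}$, I would inspect the explicit formula (\ref{eqn:psismallerN}) and check that each Gamma factor is nonzero on the finite window $\{0, 1, \ldots, N-1\}$ and that the total product is positive; this splits into a case-by-case analysis over the three pieces $\ZZ_{\princ}$, $\ZZ_{\compl}$, $\ZZ_{\degen}$ of the admissibility condition, using Lemma \ref{propposdomain} together with Lemma \ref{UadmsubsetUzero}. For (i), nonnegativity of $\psi_{\geq N}$ is immediate from (\ref{eqn:psigreaterN}) and the same Gamma-sign analysis, while the summability $\sum_{\hatx \in \Z^{\epsilon}_{\geq N}} \psi_{\geq N}(\hatx)/(1 + \hatx^2) < \infty$ reduces to the Stirling asymptotic $W(\hatx) \sim x^{1 - 2(\Sigma + 2N)}$ already derived in Step 1 of the proof of Theorem \ref{thm:Okernel}: combined with the elementary estimate $\prod_{\haty \in \Z^{\epsilon}_{<N}}(\hatx - \haty)^2 \sim x^{4N}$ as $x \to \infty$, this yields $\psi_{\geq N}(\hatx)/(1 + \hatx^2) \sim x^{-3 - 2\Re\Sigma}$, which is summable precisely because $\Re\Sigma > -1$ is built into the definition of $\U_{\adm}$.

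With (i), (ii) in hand, Proposition \ref{complementation} immediately yields $\PP_L = (\OO^{(N)})^{\Delta}$, and combining this with $\LL^{(N)} = (\OO^{(N)})^{\Delta}$ from Proposition-Definition \ref{relprocesses} identifies $\LL^{(N)}$ as the $L$-ensemble with matrix $L_N$. For the kernel formula, I would then verify the two hypotheses (a), (b) from Section \ref{Lsubsection} directly on $L_N$: it is trace class because its only nonzero block $A_N$ has finitely many ($N$) columns, each of finite $\ell^2$-norm by the summability just established, and its finite principal minors are nonnegative because $L_N$ is skew-Hermitian. Proposition \ref{kernelLensemble} then delivers $K^{\LL^{(N)}} = L_N(1 + L_N)^{-1}$ with no further work.

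The principal technical obstacle I anticipate lies in verifying strict positivity in (ii), specifically in the degenerate regime $(z, z') \in \ZZ_{\degen, n}$, where reciprocals such as $1/\Gamma(z - k)$ do vanish for certain integer $k$; one must carefully chase the inequalities packaged into $\U_{\adm}$ (and the standing assumption $a \geq b \geq -1/2$) to confirm that none of the four factors $\Gamma(-x+z+N)^{-1}$, $\Gamma(x+z+N+2\epsilon)^{-1}$ and their primed analogues produce a zero for any $x \in \{0, 1, \ldots, N-1\}$, so that $\psi_{<N}(\hatx)$ remains strictly positive throughout the finite support required by the general framework.
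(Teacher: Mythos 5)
Your proposal is correct and follows essentially the same route as the paper: the paper's own proof of this proposition simply points back to the specialization in Section \ref{sec:specificL} (the identification $f = W$, hence $\PP_{\ort} = \OO^{(N)}$, combined with Propositions \ref{relprocesses} and \ref{complementation}) and then invokes Proposition \ref{kernelLensemble}. The extra care you take in verifying hypotheses (i), (ii) and the trace-class/skew-Hermitian conditions matches the checks the paper performs in Sections \ref{orthLsubsection} and \ref{sec:strategy}.
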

\begin{proof}
The first statement follows from the discussion above. The second follows from Proposition $\ref{kernelLensemble}$.
\end{proof}

\section{Discrete Riemann-Hilbert Problem and connection to $L$-ensembles}\label{sec:DRHP}

In this section, we follow the exposition in \cite{B2} faithfully, though the treatment in that paper is more general than ours. Our goal is to lay out a plan, in a more general scenario, on how to compute the entries of the matrix of $K = L_N(1 + L_N)^{-1}$, where $L_N$ is defined in $(\ref{Lmatrixdef})$. We carry out the plan and compute explicit analytic formulas for the entries of $K$ in the next section.

\subsection{Discrete Riemann-Hilbert problem}\label{DRHP}

Let $\SSS\subset\C$ be a subset of $\C$ for which there exists $\epsilon > 0$ such that $|x - y| > \epsilon \ \forall x\neq y$ in $\SSS$, in particular $\SSS$ is locally finite, and the operator
\begin{equation}\label{Top}
(Th)(x) = \sum_{\substack{y\in\SSS\\ y\neq x}}{\frac{h(y)}{x - y}}
\end{equation}
is well-defined on $\ell^2(\SSS)$ and is a bounded operator.

An operator $L$ on $\ell^2(\mathcal{S})$ is said to be \textit{integrable} if there exist an integer $R\in\{2, 3, \ldots\}$, functions $f_1, g_1, \ldots, f_R, g_R\in\ell^2(\mathcal{S})$ such that $\sum_{i=1}^R{f_i(x)g_i(x)} = 0 \ \forall x\in\SSS$, and the matrix of $L$ has the form
\begin{equation}\label{integrableop}
L(x, y) = \begin{cases}
                  \frac{\sum_{i=1}^R{f_i(x)g_i(y)}}{x - y} & \textrm{if } x\neq y\\
                  0 & \textrm{if } x = y.
          \end{cases}
\end{equation}

It is implied that $L$ is a bounded operator in $\ell^2(\SSS)$.

Assume we are given a matrix-valued function
\begin{equation}\label{functionw}
w : \SSS \rightarrow \mathbf{M}_R(\C)
\end{equation}
to be 	called the \textit{jump matrix}. We denote the space of $R\times R$ complex matrices by $\mathbf{M}_R(\C)$. A solution of the \textit{discrete Riemann-Hilbert problem}\footnote{DRHP, for simplicity} $(\SSS, w)$ is a holomorphic map
\begin{equation}\label{functionm}
m : \C\setminus \SSS \rightarrow \mathbf{M}_R(\C)
\end{equation}
with simple poles at the points of $\SSS$ that satisfies the following conditions
\begin{itemize}
	\item $m(\zeta)$ is analytic in $\C\setminus\SSS$.
	\item $m(\zeta)\longrightarrow I_R$, as $\zeta\longrightarrow\infty$. Here $I_R$ is the identity $R\times R$ matrix.
	\item $\Res_{\zeta = x}{m(\zeta)} = \lim_{\zeta\rightarrow x}({m(\zeta)w(x))} \ \forall \ x\in\SSS$.
\end{itemize}
The second bullet above needs to be clarified when $\SSS$ is an infinite set. It means that the convergence holds uniformly in a sequence of expanding contours in the complex plane, say for example in a sequence of circles $C_k = \{\zeta \in \C : |\zeta| = r_k\}$ whose radii tend to infinity: $\lim_{k\rightarrow\infty}{r_k} = +\infty$. Moreover we make the requirement that $\inf_{k\geq 1}{\textrm{dist}(\SSS, C_k)} > 0$, i.e., the distance between the contours and the discrete set $\SSS$ is (uniformly) bounded away from $0$. The following statement gives existence and uniqueness of solutions $m(\zeta)$ and moreover it provides formulas for the entries of $K = L(1+L)^{-1}$.

\begin{prop}[\cite{B2}]\label{borodinDRHP}
In the setup avoe, let $L$ be an integrable operator on $\ell^2(\SSS)$, such that $(1+L)$ is invertible. Consider the (column) vector-valued functions
\begin{equation*}
f(x) = (f_1(x), \ldots, f_R(x))^t, \hspace{.2in}g(x) = (g_1(x), \ldots, g_R(x))^t,
\end{equation*}
with entries in $\ell^2(\SSS)$ and the jump matrix
\begin{equation*}
w(x) = -f(x)g(x)^t \in \mathbf{M}_R(\C).
\end{equation*}
Then there exists a unique solution $m(\zeta)$ to the DRHP $(\SSS, w)$, i.e., a function $m : \C \setminus \SSS \rightarrow \mathbf{M}_R(\C)$ satisfying the three bullets above, where in the second bullet we require uniform convergence in a sequence of expanding contours in $\C$ that are uniformly bounded away from $\SSS$.

Furthermore, the matrix of $K = L(1+L)^{-1}$ has the form

\begin{equation*}
K(x, y) = \left\{
\begin{aligned}
    \frac{(G(y))^tF(x)}{x - y} \hspace{.8in}& \textrm{ if } x\neq y,\\
    (G(x))^t\lim_{\zeta\rightarrow x}{(m'(\zeta)f(x))} \hspace{.2in} & \textrm{ if } x = y,
\end{aligned}\right.
\end{equation*}

where

\begin{equation*}
F(x) = \lim_{\zeta\rightarrow x}{(m(\zeta)f(x))}, \hspace{.1in} G(x) = \lim_{\zeta\rightarrow x}{(m(\zeta))^{-t}g(x)}.
\end{equation*}
\end{prop}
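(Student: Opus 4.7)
The plan is to prove existence, uniqueness, and the kernel formula essentially separately. The construction of $m$ is forced by a resolvent computation, uniqueness follows from a Liouville argument after checking $\det m \equiv 1$, and the kernel formula is extracted from the standard commutator identity for integrable operators.

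For existence, I would set $F \myeq (1+L)^{-1}f$, with the resolvent acting component-wise, and define
$$m(\zeta) \myeq I_R - \sum_{y \in \SSS} \frac{F(y)\, g(y)^t}{\zeta - y}.$$
Since $F, g \in \ell^2(\SSS, \C^R)$, the sum converges locally uniformly on $\C \setminus \SSS$, and $m(\zeta) \to I_R$ along sequences of contours uniformly bounded away from $\SSS$. The residue at $x$ is manifestly $-F(x) g(x)^t$; to match the jump condition I compute $\lim_{\zeta \to x} m(\zeta) w(x)$. The a priori singular contribution from $y = x$ is the rank-one matrix $F(x) g(x)^t f(x) g(x)^t$, which vanishes because $g(x)^t f(x) = \sum_i f_i(x) g_i(x) = 0$ by the integrability hypothesis. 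The remaining regular limit equals $-\bigl(f(x) - \sum_{y \neq x} L(x,y) F(y)\bigr) g(x)^t = -F(x) g(x)^t$ by the equation $(1+L)F = f$.

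For uniqueness, the key observation is that $w(x) = -f(x) g(x)^t$ is nilpotent ($w(x)^2 = 0$) and has trace zero, so $\det\bigl(I + w(x)/(\zeta - x)\bigr) \equiv 1$. Writing $m(\zeta) = m_{\mathrm{reg}}(\zeta)\bigl(I + w(x)/(\zeta - x)\bigr)$ near each $x$ shows $\det m$ is in fact entire, and the normalization at infinity forces $\det m \equiv 1$ by Liouville. Hence $m^{-1}$ exists on $\C \setminus \SSS$, and the ratio of two solutions becomes an entire matrix function tending to $I_R$, hence equal to $I_R$.

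For the kernel formulas, I would follow the classical integrable operator approach. Viewing $X$ as multiplication by the spectral variable, the relation $\sum_i f_i(x) g_i(x) = 0$ yields $[X, L] = \sum_{i=1}^R f_i \otimes g_i$ as a rank-$R$ operator on $\ell^2(\SSS)$. Since $1 - K = (1+L)^{-1}$, a short computation gives
$$[X, K] = (1+L)^{-1}[X, L](1+L)^{-1} = \sum_{i=1}^R F_i \otimes G_i,$$
with $G_i \myeq (1+L^t)^{-1} g_i$, so that $K(x, y) = G(y)^t F(x)/(x - y)$ for $x \neq y$. Running the DRHP construction in parallel for $n(\zeta) \myeq m(\zeta)^{-t}$ shows it solves the DRHP with jump $g(x) f(x)^t$, from which $G(x) = \lim_{\zeta \to x} m(\zeta)^{-t} g(x)$. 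For the diagonal, I would use $K = L - LK$: expanding $K(x, x) = -\sum_{z \neq x} L(x, z) K(z, x)$, inserting the off-diagonal formula, and using the scalar identity $g(z)^t f(x) = f(x)^t g(z)$, one recognizes the resulting sum as $G(x)^t \lim_{\zeta \to x} m'(\zeta) f(x)$ via termwise differentiation of the series for $m$. The most delicate step technically is controlling convergence of the series defining $m$ and $m^{-t}$ uniformly on contours escaping to infinity, which is where the locally finite separation of $\SSS$ enters essentially.
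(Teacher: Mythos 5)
The paper does not prove this proposition; it is imported verbatim from Borodin's paper \cite{B2}, so there is no internal proof to compare against. Your sketch is a correct reconstruction of the standard argument there: the explicit ansatz $m(\zeta)=I_R-\sum_y F(y)g(y)^t/(\zeta-y)$ with $F=(1+L)^{-1}f$, uniqueness via $w(x)^2=0$ and a Liouville argument on $\det m$, and the kernel formula via the commutator identity $[X,K]=(1+L)^{-1}[X,L](1+L)^{-1}$ together with the diagonal computation from $K=L-LK$. All the key cancellations you invoke (the vanishing of the singular terms via $g(x)^tf(x)=0$, and $f-LF=F$) are the right ones, and you correctly flag the only genuinely delicate analytic point, namely the uniform decay of the series for $m$ along expanding contours bounded away from $\SSS$. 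The one step I would tighten is the commutator manipulation: since multiplication by the spectral variable is unbounded when $\SSS$ is infinite, the identity $(x-y)K(x,y)=\sum_i F_i(x)G_i(y)$ should ultimately be verified directly at the level of kernels rather than by operator algebra alone, but this is a routine verification and not a gap in the approach.
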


\subsection{DRHP and $L$-ensembles}\label{kernelsortL}

Next we make use of the theory of the DRHP to find explicit formulas for the correlation kernel of the point process $\mathcal{P}_L$ of Section $\ref{orthLsubsection}$; let us recall the setting of that section.

Let $N\in\N$ be a positive integer and $\X\subset\R$ be a locally finite set with a splitting $\X = \X_I\sqcup\X_{II}$, $|\X_{II}| = N$. Assume moreover there exists $\epsilon > 0$ such that $|x - y| > \epsilon \ \forall \ x\neq y$ in $\X$. Let us additionally assume that the operator $T: \ell^2(\X) \rightarrow \ell^2(\X)$ defined in $(\ref{Top})$ is well-defined and is a bounded operator.
Consider a map $h_I:\X_I\rightarrow\R_{\geq 0}$ such that $\sum_{x\in\X_I}{\frac{(h_I(x))^2}{1+x^2}}<\infty$ and a map $h_{II}:\X_{II}\rightarrow\R_{>0}$. In terms of the direct sum decomposition $\ell^2(\X) = \ell^2(\X_I)\oplus\ell^2(\X_{II})$, the matrix
\begin{equation}\label{eq:matrixentries}
L = \begin{bmatrix}
    0 & A\\
    -A^* & 0
\end{bmatrix}, \textrm{ where } A(x, y) = \frac{h_I(x)h_{II}(y)}{x - y}, \ x\in\X_I, \ y\in\X_{II},
\end{equation}
defines an $L$-ensemble that we called $\PP_L$. By virtue of Proposition $\ref{kernelLensemble}$, a correlation kernel for $\PP_L$ is given by $K = L(1 + L)^{-1}$. Next we use Proposition $\ref{borodinDRHP}$ to find an explicit correlation kernel for $\PP_L$ in terms of analytic functions.

To begin with, let us assume the additional constraint $h_I\in\ell^2(\X_I)$; this is a stronger condition than $\sum_{x\in\X_I}{\frac{(h_I(x))^2}{1+x^2}}<\infty$. Note that $|\X_{II}| < \infty$ implies $h_{II}\in\ell^2(\X_{II})$. The operator $L$ defined above is integrable in the sense of Section $\ref{DRHP}$ above. Indeed, if we let
\begin{equation*}
R = 2,
\end{equation*}
\begin{eqnarray*}
f_1(x) =     \begin{cases}
                  -h_I(x) &\textrm{if } x\in\X_I\\
                  0 &\textrm{if } x\in\X_{II}
\end{cases} &\hspace{.2in}&
f_2(x) =	\begin{cases}
                  0 &\textrm{if } x\in\X_I\\
                  h_{II}(x) &\textrm{if } x\in\X_{II}
\end{cases}\\
g_1(x) =	\begin{cases}
                  0 &\textrm{if } x\in\X_I\\
                  -h_{II}(x) &\textrm{if } x\in\X_{II}
\end{cases} &\hspace{.2in}&
g_2(x) =	\begin{cases}
                  h_I(x) &\textrm{if } x\in\X_I\\
                   0 &\textrm{if } x\in\X_{II}
\end{cases}
\end{eqnarray*}
then the entries of $L$ in $(\ref{eq:matrixentries})$ are given by $L(x, y) = \mathbf{1}_{\{x \neq y\}}\cdot\frac{f_1(x)g_1(y) + f_2(x)g_2(y)}{x - y}$. The corresponding jump matrix $w: \X\rightarrow \mathbf{M}_2(\C)$ is
\begin{eqnarray}\label{eqn:jumpmatrix}
w(x)  = \begin{cases}
                  \begin{bmatrix}
    0 & -\left(h_I(x)\right)^2\\
    0 & 0
\end{bmatrix} &\textrm{if } x\in\X_{I},\\
                  \begin{bmatrix}
    0 & 0\\
    -\left(h_{II}(x)\right)^2 & 0
\end{bmatrix} &\textrm{if } x\in\X_{II}.
\end{cases}
\end{eqnarray}

Since $h_I, h_{II}$ are real-valued, then $L^* = -L$, which implies that $-1$ is not in the spectrum of $L$ and $(1+L)$ is invertible. By Proposition $\ref{borodinDRHP}$, we then have that the DRHP $(\X, w)$ has a unique solution
\begin{equation*}
m: \C \setminus \X \rightarrow \mathbf{M}_2(\C).
\end{equation*}
Write the coordinates of the matrix-valued function $m$ as
\begin{equation*}
m = \begin{bmatrix}
    m_{11} & m_{12}\\
    m_{21} & m_{22}
\end{bmatrix} =
\begin{bmatrix}
    R_I & -S_{II}\\
    -S_I & R_{II}
\end{bmatrix}
\end{equation*}
Being $m$ the solution to the DRHP $(\X, w)$ translates in this case to the following points:
\begin{itemize}
	\item $R_I, S_I$ are holomorphic in $\C \setminus \X_{II}$ and $R_{II}, S_{II}$ are holomorphic in $\C \setminus \X_I$.
	\item $R_I(\zeta), R_{II}(\zeta) \longrightarrow 1$ and $S_I(\zeta), S_{II}(\zeta) \longrightarrow 0$ as $\zeta\longrightarrow\infty$. Both limits are uniform in a sequence of expanding contours in the complex plane. The contours in question are uniformly bounded away from $\X$.
	\item The functions $R_I, S_I, R_{II}, S_{II}$ have only simple poles.
	\item $\Res_{\zeta = x}{R_I(\zeta)} = \left(h_{II}(x)\right)^2S_{II}(x)\ \forall x\in\X_{II}$.
	\item $\Res_{\zeta = x}{S_I(\zeta)} = \left(h_{II}(x)\right)^2R_{II}(y) \ \forall x\in\X_{II}$.
	\item $\Res_{\zeta = x}{R_{II}(\zeta)} = \left(h_I(x)\right)^2S_I(x) \ \forall x\in\X_I$.
	\item $\Res_{\zeta = x}{S_{II}(\zeta)} = \left(h_I(x)\right)^2R_I(x) \ \forall x\in\X_I$.
\end{itemize}

Proposition $\ref{borodinDRHP}$ can be rewritten in this case as follows.

\begin{prop}\label{borodinDRHPrewritten}
Assume we are in the setting described above, in particular $h_I\in\ell^2(\X_I)$, and $L$ is defined in $(\ref{eq:matrixentries})$. Let
\begin{equation*}
m =
\begin{bmatrix}
    R_I & -S_{II}\\
    -S_I & R_{II}
\end{bmatrix}
\end{equation*}
be the unique solution to the DRHP $(\X, w)$. Then $(1+L)$ is invertible and the operator $K = L(1+L)^{-1}$ has matrix entries
\begin{equation*}
K(x, y) = \left\{
\begin{aligned}
    h_I(x)h_{I}(y)&\cdot\frac{R_I(x)S_I(y) - S_I(x)R_I(y)}{x-y}, \textrm{ if } x, y \in\X_I, \ x\neq y,\\
    h_I(x)h_{II}(y)&\cdot\frac{R_I(x)R_{II}(y) - S_I(x)S_{II}(y)}{x-y}, \textrm{ if } x\in\X_I,\ y \in\X_{II},\\
    h_{II}(x)h_{II}(y)&\cdot\frac{R_{II}(x)R_I(y) - S_{II}(x)S_I(y)}{x-y}, \textrm{ if } x\in\X_{II},\ y \in\X_I,\\
    h_{II}(x)h_{II}(y)&\cdot\frac{R_{II}(x)S_{II}(y) - S_{II}(x)R_{II}(y)}{x-y},\textrm{ if } x, y\in\X_{II},\ x\neq y,
\end{aligned}\right.
\end{equation*}
and the indeterminacy on the diagonal $x = y$ is resolved by L'H\^opital's rule
\begin{equation*}
K(x, x) = \left\{
\begin{aligned}
(h_I(x))^2\cdot((R_I)'(x)S_I(x) - (S_I)'(x)R_I(x)),& \textrm{ if }x\in\X_I,\\
(h_{II}(x))^2\cdot((R_{II})'(x)S_{II}(x) - (S_{II})'(x)R_{II}(x)),& \textrm{ if }x\in\X_{II}.
\end{aligned}\right.
\end{equation*}
\end{prop}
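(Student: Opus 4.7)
The plan is to obtain Proposition~\ref{borodinDRHPrewritten} as a direct specialization of Proposition~\ref{borodinDRHP} by producing explicit $f, g$ that realize $L$ as an integrable operator and then computing the vectors $F, G$ in closed form. First I would set $f(x) = (h_I(x), 0)^t$, $g(x) = (0, h_I(x))^t$ for $x \in \X_I$, and $f(x) = (0, h_{II}(x))^t$, $g(x) = (h_{II}(x), 0)^t$ for $x \in \X_{II}$. A direct check shows $\sum_i f_i(x) g_i(x) = 0$, that the integrable operator (\ref{integrableop}) built from $(f, g)$ reproduces the block $L$ in (\ref{eq:matrixentries}), and that $-f(x) g(x)^t$ coincides with the jump matrix $w(x)$ in (\ref{eqn:jumpmatrix}). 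Since $L$ has real entries and is skew-symmetric, $L^* = -L$ forces the spectrum of $L$ onto $i\mathbb{R}$, so $(1+L)$ is invertible and Proposition~\ref{borodinDRHP} produces a unique DRHP solution $m$, which we write in block form as $\bigl[\begin{smallmatrix} R_I & -S_{II} \\ -S_I & R_{II} \end{smallmatrix}\bigr]$; the holomorphy and residue conditions recorded in the bullets above are then immediate from the definition of a DRHP solution.

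The vector $F(x) = \lim_{\zeta\to x} m(\zeta) f(x)$ is obtained using the holomorphy of the relevant column of $m$ at $x$: on $\X_I$ the first column $(R_I, -S_I)^t$ is holomorphic at $x$, producing $F(x) = (R_I(x), -S_I(x))^t h_I(x)$, and an analogous computation gives $F(x) = (-S_{II}(x), R_{II}(x))^t h_{II}(x)$ on $\X_{II}$. For $G(x) = \lim_{\zeta \to x} m(\zeta)^{-t} g(x)$ I would first prove $\det m \equiv 1$ on $\mathbb{C}$: using the residue conditions for $m$, the residues of $\det m = R_I R_{II} - S_I S_{II}$ cancel at every point of $\X$ (at $x \in \X_I$ one has $\operatorname{Res}_{\zeta = x}(\det m) = R_I(x)(h_I(x))^2 S_I(x) - S_I(x)(h_I(x))^2 R_I(x) = 0$, and analogously on $\X_{II}$), so $\det m$ is entire; combined with $m \to I$ at infinity, Liouville gives $\det m \equiv 1$. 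Consequently $m^{-t} = \sigma m \sigma^{-1}$ with $\sigma = \bigl[\begin{smallmatrix} 0 & 1 \\ -1 & 0 \end{smallmatrix}\bigr]$, and observing $g(x) = \sigma f(x)$ on $\X_I$ and $g(x) = -\sigma f(x)$ on $\X_{II}$ reduces $G(x)$ to $\pm\sigma F(x)$, yielding $G(x) = (S_I(x), R_I(x))^t h_I(x)$ on $\X_I$ and $G(x) = (R_{II}(x), S_{II}(x))^t h_{II}(x)$ on $\X_{II}$.

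Substituting these expressions into $K(x,y) = (G(y))^t F(x)/(x-y)$ from Proposition~\ref{borodinDRHP} and splitting according to the four placements of $x, y$ in $\X_I \sqcup \X_{II}$ yields exactly the four claimed $2 \times 2$-determinant expressions. The on-diagonal formulas follow from $K(x,x) = (G(x))^t \lim_{\zeta \to x}(m'(\zeta) f(x))$: the same holomorphy of the relevant columns of $m$ at $x$ converts the limit into differentiation of $R_I, S_I$ (respectively $R_{II}, S_{II}$) at $x$, and the inner product with $G(x)$ collapses to the stated Wronskians. I expect the only real obstacle to be the sign bookkeeping when matching $(f, g)$ with $w$; once this is aligned the remainder is a routine calculation.
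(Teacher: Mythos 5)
Your proposal is correct and is exactly the intended derivation: the paper states this proposition as an immediate rewriting of Proposition \ref{borodinDRHP} without supplying details, and your argument fills them in along the expected route, the one genuinely substantive ingredient being the observation that $\det m = R_I R_{II} - S_I S_{II}$ has cancelling residues and hence equals $1$ identically, so that $m^{-t} = \sigma m \sigma^{-1}$ and $G$ can be read off from $F$. Two harmless blemishes: with your $\sigma = \bigl[\begin{smallmatrix} 0 & 1 \\ -1 & 0 \end{smallmatrix}\bigr]$ the identifications should read $g = -\sigma f$ on $\X_I$ and $g = \sigma f$ on $\X_{II}$ (your final expressions for $G$ are nevertheless the correct ones), and your computation of the $x\in\X_{II}$, $y\in\X_I$ case correctly produces the prefactor $h_{II}(x)h_I(y)$, revealing that the $h_{II}(x)h_{II}(y)$ appearing in that line of the stated proposition is a typo.
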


In view of Proposition $\ref{borodinDRHPrewritten}$, our goal is to find explicit formulas for the functions $R_I, R_{II}, S_I, S_{II}$. A way to find such expressions is by coming up with four functions that satisfy the seven bullets stated above, that relate the structure of their poles and their asymptotics at infinity. Such expressions are available in this general setting, see \cite{BO1}. The following result gives such explicit formulas in terms of the orthogonal polynomials $\{p_k(\hatx)\}_{k \geq 0}$ on $\X$ with respect to the weight function $f: \X\rightarrow\R_{\geq 0}$ in $(\ref{funcf})$. When $h_I\in\ell^2(\X_I)$, then $f$ has finite $(2N)$ moments, so at least the orthogonal polynomials $p_0, \ldots, p_N$ exist and $h_n = \sum_{x\in\X}{(p_n(x))^2f(x)} < \infty$ for all $n = 0, 1, \ldots N$.

\begin{prop}[\cite{BO1}, Prop. 5.8 and (5.12)]\label{RISI}
Under the assumptions above, in particular $h_I\in\ell^2(\X_I)$,
\begin{equation*}
\begin{gathered}
R_I(\zeta) = \frac{p_N(\zeta)}{\prod_{y\in\X_{II}}{(\zeta - y)}}, \hspace{.2in} S_I(\zeta) = \frac{p_{N-1}(\zeta)}{h_{N-1}\prod_{\substack{y\in\X_{II}}}{(\zeta- y)}},\\
R_{II}(\zeta) = 1 + \sum_{y\in\X_I}{\frac{(h_I(y))^2S_I(y)}{\zeta-y}}, \hspace{.2in} S_{II}(x) = \sum_{y\in\X_I}{\frac{(h_I(y))^2R_I(y)}{\zeta-y}}.
\end{gathered}
\end{equation*}
\end{prop}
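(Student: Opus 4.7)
The plan is to verify that the four explicit functions on the right-hand sides satisfy all the conditions characterizing the unique DRHP solution $m$ listed in Section $\ref{kernelsortL}$ (analyticity off $\X$, simple poles, asymptotics at infinity, and the residue conditions at $\X_I$ and $\X_{II}$); uniqueness in Proposition $\ref{borodinDRHP}$ then forces the candidates to coincide with the entries of $m$.

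First I would dispatch the analytic items. Since $p_N,p_{N-1}$ are monic polynomials of degrees $N$ and $N-1$ and $\prod_{y\in\X_{II}}(\zeta-y)$ is monic of degree $N$ with simple zeros exactly on $\X_{II}$, the proposed $R_I$ and $S_I$ are meromorphic on $\C$ with only simple poles on $\X_{II}$, and a comparison of leading terms gives $R_I(\zeta)\to 1$ and $S_I(\zeta)\to 0$ uniformly on large circles centered at the origin. For $R_{II}$ and $S_{II}$, the identities
\begin{equation*}
(h_I(y))^2 R_I(y) = f(y)\, p_N(y)\prod_{z\in\X_{II}}(y-z),\qquad (h_I(y))^2 S_I(y) = \frac{f(y)\, p_{N-1}(y)}{h_{N-1}}\prod_{z\in\X_{II}}(y-z),
\end{equation*}
valid for $y\in\X_I$, combined with $h_I\in\ell^2(\X_I)$ and finiteness of the moments of $f$ up to order $2N$, yield absolute convergence of the Cauchy sums on compact subsets of $\C\setminus\X_I$; hence both $R_{II}$ and $S_{II}$ are holomorphic there with only simple poles, tending to $1$ and $0$ respectively at infinity.

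The residue conditions at $x\in\X_I$ are immediate from the defining Cauchy sums, since extracting the residue of the simple term $(h_I(y))^2 F(y)/(\zeta-y)$ at $\zeta=y$ returns $(h_I(y))^2 F(y)$. The substantive step is the pair of residue conditions at $x\in\X_{II}$,
\begin{equation*}
\frac{p_N(x)}{\prod_{y\in\X_{II},\,y\neq x}(x-y)}=(h_{II}(x))^2 S_{II}(x),\qquad \frac{p_{N-1}(x)}{h_{N-1}\prod_{y\in\X_{II},\,y\neq x}(x-y)}=(h_{II}(x))^2 R_{II}(x),
\end{equation*}
and here orthogonality is the key. Using the first displayed identity above and cancelling the factor $(y-x)$ inside $\prod_{z\in\X_{II}}(y-z)$ against $1/(x-y)$ in the definition of $S_{II}$ yields
\begin{equation*}
S_{II}(x) = -\sum_{y\in\X_I} f(y)\, p_N(y)\prod_{z\in\X_{II},\,z\neq x}(y-z).
\end{equation*}
The polynomial $\prod_{z\in\X_{II},\,z\neq x}(y-z)$ has degree $N-1$ in $y$, so it expands in the orthogonal basis $p_0,\ldots,p_{N-1}$; orthogonality of $p_N$ to this span makes the sum extended over all of $\X$ vanish, and among the missing $\X_{II}$ terms only $y=x$ contributes, because the product annihilates every other point of $\X_{II}$. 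This forces $S_{II}(x)=f(x)\, p_N(x)\prod_{z\neq x}(x-z)$, and inserting $(h_{II}(x))^{-2}=f(x)\prod_{z\neq x}(x-z)^2$ reproduces the first identity. The second is analogous, with a single change: the expansion of $\prod_{z\neq x}(y-z)$ has leading coefficient $1$ on $p_{N-1}$ by monicity, so the sum over $\X$ equals $h_{N-1}$ rather than $0$, and the same bookkeeping yields $R_{II}(x)=f(x)\, p_{N-1}(x)\prod_{z\neq x}(x-z)/h_{N-1}$, matching the required identity.

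The orthogonality-plus-bookkeeping step of the previous paragraph is the main and essentially only obstacle; every other verification reduces to tracking leading coefficients and simple-pole residues. With both $\X_{II}$-residue identities checked, uniqueness in Proposition $\ref{borodinDRHP}$ completes the proof.
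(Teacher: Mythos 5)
Your proof is correct. The paper does not prove Proposition \ref{RISI} itself --- it is quoted from Borodin--Olshanski --- but your argument (verify the four candidate functions against the defining conditions of the DRHP $(\X,w)$ and invoke the uniqueness in Proposition \ref{borodinDRHP}, with the residue conditions at $\X_{II}$ handled by expanding the degree-$(N-1)$ monic polynomial $\prod_{z\in\X_{II},\,z\neq x}(y-z)$ in the orthogonal basis and using $\sum_{\X}f\,p_N q=0$ and $\sum_{\X}f\,p_{N-1}q=h_{N-1}$) is exactly the standard proof of the cited result, and all the sign and normalization bookkeeping checks out.
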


The following is an immediate consequence of Proposition $\ref{RISI}$. Although the result itself is not used, it was important to guide us in our search of explicit formulas for solutions of a DRHP, see Remark $\ref{rem:borodin}$.

\begin{cor}[\cite{BO1}, Cor. 5.9]\label{RIISII}
Under the assumptions above, for any $x\in\X_{II}$, we have
\begin{equation*}
\begin{gathered}
R_{II}(x) = \frac{p_{N-1}(x)}{h_{N-1}\left(h_{II}(x)\right)^2\prod_{\substack{y\in\X_{II} \\ y\neq x}}{(x - y)}}, \hspace{.2in} S_{II}(x) = \frac{p_N(x)}{\left(h_{II}(x)\right)^2\prod_{\substack{y\in\X_{II} \\ y\neq x}}{(x - y)}}.
\end{gathered}
\end{equation*}
\end{cor}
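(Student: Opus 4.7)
The plan is to read off the formulas directly from the DRHP residue conditions, using the explicit expressions for $R_I, S_I$ supplied by Proposition \ref{RISI}.

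Recall that the solution matrix $m$ to the DRHP $(\X, w)$ satisfies the residue relations
\begin{equation*}
\Res_{\zeta = x} R_I(\zeta) = (h_{II}(x))^2 S_{II}(x), \qquad \Res_{\zeta = x} S_I(\zeta) = (h_{II}(x))^2 R_{II}(x), \qquad \forall x \in \X_{II},
\end{equation*}
which come from expanding the third bullet of the DRHP using the explicit form of the jump matrix $w(x)$ at points of $\X_{II}$ given in $(\ref{eqn:jumpmatrix})$. Since $R_{II}, S_{II}$ are holomorphic in a neighborhood of each $x \in \X_{II}$, the right-hand sides make sense as values (not limits), and these equations determine $R_{II}(x)$ and $S_{II}(x)$ once the residues of $R_I, S_I$ at $x$ are known.

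Now I would apply Proposition \ref{RISI}, which gives
\begin{equation*}
R_I(\zeta) = \frac{p_N(\zeta)}{\prod_{y \in \X_{II}}(\zeta - y)}, \qquad S_I(\zeta) = \frac{p_{N-1}(\zeta)}{h_{N-1} \prod_{y \in \X_{II}}(\zeta - y)}.
\end{equation*}
These are rational functions with simple poles precisely at the points of $\X_{II}$ (the polynomials $p_N, p_{N-1}$ being holomorphic). Computing residues at a fixed $x \in \X_{II}$ is then mechanical:
\begin{equation*}
\Res_{\zeta = x} R_I(\zeta) = \frac{p_N(x)}{\prod_{y \in \X_{II},\, y \neq x}(x - y)}, \qquad \Res_{\zeta = x} S_I(\zeta) = \frac{p_{N-1}(x)}{h_{N-1} \prod_{y \in \X_{II},\, y \neq x}(x - y)}.
\end{equation*}

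Finally, equating these residues with $(h_{II}(x))^2 S_{II}(x)$ and $(h_{II}(x))^2 R_{II}(x)$ respectively and solving yields the two claimed identities. There is no real obstacle here: the entire proof is a substitution, and the only thing worth being careful about is that, since $x \in \X_{II}$ is a simple pole of both $R_I$ and $S_I$, the residue computation is just the standard formula for the residue of a rational function at a simple pole. No analytic continuation or additional argument is needed.
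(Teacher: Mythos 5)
Your proposal is correct and is exactly the argument the paper intends: the paper states Corollary \ref{RIISII} as an immediate consequence of Proposition \ref{RISI}, and your computation (substituting the formulas for $R_I, S_I$ into the residue conditions $\Res_{\zeta = x}R_I(\zeta) = (h_{II}(x))^2 S_{II}(x)$ and $\Res_{\zeta = x}S_I(\zeta) = (h_{II}(x))^2 R_{II}(x)$, then dividing by $(h_{II}(x))^2 > 0$) is precisely that consequence spelled out.
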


Lastly let us state a theorem which can be applied \textit{without} the assumption $h_I\in\ell^2(\X)$, though the result still requires the weaker assumption $\sum_{x\in\X_I}{\frac{(h_I(x))^2}{1+x^2}} < \infty$.

\begin{thm}[\cite{BO1}, Thm. 5.10]\label{thm:kernelLgeneral}
Assume we are under the assumptions above, including $\sum_{x\in\X_I}{\frac{(h_I(x))^2}{1+x^2}} < \infty$, $L$ is defined as in $(\ref{eq:matrixentries})$ and $K = L(1+L)^{-1}$. Define $\epsilon:\X\rightarrow\{+1, -1\}$ by
\begin{equation*}
\epsilon(x) = \left\{
\begin{aligned}
\textrm{sgn}\left(\prod_{y\in\X_{II}}{(x-y)}\right),& \textrm{ if }x\in\X_I,\\
\textrm{sgn}\left(\prod_{y\in\X_{II}\setminus\{x\}}{(x-y)}\right),& \textrm{ if }x\in\X_{II}.
\end{aligned}\right.
\end{equation*}
Recall that a function $f:\X\rightarrow\R_{\geq 0}$ was defined from the pair of functions $(h_I, h_{II})$ in the general setting of Section $\ref{orthLsubsection}$. The orthogonal polynomials $p_0 = 1, p_1, \ldots, p_{N-1}$ with respect to the weight function $f$ on $\X$ exist, and $h_n = \sum_{x\in\X}{(p_n(x))^2f(x)} < \infty$ for $n = 0, 1, \ldots, N-1$. Define the map $K':\X\times\X\rightarrow\R$ by
\begin{equation*}
K'(x, y) = \left\{
\begin{aligned}
\sqrt{f(x)f(y)}\cdot\sum_{n=0}^{N-1}{\frac{p_n(x)p_n(y)}{h_n}}, & \textrm{ if }x\in\X_I,\\
\delta_{xy} - \sqrt{f(x)f(y)}\cdot\sum_{n=0}^{N-1}{\frac{p_n(x)p_n(y)}{h_n}},& \textrm{ if }x\in\X_{II}.
\end{aligned}\right.
\end{equation*}
Then
\begin{equation*}
K(x, y) = \epsilon(x)K'(x, y)\epsilon(y) \ \forall x, y \in \X.
\end{equation*}
\end{thm}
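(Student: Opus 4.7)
The plan is to first establish the identity $K(x,y) = \epsilon(x)K'(x,y)\epsilon(y)$ under the stronger hypothesis $h_I\in\ell^2(\X_I)$, where Propositions $\ref{borodinDRHPrewritten}$, $\ref{RISI}$ and Corollary $\ref{RIISII}$ apply directly, and then to remove the hypothesis by a truncation argument. Under $h_I\in\ell^2(\X_I)$ the orthogonal polynomials $p_0,\ldots,p_N$ with respect to $f$ all exist, so I can substitute the formulas from Proposition $\ref{RISI}$ and Corollary $\ref{RIISII}$ into the four cases of Proposition $\ref{borodinDRHPrewritten}$.

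In the case $x,y\in\X_I$ with $x\neq y$, plugging in $R_I(\zeta)=p_N(\zeta)/\prod_{v\in\X_{II}}(\zeta-v)$ and $S_I(\zeta)=p_{N-1}(\zeta)/(h_{N-1}\prod_{v\in\X_{II}}(\zeta-v))$ gives
\begin{equation*}
K(x,y)=\frac{h_I(x)h_I(y)}{h_{N-1}\prod_{v\in\X_{II}}(x-v)(y-v)}\cdot\frac{p_N(x)p_{N-1}(y)-p_{N-1}(x)p_N(y)}{x-y}.
\end{equation*}
Since $f(x)=(h_I(x))^2/\prod_{v\in\X_{II}}(x-v)^2$, one has $\epsilon(x)\sqrt{f(x)}=h_I(x)/\prod_{v\in\X_{II}}(x-v)$, and the Christoffel--Darboux identity collapses the quotient of $p_N,p_{N-1}$ into $\sum_{n=0}^{N-1}p_n(x)p_n(y)/h_n$, giving exactly $\epsilon(x)K'(x,y)\epsilon(y)$. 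The diagonal case $x=y\in\X_I$ follows by the same substitution, using the derivative form of Christoffel--Darboux and L'H\^opital's rule. For $x,y\in\X_{II}$, I substitute the formulas of Corollary $\ref{RIISII}$ for $R_{II}(x),S_{II}(x)$ (which involve an extra factor $(h_{II}(x))^{-2}$) into the fourth formula of Proposition $\ref{borodinDRHPrewritten}$; the factors $(h_{II}(x))^2$ cancel against the ones sitting outside, and $f(x)$ for $x\in\X_{II}$ equals $1/((h_{II}(x))^2\prod_{v\in\X_{II}\setminus\{x\}}(x-v)^2)$, so again $\epsilon(x)\sqrt{f(x)}=1/(h_{II}(x)\prod_{v\in\X_{II}\setminus\{x\}}(x-v))$, and after incorporating the Kronecker $\delta_{xy}$ correction (which appears because $K'$ on $\X_{II}$ is defined as $\delta_{xy}$ minus the Christoffel--Darboux sum, while $K$ is the finite rank perturbation $L(1+L)^{-1}$ of a finite block), the identity drops out. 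The mixed cases $x\in\X_I,\ y\in\X_{II}$ are handled identically, combining the $R_I,S_I$ formulas with the $R_{II},S_{II}$ formulas.

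To remove the hypothesis $h_I\in\ell^2(\X_I)$ under the weaker assumption $\sum_{x\in\X_I}(h_I(x))^2/(1+x^2)<\infty$, I enumerate $\X_I$ and set $h_I^{(M)}=h_I\cdot\mathbf{1}_{\{\text{first }M\text{ points of }\X_I\}}$; then $h_I^{(M)}\in\ell^2(\X_I)$, the associated operator $L^{(M)}$ is skew-symmetric of finite rank, and the previous paragraph gives $K^{(M)}=\epsilon K^{\prime (M)}\epsilon$. The left-hand side converges entrywise as $M\to\infty$ because $L^{(M)}\to L$ strongly in $\ell^2(\X)$ (indeed in Hilbert--Schmidt norm after weighting by $(1+x^2)^{-1/2}$, which is how the finite-moment hypothesis enters), and since $L$ is skew-adjoint, $1+L$ is invertible and the resolvent map $L\mapsto L(1+L)^{-1}$ is continuous in the operator norm on this class. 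The right-hand side converges entrywise because the weight $f^{(M)}$ converges pointwise to $f$ and, by dominated convergence in $M$ using the finite-moment condition, the moments up to order $2(N-1)$ converge; hence the orthogonal polynomials $p_n^{(M)}$ and their squared norms $h_n^{(M)}$ converge to $p_n$ and $h_n$ for $n\leq N-1$, and the formula passes to the limit.

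The main obstacle, and the part requiring the most care, is the final approximation step: the orthogonal polynomial $p_N^{(M)}$ and the squared norm $h_N^{(M)}$ used inside the DRHP formulas for $R_I,S_I$ generally blow up as $M\to\infty$, since $f$ need not have a finite $(2N)$-th moment, yet the combination that actually appears in $K^{(M)}$ must remain bounded because it equals the Christoffel--Darboux sum $\sum_{n=0}^{N-1}p_n^{(M)}(x)p_n^{(M)}(y)/h_n^{(M)}$, for which only moments up to order $2(N-1)$ are needed. So it is essential to express $K^{(M)}$ in the Christoffel--Darboux sum form (as the formula for $K'$ already does) \emph{before} taking $M\to\infty$, rather than attempting to pass to the limit in the $p_N^{(M)}/p_{N-1}^{(M)}$ expressions of Propositions $\ref{RISI}$ and $\ref{RIISII}$ directly.
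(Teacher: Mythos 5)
First, note that the paper does not prove this statement at all: it is quoted verbatim as Theorem 5.10 of \cite{BO1}, so there is no internal proof to compare against. Judged on its own merits, your plan is the natural one and, as far as I can tell, it is essentially the argument of \cite{BO1}: establish the identity under the stronger hypothesis $h_I\in\ell^2(\X_I)$ by substituting Proposition \ref{RISI} and Corollary \ref{RIISII} into Proposition \ref{borodinDRHPrewritten} and invoking Christoffel--Darboux, then remove the hypothesis by truncating $h_I$. Your algebra in the off-diagonal cases is right (including the sign flip on the $\X_{II}\times\X_{II}$ block coming from $R_{II}S_{II}-S_{II}R_{II}$ versus $R_IS_I-S_IR_I$, which is what produces the minus sign in the second branch of $K'$), and your closing observation --- that one must pass to the Christoffel--Darboux sum form \emph{before} letting $M\to\infty$, because $p_N^{(M)}$ and $h_N^{(M)}$ need the $(2N)$-th moment and may degenerate --- is exactly the point of the weaker hypothesis and is the right thing to emphasize.

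Two places need repair. The substantive one is the diagonal case $x=y\in\X_{II}$, which you dispose of with ``the identity drops out.'' It does not drop out by substitution: Corollary \ref{RIISII} gives the values of $R_{II}$ and $S_{II}$ only \emph{at the points of} $\X_{II}$, whereas $K(x,x)=(h_{II}(x))^2\bigl((R_{II})'(x)S_{II}(x)-(S_{II})'(x)R_{II}(x)\bigr)$ requires derivatives of the analytic functions of Proposition \ref{RISI}, i.e.\ of $R_{II}(\zeta)=1+\sum_{y\in\X_I}(h_I(y))^2S_I(y)/(\zeta-y)$ and its companion. Showing that this Wronskian equals $(h_{II}(x))^{-2}\bigl(1-f(x)\sum_{n=0}^{N-1}p_n(x)^2/h_n\bigr)$ --- in particular, that the constant term ``$1$'' in $R_{II}$ is what produces the $\delta_{xy}$ --- is a genuine computation (e.g.\ via the relation $R_IR_{II}-S_IS_{II}\equiv\det m\equiv 1$ and the residue conditions), and it cannot be skipped since the diagonal entries enter the correlation functions. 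The second point is minor: under the weak hypothesis each of the $N$ columns of $A$ already lies in $\ell^2(\X_I)$ (this is precisely what $\sum_{x\in\X_I}(h_I(x))^2/(1+x^2)<\infty$ buys, since $|x-y|^{-2}\asymp(1+x^2)^{-1}$ for $y\in\X_{II}$ fixed), so $L^{(M)}\to L$ in trace norm with no auxiliary weighting; your parenthetical about a weighted Hilbert--Schmidt norm is unnecessary and slightly misleading, though the conclusion $K^{(M)}\to K$ entrywise stands.
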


\section{Correlation kernel of $\LL^{(N)}$}\label{kernelL}

In Section $\ref{sec:Lensemble}$, we showed $\LL^{(N)}$ is an $L$-ensemble on the state space $\Zp^{\epsilon}$. Proposition $\ref{kernelLensemble}$ implies that $\LL^{(N)}$ is determinantal. In this section, our objective is to find a correlation kernel $K^{\LL^{(N)}}$ of $\LL^{(N)}$. The real struggle will be to find formulas that are suitable for the limit transition $N\rightarrow\infty$. Throughout this section, assume as usual that $(z, z')\in\U_{\adm}$; we do restrict $(z, z')$ to smaller subsets of $\C^2$ in some arguments (or make analytic continuations of $(z, z')$ to domains of $\C^2$ containing $\U_0$), but we will point it out when we do. Observe that $(z, z')\in\U_{\adm}$ implies that $\Sigma = z+z'+b\in (-1, +\infty)$.

\subsection{Strategy of the computation}\label{sec:strategy}

As shown in Proposition $\ref{kernelLensemble}$, a correlation kernel $K^{\LL^{(N)}}$ of the $L$-ensemble $\LL^{(N)}$, whose $L$-matrix $L_N$ is given in $(\ref{Lmatrixdef})$, is
\begin{equation*}
K^{\LL^{(N)}} = L_N\left(1 + L_N\right)^{-1}.
\end{equation*}
In order to give explicit expressions for $K^{\LL^{(N)}}$, we use the general theory of the DRHP from the previous section. Remember also the specific setting of Section $\ref{sec:specificL}$. The state space is $\X = \Zpe$ with the splitting into spaces $\X_I = \Z^{\epsilon}_{\geq N}$ and $\X_{II} = \Z^{\epsilon}_{<N}$. We defined two functions $h_I = \sqrt{\psi_{\geq N}} : \Z^{\epsilon}_{\geq N}\rightarrow\R_{>0}$ and $h_{II} = \sqrt{\psi_{<N}} : \Z^{\epsilon}_{<N}\rightarrow\R_{\geq 0}$. The entries of the matrix $K^{\LL^{(N)}}$ can be expressed in terms of the solution of the DRHP $(\Zpe, w_N)$, where
\begin{eqnarray*}
w_N(\hatx)  = \begin{cases}
                  \begin{bmatrix}
    0 & -\psi_{\geq N}(\hatx)\\
    0 & 0
\end{bmatrix} &\textrm{if } \hatx\in\Z^{\epsilon}_{\geq N},\\
                  \begin{bmatrix}
    0 & 0\\
    -\psi_{<N}(\hatx) & 0
\end{bmatrix} &\textrm{if } \hatx\in\Z^{\epsilon}_{<N}.
\end{cases}
\end{eqnarray*}
is the jump matrix, cf. $(\ref{eqn:jumpmatrix}$). Proposition $\ref{borodinDRHPrewritten}$ expresses the entries of $K^{\LL^{(N)}}$ in terms of four functions which solve the DRHP $(\Zpe, w_N)$. In this section, we redenote the four functions $R_I, R_{II}, S_I, S_{II}$ in question by $R_{\geq N}, R_{<N}, S_{\geq N}, S_{<N}$, respectively.

In order to apply the DRHP method in the general setting of the previous section, we need the assumption that $h_I\in\ell^2(\X_I)$. In our special case, this assumption is equivalent to $\psi_{\geq N}\in\ell^1\left(\Z^{\epsilon}_{\geq N}\right)$. By the identity $\Gamma(z)\Gamma(1 - z) = \frac{\pi}{\sin\pi z}$, the formula $(\ref{eqn:psigreaterN})$ for $\psi_{\geq N}$ can be rewritten as
\begin{equation*}
\begin{gathered}
\psi_{\geq N}(\hatx) = \frac{\sin(\pi z)\sin(\pi z')}{\pi^2}(x + \epsilon)\frac{\Gamma(x + N + 2\epsilon)^2}{\Gamma(x - N + 1)^2}\cdot\Gamma\left[ \begin{split}
x + 1 &,& x + a + 1\\
x + 2\epsilon &,& x + b + 1
\vphantom{\frac12}
\end{split}
\right]\\
\times\Gamma\left[ \begin{split}
x + 1 - z - N &,& x + 1 - z' - N\\
x + z + N + 2\epsilon &,& x + z' + N + 2\epsilon
\vphantom{\frac12}
\end{split}
\right] \ \forall \ \hatx\in\Z^{\epsilon}_{\geq N}.
\end{gathered}
\end{equation*}
By well known asymptotics of the Gamma function, we have $\psi_{\geq N}(\hatx) \sim \frac{\sin\pi z \sin \pi z'}{\pi^2}x^{1 - 2\Sigma}$ and so $\psi_{\geq N}\in\ell^1(\Z^{\epsilon}_{\geq N})$ if $\Sigma > 1$.

Another condition that we need in order to apply the theory of the DRHP is that the operator $T$ on $\ell^2(\Zpe)$ defined in $(\ref{Top})$ is well-defined and a bounded operator. Indeed, let us begin by checking that the sum defining the operator $T$ is always absolutely convergent. Given $h\in\ell^2(\Zpe)$ and $\hatx\in\Zpe$ we have
\begin{equation*}
\left(\sum_{\substack{\haty\in\Zpe \\ \haty\neq\hatx}}{\left|\frac{h(\haty)}{(\hatx - \haty)}\right|}\right)^2 \leq
3\left( \sum_{\substack{\haty\in\Zpe \\ \haty\neq\hatx}}{\frac{h(\haty)^2}{(\hatx - \haty)^2}} \right) \leq 3\sum_{\substack{\haty\in\Zpe \\ \haty \neq \hatx}}{h(\haty)^2} \leq 3\left(\|h\|_{\ell^2(\Zpe)}\right)^2 <
\infty,
\end{equation*}
where the second inequality holds because $|\hatx - \haty| \geq 1$ whenever $x\neq y$ are elements of $\Zpe$. Thus the sum defining $(Th)(\hatx)$ is absolutely convergent for any $\hatx\in\Zpe$. By a similar reasoning we can bound
\begin{equation*}
\|Th\|_{\ell^2(\Zpe)}^2 = \sum_{\hatx\in\Zpe}{((Th)(\hatx))^2} \leq
3\sum_{\hatx\in\Zpe}{ \left( \sum_{\substack{\haty\in\Zpe \\ \haty\neq\hatx}}{\frac{h(\haty)^2}{(\hatx - \haty)^2}} \right) } =
3\sum_{\haty\in\Zpe}{h(\haty)^2 \sum_{\substack{\hatx\in\Zpe \\ \hatx \neq \haty}}{\frac{1}{(\hatx - \haty)^2}} } \leq 
C\|h\|_{\ell^2(\Zpe)}^2,
\end{equation*}
where the interchange of sums is easily justified and $C$ is $3$ times any upper bound for $\sum_{\substack{\hatx\in\Zpe \\ \hatx \neq \haty}}{\frac{1}{(\hatx - \haty)^2}}$, uniform over $\hatx\in\Zpe$, for example $C = 6\sum_{n=1}^{\infty}{\frac{1}{n^2}}$.\\

Therefore, if $(z, z')\in\U_{\adm}\cap\{\Sigma > 1\}$, then the general theory of the DRHP applies and we are able to obtain formulas for the analytic functions $R_{\geq N}, R_{<N}, S_{\geq N}, S_{<N}$. In fact, Proposition $\ref{RISI}$ expresses these four functions in terms of the orthogonal polynomials $\mathfrak{p}_N, \mathfrak{p}_{N-1}$ of weight $W(\hatx)$ that we computed in the proof of Theorem $\ref{thm:Okernel}$. However our job is not yet complete at this point because the formulas in that proposition do not allow us to take the limit as $N$ tends to infinity. Most of the difficulty relies in obtaining, via transformations of generalized hypergeometric functions (and massive computations), formulas for $R_{\geq N}, S_{\geq N}, R_{<N}, S_{<N}$ that are suitable for taking the limit $N\rightarrow\infty$.

Finally there is the task of extending the results for all $(z, z')\in\U_{\adm}$ by removing the assumption $\Sigma > 1$. This is done by first showing via different techniques, namely Theorem $\ref{thm:kernelLgeneral}$, that the formulas involved depend analytically on the parameters $z, z'$ and then by means of an analytic continuation.

Before carrying out the plan described here, it is very convenient to construct a different discrete Riemann-Hilbert problem whose solution $\widetilde{m}(\zeta)$ is related to the solution of our desired DRHP $m(\zeta)$ via the simple (but non-invertible) change of variables $\widetilde{m}(\zeta) = m((\zeta + \epsilon)^2)$.

\subsection{A modified DRHP}\label{sec:modifiedDRHP}

Assume the following conditions
\begin{equation*}
\begin{gathered}
\Sigma = z+z'+b > 1,\\
(a, b) \neq (-1/2, -1/2) \textrm{ (which is equivalent to $\epsilon > 0$)}.
\end{gathered}
\end{equation*}

From Proposition $\ref{borodinDRHP}$ and the discussion above, there exists a unique solution $m:\C \setminus \Zpe \longrightarrow \mathbf{M}_2(\C)$ to the DRHP $(\Zpe, w_N)$. If we let
\begin{equation*}
m = \begin{bmatrix}
    R_{\geq N} & -S_{<N}\\
    -S_{\geq N} & R_{<N}
\end{bmatrix},
\end{equation*}
then the following relations hold and uniquely define $m$, see Section $\ref{kernelsortL}$:
\begin{itemize}
	\item $R_{\geq N}, S_{\geq N}$ are holomorphic in $\C\setminus\Z^{\epsilon}_{<N}$ and $R_{<N}, S_{<N}$ are holomorphic in $\C\setminus\Z^{\epsilon}_{\geq N}$.
	\item $R_{\geq N}(\zeta), R_{<N}(\zeta) \longrightarrow 1$ and $S_{\geq N}(\zeta), S_{<N}(\zeta) \longrightarrow 0$ as $\zeta\longrightarrow\infty$. The convergence is uniform over a sequence of expanding contours in the complex plane. The contours in question are uniformly bounded away from $\Zpe$.
	\item The functions $R_{\geq N}, R_{<N}, S_{\geq N}, S_{<N}$ have only simple poles.
	\item $\Res_{\zeta = x}{R_{\geq N}(\zeta)} = \psi_{<N}(x)S_{<N}(x) \ \forall x\in\Z^{\epsilon}_{< N}$.
	\item $\Res_{\zeta = x}{S_{\geq N}(\zeta)} = \psi_{<N}(x)R_{<N}(x) \ \forall x\in\Z^{\epsilon}_{< N}$.
	\item $\Res_{\zeta = x}{R_{<N}(\zeta)} = \psi_{\geq N}(x)S_{\geq N}(x) \ \forall x\in\Z^{\epsilon}_{\geq N}$.
	\item $\Res_{\zeta = x}{S_{<N}(\zeta)} = \psi_{\geq N}(x)R_{\geq N}(x) \ \forall x\in\Z^{\epsilon}_{\geq N}$.
\end{itemize}
Let us introduce a different DRHP. Define $\Y$ as the set
\begin{equation}\label{Ydef}
\Y \myeq \Zp \sqcup (-\Zp - 2\epsilon) = \{y : y\in\Zp\} \sqcup \{-y-2\epsilon : y\in\Zp\},
\end{equation}
with the splitting
\begin{equation*}
\begin{gathered}
\Y = \Y_{\geq N} \sqcup \Y_{< N},\\
\Y_{\geq N} \myeq \{N, N+1, \ldots\}\sqcup\{-N-2\epsilon. -N-1-2\epsilon, \ldots\},\\
\Y_{< N} \myeq \{0, 1, \ldots, N - 1\}\sqcup\{-2\epsilon, \ldots, -N+1-2\epsilon\}.
\end{gathered}
\end{equation*}

Consider the functions $\widetilde{\psi}_{\geq N}: \Y_{\geq N}\rightarrow\R$, $\widetilde{\psi}_{<N}:\Y_{<N}\rightarrow\R\setminus\{0\}$ defined by
\begin{equation}\label{psitilde}
\widetilde{\psi}_{\geq N}(y) \myeq \frac{\psi_{\geq N}(\widehat{y})}{2(y+\epsilon)}, \hspace{.1in}y\in\Y_{\geq N}, \hspace{.3in}
\widetilde{\psi}_{<N}(y) \myeq \frac{\psi_{<N}(\widehat{y})}{2(y+\epsilon)}, \hspace{.1in}y\in\Y_{<N},
\end{equation}
where $\psi_{\geq N}$, $\psi_{<N}$ are the functions in $(\ref{eqn:psigreaterN})$ and $(\ref{eqn:psismallerN})$, respectively. Observe that $\epsilon > 0$ is important for the definition above, since otherwise $\widetilde{\psi}_{<N}$ would not be defined at $0$.

Additionally consider the jump matrix $\widetilde{w}_N: \Y \rightarrow \mathbf{M}_2(\C)$ defined by
\begin{eqnarray}\label{eqn:weightwtilde}
\widetilde{w}_N(y)  \myeq \begin{cases}
                  \begin{bmatrix}
    0 & -\widetilde{\psi}_{\geq N}(y)\\
    0 & 0
\end{bmatrix} &\textrm{if } y\in\Y_{\geq N},\\
                  \begin{bmatrix}
    0 & 0\\
    -\widetilde{\psi}_{<N}(y) & 0
\end{bmatrix} &\textrm{if } y\in\Y_{<N}.
\end{cases}
\end{eqnarray}

Finally let $\widetilde{m}: \C\setminus\Y \longrightarrow \mathbf{M}_2(\C)$ be the matrix-valued function
\begin{equation*}
\widetilde{m}(\zeta) \myeq  m((\zeta + \epsilon)^2).
\end{equation*}

We claim that $\widetilde{m}$ is a solution to the DRHP $(\Y, \widetilde{w}_N)$. In fact, $\widetilde{m}$ certainly satisfies the conditions regarding the asymptotics at infinity. We need to check the structure of the poles and the formulas for the residues at these poles. Let $x_0$ be a (simple) pole of $m(\zeta)$ and $\Res_{\zeta = x_0}{m(\zeta)} = c_{-1}$. The Laurent expansion of $m(\zeta)$ around $\zeta = x_0$ is therefore of the form
\begin{equation*}
m(\zeta) = \frac{c_{-1}}{\zeta - x_0} + c_0 + c_1(\zeta-x_0) + c_2(\zeta - x_0)^2 + \ldots, \hspace{.1in}c_{-1}\neq 0.
\end{equation*}
Since the solution $m(\zeta)$ to the DRHP has only poles at $\Zpe$, then the pole $x_0$ must be of the form $x_0 = (y_0 + \epsilon)^2$ for some $y_0\in\Zp$. It follows that
\begin{equation}\label{eqn:pole}
\widetilde{m}(\zeta) = m((\zeta+\epsilon)^2) = \frac{c_{-1}}{(\zeta - y_0)(\zeta + y_0 + 2\epsilon)} + c_0 + c_1(\zeta-y_0)(\zeta + y_0 + 2\epsilon) + \ldots, \hspace{.1in}c_{-1}\neq 0.
\end{equation}
It follows that $\widetilde{m}(\zeta)$ has simple poles at points $y_0\in\Zp$ and $-y_0-2\epsilon\in\Zp-2\epsilon$; in other words, $\widetilde{m}(\zeta)$ has simple poles exactly at the points of $\Y$. Observe that we used $\epsilon > 0$ again, since otherwise the pole at $y_0 = 0$ would have been of order $2$. From $(\ref{eqn:pole})$ it also follows that
\begin{equation}\label{residuesrelation}
\Res_{\zeta = y_0}{\widetilde{m}(\zeta)} = -\Res_{\zeta = -y_0 - 2\epsilon}{\widetilde{m}(\zeta)} = \frac{c_{-1}}{2(y_0+\epsilon)} = \frac{\Res_{\zeta = x_0}{m(\zeta)}}{2(y_0+\epsilon)} \ \forall y_0\in\Zp, \ x_0 = (y_0 + \epsilon)^2.
\end{equation}

From $(\ref{residuesrelation})$ and the fact that $m(\zeta)$ is the solution to the DRHP $(\X, w_N)$, it is evident now that $\widetilde{m}(\zeta)$ is a solution to the DRHP $(\Y, \widetilde{w}_N)$, where $\Y$ is as in $(\ref{Ydef})$ and $\widetilde{w}_N: \Y \rightarrow \mathbf{M}_2(\C)$ is defined in $(\ref{eqn:weightwtilde})$. We have the following uniqueness result.

\begin{lem}\label{lem:uniqueness}
In the setting above, there is a unique solution $\widetilde{m}: \C \setminus \Y \rightarrow \mathbf{M}_2(\C)$ of the DRHP $(\Y, \widetilde{w}_N)$ and it is $\widetilde{m}(\zeta) = m((\zeta+\epsilon)^2)$.
\end{lem}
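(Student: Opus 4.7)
The existence half of the statement is already established in the discussion preceding the lemma: the text verifies that $\widetilde{m}(\zeta) = m((\zeta+\epsilon)^2)$ has the right poles and residues and the correct normalization at infinity. So only uniqueness remains. The plan is to run the standard Liouville-type argument for DRHP solutions, which works here because the jump matrix $\widetilde{w}_N(y)$ is strictly upper or strictly lower triangular, hence nilpotent of order two with zero trace.

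First I would show that $\det\widetilde{m}'(\zeta) \equiv 1$ for any solution $\widetilde{m}'$ of $(\Y, \widetilde{w}_N)$. Near each $y \in \Y$, the residue condition and the nilpotency $\widetilde{w}_N(y)^2 = 0$ force $\bigl(\Res_{\zeta = y}\widetilde{m}'(\zeta)\bigr)\widetilde{w}_N(y) = 0$, and expanding $\det\widetilde{m}'$ in a Laurent series (using $\mathrm{tr}\,\widetilde{w}_N(y) = 0$) shows that the a priori pole at $y$ is removable. Thus $\det\widetilde{m}'$ is entire, and the uniform asymptotic $\widetilde{m}' \to I_2$ along expanding contours bounded away from $\Y$ (together with the maximum modulus principle in the annuli between successive contours) upgrades to $\det\widetilde{m}' \to 1$ at infinity, so $\det\widetilde{m}' \equiv 1$ on $\C$. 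In particular $\widetilde{m}'(\zeta)$ is invertible for every $\zeta \in \C\setminus\Y$.

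Next I would establish, near each $y \in \Y$, the local factorization
\[
\widetilde{m}'(\zeta) \;=\; E(\zeta)\left(I_2 + \frac{\widetilde{w}_N(y)}{\zeta-y}\right),
\]
with $E$ holomorphic and invertible at $\zeta = y$. Since $\widetilde{w}_N(y)^2 = 0$, the inverse of the right factor is $I_2 - \widetilde{w}_N(y)/(\zeta-y)$, so I would \emph{define} $E(\zeta) := \widetilde{m}'(\zeta)\bigl(I_2 - \widetilde{w}_N(y)/(\zeta-y)\bigr)$ and check directly that the principal part at $y$ cancels: the $(\zeta-y)^{-2}$ term vanishes because $R(y)\widetilde{w}_N(y) = 0$, and the $(\zeta-y)^{-1}$ term vanishes because the residue condition gives $R(y) = \widetilde{m}'^{\,\mathrm{reg}}(y)\widetilde{w}_N(y)$. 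Invertibility of $E(y)$ follows from $\det E \equiv \det\widetilde{m}' \equiv 1$.

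Given this factorization, if $\widetilde{m}_1, \widetilde{m}_2$ are any two solutions, then near each $y \in \Y$ the identity $(I+A)(I-A) = I$ for $A^2 = 0$ yields
\[
\widetilde{m}_1(\zeta)\,\widetilde{m}_2(\zeta)^{-1} \;=\; E_1(\zeta)\,E_2(\zeta)^{-1},
\]
which is holomorphic at $y$. Thus $\widetilde{m}_1\widetilde{m}_2^{-1}$ extends to an entire $\mathbf{M}_2(\C)$-valued function with limit $I_2$ at infinity, hence equals $I_2$ identically by Liouville, giving $\widetilde{m}_1 \equiv \widetilde{m}_2$. Applied to the known solution $\widetilde{m}(\zeta) = m((\zeta+\epsilon)^2)$, this yields the lemma. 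The main obstacle is the local factorization step, where one must carefully keep track of the nilpotent algebra; the rest reduces to the standard Liouville mechanism.
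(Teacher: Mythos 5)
Your proof is correct and follows essentially the same route as the paper: the paper establishes existence from the preceding discussion and then invokes \cite[Lemma 4.7]{B2} for uniqueness, whose sole hypothesis is exactly the nilpotency $\widetilde{w}_N(x)^2=0$ that drives your argument. You have simply written out the standard Liouville-type proof of that cited lemma (removability of the determinant's poles, local factorization through $I_2+\widetilde{w}_N(y)/(\zeta-y)$, and Liouville applied to $\widetilde{m}_1\widetilde{m}_2^{-1}$), which is sound.
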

\begin{proof}
The discussion above shows that there exists at least one solution $\widetilde{m}$ to $(\Y, \widetilde{w}_N)$ given by $\widetilde{m}(\zeta) = m((\zeta+\epsilon)^2)$. In \cite[Lemma 4.7]{B2}, it is shown that there exists at most one solution to the DRHP $(\Y, \widetilde{w}_N)$ provided that $\widetilde{w}_N(x)^2 = 0$ for all $x\in\Y$. In fact, $(\widetilde{w}_N(x))^2 = 0$ follows from $(\ref{eqn:weightwtilde})$, and we are done.
\end{proof}

Like we did with $m$, let us describe the matrix entries of $\widetilde{m}$ more explicitly. If we write
\begin{equation}
\widetilde{m} = \begin{bmatrix}
    \widetilde{R}_{\geq N} & -\widetilde{S}_{<N}\\
    -\widetilde{S}_{\geq N} & \widetilde{R}_{<N}
\end{bmatrix},
\end{equation}
then $\TR_{\geq N}(\zeta) = R_{\geq N}((\zeta + \epsilon)^2)$, $\TS_{\geq N}(\zeta) = S_{\geq N}((\zeta + \epsilon)^2)$, $\TR_{< N}(\zeta) = R_{< N}((\zeta + \epsilon)^2)$ and $\TS_{< N}(\zeta) = S_{< N}((\zeta + \epsilon)^2)$. Moreover the following conditions are satisfied and uniquely determine the solution $\widetilde{m}$ of the DRHP $(\Y, \widetilde{w}_N)$:
\begin{itemize}
	\item $\widetilde{R}_{\geq N}, \widetilde{S}_{\geq N}$ are holomorphic in $\C\setminus\Y_{<N }$ and $\widetilde{R}_{<N}, \widetilde{S}_{<N}$ are holomorphic in $\C\setminus\Y_{\geq N}$.
	\item $\widetilde{R}_{\geq N}(\zeta), \widetilde{R}_{<N}(\zeta) \longrightarrow 1$ and $\widetilde{S}_{\geq N}(\zeta), \widetilde{S}_{<N}(\zeta) \longrightarrow 0$ as $\zeta\longrightarrow\infty$. The convergence is uniform in a sequence of expanding contours in the complex plane. The distance between the contours in question and $\Y$ is uniformly bounded away from $0$.
	\item The functions $\widetilde{R}_{\geq N}, \widetilde{R}_{<N}, \widetilde{S}_{\geq N}, \widetilde{S}_{<N}$ have only simple poles.
	\item $\Res_{\zeta = y}{\TR_{\geq N}(\zeta)} = \widetilde{\psi}_{<N}(y)\TS_{<N}(y) \ \forall y\in\Y_{< N}$.
	\item $\Res_{\zeta = y}{\TS_{\geq N}(\zeta)} = \widetilde{\psi}_{<N}(y)\TR_{<N}(y) \ \forall y\in\Y_{< N}$.
	\item $\Res_{\zeta = y}{\TR_{<N}(\zeta)} = \widetilde{\psi}_{\geq N}(y)\TS_{\geq N}(y)\ \forall y\in\Y_{\geq N}$.
	\item $\Res_{\zeta = y}{\TS_{<N}(\zeta)} = \widetilde{\psi}_{\geq N}(y)\TR_{\geq N}(y) \ \forall y\in\Y_{\geq N}$.
\end{itemize}

\subsection{More on generalized hypergeometric functions}

We break the discussion momentarily to discuss some terminology and results on generalized hypergeometric functions needed for the formulas in subsequent sections. The \textit{very-well-poised $_7F_6(1)$ series} is defined by\footnote{There is a more familiar function $W$ in connection to $_7F_6(1)$ generalized hypergeometric series, see \cite[Ch. VII]{B}, but it is not the one used here}
\begin{equation}\label{verypoised}
W(A; C, D, E, F, G) \myeq \pFq{7}{6}{A ,,,, 1+\frac{A}{2} ,,,, C ,,,, D ,,,, E ,,,, F ,,,, G}{\frac{A}{2} ,, 1+A-C ,, 1+A-D ,, 1+A-E ,, 1+A-F ,, 1+A-G}{1}.
\end{equation}
It is a holomorphic function in the domain defined by $\Re (2 + 2A - C - D - E - F - G) > 0$ and $A-C, \ A - D, \ A - E, \ A - F, \ A - G\notin\Z_{<0}$.

Next we review the functions defined and studied in \cite{Mi1}. For any $A, B, C, D, E, F, G\in\C$ such that
\begin{equation}\label{eqn:condition1}
E + F + G - A - B - C - D = 1,
\end{equation}
the \textit{$L$-function}\footnote{The letter $L$ in this context stands for a linear combination of generalized hypergeometric functions and has nothing to do with $L$-ensembles} is the function defined by
\begin{equation}\label{Lfunction}
\begin{gathered}
L\left[ \begin{split}
A ,\ B ,\ C ,\  D\\
E ;\hspace{.1in} F ,\hspace{.1in} G
\vphantom{\frac12}
\end{split}\right] \myeq \frac{\Gamma(1 - E)\cdot\pFq{4}{3}{A ,,, B ,,, C ,,, D}{E ,,, F ,,, G}{1}}{\pi\cdot\Gamma(F)\Gamma(G)\Gamma(1+A-E)\Gamma(1+B-E)\Gamma(1+C-E)\Gamma(1+D-E)}\\
+ \frac{\Gamma(E - 1)\cdot\pFq{4}{3}{1+A-E ,,,1+B-E ,,, 1+C-E ,,, 1+D-E}{2-E ,,, 1+F-E ,,, 1+G-E}{1}}{\pi\cdot\Gamma(1+F-E)\Gamma(1+G-E)\Gamma(A)\Gamma(B)\Gamma(C)\Gamma(D)}.
\end{gathered}
\end{equation}

In one-line notation, the function is denoted by $L(A, B, C, D; E; F, G)$. The \textit{top and bottom arguments} of the $L$-function above are $A, B, C, D$ and $F, G$, respectively. Clearly $L(A, B, C, D; E; F, G)$ is symmetric as a function of its top arguments $A, B, C, D$, and it is also symmetric as a function of its bottom arguments $F, G$.

As a function of the variables $E, F, G, B, C, D$ (and replacing $A = E+F+G-B-C-D-1$), the function is analytic in the domain $\{E\notin\Z\}$. Similar observations hold for the $L$-function as an analytic function on the variables $E$ and $5$ other variables, setting the sixth one using $(\ref{eqn:condition1})$.

In certain domain, the function $L(A, B, C, D; E; F, G)$ can be expressed as a very-well-poised $_7F_6(1)$ series. Explicitly, if $\Re(F - D) > 0$, $E\notin\Z$, $D+G-E\notin\Z_{<0}$ and $(\ref{eqn:condition1})$ is satisfied, then \cite[(2.3)]{Mi1}, \cite[(7.5.3)]{B}, give
\begin{equation}\label{eqn:LWidentity}
\begin{gathered}
L(A, B, C, D; E; F, G) = W(D + G - E; \ G - A, \ G - B, \ G - C, \ D, \ 1 + D - E)\\
\times \frac{\Gamma(1 + D + G - E)}{\pi\cdot\Gamma[ G, 1 + G - E, F - D, 1 + A + D - E, 1 + B + D - E, 1 + C + D - E ]}.
\end{gathered}
\end{equation}
When $A = -n$ for some $n\in\N$, then the $L$-function can be written as a multiple of one terminating Saalsch\"{u}tzian $_4F_3(1)$ series. In fact, $\frac{1}{\Gamma(-n)} = 0$ and $(\ref{Lfunction})$ imply
\begin{equation}\label{eqn:4F3L}
L\left[ \begin{split}
-n ,\ X ,\ Y ,\  Z\\
U ;\hspace{.1in} V ,\hspace{.1in} W
\vphantom{\frac12}
\end{split}\right] = \frac{\Gamma(1 - U)\cdot\pFq{4}{3}{-n ,,, X ,,, Y ,,, Z}{U ,,, V ,,, W}{1}}{\pi \cdot \Gamma[1 - n - U, \ 1 + X - U, \ 1 + Y - U, \ 1 + Z - U, \ V, \ W]}.
\end{equation}
Other important identities of the $L$-function that we shall use are the \textit{$L$-fundamental identities} and the \textit{$L$-incoherent} relation, see Propositions $\ref{prop:fundamental}$ and $\ref{incoherent}$ in the Appendix.

\subsection{Solving the modified DRHP}

We still assume here that $\Sigma > 1$, $(a, b)\neq \left(-\frac{1}{2}, -\frac{1}{2}\right)$. By following the general setup of Section $\ref{kernelsortL}$, cf. Proposition $\ref{RISI}$, let us define
\begin{equation}\label{RSgeqN}
\widetilde{R}_{\geq N}(\zeta) \myeq \frac{\mathfrak{p}_N(\widehat{\zeta})}{\prod_{j=0}^{N-1}{\left(\widehat{\zeta} - (j + \epsilon)^2\right)}}, \
\widetilde{S}_{\geq N}(\zeta) \myeq \frac{\mathfrak{p}_{N-1}(\widehat{\zeta})}{h_{N-1}\cdot\prod_{j=0}^{N-1}{\left(\widehat{\zeta} - (j + \epsilon)^2\right)}}, \hspace{.1in} \widehat{\zeta} = (\zeta + \epsilon)^2,
\end{equation}
where $\mathfrak{p}_{N-1}(\hatx), \mathfrak{p}_N(\hatx)$ are the $(N-1)$-st and $N$-th orthogonal polynomials on $\Zpe$ with respect to the weight function $W(\hatx)$, and $h_{N-1}$ is the squared norm of $\mathfrak{p}_{N-1}$ in $L^2(\Zpe, W)$. They have explicit formulas given in $(\ref{pNfirst})$, $(\ref{pNfirst1})$ and $(\ref{hN1})$, which yield
\begin{equation}\label{RlargerN}
\begin{gathered}
\widetilde{R}_{\geq N}(\zeta) = \Gamma\left[ \begin{split}
N+a+1 &,& -z+1 &,& -z'+1 &,& 1-N-\Sigma\\
a+1 &,& -z-N+1 &,& -z'-N+1 &,& 1-\Sigma
\vphantom{\frac12}
\end{split}\right]\\
\times\Gamma\left[ \begin{split}
\zeta-N+1 &,& \zeta+2\epsilon\\
\zeta+1 &,& \zeta+N+2\epsilon
\vphantom{\frac12}
\end{split}\right]\pFq{4}{3}{-N ,,, 1-N-\Sigma ,,, \zeta+2\epsilon ,,, -\zeta}{1+a ,,, -z-N+1 ,,, -z'-N+1}{1},\\
\end{gathered}
\end{equation}
\begin{equation}\label{SlargerN}
\begin{gathered}
\widetilde{S}_{\geq N}(\zeta) = \frac{1}{h_{N-1}}\cdot\Gamma\left[ \begin{split}
N+a &,& -z &,& -z' &,& -N-\Sigma\\
1+a &,& -z-N+1 &,& -z'-N+1 &,& -1-\Sigma
\vphantom{\frac12}
\end{split}\right]\\
\times\Gamma\left[ \begin{split}
\zeta-N+1 &,& \zeta+2\epsilon\\
\zeta+1 &,& \zeta+N+2\epsilon
\vphantom{\frac12}
\end{split}\right]\pFq{4}{3}{-N+1 ,,, -N-\Sigma ,,, \zeta+2\epsilon ,,, -\zeta}{1+a ,,, -z-N+1 ,,, -z'-N+1}{1}.
\end{gathered}
\end{equation}

As functions of $\zeta$, the formulas defining $\widetilde{R}_{\geq N}, \widetilde{S}_{\geq N}$ are meromorphic with simple poles at the points of $\Y_{<N}$. As functions of $(z, z')$, $\widetilde{R}_{\geq N}$ is holomorphic on $\U_0 \cap \{\Sigma \neq 0\}$ and $\widetilde{S}_{\geq N}$ is holomorphic on $\U_0$.

Next we define the functions $\widetilde{R}_{<N}, \widetilde{S}_{<N}$ as follows, cf. Proposition $\ref{RISI}$ again,
\begin{equation}\label{R2S2}
\widetilde{R}_{<N}(\zeta) \myeq 1 + \sum_{y=N}^{\infty}{\frac{\psi_{\geq N}(\haty)\widetilde{S}_{\geq N}(y)}{(\zeta - y)(\zeta + y + 2\epsilon)}}, \hspace{.2in} \widetilde{S}_{<N}(\zeta) \myeq \sum_{y=N}^{\infty}{\frac{\psi_{\geq N}(\haty)\widetilde{R}_{\geq N}(y)}{(\zeta-y)(\zeta+y+2\epsilon)}}.
\end{equation}
It can be shown that both $\TS_{\geq N}$ and $\TR_{\geq N}$ are bounded on $\Z_{\geq N} = \{N, N+1, \ldots\}$ (see for instance (A1) in the proof of Proposition $\ref{prop:R2S2limit}$ below that shows $\TR_{\geq N}(y) \rightarrow 1$ as $y\rightarrow\infty$). As we are assuming $\Sigma > 1$, then $\psi_{\geq N}\in\ell^1(\Z^{\epsilon}_{\geq N})$, which implies that both of the sums defining $\TR_{<N}$, $\TS_{<N}$ are absolutely convergent and therefore $\TR_{<N}, \TS_{<N}$ are holomorphic, as functions of $\zeta$, on the domain $\C\setminus\Y_{\geq N}$. As functions of $(z, z')$, $\TR_{<N}$ is holomorphic on $\U_0$ (just like $\TS_{\geq N}$) and $\TS_{<N}$ is holomorphic on $\U_0\cap\{\Sigma \neq 0\}$ (just like $\TR_{\geq N}$).

\begin{prop}\label{prop:R2S2limit}
The function $\TR_{<N}$ admits the closed form
\begin{equation}\label{tildeRsmallerN}
\begin{gathered}
\widetilde{R}_{<N}(\zeta)= -\frac{\sin\pi z}{\pi}\cdot\Gamma\left[ \begin{split}
z' - z,\ z+b+1,\ z+1\\
\Sigma+1\hspace{.5in}
\vphantom{\frac12}\end{split}\right]\times\Gamma\left[ \begin{split}
N+\Sigma+a+1 &,& N+\Sigma+1\\
N+z+b+1 &,& N+z+2\epsilon
\vphantom{\frac12}\end{split}\right]\\
\times\Gamma\left[ \begin{split}
N+\zeta+2\epsilon,\hspace{.3in} N-\zeta\\
N+\zeta+z'+2\epsilon,\ N-\zeta+z'
\vphantom{\frac12}\end{split}\right]\pFq{4}{3}{1+z+b ,, -z' ,, \zeta+N+z+2\epsilon , N-\zeta+z}{1+z-z' ,,, N+z+b+1 ,,, N+z+2\epsilon}{1}\\
+ \textrm{\{ a similar expression with $z$ and $z'$ interchanged \}},
\end{gathered}
\end{equation}
whereas if $\Sigma \neq 0$, the function $\TS_{<N}$ admits the closed form
\begin{equation}\label{tildeSsmallerN}
\begin{gathered}
\widetilde{S}_{<N}(\zeta) = -\frac{\sin \pi z}{2\pi}\cdot\Gamma\left[ \begin{split}
z' - z &,& \Sigma\\
z' &,& z' + b
\vphantom{\frac12}\end{split}\right]\times\Gamma\left[ \begin{split}
N+a+1 &,& N+1\\
N+z+b+1 &,& N+z+2\epsilon
\vphantom{\frac12}\end{split}\right]\\
\times\Gamma\left[ \begin{split}
N+\zeta+2\epsilon &,& N - \zeta\\
N+\zeta+z'+2\epsilon &,& N-\zeta+z'
\vphantom{\frac12}\end{split}\right]\pFq{4}{3}{z+b ,, 1 - z' ,, \zeta+N+z+2\epsilon ,, N-\zeta+z}{1+z-z' ,,, N+z+b+1 ,,, N+z+2\epsilon}{1}\\
+ \textrm{\{ a similar expression with $z$ and $z'$ interchanged \}}.
\end{gathered}
\end{equation}
\end{prop}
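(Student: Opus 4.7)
The plan is to derive each of the closed forms (\ref{tildeRsmallerN}) and (\ref{tildeSsmallerN}) by recognizing the semi-infinite sum defining $\widetilde{R}_{<N}$ (resp.\ $\widetilde{S}_{<N}$) as, up to Gamma prefactors, a very-well-poised $_7F_6(1)$ series in disguise, and then expanding that $_7F_6(1)$ into the two-$_4F_3$ form dictated by the $L$-function identities (\ref{eqn:LWidentity}) and (\ref{Lfunction}). I sketch the approach for $\widetilde{R}_{<N}$; the case of $\widetilde{S}_{<N}$ is parallel.

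First I would substitute the explicit formula (\ref{SlargerN}) for $\widetilde{S}_{\geq N}(y)$ and the product form (\ref{eqn:psigreaterN}) of $\psi_{\geq N}(\widehat{y})$ into the sum (\ref{R2S2}). Using the factorization $(\zeta - y)(\zeta + y + 2\epsilon) = \widehat{\zeta} - \widehat{y}$ (with $\widehat{\zeta} = (\zeta+\epsilon)^2$) and shifting the index $y \mapsto y + N$, the resulting summand, after collecting all Gamma ratios into Pochhammer symbols, becomes a product of $(\cdot)_y$ factors times a terminating $_4F_3(1)$ in $y$ inherited from $\widetilde{S}_{\geq N}(y)$. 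I would then interchange the two summations by absolute convergence (this is why we work under $\Sigma > 1$), evaluate the innermost $y$-sum, and verify term-by-term that the outcome is a very-well-poised $_7F_6(1)$ of the form (\ref{verypoised}). The correct parameter identification forces $E = 1 + z - z'$.

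Next, I would apply (\ref{eqn:LWidentity}) to convert this $_7F_6(1)$ into an $L$-function $L(A, B, C, D; E; F, G)$ with explicit Gamma prefactors, and then expand via the defining equation (\ref{Lfunction}). The first $_4F_3$ summand of (\ref{Lfunction}) reproduces the first big term of (\ref{tildeRsmallerN}), while the second summand, obtained by the symmetry $E \mapsto 2 - E$ — which under $E = 1 + z - z'$ is precisely the swap $z \leftrightarrow z'$ — yields the "similar expression with $z$ and $z'$ interchanged". Repeated use of $\Gamma(u)\Gamma(1-u) = \pi/\sin(\pi u)$ collapses the remaining Gamma clutter into the advertised prefactor $-\sin(\pi z)/\pi$ (and its companion after the swap).

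The main obstacle is purely computational: correctly performing the Pochhammer bookkeeping in the middle step and checking that the seven parameters satisfy the very-well-poised balance conditions of (\ref{verypoised}) together with the balance $(\ref{eqn:condition1})$ required by the $L$-function; this is the sort of massive calculation the author refers to in the introduction and defers to the Appendix. The derivation of (\ref{tildeSsmallerN}) follows the same template but starts from $\widetilde{R}_{\geq N}(y)$ in (\ref{RlargerN}) in place of $\widetilde{S}_{\geq N}(y)$, and the exclusion of $\Sigma = 0$ is forced at the outset because (\ref{RlargerN}) carries the factor $\Gamma(1-N-\Sigma)/\Gamma(1-\Sigma)$, which is singular there for $N \geq 1$. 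Finally, the auxiliary restriction $\Sigma > 1$ needed to make (\ref{R2S2}) absolutely convergent is removed a posteriori: both sides of (\ref{tildeRsmallerN}) and (\ref{tildeSsmallerN}) are meromorphic in $(z, z')$ on the relevant subdomain of $\U_0$ and, having been shown to agree on an open set, extend to equalities throughout the admissible range by analytic continuation.
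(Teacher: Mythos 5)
Your route is genuinely different from the paper's, and it has a gap at its central step. The paper does not evaluate the series $(\ref{R2S2})$ directly. Instead it establishes uniqueness of the solution to the modified DRHP $(\Y,\widetilde{w}_N)$ (Lemma $\ref{lem:uniqueness}$) and then \emph{verifies} that the proposed closed forms, together with $\TR_{\geq N},\TS_{\geq N}$, satisfy the conditions characterizing that solution: simple poles only at $\Y_{\geq N}$, the residue identities $\Res_{\zeta=y}{\TR_{<N}(\zeta)}=\widetilde{\psi}_{\geq N}(y)\TS_{\geq N}(y)$, and uniform convergence to $1$ (resp.\ $0$) along expanding contours. That verification is enabled by Lemma $\ref{R2S2rewritten}$, which re-expresses the right-hand sides of $(\ref{tildeRsmallerN})$--$(\ref{tildeSsmallerN})$ through $L$-functions, with the asymptotics at infinity handled via the very-well-poised $_7F_6(1)$ representation $(\ref{eqn:LWidentity})$. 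Remark $\ref{rem:borodin}$ states explicitly that the formulas were \emph{not} found by summing the series, but by guessing that $\TR_{<N},\TS_{<N}$ satisfy the Wilson difference equation and matching against Rahman's functions of the second kind.

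The gap in your proposal: after expanding the terminating $_4F_3$ inside $\TS_{\geq N}(y)$ and interchanging the two sums, the inner sum over $y$ for each fixed $k$ is a \emph{non-terminating} unit-argument series of $_5F_4$ type; such a series admits a closed evaluation only under special balancing (very-well-poised or Saalsch\"utzian) conditions, which you neither verify nor show how to arrange, and you do not explain how the leading ``$1$'' in the definition of $\TR_{<N}$ is absorbed into the final expression. Asserting that ``the outcome is a very-well-poised $_7F_6(1)$'' is asserting the answer rather than deriving it --- this is exactly the computation the paper is structured to avoid. (Your observation that the two $_4F_3$ terms of $(\ref{Lfunction})$ are exchanged by $E\mapsto 2-E$, which for $E=1+z-z'$ is the swap $z\leftrightarrow z'$, is correct and does explain the shape of the answer.) A further small point: the proposition is proved under the standing assumption $\Sigma>1$, where the sums $(\ref{R2S2})$ converge; the restriction is removed later at the level of the correlation kernel in Theorem $\ref{thm:Lkernel}$ via Theorem $\ref{thm:kernelLgeneral}$, not by analytically continuing the (divergent) series itself, so your closing continuation argument must be applied to a different analytic object than the left-hand sides of $(\ref{tildeRsmallerN})$ and $(\ref{tildeSsmallerN})$.
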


\begin{rem}
The right-hand side of $(\ref{tildeRsmallerN})$ is evidently holomorphic, as a function of $(z, z')$, on the domain $\U_0\cap\{z-z'\notin\Z\}$, while the right-hand side of $(\ref{tildeSsmallerN})$ is holomorphic on the domain $\U_0\cap\{z-z'\notin\Z\}\cap\{\Sigma \neq 0\}$. The singularities at $z-z'\notin\Z$ can, however, be removed by simply checking that the residues of the simple poles of each of the two terms in $(\ref{tildeRsmallerN})$ and $(\ref{tildeSsmallerN})$ cancel out each other.
\end{rem}

The remaining of this section is devoted to proving Proposition $\ref{prop:R2S2limit}$, but first we need more suitable expressions for $\widetilde{R}_{<N}(\zeta)$ and $\widetilde{S}_{<N}(\zeta)$ given in the lemma below. We write $h_{N-1}(z, z')$ for the formula given in the right-hand side of $(\ref{hN1})$ and $h_N(z-1, z'-1)$ for the same formula with $z$, $z'$ and $N$ replaced by $z-1$, $z'-1$ and $N+1$, respectively. Consider also the meromorphic function
\begin{equation}\label{Gdef}
G(\zeta) = \frac{\sin(\pi z)\sin(\pi z')\sin(\pi(\zeta-\Sigma+N))}{\sin(\pi\Sigma)\sin(\pi(\zeta-z))\sin(\pi(\zeta-z'))}.
\end{equation}

\begin{lem}\label{R2S2rewritten}
The right-hand side of $(\ref{tildeRsmallerN})$ (the closed formula of $\TR_{<N}(\zeta)$) equals
\begin{equation*}
\begin{gathered}
\frac{G(\zeta)}{2h_{N-1}(z, z')}\cdot\Gamma\left[ \begin{split}
&&\zeta+a+1 &,& N - \zeta &,& \zeta+N+2\epsilon\\
\zeta+b+1 &,& z-\zeta+N &,& z'-\zeta+N &,& z+\zeta+N+2\epsilon &,& z'+\zeta+N+2\epsilon
\vphantom{\frac12}
\end{split}\right]\\
\times\Gamma\left[ \begin{split}
\zeta+1 &,& \zeta+N+a+b\\
\zeta+2\epsilon &,& \zeta-N+2
\vphantom{\frac12}
\end{split}\right]\pFq{4}{3}{-N+1 ,,, -N+1-a ,,, 1+z ,,, 1+z'}{\zeta-N+2 ,,, 1-\zeta-N-a-b ,,, \Sigma+2}{1}\\
+ \pi\cdot\Gamma\left[ \begin{split}
-\Sigma &,& \zeta+1 &,& \zeta+a+1 &,& \zeta+N+2\epsilon &,& \zeta+1-z-N &,& \zeta+1-z'-N\\
&& && && \zeta-N+1
\vphantom{\frac12}
\end{split}\right]\nonumber\\
\times L\left[ \begin{split}
-z &,& -z-b &,& \zeta+1-N-z &,& \zeta+N+z'+2\epsilon\\
&&1+z'-z &;& \zeta+1-z &,& \zeta+a+1-z
\vphantom{\frac12}
\end{split}\right],
\end{gathered}
\end{equation*}
while the right-hand side of $(\ref{tildeSsmallerN})$ (the closed formula of $\TS_{<N}(\zeta)$) equals
\begin{equation*}
\begin{gathered}
\frac{G(\zeta)}{2}\cdot\Gamma\left[ \begin{split}
&& \zeta + a + 1 &,& \zeta + N + 2\epsilon &,& N-\zeta\\
\zeta+ b + 1 &,& z-\zeta+N &,& z'-\zeta+N &,& z+\zeta+N+2\epsilon &,& z'+\zeta+N+2\epsilon
\vphantom{\frac12}
\end{split}
\right]\\
\times\Gamma\left[ \begin{split}
\zeta + 1 &,& \zeta+N+2\epsilon\\
\zeta+2\epsilon &,& \zeta-N+1
\vphantom{\frac12}
\end{split}
\right]\pFq{4}{3}{-N ,,, -N-a ,,, z ,,, z'}{\zeta-N+1 ,,, -\zeta-N-a-b ,,, \Sigma}{1}\nonumber\\
+\pi\cdot h_{N}(z-1,z'-1)\cdot\Gamma\left[ \begin{split}
\zeta+1 &,& \zeta + a + 1 &,& \zeta + N + 2\epsilon &,& \zeta+1-z-N &,& \zeta+1-z'-N\\
 && && \zeta-N+1
\vphantom{\frac12}
\end{split}
\right]\nonumber\\
\times \Gamma(-\Sigma+2)\cdot L\left[ \begin{split}
1-z &,& 1-z-b &,& \zeta+1-N-z &,& \zeta+N+z'+2\epsilon\\
&&1+z'-z &;& \zeta+2-z &,& \zeta+a+2-z
\vphantom{\frac12}
\end{split}\right].
\end{gathered}
\end{equation*}
\end{lem}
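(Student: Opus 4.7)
The plan is to exhibit each of the two-term sums in Proposition $\ref{prop:R2S2limit}$ (where the two summands are related by $z\leftrightarrow z'$) as an $L$-function with parameter $E = 1+z'-z$, since this is precisely the swap realized by the reflection $A\mapsto 1+A-E$ built into the definition $(\ref{Lfunction})$, and then to convert that $L$-function into the one appearing in the lemma by standard $_4F_3(1)$ transformations, with a correction that produces the $G(\zeta)$-prefactor term. A key sanity check is the Saalsch\"{u}tz condition: one verifies directly that both $_4F_3$ series in $(\ref{tildeRsmallerN})$ and $(\ref{tildeSsmallerN})$, as well as the ones to appear on the right-hand side of the lemma, are balanced (bottom$-$top sums equal $1$), so that $L$-functions and Whipple transformations are applicable.

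First I would expand the $L$-function on the right-hand side of each claimed equality using $(\ref{Lfunction})$, producing a sum of two $_4F_3(1)$ series related by the reflection at $E=1+z'-z$, which here is exactly $z\leftrightarrow z'$. I would then apply a Whipple transformation to each of these $_4F_3$'s --- most conveniently realized by passing through the very-well-poised $_7F_6$ form via the identity $(\ref{eqn:LWidentity})$ --- so that its parameters match one of the $_4F_3$ series appearing in $\widetilde{R}_{<N}$ (resp.\ $\widetilde{S}_{<N}$) from Proposition $\ref{prop:R2S2limit}$. This reduces the verification to a matching of Gamma-function prefactors, which, after repeated use of $\Gamma(u)\Gamma(1-u)=\pi/\sin(\pi u)$, produces ratios of sines whose combination is exactly $G(\zeta)$ as defined in $(\ref{Gdef})$. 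The extra $_4F_3$ appearing in the $G(\zeta)$-term has parameters $(-N+1,-N+1-a,1+z,1+z';\zeta-N+2,1-\zeta-N-a-b,\Sigma+2)$ for $\widetilde{R}_{<N}$ and $(-N,-N-a,z,z';\zeta-N+1,-\zeta-N-a-b,\Sigma)$ for $\widetilde{S}_{<N}$; these are precisely the parameters arising in $\widetilde{\mathfrak{p}}_N$ from Step~4 of the proof of Theorem $\ref{thm:Okernel}$, and the contiguous relation $(\ref{trivial4F3})$ that produced $\widetilde{\mathfrak{p}}_N$ from $\mathfrak{p}_N$ and $\mathfrak{p}_{N-1}$ is exactly what accounts for their appearance here.

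The main obstacle is the exhaustive bookkeeping of Gamma prefactors: one must collect the sine ratios produced by the reflection formula into $G(\zeta)$ and verify that the residual Gamma factors agree with those written in the lemma. This calculation is purely algebraic but lengthy, so I would relegate it to the Appendix, where the $L$-fundamental identities (Proposition $\ref{prop:fundamental}$) and the $L$-incoherent relation (Proposition $\ref{incoherent}$), both drawn from \cite{Mi1}, provide the exact $_4F_3$-to-$L$ transformations required to match the two sides. Once the prefactors are matched, the equality follows as an identity of analytic functions on the domain $\U_0\cap\{z-z'\notin\Z\}\cap\{\Sigma\neq 0\}$, and extends to the stated domain by continuity after cancellation of the removable poles in $z-z'\in\Z$ noted in the remark following Proposition $\ref{prop:R2S2limit}$.
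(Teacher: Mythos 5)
Your overall framing is right in one respect: the two $z\leftrightarrow z'$-related summands in $(\ref{tildeRsmallerN})$ and $(\ref{tildeSsmallerN})$ do assemble into a single $L$-function (the reflection $A\mapsto 1+A-E$ at $E=1+z-z'$ is exactly the swap $z\leftrightarrow z'$), and the proof is indeed a long chain of $L$-function manipulations relegated to the Appendix. But the mechanism you describe for producing the two-term right-hand side of the lemma does not work, and the gap is structural rather than computational. A Whipple transformation (equivalently, passage through the very-well-poised $_7F_6$ via $(\ref{eqn:LWidentity})$, or the fundamental transformations of Proposition $\ref{prop:fundamental}$) turns one $L$-function into one $L$-function; no amount of Gamma-prefactor matching can turn a single $L$-function into the sum of an $L$-function \emph{plus} a separate terminating $_4F_3$ carrying the $G(\zeta)$-prefactor. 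That extra term is genuinely the third member of a three-term linear relation among Saalsch\"{u}tzian series, and the only identity in the toolkit that supplies it is the $L$-incoherent relation (Proposition $\ref{incoherent}$), which you mention only in passing as part of the ``bookkeeping.'' In the actual proof it is the crux: it is applied (twice) with parameters chosen so that $F-A=1-N$, killing one of the products of six sines and collapsing the three-term relation into the specific structure of the lemma; the resulting trigonometric combination is then condensed into $G(\zeta)$ via a four-sine identity of the form $(\ref{eq:trigonometric})$, and a final cancellation of two residual terms is still required.

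Your attribution of the extra $_4F_3$ to the contiguous relation $(\ref{trivial4F3})$ is also incorrect. You rightly observe that its parameters are those of $\widetilde{\mathfrak{p}}_N$ (suitably shifted), but $(\ref{trivial4F3})$ relates three $_4F_3$'s with identical arguments up to unit parameter shifts and produces no sine ratio of the form $G(\zeta)$; it did its work in Step 4 of the proof of Theorem $\ref{thm:Okernel}$ and does not reappear here. To repair the proposal you would need to place the incoherent relation at the center of the argument — identify the correct seven parameters, exploit the vanishing sine factor, carry out the four-sine simplification, and verify the terminal cancellation — at which point you would essentially be reproducing the Appendix computation run in the opposite direction.
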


\begin{rem}
By virtue of $\Gamma(z)\Gamma(1 - z) = \frac{\pi}{\sin\pi z}$, the first term of $\TR_{<N}(\zeta)$ (in Lemma $\ref{R2S2rewritten}$) has the factor
\begin{equation*}
\frac{1}{\sin\pi(\zeta - z)\sin\pi(\zeta - z')}\frac{1}{\Gamma(z-\zeta+N)\Gamma(z'-\zeta+N)} = \Gamma(\zeta+1-z-N)\Gamma(\zeta+1-z'-N).
\end{equation*}
Then, as a function of $\zeta$, the first term of $\TR_{<N}(\zeta)$ has simple poles at $\Y_{\geq N}\cup (-a-\Z_{<0})\cup(z+N-\Z_{<0})\cup(z'+N-\Z_{<0})$. The second term of $\TR_{<N}(\zeta)$ has simple poles at $\{-N-2\epsilon, -N-1-2\epsilon, \ldots\}\cup(-a-\Z_{<0})\cup(z+N-\Z_{<0})\cup(z'+N-\Z_{<0})$. As it turns out, a consequence of Lemma $\ref{R2S2rewritten}$ is that the residues of the first term of $\TR_{<N}$ with respect to each of the simple poles at $(-a-\Z_{<0})\cup(z+N-\Z_{<0})\cup(z'+N-\Z_{<0})$ are the negatives of the residues of the corresponding poles of the second term of $\TR_{<N}$. Then $\TR_{<N}$ is holomorphic on $\C\setminus\Y_{\geq N}$, as it ought to be. A similar remark applies to $\TS_{<N}$. One actually has that $\TS_{<N}$ can be obtained from $\TR_{<N}$ after the change of variables $z\mapsto z-1$, $z'\mapsto z'-1$, $N\mapsto N+1$ and then multiplying both sides by $h_N(z-1, z'-1)$.
\end{rem}

The proof of Lemma $\ref{R2S2rewritten}$ is very technical, and based on several identities involving the $L$-functions. We relegate the proof to the Appendix, and we proceed here with the proof of Proposition $\ref{prop:R2S2limit}$ assuming Lemma $\ref{R2S2rewritten}$ has been proved. We shall need the observation that the formulas for $\widetilde{R}_{<N}(\zeta)$ and $\widetilde{S}_{<N}(\zeta)$ in Proposition $\ref{prop:R2S2limit}$ are invariant under the involution $\zeta \mapsto -\zeta-2\epsilon$. Moreover the formulas for $\widetilde{R}_{\geq N}(\zeta)$ and $\widetilde{S}_{\geq N}(\zeta)$ in $(\ref{RlargerN})$ and $(\ref{SlargerN})$, respectively, are also invariant under $\zeta\mapsto -\zeta-2\epsilon$.

\begin{lem}\label{lem:involutionpreserving}
$\widetilde{R}_{<N}(\zeta) = \widetilde{R}_{<N}(-\zeta-2\epsilon)$, $\widetilde{S}_{<N}(\zeta) = \widetilde{S}_{<N}(-\zeta-2\epsilon)$, $\widetilde{R}_{\geq N}(\zeta) = \widetilde{R}_{\geq N}(-\zeta-2\epsilon)$, and $\widetilde{S}_{\geq N}(\zeta) = \widetilde{S}_{\geq N}(-\zeta-2\epsilon)$.$\hfill\square$
\end{lem}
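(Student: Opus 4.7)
The plan is to verify the four invariances by direct computation from the explicit closed-form expressions for the four functions, leaning on the symmetry of $_4F_3$ in its top parameters together with the Gamma reflection formula where needed.

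For $\TR_{\geq N}$ and $\TS_{\geq N}$, I would read off the $\zeta$-dependent piece of $(\ref{RlargerN})$ and $(\ref{SlargerN})$: a common Gamma ratio $f(\zeta) = \Gamma[\zeta-N+1, \zeta+2\epsilon;\, \zeta+1, \zeta+N+2\epsilon]$ and a $_4F_3$ whose top parameters contain $\zeta+2\epsilon$ and $-\zeta$ (while the other top and all bottom parameters are independent of $\zeta$). Under $\zeta \mapsto -\zeta - 2\epsilon$ the two parameters $\zeta+2\epsilon$ and $-\zeta$ simply swap, so the $_4F_3$ is unchanged by symmetry in its upper parameters. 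For $f$, the rational-function identity
\[
f(\zeta) = \frac{1}{\prod_{k=0}^{N-1}(\zeta-k)(\zeta+2\epsilon+k)}
\]
(which comes from telescoping $\Gamma(\zeta+1)/\Gamma(\zeta-N+1)$ and $\Gamma(\zeta+N+2\epsilon)/\Gamma(\zeta+2\epsilon)$ into products of length $N$) makes the involution invariance transparent, since $\zeta-k \mapsto -(\zeta+2\epsilon+k)$ and $\zeta+2\epsilon+k \mapsto -(\zeta-k)$, contributing an overall $(-1)^{2N}=1$.

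For $\TR_{<N}$ and $\TS_{<N}$, I would use the closed expressions $(\ref{tildeRsmallerN})$ and $(\ref{tildeSsmallerN})$ from Proposition $\ref{prop:R2S2limit}$. In each summand, the $\zeta$-dependent Gamma factor $\Gamma[N+\zeta+2\epsilon, N-\zeta;\, N+\zeta+z'+2\epsilon, N-\zeta+z']$ is invariant under $\zeta \mapsto -\zeta-2\epsilon$ because the involution merely permutes its four arguments (in two pairs). The $\zeta$-dependent top parameters of the accompanying $_4F_3$ are $\zeta+N+z+2\epsilon$ and $N-\zeta+z$, which again swap under the involution, leaving the series invariant by upper-parameter symmetry. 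The same argument applies verbatim to the $z\leftrightarrow z'$ summand and to $\TS_{<N}$, whose structure is identical.

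The only real obstacle is that the closed-form expression in Proposition $\ref{prop:R2S2limit}$ was derived under the working hypothesis $\Sigma > 1$ (and $\epsilon > 0$). To conclude the invariance on the full admissible domain, I would invoke the analyticity statements built into that proposition together with Corollary $\ref{cor:analytic}$ and extend by analytic continuation in $(z,z')$, after first checking in a standard manner that the spurious poles at $z-z'\in\Z$ and at $\Sigma = 0$ cancel as described in the remark following Proposition $\ref{prop:R2S2limit}$. Since the invariance $\zeta \mapsto -\zeta-2\epsilon$ is an identity of meromorphic functions in $\zeta$ that holds on an open set of parameters, the extension is automatic and no further computation is required.
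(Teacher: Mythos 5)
Your core argument is correct and is essentially the (unstated) verification the paper has in mind: the paper offers no proof beyond the $\hfill\square$, evidently regarding the statement as immediate from inspection of the explicit formulas. The key observations — that $\Gamma[\zeta-N+1,\,\zeta+2\epsilon;\ \zeta+1,\,\zeta+N+2\epsilon]$ telescopes to a rational function $\prod_{k=0}^{N-1}(\zeta-k)^{-1}(\zeta+2\epsilon+k)^{-1}$ whose factors are merely permuted (each pair picking up two minus signs) by $\zeta\mapsto-\zeta-2\epsilon$; that the $\zeta$-dependent upper parameters $\zeta+2\epsilon, -\zeta$ of the $_4F_3$ in $(\ref{RlargerN})$, $(\ref{SlargerN})$ swap; that the $\zeta$-dependent Gamma arguments $N+\zeta+2\epsilon\leftrightarrow N-\zeta$ and $N+\zeta+z'+2\epsilon\leftrightarrow N-\zeta+z'$ in $(\ref{tildeRsmallerN})$, $(\ref{tildeSsmallerN})$ swap in pairs; and that the $\zeta$-dependent upper parameters $\zeta+N+z+2\epsilon\leftrightarrow N-\zeta+z$ of the corresponding $_4F_3$ swap — are all correctly identified and all one needs.

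However, your final paragraph is a confusion that should be removed. First, there is no ``obstacle'' to address: the Lemma asserts the invariance of the \emph{explicit formulas} on the right-hand sides of $(\ref{tildeRsmallerN})$, $(\ref{tildeSsmallerN})$, $(\ref{RlargerN})$, $(\ref{SlargerN})$ under $\zeta\mapsto -\zeta-2\epsilon$, and you have just verified this is a purely algebraic identity in $\zeta$, valid at every point where the formulas are defined, for any values of $z,z',a,b$ making the formulas meaningful. No constraint like $\Sigma>1$ or $\epsilon>0$ enters the verification, so no analytic continuation in $(z,z')$ is needed. Second, and more importantly, invoking ``the analyticity statements built into'' Proposition $\ref{prop:R2S2limit}$ would be circular: the Lemma is used \emph{inside} the proof of that proposition (to reduce the residue checks (C1), (C2) to $y\in\Zp$ and the asymptotics (B2) to a half-plane), so the Lemma must be established independently — which your direct computation does perfectly well, and which is the only reading consistent with the sentence in the paper that introduces the Lemma (``the \emph{formulas} ... are invariant under the involution'').
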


We are finally ready to prove Proposition $\ref{prop:R2S2limit}$.

\begin{proof}[Proof of Proposition $\ref{prop:R2S2limit}$]
By the discussion in Section $\ref{sec:modifiedDRHP}$, especially the uniqueness statement in Lemma $\ref{lem:uniqueness}$, the functions $\widetilde{R}_{<N}, \widetilde{R}_{\geq N}, \widetilde{S}_{<N}$ and $\widetilde{S}_{\geq N}$ are uniquely defined by the seven conditions at the end of Section $\ref{sec:modifiedDRHP}$. In the remaining of the proof, we simply have to check that these conditions hold. In other words, we need to study the structure of residues of these four meromorphic functions and their asymptotics at infinity. Let us verify the necessary conditions only for the functions $\widetilde{R}_{<N}$ and $\widetilde{R}_{\geq N}$, since the analysis for the functions $\widetilde{S}_{<N}$ and $\widetilde{S}_{\geq N}$ is similar. In other words, we need to prove:\\

(A1) $\widetilde{R}_{\geq N}(\zeta)$ is meromorphic and it has only simple poles at the points of $\Y_{<N}$.

(B1) $\widetilde{R}_{\geq N}(\zeta) \rightarrow 1$ uniformly as $\zeta\rightarrow\infty$ on expanding contours whose distance from $\Y$ is bounded away from $0$.

(C1) $\Res_{\zeta = y}{\widetilde{R}_{\geq N}(\zeta)} = \widetilde{\psi}_{<N}(y)\widetilde{S}_{<N}(y)\ \forall y\in\Y_{< N}$.

(A2) $\widetilde{R}_{< N}(\zeta)$ is meromorphic and it has only simple poles at the points of $\Y_{\geq N}$.

(B2) $\widetilde{R}_{< N}(\zeta) \rightarrow 1$ uniformly as $\zeta\rightarrow\infty$ on expanding contours whose distance from $\Y$ is bounded away from $0$.

(C2) $\Res_{\zeta = y}{\widetilde{R}_{<N}(\zeta)} = \widetilde{\psi}_{\geq N}(y)\widetilde{S}_{\geq N}(y) \ \forall y\in\Y_{\geq N}$.\\

From formula $(\ref{RlargerN})$, it is clear that $\widetilde{R}_{\geq N}$ is meromorphic as a function of $\zeta$ and all its poles are simple and at the points of $\Y_{<N}$, which proves (A1).

To prove (B1), we need an identity for terminating Saalsch\"{u}tzian $_4F_3(1)$ series, which implies that $\widetilde{R}_{\geq N}(\zeta)$ can be rewritten as
\begin{equation}\label{R1N}
\widetilde{R}_{\geq N}(\zeta) = \pFq{4}{3}{-N ,, -N-a ,, z ,, z'}{\Sigma ,, \zeta-N+1 ,, -\zeta-N-a-b}{1} = \sum_{k=0}^{N-1}{\frac{(-N)_k(-N-a)_k(z)_k(z')_k}{(\Sigma)_k(\zeta-N+1)_k(-\zeta-N-a-b)_k k!}}.
\end{equation}
In fact, the equality between $(\ref{RlargerN})$ and $(\ref{R1N})$ is equivalent to the equality $(\ref{pNfirst}) = (\ref{almostpN})$, which is proved in the Appendix. In the series above, it is evident that each term, except the first one (corresponding to $k = 0$) converges to $0$ as $\zeta\rightarrow\infty$. Therefore $\lim_{\zeta\rightarrow\infty}{\widetilde{R}_{\geq N}(\zeta)} = 1$ and moreover the limit is uniform in \textit{any} sequence of expanding contours in $\C$. Thus (B1) is proved.

From the definition $(\ref{psitilde})$, $\widetilde{\psi}_{< N}(\zeta) = -\widetilde{\psi}_{< N}(-\zeta-2\epsilon)$; together with Lemma $\ref{lem:involutionpreserving}$, we have $\widetilde{\psi}_{<N}(\zeta)\widetilde{S}_{<N}(\zeta) = -\widetilde{\psi}_{<N}(-\zeta-2\epsilon)\widetilde{S}_{<N}(-\zeta-2\epsilon)$ and also $\widetilde{R}_{\geq N}(\zeta) = \widetilde{R}_{\geq N}(-\zeta-2\epsilon)$. Thus (C1) reduces to prove $\Res_{\zeta = y}{\widetilde{R}_{\geq N}(\zeta)} = \widetilde{\psi}_{<N}(y)\widetilde{S}_{<N}(y) \ \forall y\in\{0, 1, \ldots, N-1\}$. Because of expression $(\ref{RlargerN})$ for $\widetilde{R}_{\geq N}(\zeta)$ and
\begin{equation*}
\Res_{\zeta = y}{\frac{\Gamma(\zeta - N + 1)}{\Gamma(\zeta + 1)}} = \Res_{\zeta = y}{\frac{1}{\zeta (\zeta - 1)\cdots(\zeta-N+1)}} = \frac{(-1)^{N-y+1}}{\Gamma(y+1)\Gamma(N-y)} \ \forall y\in\{0, 1, \ldots, N-1\},
\end{equation*}
we have that $\Res_{\zeta = y}{\widetilde{R}_{\geq N}(\zeta)}$ equals
\begin{equation}\label{eq:res1}
\begin{gathered}
\frac{(-1)^{N-y+1}\cdot\Gamma(y+2\epsilon)}{\Gamma(y+1)\Gamma(N-y)\Gamma(y+N+2\epsilon)}\times\Gamma\left[ \begin{split}
N+a+1 &,& 1-z &,& 1-z &,& 1-N-\Sigma\\
a+1 &,& 1-z-N &,& 1-z'-N &,& 1-\Sigma
\vphantom{\frac12}
\end{split}
\right]\\
\times\pFq{4}{3}{-N ,,, 1-N-\Sigma ,,, y+2\epsilon ,,, -y}{1+a ,,, 1 - z - N ,,, 1 - z' - N}{1} \ \forall y\in\{0, 1, \ldots, N-1\}.
\end{gathered}
\end{equation}
To prove (C1), we must check that the expression above equals $\widetilde{\psi}_{<N}(y)\widetilde{S}_{<N}(y)$. To evaluate $\widetilde{S}_{<N}(y)$, use Lemma $\ref{R2S2rewritten}$. Observe that the second sum in the expression for $\widetilde{S}_{<N}(y)$ vanishes due to the factor $\Gamma(y+1)/\Gamma(y-N+1)$. Moreover $G(y) = (-1)^{N-y+1}$ follows from the definition $(\ref{Gdef})$ and $\sin \pi(\eta - M) = (-1)^{M+1}\sin \pi\eta \ \forall M\in\Z$. By using also the explicit formula $(\ref{eqn:psismallerN})$ for $\psi_{<N}(\haty)$, we can obtain
\begin{equation}\label{eq:res2}
\widetilde{\psi}_{<N}(y)\TS_{<N}(y) = \frac{\psi_{<N}(\haty)}{2(y+\epsilon)}\TS_{<N}(y) = \frac{(-1)^{N-y+1}\cdot\pFq{4}{3}{-N ,,, -N-a ,,, z ,,, z'}{y-N+1 ,,, -y-N-a-b ,,, \Sigma}{1}}{\Gamma(y-N+1)\Gamma(N-y)}.
\end{equation}
We indeed have $(\ref{eq:res1}) = (\ref{eq:res2})$, since it is equivalent to $(\ref{RlargerN})=(\ref{R1N})$, and the latter was already justified.\\

Now we proceed to prove (A2), (B2) and (C2). The formula in $(\ref{tildeRsmallerN})$ for $\widetilde{R}_{<N}(\zeta)$ shows that it is a meromorphic function whose set of poles is $\Y_{\geq N}$ and they are simple poles. Then (A2) is proved.

To prove (B2), observe that $\TR_{<N}(\zeta) = \TR_{<N}(-\zeta-2\epsilon)$ of Lemma $\ref{lem:involutionpreserving}$ allows us to reduce the statement to $\lim_{\zeta\rightarrow\infty}{\TR_{<N}(\zeta)} = 1$ for $\zeta\rightarrow\infty$ in an expanding sequence of \textit{half-contours} in the closed halfspace $\{z\in\C : \Re z \geq -\epsilon\}\subset\C$ (for instance, in a sequence of semicircles whose radii tends to infinity). For the actual asymptotics, we use the expression of Lemma $\ref{R2S2rewritten}$. Let us begin with the first term of $\TR_{<N}(\zeta)$. First, the terminating Saalsch\"{u}tzian $_4F_3(1)$ series in that first term tends to $1$ as $\zeta\rightarrow\infty$, that is,
\begin{equation*}
\lim_{\zeta\rightarrow\infty}{\pFq{4}{3}{-N+1 ,,, -N+1-a ,,, 1+z ,,, 1+z'}{\zeta-N+2 ,,, 1-\zeta-N-a-b ,,, \Sigma+2}{1}} = 1
\end{equation*}
uniformly for $\zeta$ in any sequence of expanding in $\C$ (this is similar to (B1) above). Second, by using
\begin{equation*}
\begin{gathered}
\frac{1}{\sin(\pi(\zeta - z))\sin(\pi(\zeta - z'))\Gamma(z-\zeta+N)\Gamma(z'-\zeta+N)} = \frac{\Gamma(\zeta+1-z-N)\Gamma(\zeta+1-z'-N)}{\pi^2}\\
\Gamma(N-\zeta) = \frac{\pi}{\Gamma(\zeta - N + 1)\sin(\pi(\zeta-N+1))}
\end{gathered}
\end{equation*}
the factor before the terminating Saalsch\"{u}tzian $_4F_3(1)$ series in the first term of $\TR_{<N}(\zeta)$ equals
\begin{equation}\label{eqn:firstRsmallerN}
\begin{gathered}
\Gamma\left[ \begin{split}
\zeta+a+1 &,& \zeta+N+2\epsilon &,& \zeta+1-z-N &,& \zeta+1-z'-N &,& \zeta+1 &,& \zeta+N+a+b\\
\zeta+b+1 &,& \zeta - N + 1&,& z+\zeta+N+2\epsilon &,& z'+\zeta+N+2\epsilon &,& \zeta+2\epsilon &,& \zeta-N+2
\vphantom{\frac12}
\end{split}\right]\\
\times\frac{\sin(\pi z)\sin(\pi z')}{2 \sin(\pi\Sigma) h_{N-1}(z, z')}\frac{\sin(\pi(\zeta-\Sigma))}{\sin(\pi(\zeta+1))}.
\end{gathered}
\end{equation}
From well known asymptotics of Gamma functions, the first line of $(\ref{eqn:firstRsmallerN})$ is $\zeta^{-2(\Sigma+1)}(1 + O(\zeta^{-1}))$, so it converges to $0$ as $\zeta\rightarrow\infty$, because $\Sigma > -1$. Now let us look at the absolute value of the second line of $(\ref{eqn:firstRsmallerN})$; it is upper bounded by
\begin{equation*}
const\times \left| \frac{\sin(\pi(\zeta - \Sigma))}{\sin\pi\zeta} \right| = const\times \left| \cos(\pi\Sigma) - \cot(\pi\zeta)\sin(\pi\Sigma) \right| \leq const\times\left( 1 + |\cot(\pi\zeta)| \right),
\end{equation*}
where the constant $const$ does not depend on $\zeta$. It is known that $|\cot(\pi\zeta)|$ is uniformly bounded for $\zeta\rightarrow\infty$ in any expanding sequence of contours of $\C$ that are uniformly bounded away from $\Z$. It follows that the first term of $\TR_{<N}(\zeta)$ in Lemma $\ref{R2S2rewritten}$ converges to $0$ as $\zeta\rightarrow\infty$ in the sense we want.

Let us now prove that the second term of $\TR_{<N}(\zeta)$ in Lemma $\ref{R2S2rewritten}$ converges to $1$ uniformly as $\zeta\rightarrow\infty$ in a sequence of expanding half-contours of $\{z\in\C : \Re z\geq -\epsilon\}\subset\C$. Since $\epsilon > -a-1$, then $\Re(\zeta+a+1) > 0$ for all $\zeta\in\{z\in\C : \Re z\geq -\epsilon\}$. Then we can apply identity $(\ref{eqn:LWidentity})$ with parameters $A = -z-b, \ B = \zeta+1-N-z, \ C = \zeta+N+z'+2\epsilon, \ D = -z, \ E = 1+z'-z, \ F = \zeta+1-z. \ G = \zeta+a+1-z$. It follows that the second term of $\TR_{<N}(\zeta)$ in Lemma $\ref{R2S2rewritten}$ equals
\begin{equation}\label{eqn:secondtermR}
\begin{gathered}
\Gamma\left[ \begin{split}
\zeta-z-z'+1 &,& \zeta-z-N+1 &,& \zeta-z'-N+1 &,& \zeta+1\\
\zeta-z-z'-N+1 &,& \zeta-z+1 &,& \zeta-z'+1 &,& \zeta-N+1
\vphantom{\frac12}
\end{split}\right]\\
\times\pFq{7}{6}{\zeta-z-z' ,,, 1+\frac{(\zeta-z-z')}{2} ,,, N ,,, -z ,,, -z' ,,, -N-\Sigma-a ,,, \zeta+b+1}{\frac{(\zeta-z-z')}{2} ,,, \zeta-z-z'-N+1 ,,, \zeta-z+1 ,,, \zeta-z'+1 ,,, \zeta+N+2\epsilon ,,, -\Sigma}{1}.
\end{gathered}
\end{equation}
By well known asymptotics of Gamma functions, the first line of $(\ref{eqn:secondtermR})$ is asymptotically $1 + O(\zeta^{-1})$. Let $v_k(\zeta)$ be the $k$-th term of the $_7F_6$ function in the second line of $(\ref{eqn:secondtermR})$, for all $k\in\Z_{\geq 0}$. Clearly $v_0(\zeta) \equiv 1$, while for any $k > 0$ we have
\begin{equation*}
v_k(\zeta) = const\times\frac{(\zeta+2k-z-z')(\zeta-z-z'+1)_{k-1}(\zeta+b+1)_k}{(\zeta-z-z'-N+1)_k(\zeta-z+1)_k(\zeta-z'+1)_k(\zeta+N+2\epsilon)_k},
\end{equation*}
where $const$ is a positive real value not depending on $\zeta$. Then if $k$ is fixed, $v_k(\zeta)\rightarrow 0$ uniformly for $\zeta$ in any expanding sequence of contours bounded away from $\{z+z'+\Z\}\cup\{z+\Z\}\cup\{z'+\Z\}\cup\{-2\epsilon+\Z\}$. By making some uniform estimates on $|v_k(\zeta)|$, that we omit, we can interchange the order between the limit and sum signs, thus obtaining
\begin{equation*}
\lim_{\zeta\rightarrow\infty}{\pFq{7}{6}{\zeta-z-z' ,,, 1+\frac{(\zeta-z-z')}{2} ,,, N ,,, -z ,,, -z' ,,, -N-\Sigma-a ,,, \zeta+b+1}{\frac{(\zeta-z-z')}{2} ,,, \zeta-z-z'-N+1 ,,, \zeta-z+1 ,,, \zeta-z'+1 ,,, \zeta+N+2\epsilon ,,, -\Sigma}{1}} = 1,
\end{equation*}
in the desired sense. Thus the second term of $\TR_{<N}(\zeta)$ converges to $1$ as $\zeta\rightarrow\infty$ in $\{z\in\C : \Re z \geq -\epsilon\}$ (and therefore in $\C$ as explained above), concluding the proof of (B2).

As we did for (C1), we deduce that (C2) reduces to proving $\Res_{\zeta = y}{\widetilde{R}_{<N}(\zeta)} = \widetilde{\psi}_{\geq N}(y)\widetilde{S}_{\geq N}(y),\ \forall y\in\{N, N+1, \ldots\}$. To calculate the left-hand side, use Lemma $\ref{R2S2rewritten}$. The second term in the expression for $\widetilde{R}_{<N}(\zeta)$ has no residue at $\zeta = y$, but the first term does because of the factor $\Gamma(N - \zeta)$. By using
\begin{equation*}
\Res_{\zeta = y}{\Gamma(N - \zeta)} = \frac{(-1)^{y - N + 1}}{\Gamma(y - N + 1)},
\end{equation*}
and $G(y) = (-1)^{y - N + 1} \ \forall y\in\Z$, we have that $\Res_{\zeta = y}{\widetilde{R}_{<N}(\zeta)}$ equals
\begin{equation}\label{eq:residue1}
\begin{gathered}
\frac{1}{2h_{N-1}\Gamma\left[\zeta-N+1, z-y+N, z'-y+N\right]}\cdot\Gamma\left[ \begin{split}
y+a+1 ,\ y+N+2\epsilon ,\ y+1 ,\ y+N+a+b\\
y+b+1 ,\hspace{.3in} y+2\epsilon ,\hspace{.3in} y-N+2
\vphantom{\frac12}
\end{split}
\right]\\
\times\frac{1}{\Gamma(z+y+N+2\epsilon)\Gamma(z'+y+N+2\epsilon)}
\pFq{4}{3}{-N+1 ,,, -N+1-a ,,, 1+z ,,, 1+z'}{y-N+2 ,,, 1-y-N-a-b ,,, \Sigma+2}{1}.
\end{gathered}
\end{equation}
We need to show that the expression above is equal to $\widetilde{\psi}_{\geq N}(y)\TS_{\geq N}(y) = \frac{\psi_{\geq N}(\haty)\TS_{\geq N}(y)}{2(y+\epsilon)}$; from $(\ref{SlargerN})$ and $(\ref{eqn:psigreaterN})$, the latter equals
\begin{equation}\label{eq:residue2}
\begin{gathered}
\frac{1}{2h_{N-1}}\cdot\Gamma\left[ \begin{split}
y+a+1 &,& y+N+2\epsilon &,& N+a &,& -z &,& -z' &,& -N-\Sigma\\
y+b+1 &,& y-N+1 &,& 1+a &,& -z-N+1 &,& -z'-N+1 &,& -1-\Sigma
\vphantom{\frac12}
\end{split}
\right]\\
\times\frac{1}{\Gamma(z-y+N)\Gamma(z'-y+N)\Gamma(z+y+N+2\epsilon)\Gamma(z'+y+N+2\epsilon)}\\
\times\pFq{4}{3}{-N+1 ,,, -N-\Sigma ,,, y+2\epsilon ,,, -y}{1+a ,,, -z-N+1 ,,, -z'-N+1}{1}.
\end{gathered}
\end{equation}
The equality $(\ref{eq:residue1}) = (\ref{eq:residue2})$ is equivalent to the identity $(\ref{RlargerN})=(\ref{R1N})$ after the change of variables $N\mapsto N-1, z\mapsto z+1, z'\mapsto z'+1$. Hence (C2) is proved and we are done.
\end{proof}

\begin{rem}\label{rem:borodin}
Observe that we proved the formulas for the explicit expressions of $\widetilde{R}_{<N}(\zeta), \widetilde{S}_{<N}(\zeta)$ by checking the residues at the points of $\Zp^{\epsilon}$ and the uniform asymptotics at infinity along expanding contours. However, we did not explain how we came up with these explicit formulas. I owe Alexei Borodin the following suggestion of how to find expressions for $\widetilde{R}_{<N}, \widetilde{S}_{<N}$. From his earlier work on the DRHP, see e.g. \cite{B00}, it is expected that both of these functions satisfy certain second-order difference equations. Due to Corollary $\ref{RIISII}$, we know that $\widetilde{R}_{<N}(x), \widetilde{S}_{<N}(x)$ satisfy the difference equation of Wilson polynomials at points $x\in\Z^{\epsilon}_{<N}$. Taking a leap of faith, we assumed that the functions $\widetilde{R}_{<N}(\zeta), \widetilde{S}_{<N}(\zeta)$ satisfy the difference equations of Wilson polynomials for a general argument. The paper \cite{R} finds (even in a more general $q$-setting) two linearly independent solutions to the difference equation of the Wilson polynomials, known as the \textit{Wilson polynomials of the first and second kinds}. Then we found expressions for $\widetilde{R}_{<N}$ and $\widetilde{S}_{<N}$ as linear combinations of the Wilson polynomials of the first and second kind, with certain residue and asymptotics at infinity constraints. The expressions found in this way are those in Lemma  $\ref{R2S2rewritten}$. After massive computations, we were able to transform these expressions into those of Proposition $\ref{prop:R2S2limit}$.
\end{rem}

\subsection{An explicit correlation kernel $K^{\LL^{(N)}}$}

The culmination of our efforts is the following correlation kernel of $\LL^{(N)}$.

\begin{thm}\label{thm:Lkernel}
Let $a\geq b\geq -\frac{1}{2}$, $(z, z')\in\U_{\adm}$, and $\LL^{(N)}$ be the point process with associated parameters $z, z', a, b$, that was defined in Section $\ref{zprocesses}$. The point process $\LL^{(N)}$ is an $L$-ensemble. Therefore $\LL^{(N)}$ is a determinantal point process and $K^{\LL^{(N)}} = L_N(1+L_N)^{-1}$, where $L_N$ is the matrix in $(\ref{Lmatrixdef})$, is a correlation kernel for $\LL^{(N)}$. The matrix element $K^{\LL^{(N)}}(\hatx, \haty)$ can be expressed as follows
\begin{equation}\label{eqn:KLN}
K^{\LL^{(N)}}(\hatx, \haty) = \left\{
\begin{aligned}
    \sqrt{\psi_{\geq N}(\hatx)\psi_{\geq N}(\haty)}\cdot\frac{\TR_{\geq N}(x)\TS_{\geq N}(y) - \TS_{\geq N}(x)\TR_{\geq N}(y)}{\hatx - \haty}\ & \forall \ \hatx, \haty \in\Z^{\epsilon}_{\geq N},\ \hatx\neq \haty,\\
    \sqrt{\psi_{\geq N}(\hatx)\psi_{<N}(\haty)}\cdot\frac{\TR_{\geq N}(x)\TR_{<N}(y) - \TS_{\geq N}(x)\TS_{<N}(y)}{\hatx - \haty}\ & \forall \ \hatx \in\Z^{\epsilon}_{\geq N},\ \haty \in\Z^{\epsilon}_{< N},\\
    \sqrt{\psi_{< N}(\hatx)\psi_{\geq N}(\haty)}\cdot\frac{\TR_{< N}(x)\TR_{\geq N}(y) - \TS_{< N}(x)\TS_{\geq N}(y)}{\hatx - \haty}\ & \forall \ \hatx \in\Z^{\epsilon}_{< N},\ \haty \in\Z^{\epsilon}_{\geq N},\\
    \sqrt{\psi_{<N}(\hatx)\psi_{<N}(\haty)}\cdot\frac{\TR_{< N}(x)\TS_{<N}(y) - \TS_{<N}(x)\TR_{<N}(y)}{\hatx - \haty}\ & \forall \ \hatx, \haty \in\Z^{\epsilon}_{< N},\ \hatx\neq\haty,
\end{aligned}\right.
\end{equation}
where the functions $\psi_{\geq N}, \psi_{<N}, \TR_{\geq N}, \TS_{\geq N}, \TR_{<N}$ and $\TS_{<N}$ are given in $(\ref{eqn:psigreaterN})$, $(\ref{eqn:psismallerN})$, $(\ref{RlargerN})$, $(\ref{SlargerN})$, $(\ref{tildeRsmallerN})$ and $(\ref{tildeSsmallerN})$, respectively. The indeterminacy in the diagonal $\hatx = \haty$ is resolved by L'H\^opital's rule:
\begin{equation}\label{eqn:KLN2}
K^{\LL^{(N)}}(\hatx, \hatx) = \left\{
\begin{aligned}
\psi_{\geq N}(\hatx)\cdot\left(\frac{(\TR_{\geq N})'(x)\TS_{\geq N}(x) - (\TS_{\geq N})'(x)\TR_{\geq N}(x)}{2(x + \epsilon)} \right)\ & \forall \ \hatx\in\Z^{\epsilon}_{\geq N},\\
\psi_{<N}(\hatx)\cdot\left( \frac{(\TR_{<N})'(x)\TS_{<N}(x) - (\TS_{<N})'(x)\TR_{<N}(x)}{2(x + \epsilon)} \right)\ & \forall \ \hatx\in\Z^{\epsilon}_{<N}.
\end{aligned}\right.
\end{equation}
\end{thm}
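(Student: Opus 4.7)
The first assertion of the theorem---that $\LL^{(N)}$ is determinantal with correlation kernel $K^{\LL^{(N)}} = L_N(1+L_N)^{-1}$---is already contained in Proposition $\ref{LNensemble}$ combined with Proposition $\ref{kernelLensemble}$, so the substance of the proof lies in establishing the explicit formulas $(\ref{eqn:KLN})$--$(\ref{eqn:KLN2})$. My plan is to first work under the restrictive hypotheses $\Sigma > 1$ and $\epsilon > 0$ (i.e., $(a,b)\neq(-1/2,-1/2)$), and then remove them by analytic continuation.

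Under these hypotheses, the analysis of Section $\ref{sec:strategy}$ shows that $\sqrt{\psi_{\geq N}}\in\ell^2(\Z^\epsilon_{\geq N})$ and the operator $T$ of $(\ref{Top})$ is bounded on $\ell^2(\Zpe)$, so the DRHP machinery of Section $\ref{sec:DRHP}$ applies to our specific data from Section $\ref{sec:specificL}$. Proposition $\ref{borodinDRHPrewritten}$, specialized with $\X_I = \Z^\epsilon_{\geq N}$, $\X_{II} = \Z^\epsilon_{<N}$, $h_I = \sqrt{\psi_{\geq N}}$, $h_{II} = \sqrt{\psi_{<N}}$, then expresses the entries of $K^{\LL^{(N)}}$ in terms of the four matrix entries $R_{\geq N}, R_{<N}, S_{\geq N}, S_{<N}$ of the unique solution $m(\zeta)$ to the DRHP $(\Zpe, w_N)$.

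Next, I would invoke the reduction of Section $\ref{sec:modifiedDRHP}$: by Lemma $\ref{lem:uniqueness}$, the unique solution $\widetilde{m}$ of the modified DRHP $(\Y,\widetilde{w}_N)$ satisfies $\widetilde{m}(\zeta) = m((\zeta+\epsilon)^2)$, so $R_{\geq N}(\hatx) = \TR_{\geq N}(x)$, $S_{\geq N}(\hatx) = \TS_{\geq N}(x)$, etc. The explicit formulas $(\ref{RlargerN})$--$(\ref{SlargerN})$ for $\TR_{\geq N}, \TS_{\geq N}$ come directly from the Wilson--Neretin polynomials computed in Theorem $\ref{thm:Okernel}$ via the prescription of Proposition $\ref{RISI}$, while $\TR_{<N}, \TS_{<N}$ are given by Proposition $\ref{prop:R2S2limit}$, whose proof verifies the DRHP conditions (A1)--(C2) directly. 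Substituting into Proposition $\ref{borodinDRHPrewritten}$ and using the factorization $\hatx - \haty = (x-y)(x+y+2\epsilon)$ yields the off-diagonal formula $(\ref{eqn:KLN})$. For the diagonal $(\ref{eqn:KLN2})$, one applies L'H\^opital's rule, noting that $\frac{d}{d\hatx} = \frac{1}{2(x+\epsilon)}\frac{d}{dx}$, which produces the factor $\frac{1}{2(x+\epsilon)}$.

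The final step is to extend to all $(z,z')\in\U_{\adm}$ and $(a,b)$ with $a\geq b\geq -1/2$. Since $L_N$ is skew-Hermitian and of trace class for all admissible parameters (so $-1\notin\mathrm{Spec}(L_N)$), the kernel $K^{\LL^{(N)}} = L_N(1+L_N)^{-1}$ depends jointly real-analytically on $(z,z',a,b)$. The right-hand sides of $(\ref{eqn:KLN})$--$(\ref{eqn:KLN2})$, assembled from the formulas in Section $\ref{sec:strategy}$, are likewise meromorphic in $(z,z')$ on $\U_0$, so the identity established on $\{\Sigma > 1\}\cap\{\epsilon > 0\}$ propagates to $\U_{\adm}\setminus\{\Sigma = 0,\ \epsilon = 0\}$ by analytic continuation. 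The main obstacle is handling the apparent singularities of the right-hand side at $\Sigma = 0$: since the left-hand side is regular there, the singularities must cancel, which can be verified either by combining the formulas for the four entries of $\widetilde{m}$ using a $_4F_3$ contiguous identity in the spirit of Step 4 of the proof of Theorem $\ref{thm:Okernel}$, or simply by deducing regularity from the analyticity of the kernel and the structure of the isolated singularity. The degenerate case $\epsilon = 0$ is handled similarly by continuity in $\epsilon$, noting that the change of variables $\widetilde{m}(\zeta) = m((\zeta+\epsilon)^2)$ retains meaning in the limit and both sides of $(\ref{eqn:KLN})$ extend continuously.
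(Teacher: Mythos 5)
Your treatment of the core case $\Sigma>1$, $\epsilon>0$ coincides with the paper's: Proposition \ref{LNensemble} for the $L$-ensemble structure, then Proposition \ref{borodinDRHPrewritten} together with Lemma \ref{lem:uniqueness} and the explicit solutions $(\ref{RlargerN})$--$(\ref{tildeSsmallerN})$ of the modified DRHP. The gap is in your continuation step. You assert that $K^{\LL^{(N)}}=L_N(1+L_N)^{-1}$ "depends jointly real-analytically on $(z,z',a,b)$" because $L_N$ is skew-Hermitian and trace class, and that the identity then "propagates to $\U_{\adm}$ by analytic continuation." This does not work as stated, for two reasons. First, $\U_{\adm}$ is not a domain of $\C^2$: it is a union of strata (the principal series $\{z'=\bar z\}$, the complementary intervals $n<z,z'<n+1$ for various $n$, and the degenerate pieces), and these cannot all be reached from $\{\Sigma>1\}$ by real-analytic continuation \emph{within} $\U_{\adm}$. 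One must continue through the complex domain $\U_0$ (or $\mathcal{D}$), and there $L_N$ is no longer skew-Hermitian, $\|(1+L_N)^{-1}\|$ is not controlled, and the entries of $L_N$ involve $\sqrt{\psi_{\geq N}(\hatx)\psi_{<N}(\haty)}$ with radicands that cease to be positive, so $L_N(1+L_N)^{-1}$ is not manifestly holomorphic there. Second, and consequently, you never supply an expression for the left-hand side that is defined and holomorphic on an open set containing both $\U_{\adm}\cap\{\Sigma>1\}$ and the target parameters. The paper's essential tool here, which you omit, is Theorem \ref{thm:kernelLgeneral}: it rewrites $K^{\LL^{(N)}}(\hatx,\haty)=\epsilon(\hatx)K'(\hatx,\haty)\epsilon(\haty)$ via the Christoffel--Darboux sum $\sum_{n=0}^{N-1}\mathfrak{p}_n(\hatx)\mathfrak{p}_n(\haty)/h_n$, which requires only the first $N$ orthogonal polynomials (hence only $\Re\Sigma>-1$) and, by Corollary \ref{cor:analytic}, is holomorphic in $(z,z')$ on all of $\U$ and continuous in $a,b$. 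That is what makes the two-variable analytic continuation in $(z,z')$ and the limit $(a,b)\to(-\tfrac12,-\tfrac12)$ legitimate.

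A smaller but genuine error: you claim the change of variables $\widetilde m(\zeta)=m((\zeta+\epsilon)^2)$ "retains meaning in the limit" $\epsilon=0$. It does not --- at $\epsilon=0$ the point $\zeta=0$ becomes a double pole and $\widetilde\psi_{<N}(0)=\psi_{<N}(0)/(2\cdot 0)$ is undefined, which is exactly why the paper excludes $(a,b)=(-\tfrac12,-\tfrac12)$ from the DRHP argument and instead passes to the limit in the \emph{final formulas}, using continuity of $K'$ in the real parameters $a,b$ on suitable neighborhoods of $-\tfrac12$ chosen to avoid the poles of $1/h_{N-1}$ and of $\psi_{<N}$. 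Your conclusion ("both sides extend continuously") is the right one, but it must be argued at the level of the closed-form expressions, again via Theorem \ref{thm:kernelLgeneral}, not at the level of the Riemann--Hilbert problem.
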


\begin{rem}
The formulas for the functions $\TR_{\geq N}, \TS_{<N}$ in $(\ref{RlargerN}), (\ref{tildeSsmallerN})$, are ill-defined if $\Sigma = 0$. In the proof below, for any $\hatx, \haty\in\Zpe$, it is shown that $K^{\LL^{(N)}}(\hatx, \haty)$, as a function of $(z, z')$, can be analytically continued to the domain $\U$ and therefore the formula above for $K^{\LL^{(N)}}(\hatx, \haty)$ has a well-defined value for any $(z, z')\in\U_{\adm}$, even when $\Sigma = 0$.
\end{rem}

\begin{proof}
The first statements were proved in Proposition $\ref{LNensemble}$. Next we show that $K^{\LL^{(N)}}(\hatx, \haty)$, $\hatx, \haty\in\Zpe$, is given by $(\ref{eqn:KLN}), (\ref{eqn:KLN2})$ for $\Sigma \neq 0$, and that each of the expressions of the right-hand side of $(\ref{eqn:KLN}), (\ref{eqn:KLN2})$ can be analytically continued as a function of $(z, z')$ to the set $\{\Sigma = 0\}$.

Assume for now the additional restrictions $\Sigma > 1$ and $(a, b) \neq (-\frac{1}{2}, -\frac{1}{2})$. Then the discussion in Section $\ref{sec:strategy}$ and Proposition $\ref{borodinDRHPrewritten}$ shows how to write the entries $K^{\LL^{(N)}}(\hatx, \haty)$ in terms of the unique solution $m = \left( \begin{smallmatrix} R_{\geq N}&-S_{<N}\\ -S_{\geq N}&R_{<N} \end{smallmatrix} \right)$ to the DRHP $(\Zpe, w_N)$, where $w_N$ is defined in $(\ref{eqn:jumpmatrix})$. The discussion in Section $\ref{sec:modifiedDRHP}$, especially Lemma $\ref{lem:uniqueness}$, shows how $m$ is related to the unique solution $\widetilde{m} = \left( \begin{smallmatrix} \widetilde{R}_{\geq N}& -\widetilde{S}_{<N}\\ -\widetilde{S}_{\geq N}&\widetilde{R}_{<N} \end{smallmatrix} \right)$ to the DRHP $(\Y, \widetilde{w}_N)$, where $\widetilde{w}_N$ is defined in $(\ref{eqn:weightwtilde})$. Finally, we have explicit formulas available for $\widetilde{R}_{\geq N}, \widetilde{S}_{\geq N}, \widetilde{R}_{< N}$ and $\widetilde{S}_{< N}$ in $(\ref{RlargerN})$, $(\ref{SlargerN})$, $(\ref{tildeRsmallerN})$ and $(\ref{tildeSsmallerN})$, respectively. All of these four formulas are well-defined for any $(z, z')\in\U_0\cap\{\Sigma \neq 0\}$, in particular for any $(z, z')\in\U_{\adm}\cap\{\Sigma>1\}$. The theorem then follows if $\Sigma > 1$ and $(a, b) \neq (-\frac{1}{2}, -\frac{1}{2})$. In the rest of the proof, we get rid of these constraints.

The key to getting rid of the constraints is Theorem $\ref{thm:kernelLgeneral}$, which gives $K^{\LL^{(N)}}(\hatx, \haty) = \epsilon(\hatx)K'(\hatx, \haty)\epsilon(\haty)$, for any $\hatx, \haty\in\Zpe$, where
\begin{equation*}
K'(\hatx, \haty) = \left\{
\begin{aligned}
\sqrt{W(\hatx)W(\haty)}\sum_{n=0}^{N-1}{\frac{\mathfrak{p}_n(\hatx)\mathfrak{p}_n(\haty)}{h_n}} & \textrm{ if } \ \hatx\in\Z^{\epsilon}_{\geq N},\\
\delta_{x y} - \sqrt{W(\hatx)W(\haty)}\sum_{n=0}^{N-1}{\frac{\mathfrak{p}_n(\hatx)\mathfrak{p}_n(\haty)}{h_n}} & \textrm{ if} \ \hatx\in\Z^{\epsilon}_{<N},
\end{aligned}\right.
\end{equation*}
and where $W:\Zpe\rightarrow\R_{\geq 0}$ is the weight function defined in $(\ref{weightW})$, $\mathfrak{p}_0 = 1, \ldots, \mathfrak{p}_{N-1}$ are the orthogonal polynomials on $\Zpe$ with respect to $W$, and $h_0, \ldots, h_{N-1}$ are the squared norms of these polynomials.

First let us we get rid of the restriction $(a, b) \neq (-\frac{1}{2}, -\frac{1}{2})$. Let $(z, z')\in\C^2$ be such that $z+z' > \frac{3}{2}$ and $\{z, z'\}\cap\{\ldots, -\frac{3}{2}, -1, -\frac{1}{2}\} = \emptyset$. From $(\ref{weightW})$, for any $\hatx\in\Zpe$ the function $W(\hatx)$ is continuous in $a, b\in (-1, \infty)$. Together with Corollary $\ref{cor:analytic}$, for any $\hatx, \haty\in\Zpe$ it follows that $K'(\hatx, \haty)$ is continuous in the real variables $a\in (-1, \infty)$ and $b$ in some neighborhood $J_b' \subset \R$ of $-\frac{1}{2}$ (the neighborhood $J_b'$ is such that $b\in J_b'$ implies $\{z+b, z'+b\}\cap\{\ldots, -2, -1\} = \emptyset$). Then for any $\hatx, \haty\in\Zpe$, the function $K^{\LL^{(N)}}(\hatx, \haty) = \epsilon(x)K'(\hatx, \haty)\epsilon(y)$ is also continuous in $a\in (-1, \infty)$ and $b$ in the neighborhood $J_b'$ of $-\frac{1}{2}$.

On the other hand, let us look at the continuity of $\psi_{\geq N}, \psi_{<N}, \TR_{\geq N}, \TS_{\geq N}, \TR_{<N}$ and $\TS_{<N}$ with respect to the real variables $a, b$. For any $\hatx\in\Z^{\epsilon}_{\geq N}$, it is clear that $\psi_{\geq N}(\hatx)$ is continuous for $a, b\in (-1, \infty)$. Now let $I_a\subset \R$, $J_b''\subset J_b'\subset\R$ be neighborhoods of $-\frac{1}{2}$ such that $a\in I_a$, $b\in J_b''$ imply $\{z+a+b+1, z'+a+b+1\}\cap\{\ldots, -2, -1\}=\emptyset$. With this definition, it is clear that for any $\hatx \in \Z^{\epsilon}_{< N}$, the function $\psi_{<N}(\hatx)$ is continuous for $a \in I_a$ and $b\in J_b''$. For any $x\in\Y_{\geq N}$, the function $\TR_{\geq N}(x)$ is continuous for $a, b \in (-1, \infty)$, while $\TS_{\geq N}(x)$ is continuous for $a\in (-1, \infty)$ and $b\in J_b'$ (note the latter is because $1/h_{N-1}$ shows up in the definition of $\TS_{\geq N}$). Let us reduce $J_b''$ even further: let $J_b\subset J_b''$ be a neighborhood of $-\frac{1}{2}$ such that $b\in J_b$ implies $\Sigma = z+z'+b > 1$. Then for any $x\in\Y_{<N}$, the functions $\TR_{<N}(x)$ and $\TS_{<N}(x)$ are continuous for $a\in I_a$ and $b\in J_b$. Therefore the right-hand side of $(\ref{eqn:KLN})$ is continuous for $a\in I_a$, $b\in J_b$.

Thus we have already proved that both sides of the equality $(\ref{eqn:KLN})$ are continuous for $a\in I_a$, $b\in J_b$ and that the equality holds for any $a\geq b\geq -\frac{1}{2}$, $(a, b)\neq (-\frac{1}{2}, -\frac{1}{2})$ and $(z, z')\in\U_{\adm}\cap\{\Sigma > 1\}$ (note that $\U_{\adm}$ depends on $b$ too). By construction, if $(z, z')\in\U_{\adm}^{b = -1/2}$ is such that $z+z' > \frac{3}{2}$ and $\{z, z'\}\cap\{\ldots, -\frac{3}{2}, -1, -\frac{1}{2}\} = \emptyset$, then any sequence $\{a_n\}_{n\in\N}\subset I_a$, $\{b_n\}_{n\in\N}\subset J_b$ is such that $(z, z')\in\U_{\adm}^{b_n}\cap\{z+z'+b_n > 1\}$, where $\U_{\adm}^{b_n}$ is $\U_{\adm}$ defined with $b_n$ instead of $b$. Consider sequences $\{a_n\}_{n\in\N}\subset I_a$, $\{b_n\}_{n\in\N}\subset J_b$ such that $a_n > b_n > -\frac{1}{2} \ \forall n\in\N$ and $\lim_{n\rightarrow\infty}{a_n} = \lim_{n\rightarrow\infty}{b_n} = -\frac{1}{2}$. We have shown before that equality $(\ref{eqn:KLN})$ holds if $(a_n, b_n)$ replaces $(a, b)$. Then sending $n$ to infinity, we conclude that $(\ref{eqn:KLN})$ holds also for $(a, b) = (-\frac{1}{2}, -\frac{1}{2})$. Observe that we assumed that $z+z' > \frac{3}{2}$ (or $\Sigma > 1$ with $b = -\frac{1}{2}$), so we still need to remove the constraint $\Sigma > 1$ for all $a \geq b \geq -\frac{1}{2}$. By a similar reasoning, we can show that $(\ref{eqn:KLN2})$ holds for $(a, b) = (-\frac{1}{2}, -\frac{1}{2})$ and $(z, z')\in\U_{\adm}\cap\{\Sigma > 1\}$.

Let us finally get rid of the constraint $\Sigma > 1$. Consider the domain $\mathcal{D} \myeq \{ (z, z')\in\U_0 : \{z+2\epsilon, z'+2\epsilon\}\cap\{\ldots, -3, -2, -1\} = \emptyset \}$. Then clearly $\U_{\adm}\subset\mathcal{D}\subset\U_0\subset\U$. Let us look at the holomorphicity of some functions of $(z, z')$. For any $\hatx\in\Z^{\epsilon}_{\geq N}$, $(\ref{eqn:psigreaterN})$ shows that $\psi_{\geq N}(\hatx)$ is entire on $\C^2$. For any $\hatx\in\Z^{\epsilon}_{<N}$, $(\ref{eqn:psismallerN})$ shows $\psi_{<N}(\hatx)$ is holomorphic on $\mathcal{D}$.
We observed earlier that for any $x\in\Y_{\geq N}$, $\TS_{\geq N}(x)$ is holomorphic on $\U_0$ and $\TR_{\geq N}(x)$ is holomorphic on $\U_0\cap\{\Sigma \neq 0\}$. For any $x\in\Y_{<N}$, $\TR_{<N}(x)$ is holomorphic on $\U_0$ and $\TS_{<N}(x)$ is holomorphic on $\U_0\cap\{\Sigma \neq 0\}$. Thus it follows that for any $\hatx, \haty\in\Zpe$ fixed, the corresponding formula on the right-hand side of $(\ref{eqn:KLN})$ is holomorphic on the domain $\mathcal{D}\cap\{\Sigma \neq 0\}$. It also follows that the right-hand side of $(\ref{eqn:KLN2})$ is holomorphic on $\mathcal{D}\cap\{\Sigma \neq 0\}$.

It is clear, from its explicit formula, that for any $\hatx\in\Zpe$ the function $W(\hatx)$ is entire on $\C^2$. Along with Corollary $\ref{cor:analytic}$, for any $\hatx, \haty\in\Zpe$ the function $K'(\hatx, \haty)$ is holomorphic on $\U$; therefore $K^{\LL^{(N)}}(\hatx, \haty) = \epsilon(\hatx)K'(\hatx, \haty)\epsilon(\haty)$ admits an analytic continuation to $\U$. Since we showed that the right-hand side of $(\ref{eqn:KLN})$ gives $K^{\LL^{(N)}}(\hatx, \haty)$ for any $(z, z')\in\U_{\adm}\cap\{\Sigma > 1\}$ and also that the right-hand side of $(\ref{eqn:KLN})$ is holomorphic on $\mathcal{D}\cap\{\Sigma \neq 0\}$, then it follows that the equality $(\ref{eqn:KLN})$ holds for all $(z, z')\in\mathcal{D}\cap\{\Sigma \neq 0\}$, and in particular for all $(z, z')\in\U_{\adm}\cap\{\Sigma \neq 0\}$. Since $\U_{\adm}\subset\mathcal{D}$, we have also shown that the formulas on the right of $(\ref{eqn:KLN})$ admit analytic continuations to the singular set $\{\Sigma = 0\}$. One similarly obtains that $(\ref{eqn:KLN2})$ holds for all $(z, z')\in\U_{\adm}\cap\{\Sigma \neq 0\}$, and the right-hand side of $(\ref{eqn:KLN2})$ has an analytic continuation to $\{\Sigma = 0\}$.
\end{proof}

\section{The continuous point process $\PP$ on $\R_{>0}\setminus\{1\}$}\label{sec:scalinglimit}

In this section, we describe point processes $\PP = \PP_{z, z', a, b}$ on the continuous state space
\begin{equation}\label{puncturedhalfline}
\X = \R_{>0}\setminus\{1\},
\end{equation}
which will turn out to be a scaling limit of the point processes $\LL^{(N)}$ on the discrete state space $\Zpe$. We do so in a more general setting, but first we need to introduce several notions and results from \cite{OkOl1, OlOs}; see also the exposition in \cite[Sec. 3]{C}.

Since $a\geq b\geq -\frac{1}{2}$, in particular $a, b > -1$, we can define the Jacobi polynomials $\{\mathfrak{P}_k(x | a, b)\}_{k\geq 0}$ as the orthogonal polynomials on $[-1, 1]$ with respect to the weight $(1 - x)^a(1 + x)^b$. For any $N\geq 1$, the multivariate Jacobi polynomials $\{\mathfrak{P}_{\mu}(x_1, \ldots, x_N | a, b)\}_{\mu \in \GTp_N}$ are defined by the determinantal formula
\begin{equation*}
\mathfrak{P}_{\mu}(x_1, \ldots, x_N | a, b) \myeq \frac{\det_{1\leq i, j\leq N}{[\mathfrak{P}_{\mu_i + N - i}(x_j | a, b)]}}{\prod_{1\leq i < j\leq N}{(x_i - x_j)}}, \ x_1, \ldots, x_N\in [-1, 1].
\end{equation*}

It is known that $\{\mathfrak{P}_{\mu}(x_1, \ldots, x_N | a, b)\}_{\mu \in \GTp_N}$ is a basis of the Hilbert space $L^2([-1, 1]^N, \mathfrak{m}(dx))$, where $\mathfrak{m}(dx)$ is the absolutely continuous measure whose density with respect to the Lebesgue measure is $|\Delta(x_1, \ldots, x_N)|^2\prod_{i=1}^N{((1 - x_i)^a(1 + x_i)^b)}$. For convenience we also set $\GTp_0 \myeq \{ \emptyset \}$ and $\mathfrak{P}_{\emptyset} \myeq 1$. For any $N\geq 0$, $\mu\in\GTp_N$, $\lambda\in\GTp_{N+1}$, define the real numbers $\Lambda^{N+1}_N(\lambda, \mu)$ by the equalities
\begin{equation}\label{eqn:markovdef}
\frac{\mathfrak{P}_{\lambda}(x_1, \ldots, x_N, 1 | a, b)}{\mathfrak{P}_{\lambda}(1^{N+1} | a, b)} = \sum_{\mu\in\GTp_N}{\Lambda^{N+1}_N(\lambda, \mu)\frac{\mathfrak{P}_{\mu}(x_1, \ldots, x_N | a, b)}{\mathfrak{P}_{\mu}(1^N | a, b)}}.
\end{equation}

The numbers $\Lambda^{N+1}_N(\lambda, \mu)$ generally depend on $a, b$, but we omit such dependence in the notation, for simplicity. Certain explicit formulas for $\Lambda^{N+1}_N(\lambda, \mu)$ are available, but we will not need them. Moreover $\Lambda^{N+1}_N(\lambda, \mu) \geq 0 \ \forall N\geq 0, \mu\in\GTp_N, \lambda\in\GTp_{N+1},$ and $\sum_{\mu\in\GTp_N}{\Lambda^{N+1}_N(\lambda, \mu)} = 1 \ \forall N\geq 0, \lambda\in\GTp_{N+1}$. Thus the matrix $\left[\Lambda^{N+1}_N(\lambda, \mu) \right]$ of format $\GTp_{N+1}\times\GTp_N$ is a stochastic matrix, that we call the matrix of \textit{cotransition probabilities of the BC-Gelfand-Tsetlin graph}.

If $\Lambda^{N+1}_N(\lambda, \mu)\neq 0$, then we write $\mu\prec_{BC}\lambda$, see \cite[Ch. 3]{C} for an easy combinatorial characterization of the partial ordering $\prec_{BC}$. The \textit{BC-Gelfand-Tsetlin graph} is the graph whose set of vertices is
\begin{equation*}
\GTp \myeq \sqcup_{N\geq 0}{\GTp_N},
\end{equation*}
whose edges connect only signatures of adjacent lengths and $\mu\in\GTp_N$ is connected to $\lambda\in\GTp_{N+1}$ if and only if $\mu\prec_{BC}\lambda$. Moreover the BC-Gelfand-Tsetlin graph has edge-multiplicities given by the real numbers $\Lambda^{N+1}_N(\lambda, \mu)$. Observe that the graph therefore depends on the real parameters $a, b$.

We say that the sequence $\{P_N\}_{N\geq 0}$ is a \textit{coherent sequence of (resp. probability) measures} if each $P_N$ is a finite (resp. probability) measure on $\GTp_N$, and moreover the following relations hold
\begin{equation*}
P_N(\mu) = \sum_{\lambda\in\GTp_{N+1}}{\Lambda^{N+1}_N(\lambda, \mu)P_{N+1}(\lambda)}, \ \forall N\geq 0, \ \forall \lambda \in \GTp_{N+1}, \ \forall \mu \in \GTp_N.
\end{equation*}

For us, the BC type $z$-measures are the most important examples of coherent sequences of probability measures, as the following proposition shows.

\begin{prop}\label{prop:coherent}
For any $a\geq b\geq -1/2$, $(z, z')\in\U_0$, the sequence $\{P_N\}_{N\geq 0}$ of $z$-measures of finite levels associated to $z, z', a, b$ is a coherent sequence of measures. If $(z, z')\in\U_{\adm}$, the sequence $\{P_N\}_{N\geq 0}$ of $z$-measures of finite levels is a coherent sequence of probability measures.
\end{prop}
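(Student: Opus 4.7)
The plan is to extend the coherence identity
\begin{equation*}
P_N(\mu) = \sum_{\lambda\in\GTp_{N+1}}{\Lambda^{N+1}_N(\lambda, \mu)P_{N+1}(\lambda)}, \quad N\geq 0, \ \mu\in\GTp_N,
\end{equation*}
from a classical real subdomain of parameters to all of $\U_0$ by analytic continuation. On the real set $D_0 \myeq \{(z, z')\in\R^2 : z, z' > -(1+b)/2\}$, which is a nonempty open subset of $\U_0$ viewed inside the real four-dimensional domain $\R^4$, coherence of the $z$-measures is already established in \cite{OlOs} and originates from the branching rule for spherical representations of the compact symmetric spaces that approximate $G/K$, as laid out in \cite{OkOl1}.

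The core step is to verify that both sides of the identity are holomorphic in $(z, z')\in\U_0$ so that the identity theorem on $\U_0$ applies. The left side is holomorphic by inspection of $(\ref{zmeasure})$: $P'_N(\mu | z, z', a, b)$ is entire in $(z, z')$, and $S_N(z,z',a,b)$ is nonvanishing on $\U_0$ by $(\ref{subdomain})$. For the right side I must show absolute and uniform convergence on compact subsets of $\U_0$. The coefficients $\Lambda^{N+1}_N(\lambda, \mu)$ are nonnegative, bounded by $1$, independent of $(z, z')$, and vanish unless $\mu\prec_{BC}\lambda$; the latter interlacing pins down $\lambda_2, \dots, \lambda_{N+1}$ to finitely many values determined by $\mu$ and leaves $\lambda_1$ free to range over $\{\mu_1, \mu_1+1, \dots\}$. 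After flipping the denominator factors $\Gamma(z-l_1+N+1)$, $\Gamma(z'-l_1+N+1)$ in $(\ref{weight})$ via the reflection formula $\Gamma(u)\Gamma(1-u) = \pi/\sin(\pi u)$ and applying Stirling, one obtains
\begin{equation*}
|w(l_1 | z, z', a, b; N+1)|\leq C \cdot l_1^{1 - 2\Re\Sigma - 4(N+1)},
\end{equation*}
with $C$ uniform over $(z, z')$ on any fixed compactum of $\U_0$, whereas the Vandermonde behaves like $|\Delta_{N+1}(\hatl)|^2 \sim l_1^{4N}$. Combining, $|P_{N+1}(\lambda)|\leq C' l_1^{-3 - 2\Re\Sigma}$, which is summable in $l_1$ since $\Re\Sigma > -1$ throughout $\U_0$. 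This uniform tail bound yields the required analyticity of the right side.

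Both sides are then holomorphic on the connected domain $\U_0$ and coincide on the nonempty open set $D_0$, hence coincide on all of $\U_0$ by the identity theorem. This settles the first statement. For the second, Lemma $\ref{UadmsubsetUzero}$ gives $\U_{\adm}\subset\U_0$, so coherence holds on $\U_{\adm}$; combined with the fact that each $P_N$ is a genuine probability measure on $\GTp_N$ when $(z, z')\in\U_{\adm}$ (the discussion preceding Definition-Proposition $\ref{relprocesses}$), this produces a coherent sequence of probability measures.

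The hardest part is the uniform tail bound. The Gamma factors in the denominator of $w$ have poles whose locations march off to infinity with $l_1$ and must be flipped via reflection into factorially growing numerators before Stirling is applicable; one then has to control the oscillating $\sin(\pi(z-l_1+N+1))$ and $\sin(\pi(z'-l_1+N+1))$ factors that appear, uniformly in both $l_1$ and $(z, z')$ ranging over compacta, which is straightforward but requires a little care when $z$ or $z'$ has large imaginary part.
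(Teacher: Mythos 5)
The paper's own proof of this proposition is a one-line citation: the first statement is delegated to \cite[Proof of Thm. 6.4]{C} and the second to Lemma $\ref{propposdomain}$ plus the definition of $\U_{\adm}$. Your analytic-continuation scheme (base case on a real parameter domain coming from \cite{OlOs, OkOl1}, holomorphy of both sides of the coherence relation on $\U_0$, identity theorem) is precisely the expected content of that citation, so structurally you are on the same route, and your treatment of the second statement matches the paper's. Two points need repair, though neither is fatal. First, your description of the support of $\Lambda^{N+1}_N$ is wrong: the BC branching is a \emph{double} interlacing, $\lambda_i \geq \mu_i \geq \lambda_{i+2}$, not the type-A single interlacing. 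For fixed $\mu\in\GTp_N$ this pins down only $\lambda_3,\ldots,\lambda_{N+1}$ and leaves \emph{both} $\lambda_1$ and $\lambda_2$ unbounded, so your one-dimensional tail sum in $l_1$ does not cover the support. The fix is easy and actually shortens the argument: since $0\leq \Lambda^{N+1}_N(\lambda,\mu)\leq 1$, the right-hand side is dominated termwise by $\sum_{\lambda\in\GTp_{N+1}}|P_{N+1}'(\lambda)|/|S_{N+1}|$, and the locally uniform convergence of $\sum_\lambda |P_{N+1}'(\lambda)|$ on $\U$ is exactly what underlies the analytic continuation of $(\ref{partitionfunction})$ already quoted in Section $\ref{zprocesses}$ (each factor contributes $l_i^{-3-2\Re\Sigma}$, summable since $\Re\Sigma>-1$). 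No support analysis is needed at all.

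Second, the identity-theorem step is misstated: $D_0\subset\R^2$ is \emph{not} an open subset of $\C^2\cong\R^4$ (it is a totally real $2$-dimensional slice), and agreement of two holomorphic functions of two variables on a set with a limit point is not sufficient in general. You need either the uniqueness theorem for holomorphic functions agreeing on a maximal totally real submanifold, or the standard two-step argument: fix $z'$ real and continue in $z$ using the one-variable identity theorem on the connected slice $\{z : (z,z')\in\U_0\}$, then fix $z$ complex and continue in $z'$. Also note that for large $b$ the set $D_0$ can contain negative integers, so one should work on a nonempty open (in $\R^2$) subset of $D_0\cap\U_0$. Finally, be aware that the assertion that the explicit product formula $(\ref{zmeasureN})$--$(\ref{zmeasure})$ defines coherent measures on the real base domain is itself the nontrivial content of \cite{OlOs} and \cite[Thm. 6.4]{C}; it does not follow from abstract branching considerations alone, since one must identify the closed formula with the representation-theoretic coefficients.
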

\begin{proof}
The first statement is proved in \cite[Proof of Thm. 6.4]{C}. The second statement follows from Lemma $\ref{propposdomain}$ and the definition of $\U_{\adm}$.
\end{proof}

A sequence $\tau = (\tau^{(0)}, \tau^{(1)}, \tau^{(2)}, \ldots)$ such that $\tau^{(n)}\in\GTp_n$, $\tau^{(n)} \prec_{BC} \tau^{(n+1)}$ for all $n\geq 0$, is called a \textit{path in the BC-Gelfand-Tsetlin graph}\footnote{Simply a \textit{path}, for short}. The set of all paths is denoted by $\Tau$. There is an evident inclusion $\Tau \hookrightarrow \prod_{n\geq 0}{\GTp_n}$. Equip the set $\prod_{n\geq 0}{\GTp_n}$ with the product topology and $\Tau$ with its inherited topology. The topological space $\Tau$ will be called the \textit{path-space of the BC-Gelfand-Tsetlin graph}.

We equip $\Tau$ with its Borel $\sigma$-algebra. The Borel $\sigma$-algebra on $\Tau$ can be obtained in the following equivalent way. For any finite path $\phi = (\phi^{(0)} \prec_{BC} \phi^{(1)} \prec_{BC} \dots \prec_{BC} \phi^{(n)})$ in the BC-Gelfand-Tsetlin graph, the \textit{cylinder set} $S_{\phi}$ is defined by $S_{\phi} \myeq \{\tau\in\Tau : \tau^{(0)} = \phi^{(0)}, \ldots, \tau^{(n)} = \phi^{(n)}\}$. Then the Borel $\sigma$-algebra of $\Tau$ is the $\sigma$-algebra generated by the cylinder sets $S_{\phi}$, where $\phi$ varies over all finite paths in the BC-Gelfand-Tsetlin graph.

Let $P$ be a finite measure on $\Tau$, but not necessarily a probability measure. Let $\lambda\in\GT_N$ be any signature of positive length $N\geq 1$. Consider all the cylinder sets $S_{\phi}$, where $\phi = (\phi^{(0)} \prec_{BC} \phi^{(1)} \prec_{BC} \dots \prec_{BC} \phi^{(N)})$ varies over all finite paths of length $N$ with $\phi^{(N)} = \lambda$. We say that \textit{$P$ is a central measure at $\lambda$} if $P(S_{\phi})$ is proportional to the product of cotransition probabilities $\Lambda^N_{N-1}(\phi^{(N)}, \phi^{(N-1)})\cdots\Lambda^1_0(\phi^{(1)}, \phi^{(0)})$. In other words, conditioned on $\phi^{(N)} = \lambda$, the probability of a cylinder set $S_{\phi}$ is proportional to the product of cotransition probabilities along the path of $\phi$ in the BC-Gelfand-Tsetlin graph. A probability measure $P$ on $\Tau$ is said to be a \textit{central measure} if it is a central measure at $\lambda$, for any $\lambda\in\GTp_N$, $N\geq 1$.

Next let us make a connection between the objects introduced so far. Let $M(\Tau)$ be the space of signed, finite and \textit{central} measures on $\Tau$. It is known to have a natural Banach structure (the norm is given by the total variation of a measure) and therefore a canonical topology. Let $M_{\textrm{prob}}(\Tau)\subset M(\Tau)$ be the simplex consisting of central probability measures on $\Tau$. For any $P \in M_{\textrm{prob}}(\Tau)$, consider its family of pushforward measures with respect to the natural projection maps
\begin{eqnarray*}
\textrm{Proj}_N: \Tau \subset \prod_{n\geq 0}{\GTp_n} \rightarrow \GTp_N,
\end{eqnarray*}
that is, let $P_N = \left(\textrm{Proj}_N\right)_*P$ for each $N\geq 0$. Almost by definition, it follows that $\{P_N\}_{N\geq 0}$ is a coherent sequence of probability measures. The map
\begin{equation}\label{eqn:bijection}
P \longrightarrow \{ P_N = (\textrm{Proj}_N)_*P \}_{N\geq 0}
\end{equation}
is a bijection between the set of central probability measures on $\Tau$ and the set of coherent probability measures in the BC-Gelfand-Tsetlin graph. Both sets above are convex, and the map $(\ref{eqn:bijection})$ is an isomorphism of convex sets.

Since $M_{\textrm{prob}}(\Tau)$ is a convex set, it is natural to ask for its set of extreme points. We mentioned that $M_{\textrm{prob}}(\Tau)$ is also a topological space, so we can refine the question by asking for a characterization of the (topological) space of its extreme points. We denote the space of extreme points of $M_{\textrm{prob}}(\Tau)$ by $\Omega_{\infty} = \Omega_{\infty}(a, b)$ and call it the \textit{boundary of the BC-Gelfand-Tsetlin graph}. The following fundamental result characterizes $\Omega_{\infty}$ completely; observe the remarkable fact that $\Omega_{\infty}$ is independent of $a, b$.

\begin{thm}[\cite{OkOl1}, Thm. 1.3]\label{thm:BCboundary}
The boundary $\Omega_{\infty}$ of the BC-Gelfand-Tsetlin graph can be embedded as the subspace
\begin{equation*}
\Omega_{\infty} \subset \R_+^{\infty}\times\R_+^{\infty}\times\R_+
\end{equation*}
of triples $\omega = (\alpha; \beta; \delta)$ satisfying the inequalities
\begin{equation*}
\begin{gathered}
\alpha = (\alpha_1 \geq \alpha_2 \geq \alpha_3 \geq \dots \geq 0), \ \beta = (\beta_1 \geq \beta_2 \geq \beta_3 \geq \dots \geq 0), \ 1 \geq \beta_1,\\
\infty > \delta \geq \sum_{i=1}^{\infty}{(\alpha_i + \beta_i)}.
\end{gathered}
\end{equation*}
\end{thm}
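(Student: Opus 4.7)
The plan is to follow Vershik's ergodic method adapted to the BC-Gelfand-Tsetlin graph. By general principles of boundary theory (Dynkin/Martin boundary for the reverse Markov chain defined by the cotransition probabilities $\Lambda^{N+1}_N$), the extreme points of $M_{\mathrm{prob}}(\Tau)$ are in bijection with the "exits at infinity," namely equivalence classes of sequences $\{\lambda^{(N)}\}_{N \geq 1}$ with $\lambda^{(N)} \in \GTp_N$ for which the limits
\begin{equation*}
\Lambda^{\infty}_N(\mu) \myeq \lim_{M \to \infty} \Lambda^{M}_N(\lambda^{(M)}, \mu), \qquad \mu \in \GTp_N,
\end{equation*}
exist for every $N$ and every $\mu$. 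So the first step would be to reduce the theorem to the characterization of these limits.

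Second, I would translate the problem from the combinatorial side to the analytic side using the defining relation \eqref{eqn:markovdef}. The normalized multivariate Jacobi polynomials
\begin{equation*}
\widetilde{\mathfrak{P}}_{\mu}(x_1,\ldots,x_n | a,b) \myeq \frac{\mathfrak{P}_{\mu}(x_1,\ldots,x_n|a,b)}{\mathfrak{P}_{\mu}(1^n|a,b)}
\end{equation*}
form a linear basis that is adapted to the cotransition matrices. Thus the existence of the limits $\Lambda^{\infty}_N(\mu)$ for all $\mu$ is equivalent to the existence, for each fixed $n$, of a limit function
\begin{equation*}
\Phi_{\omega}(x_1,\ldots,x_n) \myeq \lim_{N \to \infty} \widetilde{\mathfrak{P}}_{\lambda^{(N)}}(x_1,\ldots,x_n, 1^{N-n} | a,b),
\end{equation*}
with uniform convergence on $[-1,1]^n$. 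The multiplicative structure of Jacobi polynomials under the stable chain of restriction maps (the branching rule implicit in \eqref{eqn:markovdef}) forces any such limit to be multiplicative: $\Phi_{\omega}(x_1,\ldots,x_n) = \prod_{i=1}^n \varphi_{\omega}(x_i)$ for a single continuous function $\varphi_{\omega}:[-1,1]\to\R$ with $\varphi_{\omega}(1)=1$. Hence classifying extreme central measures reduces to classifying the admissible one-variable limit functions $\varphi_{\omega}$.

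Third, and this is the main technical obstacle, one must classify all possible $\varphi_{\omega}$. The approach is a BC-type analogue of the Edrei-Thoma theorem: expand $\varphi_{\omega}(x)$ in the Jacobi basis $\{\mathfrak{P}_k(x|a,b)\}_{k\geq 0}$, and use the total positivity implied by the probabilistic coherence (the nonnegativity of $\Lambda^{N+1}_N$) to constrain the coefficients. A generating-function argument, combined with the asymptotic analysis of the ratio $\mathfrak{P}_{\lambda^{(N)}}(x, 1^{N-1} | a,b)/\mathfrak{P}_{\lambda^{(N)}}(1^N|a,b)$ and standard results of Aissen-Schoenberg-Whitney on totally positive sequences, should show that every admissible $\varphi_{\omega}$ has the Nevanlinna-type representation
\begin{equation*}
\varphi_{\omega}(x) = e^{\gamma(x-1)} \prod_{i\geq 1} \frac{1 + \alpha_i(x-1)/2}{1} \prod_{j \geq 1} \frac{1}{1 - \beta_j(x-1)/2},
\end{equation*}
parametrized by the triples $(\alpha;\beta;\delta)$ of the theorem (with $\delta$ absorbing $\gamma$ and $\sum(\alpha_i+\beta_i)$). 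The constraints $\alpha_i \geq 0$, $1 \geq \beta_1 \geq \beta_2 \geq \cdots \geq 0$, and $\delta \geq \sum(\alpha_i + \beta_i)$ arise, respectively, from nonnegativity of the totally positive parameters, the boundedness $\varphi_{\omega}(-1) < \infty$ required on $[-1,1]$, and the convergence of the infinite product.

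Fourth and last, I would identify the parameters $\alpha_i,\beta_i,\delta$ with concrete scaled limits of the coordinates of $\lambda^{(N)}$ (the Frobenius-type row lengths scaled by $N$ produce the $\alpha_i$, the column-complement lengths scaled by $N$ produce the $\beta_i$, and $|\lambda^{(N)}|/N$ produces $\delta$), thereby matching each triple to an explicit approximating sequence and completing the bijection $\omega \leftrightarrow P_{\omega}$. The hardest step is the third: establishing that total positivity plus the BC-symmetric branching rule forces exactly the stated Edrei-Thoma form, since the BC setting does not reduce directly to the type-A Thoma theorem and one must handle the Jacobi weight $(1-x)^a(1+x)^b$ uniformly in $a,b \geq -1/2$.
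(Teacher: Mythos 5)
This theorem is not proved in the paper at all: it is quoted verbatim from \cite{OkOl1} (Thm.\ 1.3), so there is no internal argument to compare against, and your sketch has to be measured against the actual proof in that reference. Your skeleton --- ergodic/Martin-boundary reduction, passage to normalized Jacobi polynomials $\widetilde{\mathfrak{P}}_{\lambda^{(N)}}$, multiplicativity of the limit $\Phi_\omega = \prod_i \varphi_\omega(x_i)$ --- does match the architecture of \cite{OkOl1}. The gap is in your third step. In type A the Edrei/Aissen--Schoenberg--Whitney route works because coherence is \emph{literally} equivalent to total positivity of the Taylor coefficients of a one-variable generating function (characters expand in Schur functions, which are generated by $\prod(1+\beta_i z)/\prod(1-\alpha_i z)$). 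In the Jacobi basis there is no such identification: positivity of the $\Lambda^{N+1}_N$ does not translate into total positivity of any doubly infinite sequence attached to $\varphi_\omega$, and the assertion that ``standard results of Aissen--Schoenberg--Whitney should show'' the product representation is precisely the content of the theorem, not a reduction of it. Okounkov and Olshanski get around this by an entirely different device: the binomial formula expanding $\mathfrak{P}_\lambda(1+\varepsilon_1,\ldots)/\mathfrak{P}_\lambda(1^N)$ in BC-type interpolation (shifted Jacobi) polynomials, which converts convergence of the normalized polynomials into convergence of the scaled Frobenius coordinates of $\lambda^{(N)}$ via a uniform approximation theorem. That machinery is what you would actually have to build.

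There is also a concrete error in the limit function you write down: the parameters are transposed. With $\prod_i\bigl(1+\alpha_i(x-1)/2\bigr)$ in the numerator, evaluation at $x=-1$ gives factors $1-\alpha_i$, so nonnegativity of $\varphi_\omega$ on $[-1,1]$ would force $\alpha_i\le 1$ --- contradicting the theorem, where the $\alpha_i$ are unbounded (they map to points $(1+\alpha_i)^2>1$ under $\mathfrak{i}$). The correct form is $\varphi_\omega(x)=e^{\gamma(x-1)/2}\prod_j\bigl(1+\beta_j\tfrac{x-1}{2}\bigr)\prod_i\bigl(1-\alpha_i\tfrac{x-1}{2}\bigr)^{-1}$: the constraint $1\ge\beta_1$ comes from nonnegativity of the numerator at $x=-1$ (not from ``boundedness'', which is automatic), the denominator is positive for all $\alpha_i\ge0$ and $x\in[-1,1]$, and $\delta\ge\sum_i(\alpha_i+\beta_i)$ is the condition $\gamma\ge0$ on the exponential factor rather than a convergence condition on the product.
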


We denote elements of $\Omega_{\infty}$ by the Greek letter $\omega$. We denote by $P^{\omega} \in M_{\textrm{prob}}(\Tau)$ the extreme measure on $\Tau$ that corresponds to $\omega\in\Omega_{\infty}$. The following theorem shows that any $P\in M_{\textrm{prob}}(\Tau)$ is a combination of the extreme probability measures $P^{\omega}$.

\begin{thm}[consequence of \cite{Ol}, Thm. 9.2]\label{thm:spectral}
Let $P$ be any central probability measure on $\Tau$. Then there exists a unique probability measure $\pi$ on $\Omega_{\infty}$ such that
\begin{equation*}
P = \int_{\Omega_{\infty}}{P^{\omega}d\pi(\omega)},
\end{equation*}
or in other words,
\begin{equation*}
P(S_{\phi}) = \int_{\Omega_{\infty}}{P^{\omega}(S_{\phi}) d\pi(\omega)},
\end{equation*}
for any finite path $\phi = (\phi^{(0)} \prec_{BC} \phi^{(1)} \prec_{BC} \dots \prec_{BC} \phi^{(n)})$, where $S_{\phi}$ is the associated cylinder set. The probability measure $\pi$ on $\Omega_{\infty}$ is said to be the \textit{spectral measure of $P$}.
\end{thm}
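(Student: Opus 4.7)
The statement is an application of the general Choquet-type decomposition theorem for central measures on branching graphs, namely \cite[Thm. 9.2]{Ol}. The plan is thus to verify that the hypotheses of that theorem apply to the BC-Gelfand-Tsetlin graph and to combine its conclusion with the identification of the boundary given by Theorem \ref{thm:BCboundary}.

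The general setup in \cite{Ol} considers a graded graph with countable levels and stochastic cotransition probabilities; under the weak topology on $M_{\textrm{prob}}(\Tau)$ (coming, via (\ref{eqn:bijection}), from the product topology on $\prod_{n\geq 0} \GTp_n$ with each level discrete), the convex set $M_{\textrm{prob}}(\Tau)$ becomes a metrizable compact convex set and, by \cite[Thm. 9.2]{Ol}, a \emph{Choquet simplex}. Consequently, every point of $M_{\textrm{prob}}(\Tau)$ admits a unique barycentric representation over its set of extreme points. All the hypotheses of the abstract theorem are easily seen to hold in our situation: the matrices $\Lambda^{N+1}_N$ defined in (\ref{eqn:markovdef}) are nonnegative and row-stochastic (as recalled in the text just below that equation), and the levels $\GTp_N$ are countable and graded by $N$.

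Combined with Theorem \ref{thm:BCboundary}, which identifies the set of extreme points of $M_{\textrm{prob}}(\Tau)$ with $\Omega_{\infty}$, the Choquet decomposition yields precisely the desired integral representation for any central probability measure $P$. The main obstacle---already settled in the quoted reference by a Vershik-Kerov-style ergodic method---is to show that $M_{\textrm{prob}}(\Tau)$ is genuinely a simplex (uniqueness of $\pi$), rather than merely admitting some decomposition. The argument there proceeds by constructing natural embeddings $\iota_N: \GTp_N \hookrightarrow \Omega_{\infty}$ such that the pushforwards $(\iota_N)_{*}P_N$ converge weakly on $\Omega_{\infty}$ to a probability measure which turns out to coincide with the desired $\pi$ (existence), and by exploiting the fact that the family $\{P^{\omega}\}_{\omega\in\Omega_{\infty}}$ separates points of $\Omega_{\infty}$ to rule out any alternative decomposition (uniqueness). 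Finally, an application of the bijection (\ref{eqn:bijection}) at each level reproduces the integral formula $P(S_{\phi}) = \int_{\Omega_{\infty}} P^{\omega}(S_{\phi}) d\pi(\omega)$ stated in the theorem.
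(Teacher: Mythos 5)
The paper offers no proof of this statement --- it is quoted verbatim as a consequence of \cite[Thm. 9.2]{Ol} combined with the boundary description of Theorem \ref{thm:BCboundary} --- and your sketch follows exactly that route, so it is consistent with the paper's treatment. One caveat: $M_{\textrm{prob}}(\Tau)$ is \emph{not} compact in the relevant topology (mass can escape to infinity on the countable levels $\GTp_N$, and correspondingly the boundary $\Omega_\infty$ is not compact), so the classical compact-convex Choquet theorem you initially invoke does not apply; the cited result of Olshanski is precisely the non-compact substitute for it, which your subsequent appeal to the Vershik--Kerov-style ergodic method correctly reflects, so the argument as a whole stands once that sentence is dropped.
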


Now the goal is to solve the ``spectral problem'' for certain measures $P$. In other words, given certain central probability measure $P$ on $\Tau$, the question is now to describe its spectral measure $\pi$ on $\Omega_{\infty}$. Since $\Omega_{\infty}$ is an infinite-dimensional space, one cannot write down a density function for $\pi$. Instead the key to the problem lies in the theory of point processes and correlation functions. To go further, begin by defining the measurable map
\begin{equation}\label{eqn:mapi}
\begin{gathered}
\mathfrak{i} : \Omega_{\infty} \longrightarrow \Conf(\X)\\
\omega = (\alpha; \beta; \delta) \mapsto \left( \{(1 + \alpha_i)^2\}_{i\geq 1} \sqcup \{(1 - \beta_j)^2\}_{j\geq 1} \right) \setminus \{0, 1\}.
\end{gathered}
\end{equation}

Recall that $\Conf(\X)$ above stands for the space of simple, locally finite point configurations on $\X$.

Given a probability measure $P$ on $\Tau$, we now aim to describe its spectral measure $\pi$ on $\Omega_{\infty}$ by studying its pushforward probability measure $\PP = \mathfrak{i}_*\pi$ on $\Conf(\X)$. The probability measure $\PP$ defines a point process on $\X$ that we denote by the same symbol $\PP$. Observe that the map $\mathfrak{i}$ hides some information about the coordinates $i = 1, 2, \ldots$ with $\alpha_i = 0$, $\beta_i = 0$, $\beta_i = 1$ (namely it hides the number of coordinates $k$ for which $\alpha_k = 0$, etc.), and it also hides $\delta$. However, this issue is not very significant for the problem of harmonic analysis that is our main motivation, see for example the discussion in \cite[Sec. 9]{BO1}.

The problem now is to compute the correlation functions of the point process $\PP$. This is still not an easy question, but we have an strategy to solve it by approximating $\PP$ by point processes with finite number of points.

For a central probability measure $P$ on $\Tau$, let $\{P_N\}_{N\geq 0}$ be the associated coherent sequence of probability measures, i.e., the image of $P$ under the bijective map $(\ref{eqn:bijection})$. Then for each $N\geq 0$, $P_N$ is a probability measure on $\GTp_N$. For each $N\geq 1$, consider the injective map
\begin{equation*}
\begin{gathered}
\mathfrak{k}_N : \GTp_N \hookrightarrow \Omega_{\infty}\\
\lambda = (p_1, \ldots, p_d | q_1, \ldots, q_d) \mapsto \mathfrak{k}_N(\lambda) = \omega = (\alpha; \beta; \delta),\\
\alpha_i = \frac{p_i + \frac{1}{2}}{N+\epsilon-\frac{1}{2}} \ \forall i = 1, \ldots, d; \ \alpha_i = 0 \ \forall i > d,\\
\beta_i = \frac{q_i + \frac{1}{2}}{N+\epsilon-\frac{1}{2}} \ \forall i = 1, \ldots, d; \ \beta_i = 0 \ \forall i > d,\\
\delta = \frac{|\lambda|}{N+\epsilon-\frac{1}{2}} = \frac{\lambda_1 + \ldots + \lambda_N}{N+\epsilon-\frac{1}{2}}.
\end{gathered}
\end{equation*}
The pushforwards $(\mathfrak{k}_N)_*P_N$ are probability measures on $\Omega_{\infty}$. By analogy to the ``A-type case'', we expect that $(\mathfrak{k}_N)_*P_N$ weakly converges to $P$ as $N$ goes to infinity.

Further let $\widetilde{\PP}^{(N)}$ be the pushforward of $(\mathfrak{k}_N)_*P_N$ under the map $\mathfrak{i} : \Omega_{\infty} \rightarrow \Conf(\X)$, that is, $\widetilde{\PP}^{(N)} = \mathfrak{i}_*\left((\mathfrak{k}_N)_*P_N\right)$. If we believe in the weak convergence $\mathbb{P}-\lim_{N\rightarrow\infty}{ (\mathfrak{k}_N)_*P_N } = P$, then we also expect that the pushforwards $\widetilde{\PP}^{(N)}$ converge in certain sense to $\PP$. In fact, the proposition below shows that the latter convergence holds in the sense that the correlation functions of $\widetilde{\PP}^{(N)}$ converge to those of $\PP$ as $N$ goes to infinity, in certain special case that will nonetheless be applicable to our situation.

Let us give a different description of the point processes $\widetilde{\PP}^{(N)}$. Recall the maps $\LL^{(N)}$ defined in Section $\ref{zprocesses}$ by
\begin{equation*}
\begin{gathered}
\LL^{(N)} : \GTp_N \rightarrow \Conf(\Zpe)\\
\lambda = (p_1, \ldots, p_d | q_1, \ldots, q_d) \mapsto \LL^{(N)}(\lambda) = \{(N + p_i + \epsilon)^2 \}_{1\leq i\leq d} \sqcup\{(N - 1 - q_i + \epsilon)^2\}_{1\leq i\leq d}.
\end{gathered}
\end{equation*}
Let $\PP^{(N)}$ be the pushforward measures $\PP^{(N)} = ( \LL^{(N)})_*P_N$. Observe that if $P_N = P_N(\cdot | z, z', a, b)$ is the $z$-measure of level $N$ associated to $z, z', a, b$, then the pushforward measure $\PP^{(N)} = (\LL^{(N)})_*P_N(\cdot | z, z', a, b)$ is the process $\LL^{(N)}_{z, z', a, b}$ already defined in Section $\ref{zprocesses}$.

Also consider the map
\begin{equation*}
\begin{gathered}
\mathfrak{j}_N : \Zpe \rightarrow \frac{1}{\left(N+\epsilon - \frac{1}{2}\right)^2}\Zpe \subset \X\\
x \mapsto \frac{x}{\left( N+\epsilon-\frac{1}{2} \right)^2}
\end{gathered}
\end{equation*}
and the induced map $\mathfrak{j}_N : \Conf(\Zpe) \rightarrow \Conf(\X)$ between spaces of point configurations. It is clear that the pushforward of $P_N$ under the composition map $\GTp_N \xrightarrow{\LL^{(N)}} \Conf(\Zpe) \xrightarrow{\mathfrak{j}_N} \Conf(\X)$ is the probability measure $\widetilde{\PP}^{(N)}$ on $\Conf(\X)$ described previously, i.e., $\widetilde{\PP}^{(N)} = (\mathfrak{j}_N)_*(\LL^{(N)})_*P_N = (\mathfrak{j}_N)_*\PP^{(N)}$.

For the special case dealt with in the statement below, assume that each $\PP^{(N)}$ is a determinantal process with correlation kernel $K^{\PP^{(N)}}$ (with respect to the counting measure on $\Zpe$). The pushforwards $\widetilde{\PP}^{(N)} = (\mathfrak{j}_N)_*\PP^{(N)}$ can be seen as point processes on the discrete space $\frac{1}{(N + \epsilon - \frac{1}{2})^2}\Zpe$, rather than on the continuous space $\X$. Consider the measure $\mu$ on $\frac{1}{(N+\epsilon-\frac{1}{2})^2}\Zpe$ that assigns a mass of $1/N$ to each point, i.e., $\mu\left( \frac{(n + \epsilon)^2}{(N + \epsilon - \frac{1}{2})^2} \right) = \frac{1}{N} \ \forall n\in\Zp$. With respect to this reference measure, the point process $\widetilde{\PP}^{(N)}$ is a determinantal process on $\frac{1}{(N+\epsilon-\frac{1}{2})^2}\Zpe$ with correlation kernel
\begin{equation*}
K^{\widetilde{\PP}^{(N)}}(x, y) = N\cdot K^{\PP^{(N)}}\left((N+\epsilon-\frac{1}{2})^2 x, (N+\epsilon-\frac{1}{2})^2 y \right).
\end{equation*}

Since all processes $\widetilde{\PP}^{(N)}$ are determinantal processes, the ``convergence'' of $\widetilde{\PP}^{(N)}$ to $\PP$ that we had in mind leads us to think that $\PP$ is also determinantal and that the correlation kernels $K^{\widetilde{\PP}^{(N)}}$ must converge to certain function $K^{\PP} : \X\times\X\rightarrow\R$, which is a correlation kernel for $\PP$.

To state the exact form of the proposition, we  need one last piece of notation. For any $x\in\X$, let $\widehat{x_N}$ be the point of $\Zpe$ that is closest to $\left((N + \epsilon - \frac{1}{2})x\right)^2$ (if there are two points in $\Zpe$ that are at the same distance from $\left((N + \epsilon - \frac{1}{2})x\right)^2$, then $\widehat{x_N}$ is either of them).

\begin{prop}\label{thm:scalinglimit}
Let $\left\{\PP^{(N)}\right\}_{N\geq 1}$ be point processes on $\Zpe$ and $\PP$ a point process on $\X$, as above. Assume that each $\PP^{(N)}$ is a determinantal process with correlation kernel $K^{\PP^{(N)}}$ (with respect to the counting measure on $\Zpe$). Let $\widetilde{K}^{\PP}:\X\times\X\rightarrow\R$ be a map such that the following conditions are satisfied:

\begin{enumerate}
\item The map $\widetilde{K}^{\PP}$ is continuous.

\item $\left. K^{\PP^{(N)}}\right|_{\Z^{\epsilon}_{<N}\times \Z^{\epsilon}_{<N}}$ and $\left. K^{\PP^{(N)}}\right|_{\Z^{\epsilon}_{\geq N}\times \Z^{\epsilon}_{\geq N}}$ are Hermitian symmetric, for each $N\geq 1$.

\item The following limit holds
\begin{equation*}
\lim_{N\rightarrow\infty}{N\cdot K^{\PP^{(N)}}(\widehat{x_N}, \widehat{y_N})} = \widetilde{K}^{\PP}(x^2, y^2)
\end{equation*}
uniformly for $(x, y)$ belonging to compact subsets of $\X\times\X$.
\end{enumerate}
Then the point process $\PP$ is determinantal and a correlation kernel of $\PP$ (with respect to the Lebesgue measure on $\X$) is
\begin{equation}\label{eqn:finalkernel}
K^{\PP}(x, y) \myeq \frac{\widetilde{K}^{\PP}(x, y)}{2x^{1/4}y^{1/4}}.
\end{equation}
\end{prop}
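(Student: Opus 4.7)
The plan is to recover the $k$-point correlation functions of $\PP$ with respect to Lebesgue measure on $\X$ as limits of the determinantal correlation functions of the finite approximations $\widetilde{\PP}^{(N)}$, and then to recognize the limit as $\det_{i,j}[K^{\PP}(u_i,u_j)]$ via a single change-of-variables identity. By functoriality of pushforwards of determinantal processes, $\widetilde{\PP}^{(N)} = (\mathfrak{j}_N)_{\ast}\PP^{(N)}$ is determinantal on the scaled lattice $\mathfrak{j}_N(\Zpe)\subset\X$; with respect to the reference measure $\mu_N$ placing mass $1/N$ at each scaled lattice point, its kernel is $K^{\widetilde{\PP}^{(N)}}(s,t) = N\cdot K^{\PP^{(N)}}(\mathfrak{j}_N^{-1}(s),\mathfrak{j}_N^{-1}(t))$. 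Hence, for any bounded, compactly supported Borel $f:\X^k\to\R$, the $k$-th factorial moment of $\widetilde{\PP}^{(N)}$ against $f$ is
\begin{equation*}
M^{(N)}_k[f] \;=\; \int_{\X^k} f(s_1,\ldots,s_k)\,\det_{1\le i,j\le k}\!\left[K^{\widetilde{\PP}^{(N)}}(s_i,s_j)\right]\,d\mu_N^{\otimes k}.
\end{equation*}

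Next I would verify that $\mu_N$ converges weakly on compact subsets of $\X$ to the measure $du/(2\sqrt{u})$, which is the source of the factor $2x^{1/4}y^{1/4}$ appearing in $(\ref{eqn:finalkernel})$. A short Riemann-sum argument does it: parametrizing the lattice as $s_n^2$ with $s_n = (n+\epsilon)/(N+\epsilon-\tfrac{1}{2})$ (uniform spacing $(N+\epsilon-\tfrac{1}{2})^{-1}$ in the $\sqrt{u}$-coordinate) gives, for any continuous compactly supported $g$, $\int g\,d\mu_N = \sum_{n\ge 0}\tfrac{1}{N}\,g(s_n^2) \to \int_0^\infty g(s^2)\,ds = \int_0^\infty g(u)\,du/(2\sqrt{u})$. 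Combined with the uniform-on-compacts convergence of kernels from condition $(3)$ and the continuity of $\widetilde{K}^{\PP}$ from condition $(1)$, continuity of the $k\times k$ determinant then yields
\begin{equation*}
M^{(N)}_k[f] \;\xrightarrow{N\to\infty}\; \int_{\X^k} f(u_1,\ldots,u_k)\,\det_{i,j}\!\left[\widetilde{K}^{\PP}(u_i,u_j)\right]\prod_{i=1}^{k}\frac{du_i}{2\sqrt{u_i}}.
\end{equation*}

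The clinching algebraic observation is that the diagonal factors $(2\sqrt{u_i})^{-1}$ can be absorbed symmetrically into the rows and columns of the matrix, recasting the integrand as
\begin{equation*}
\det_{i,j}\!\left[\widetilde{K}^{\PP}(u_i,u_j)\right]\prod_{i=1}^{k}\frac{1}{2\sqrt{u_i}} \;=\; \det_{i,j}\!\left[\frac{\widetilde{K}^{\PP}(u_i,u_j)}{2\,u_i^{1/4}\,u_j^{1/4}}\right] \;=\; \det_{i,j}\!\left[K^{\PP}(u_i,u_j)\right].
\end{equation*}
Since (as recalled in the discussion preceding the proposition) the correlation functions of $\widetilde{\PP}^{(N)}$ converge to those of $\PP$, the limit of $M^{(N)}_k[f]$ also equals the $k$-th factorial moment of $\PP$ against $f$; letting $f$ vary identifies $\rho_k^{\PP}(u_1,\ldots,u_k) = \det_{i,j}[K^{\PP}(u_i,u_j)]$ for all $k\ge 1$, which is the determinantal conclusion. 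Condition $(2)$ plays a supporting role: Hermiticity of $K^{\PP^{(N)}}$ on each of the two blocks $\Z^{\epsilon}_{<N}\times\Z^{\epsilon}_{<N}$ and $\Z^{\epsilon}_{\ge N}\times\Z^{\epsilon}_{\ge N}$ descends to $K^{\PP}$ on $(0,1)^2$ and $(1,\infty)^2$, ensuring non-negativity of all limiting determinants. The main technical difficulty will be to justify the interchange of the $N\to\infty$ limit with integration against $\mu_N^{\otimes k}$: one must check that the uniform-on-compacts control from $(3)$ is genuinely uniform over the lattice points $\widehat{x_N}$ sitting near any fixed compact in the $u$-variable, and that restricting to compactly supported test $f$ (whose supports stay away from $\{0\}\cup\{1\}\cup\{\infty\}$ in each coordinate) is without loss.
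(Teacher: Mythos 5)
Your computation---pushing $\PP^{(N)}$ forward to the scaled lattice, recognizing the reference measure $\mu_N$ as a Riemann approximation of $du/(2\sqrt{u})$ after the substitution $u=s^2$, and then absorbing the factors $(2\sqrt{u_i})^{-1}$ symmetrically into the rows and columns of the determinant to arrive at $K^{\PP}(x,y)=\widetilde{K}^{\PP}(x,y)/(2x^{1/4}y^{1/4})$---is exactly the computation the paper carries out to explain the form of the kernel, so that part is on target.

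The gap is the sentence ``Since (as recalled in the discussion preceding the proposition) the correlation functions of $\widetilde{\PP}^{(N)}$ converge to those of $\PP$\dots''. That convergence is not available as an input: the discussion preceding the proposition only says one \emph{expects} it, and the proposition is precisely the statement that establishes it. The process $\PP$ is defined intrinsically as $\mathfrak{i}_*\pi$, where $\pi$ is the spectral measure on $\Omega_{\infty}$ of the coherent system $\{P_N\}$; it is not defined as a limit of the $\widetilde{\PP}^{(N)}$. So the hard analytic content is to show that the correlation measures of $\PP$ exist and are the limits of those of $\widetilde{\PP}^{(N)}$; your argument assumes this and therefore only proves that \emph{if} the correlation functions converge, the limit has the stated determinantal form. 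Closing the gap requires the machinery of \cite[Ch.\ 9]{BO1}: the approximation of the boundary measure by $(\mathfrak{k}_N)_*P_N$ coming from the description of $\Omega_{\infty}$ as the boundary of the BC Gelfand--Tsetlin graph, together with uniform tightness bounds on the correlation measures of $\widetilde{\PP}^{(N)}$ so that no mass escapes to $0$, $1$ or $\infty$. This is also where condition (2) is genuinely used: the block-Hermitian structure yields Hadamard-type bounds $0\le\rho_k^{(N)}\le\prod_i K^{\widetilde{\PP}^{(N)}}(x_i,x_i)$, which, combined with (3), supply the domination needed both for tightness and for the limit/integral interchange you flagged as the main technical difficulty. (The paper itself declines to write this out and defers to \cite[Thm.\ 9.2]{BO1}; a self-contained proof must supply it rather than cite it as already established.)
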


We do not provide here a proof of Proposition $\ref{thm:scalinglimit}$. Theorem 9.2. in \cite{BO1} is the `type A' analogue of the proposition above and its proof in \cite[Ch. 9]{BO1} can be repeated with suitable modifications, but without any new ideas. We therefore leave the proof of Proposition $\ref{thm:scalinglimit}$ as an exercise to the reader. We only mention here why the exact form in $(\ref{eqn:finalkernel})$ is the correlation kernel of $\PP$ (the appearance of $x^{1/4}$ and $y^{1/4}$ may seem strange at first).

Let $k\in\N$ be arbitrary, denote by $\rho_k^{(N)}$ and $\rho_k$ the $k$-th correlation functions of $\widetilde{\PP}^{(N)}$ and $\PP$, respectively. The process $\widetilde{\PP}^{(N)}$ is taken to be on the state space $\X\cap \frac{\Zpe}{(N+\epsilon-\frac{1}{2})^2}$ and the reference measure for its correlation functions is taken to be $\mu$ such that $\mu\left( \frac{(n+\epsilon)^2}{(N+\epsilon-\frac{1}{2})^2} \right) = 1/N$. Consequently the correlation function of $\widetilde{\PP}^{(N)}$ is
\begin{equation*}
\rho_k^{(N)}(x_1, \ldots, x_k) = \det_{1\leq i, j\leq k}{\left[K^{\widetilde{\PP}^{(N)}}(x_i, x_j)\right]} = \det_{1\leq i, j\leq k}{\left[N \cdot K^{\PP^{(N)}}\left( (N+\epsilon-\frac{1}{2})^2 x_i, (N+\epsilon-\frac{1}{2})^2 x_j \right)\right]},
\end{equation*}
by our assumptions.

The strategy of \cite[proof of Thm. 9.2]{BO1} enables us to show that $\rho_k$ exists and moreover
\begin{equation*}
\lim_{N\rightarrow\infty}{\left< F, \rho_k^{(N)} \right>} = \left< F, \rho_k \right>,
\end{equation*}
for any continuous, compactly supported function $F:\X^k \rightarrow \R$. By simple algebraic manipulations and change of variables, we have
\begin{equation*}
\begin{gathered}
\left< F, \rho_k^{(N)} \right> = \sum_{x_1, \ldots, x_k\in\X\cap\frac{\Zpe}{(N+\epsilon-\frac{1}{2})^2}}{F(x_1, \ldots, x_k)\rho_k^{(N)}(x_1, \ldots, x_k)\mu(x_1)\cdots\mu(x_k)}\\
= \frac{1}{N^k}\sum_{x_1, \ldots, x_k\in\X\cap\frac{\Zpe}{(N+\epsilon-\frac{1}{2})^2}}{F(x_1, \ldots, x_k)\det\left[ K^{\widetilde{\PP}^{(N)}}(x_i, x_j) \right]}\\
= \frac{1}{N^k}\sum_{y_1, \ldots, y_k\in\X\cap\frac{\Zp + \epsilon}{N+\epsilon-\frac{1}{2}}}{F(y_1^2, \ldots, y_k^2)\det\left[ N\cdot K^{\PP^{(N)}}\left(((N+\epsilon-\frac{1}{2})y_i)^2, ((N+\epsilon-\frac{1}{2})y_j)^2\right) \right]}\\
\xrightarrow{N\rightarrow\infty} \int\dots\int_{\X^k}{F(y_1^2, \ldots, y_k^2)\det\left[ \widetilde{K}^{\PP}(y_i^2, y_j^2) \right]dy_1\dots dy_k}\\
= \int\dots\int_{\X^k}{F(x_1, \ldots, x_k)\det\left[ \widetilde{K}^{\PP}(x_i, x_j) \right]\frac{dx_1}{2\sqrt{x_1}}\dots \frac{dx_k}{2\sqrt{x_k}}}
\end{gathered}
\end{equation*}
and finally $\det_{1\leq i, j\leq k}\left[ \widetilde{K}^{\PP}(x_i, x_j) \right] = 2^k\sqrt{x_1\cdots x_k}\det_{1\leq i, j\leq k}\left[ K^{\PP}(x_i, x_j) \right]$.\\

To conclude this section, we apply Proposition $\ref{thm:scalinglimit}$ to the coherent sequence of $z$-measures $\{P_N\}_{N \geq 1}$, see Proposition $\ref{prop:coherent}$, to obtain the desired stochastic point processes $\PP$ on $\X = \R_{>0}\setminus\{1\}$, depending on parameters $z, z', a, b$.

\begin{df}
Let $a\geq b\geq -\frac{1}{2}$, $(z, z')\in\U_{\adm}$ and $\{P_N (\cdot | z, z', a, b)\}_{N\geq 0}$ be the associated $z$-measures of finite levels. Let $P_{z, z', a, b}$ be the associated element of $M_{\textrm{prob}}(\Tau)$, given by the bijective map $(\ref{eqn:bijection})$ above. Denote by $\pi_{z, z', a, b}$ to the spectral measures on $\Omega_{\infty}$ corresponding to $P_{z, z', a, b}$, and given by Theorem $\ref{thm:spectral}$. Then we define $\PP_{z, z', a, b}$ as the pushforward of $\pi_{z, z', a, b}$ with respect to the map $\mathfrak{i}:\Omega_{\infty}\rightarrow\Conf(\X)$ in $(\ref{eqn:mapi})$. If $z, z', a, b$ are implicit in the context, then we denote $\PP_{z, z', a, b}$ simply by $\PP$.
\end{df}

In the next section, we show that the assumptions of Proposition $\ref{thm:scalinglimit}$ are satisfied (with $\PP^{(N)}$ replaced by $\LL^{(N)}_{z, z', a, b}$ and $\PP$ replaced by $\PP_{z, z', a, b}$). As a consequence, it will follow that $\PP_{z, z', a, b}$ is determinantal and we are able to compute a correlation kernel of it as a limit of the correlation kernels $K^{\LL^{(N)}}$.

\section{Main result: An explicit correlation kernel for the point process $\PP$}\label{sec:maintheoremkernel}

\subsection{Statement of the main theorem}

The main theorem of this paper is the following.

\begin{thm}\label{maintheorem}
Let $a \geq b \geq -\frac{1}{2}$, $(z, z')\in\U_{\adm}$, and $\PP = \PP_{z, z', a, b}$ be the corresponding point process on $\R_{>0} \setminus \{1\}$. Then $\PP$ is determinantal and a correlation kernel $K^{\PP}(x, y)$ can be explicitly given in terms of hypergeometric functions. Explicitly, in terms of the decomposition $\R_{>0} \setminus \{1\} = \X_{>1}\sqcup\X_{<1},\ \X_{>1} = (1, \infty), \ \X_{<1} = (0, 1)$, $K^{\PP}(x, y)$ can be expressed as
\begin{equation}\label{KernelP}
K^{\PP}(x, y) = \left\{
\begin{aligned}
    \sqrt{\psi_{>1}(x)\psi_{>1}(y)}\cdot\frac{R_{>1}(x)S_{>1}(y) - S_{>1}(x)R_{>1}(y)}{x - y},& \textrm{ if } x, y \in\X_{>1}, \ x\neq y,\\
    \sqrt{\psi_{>1}(x)\psi_{<1}(y)}\cdot\frac{R_{>1}(x)R_{<1}(y) - S_{>1}(x)S_{<1}(y)}{x - y},& \textrm{ if } x\in\X_{>1},\ y \in\X_{< 1},\\
    \sqrt{\psi_{<1}(x)\psi_{>1}(y)}\cdot\frac{R_{<1}(x)R_{>1}(y) - S_{<1}(x)S_{>1}(y)}{x - y},& \textrm{ if } x\in\X_{<1},\ y \in\X_{>1},\\
    \sqrt{\psi_{<1}(x)\psi_{<1}(y)}\cdot\frac{R_{<1}(x)S_{<1}(y) - S_{<1}(x)R_{<1}(y)}{x - y}, &\textrm{ if } x, y\in\X_{<1},\ x\neq y,
\end{aligned}\right.
\end{equation}
where $\psi_{>1} : (1, \infty)\rightarrow \R_{\geq 0}$, $\psi_{<1} : (0, 1)\rightarrow \R_{\geq 0}$ are defined by
\begin{eqnarray*}
\psi_{>1}(x) &=& \frac{\sin(\pi z)\sin(\pi z')}{2\pi^2}x^{- b}(x - 1)^{-z-z'},\\
\psi_{<1}(x) &=& 2x^{b}(1 - x)^{z+z'},
\end{eqnarray*}
and $R_{>1}, S_{>1} : (1, \infty) \rightarrow \C$, $R_{<1}, S_{<1} : (0, 1) \rightarrow \C$, are defined as follows
\begin{eqnarray*}
R_{>1}(x) &=& \left(1 - \frac{1}{x}\right)^{z'}\pFq{2}{1}{z' + b ,,, z'}{z + z' + b}{\frac{1}{x}},\\
S_{>1}(x) &=& \frac{2}{x}\left(1 - \frac{1}{x}\right)^{z'}\Gamma\left[ \begin{split}
z+1,\ z'+1,\ z+b+1,\ z'+b+1\\
z+z'+b+1,\hspace{.3in} z+z'+b+2
\vphantom{\frac12}\end{split}
\right]\pFq{2}{1}{z' + b + 1 ,, z' + 1}{z + z' + b + 2}{\frac{1}{x}},\\
R_{<1}(x) &=& -\frac{\sin \pi z}{\pi}\cdot\Gamma\left[ \begin{split}
z'-z ,\ z+b+1 ,\ z+1\\
z+z'+b+1 \hspace{.2in}
\vphantom{\frac12}
\end{split}\right]\left( 1 - x \right)^{-z'}\pFq{2}{1}{-z' ,,, z+b+1}{1+z-z'}{1 - x}\\
&&-\frac{\sin \pi z'}{\pi}\cdot\Gamma\left[ \begin{split}
z-z' ,\ z'+b+1 ,\ z'+1\\
 z+z'+b+1 \hspace{.3in}
\vphantom{\frac12}
\end{split}\right]( 1 - x )^{-z}\pFq{2}{1}{-z ,,, z'+b+1}{1+z'-z}{1 - x},\\
S_{<1}(x) &=& -\frac{\sin \pi z}{2\pi}\cdot\Gamma\left[ \begin{split}
z'-z &,& z+z'+b\\
z' &,& z'+b
\vphantom{\frac12}
\end{split}\right]\left( 1 - x \right)^{-z'}\pFq{2}{1}{1-z' ,,, z+b}{1+z-z'}{1 - x}\\
&&-\frac{\sin \pi z'}{2\pi}\cdot\Gamma\left[ \begin{split}
z-z' &,& z+z'+b\\
z &,& z+b
\vphantom{\frac12}
\end{split}\right]\left( 1 - x \right)^{-z}\pFq{2}{1}{1-z ,,, z'+b}{1+z'-z}{1 - x}.
\end{eqnarray*}

Observe that the expressions for the kernel $K^{\PP}$ in $(\ref{KernelP})$ have poles in the diagonal $x = y$. The kernel admits the following analytic continuation via L'H\^opital's rule:
\begin{equation}\label{KernelP2}
K^{\PP}(x, x) = \left\{
\begin{aligned}
    \psi_{>1}(x)\cdot \left(R_{>1}'(x)S_{>1}(x) - S_{>1}'(x)R_{>1}(x)\right),& \textrm{ if } x \in\X_{>1},\\
    \psi_{<1}(x)\cdot \left(R_{<1}'(x)S_{<1}(x) - S_{<1}'(x)R_{<1}(x)\right), &\textrm{ if } x\in\X_{<1}.
\end{aligned}\right.
\end{equation}
\end{thm}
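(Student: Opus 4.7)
The plan is to apply Proposition \ref{thm:scalinglimit} to the sequence $\PP^{(N)} = \LL^{(N)}_{z,z',a,b}$, which by Theorem \ref{thm:Lkernel} is determinantal with the four-block correlation kernel $K^{\LL^{(N)}}$ of $(\ref{eqn:KLN})$--$(\ref{eqn:KLN2})$, built from the six functions $\psi_{\geq N}, \psi_{<N}, \widetilde{R}_{\geq N}, \widetilde{S}_{\geq N}, \widetilde{R}_{<N}, \widetilde{S}_{<N}$. Under the scaling $\widehat{y} \approx ((N+\epsilon-\tfrac12)y)^2$, points of $\Z^{\epsilon}_{\geq N}$ cluster in $\X_{>1}$ and points of $\Z^{\epsilon}_{<N}$ cluster in $\X_{<1}$, matching the four cases of $(\ref{KernelP})$. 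The task is to verify the uniform-on-compacts limit $N \cdot K^{\LL^{(N)}}(\widehat{x_N}, \widehat{y_N}) \to \widetilde K^{\PP}(x^2, y^2) = 2\sqrt{xy}\,K^{\PP}(x^2, y^2)$ with the stated form of $K^{\PP}$.

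I would organize the limit computation in the two regimes $\Z^{\epsilon}_{\geq N} \leftrightarrow \X_{>1}$ and $\Z^{\epsilon}_{<N} \leftrightarrow \X_{<1}$ and, in each regime, compute the scaling of each of the six functions. The Gamma-function prefactors are handled by repeated use of $\Gamma(x+\alpha)/\Gamma(x+\beta) \sim x^{\alpha-\beta}$ at the two natural scales $y \sim (N+\epsilon-\tfrac12)\sqrt{u}$ and $y \pm N \sim N(1 \pm \sqrt{u})$, together with the reflection formula $\Gamma(w)\Gamma(1-w) = \pi/\sin\pi w$ to handle the arguments of the form $\Gamma(-N+\cdots)$. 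Each individual function picks up an explicit leading power $N^{c}$, but when assembled into $N \cdot \sqrt{\psi\,\psi}\,(\widetilde R \widetilde S - \widetilde S \widetilde R)/(\widehat x - \widehat y)$ all powers of $N$ cancel, producing a finite limit. For the hypergeometric pieces, termwise passage turns each $_4F_3$ into a $_2F_1$: in the $<N$ region, the Pochhammer ratios $(\zeta+N+z+2\epsilon)_k/(N+z+2\epsilon)_k \to (1+\sqrt u)^k$ and $(N-\zeta+z)_k/(N+z+b+1)_k \to (1-\sqrt u)^k$ combine to $(1-u)^k$, yielding the $_2F_1$ at argument $1-u$ in $(\ref{tildeRsmallerN})$--$(\ref{tildeSsmallerN})$; in the $\geq N$ region, the analogous termwise analysis of the terminating $_4F_3$ of $(\ref{R1N})$ yields $_2F_1(z,z';\Sigma;1/(1-u))$, which by Pfaff's transformation ${}_2F_1(a,b;c;w) = (1-w)^{-b}{}_2F_1(c-a,b;c;w/(w-1))$ is exactly $(1-1/u)^{z'}\cdot{}_2F_1(z'+b,z';\Sigma;1/u)$, matching $R_{>1}$. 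Uniform convergence on compacts bounded away from $\{0,1,\infty\}$ follows from dominating the termwise limit by its analytic continuation (through the Pfaff-transformed $_2F_1$, which converges absolutely in the relevant region).

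Finally, the three hypotheses of Proposition \ref{thm:scalinglimit} must be checked. Continuity of the candidate limit kernel follows from its closed form, with the diagonal resolved by L'H\^opital as in $(\ref{KernelP2})$. Hermitian symmetry of $K^{\LL^{(N)}}$ on each diagonal block follows from the skew-symmetry of the $L$-matrix $L_N$ in $(\ref{Lmatrixdef})$, which makes $K^{\LL^{(N)}} = L_N(1+L_N)^{-1}$ Hermitian on each of the two diagonal blocks. The uniform convergence is the conclusion of the previous step. The main obstacle will be the careful bookkeeping of the many Gamma asymptotics so that all $N$-powers cancel exactly in $\sqrt{\psi_{<N}\psi_{<N}}$, in $\widetilde{S}_{<N}$, and in $\widehat{x}-\widehat{y}$ (and likewise in the $\geq N$ region); the application of hypergeometric transformations (Pfaff) to reach the stated final form; and the verification that the apparent simple poles at $z-z'\in\Z$ of each of the two $z\leftrightarrow z'$ pieces in the limiting $R_{<1}, S_{<1}$ cancel in pairs so that the limit is holomorphic. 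The symmetry $\zeta \leftrightarrow -\zeta-2\epsilon$ of Lemma \ref{lem:involutionpreserving} can be used to halve the asymptotic analysis.
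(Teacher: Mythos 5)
Your overall strategy coincides with the paper's: apply Proposition \ref{thm:scalinglimit} to $\LL^{(N)}$, verify Hermitian symmetry of the diagonal blocks from Theorem \ref{thm:Lkernel}, establish the six uniform limits $\psi_{\geq N},\psi_{<N},\widetilde R_{\geq N},\widetilde S_{\geq N},\widetilde R_{<N},\widetilde S_{<N}\to\psi_{>1},\psi_{<1},R_{>1},S_{>1},R_{<1},S_{<1}$ on compacts, and then remove the constraints $\Sigma\neq 0$, $z-z'\notin\Z$ by analytic continuation. The treatment of the $\X_{<1}$ region and of the bookkeeping of $N$-powers is consistent with the paper's Propositions \ref{psilimit} and \ref{RS2limit}.

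However, your route for the $\X_{>1}$ region has a genuine gap. You propose to take the termwise limit of the terminating series $(\ref{R1N})$, whose $k$-th Pochhammer ratio $\frac{(-N)_k(-N-a)_k}{(\zeta-N+1)_k(-\zeta-N-a-b)_k}$ tends (with $\zeta\sim N\eta$) to $\bigl(1/(1-\eta^2)\bigr)^k$, and then apply Pfaff's transformation to the resulting ${}_2F_1\bigl(z,z';\Sigma;1/(1-\eta^2)\bigr)$. But for $\eta^2\in(1,2]$ the limiting argument satisfies $|1/(1-\eta^2)|\geq 1$, so the limiting ${}_2F_1$ series diverges and termwise passage cannot be justified by domination — you cannot ``dominate the termwise limit by its analytic continuation,'' since domination must control the prelimit partial sums by a convergent series. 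The hypotheses of Lemma \ref{limithypergeom1} also fail in this representation ($\Re B_N^{(1)}>Ns$ with $s>1$ is violated). Since Proposition \ref{thm:scalinglimit} requires uniform convergence on \emph{all} compact subsets of $\X\times\X$, including those approaching $x=1^+$, this step must be repaired. The fix the paper uses is to apply the Pfaff-type transformation at the \emph{prelimit} level: a Bailey identity for Saalsch\"utzian ${}_4F_3(1)$ series (the relation $S(1,2,3)=S(3,5,6)$ in the Appendix) rewrites $\widetilde R_{\geq N}(N\zeta)$ as $(\ref{eq:RgeqN})$, whose lower parameters $-N\zeta+z'$ and $N\zeta+z'+2\epsilon$ put the termwise limit at argument $1/\zeta^2$, which lies in the unit disk for all $\zeta>1$; Lemma \ref{limithypergeom1} then applies uniformly on every compact of $(1,\infty)$. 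You should either adopt that prelimit transformation or supply an alternative uniform estimate valid down to $x=1^+$.
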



\begin{rem}
The function $R_{>1}$ is holomorphic (as function of $(z, z')$) on the domain $\U\cap\{\Sigma \neq 0\}$, whereas $R_{<1}$ is holomorphic on $\U_0\cap\{ z - z' \notin \Z \}$. The function $S_{>1}$ is holomorphic on $\U_0$, whereas $S_{<1}$ is holomorphic on $\U\cap\{z-z' \notin \Z\}\cap\{\Sigma \neq 0\}$. Then the functions $R_{>1}, R_{<1}, S_{>1}, S_{<1}$ (and consequently $K^{\PP}(x, y)$) are all holomorphic on $\U_0\cap\{\Sigma \neq 0\}\cap\{z-z' \notin \Z\}$. In the proof below, for any $x, y\in\X$, it is shown that $K^{\PP}(x, y)$ as a function of $(z, z')$ can be analytically continued to the domain $\U_0$ and therefore $K^{\PP}(x, y)$ has a well-defined value for any $(z, z')\in\U_{\adm}$, even when $\Sigma = 0$ and when $z-z'\in\Z$.
\end{rem}

\begin{rem}
The kernel $K^{\PP}$ in Theorem $\ref{maintheorem}$ is known as the \textit{hypergeometric kernel} and it appeared first in \cite{BO1}. We make the exact comparison with the formula of \cite{BO1} in the next remark. Observe the surprising fact that $K^{\PP}$ is independent of the parameter $a$. This phenomenon also occurs in a different context where some natural Markov processes on $\Omega_{\infty}$ preserving the $z$-measures lose the parameter $a$ in a limit transition, see \cite{C}.
\end{rem}

\begin{rem}
Consider the functions $\overline{\psi}_{>1}(x) = 2\psi_{>1}(x)$, $\overline{\psi}_{<1}(x) = \psi_{<1}(x)/2$, $\overline{R}_{>1} = R_{>1}$, $\overline{R}_{<1} = R_{<1}$, $\overline{S}_{>1} = S_{>1}/2$, $\overline{S}_{<1} = 2S_{<1}$ and the kernel $\overline{K}^{\PP}$ defined just like $K^{\PP}$, but with the functions $\overline{\psi}_{>1}, \ldots$ (with a bar over it) instead of the functions $\psi_{>1}, \ldots$. It is clear that $\overline{K}^{\PP}(x, y) = K^{\PP}(x, y)\ \forall x, y\in\X$.

By virtue of the identity, see \cite[2.4.(1)]{B},
\begin{equation*}
(1-z)^{-A}\pFq{2}{1}{A ,,, B}{C}{-z/(1-z)} = \pFq{2}{1}{A ,,, C - B}{C}{z},
\end{equation*}
the formulas for $\overline{R}_{>1}, \overline{S}_{>1}$ have alternative expressions. For example,
\begin{eqnarray*}
\overline{R}_{>1}(x) &=& \left( 1 - \frac{1}{x} \right)^{-b} \pFq{2}{1}{z+b ,,, z'+b}{z + z' + b}{\frac{1}{1-x}}.
\end{eqnarray*}
With the new expressions for $\overline{R}_{>1}, \overline{S}_{>1}$, it is clear that the functions $\overline{\psi}_{>1}, \overline{\psi}_{<1}, \overline{R}_{>1}, \overline{R}_{<1}, \overline{S}_{>1}, \overline{S}_{<1}$ are very similar to the ones appearing in the hypergeometric kernel describing the $zw$-measures, see \cite[Thm. 10.1]{BO2}. In fact, they are actually the \textit{same} functions after the formal identification of variables
\begin{eqnarray*}
\textrm{Hypergeometric kernel in \cite{BO1}}&\leftrightarrow&\textrm{Hypergeometric kernel above}\\
z &\leftrightarrow&z\\
z' &\leftrightarrow&z'\\
w &\leftrightarrow&0\\
w' &\leftrightarrow&b\\
x &\leftrightarrow& x - \frac{1}{2}.
\end{eqnarray*}
A pedrestian explanation for the similarity of the functions $R_{>1}, S_{>1}$ is that Askey-Lesky polynomials (which come up in the analysis of $zw$-measures) and Wilson-Neretin polynomials have the same asymptotics in certain limit regime. Moreover the similarity of the functions $R_{<1}, S_{<1}$, at least when $\Re \Sigma > 0$, comes from the fact that they can be expressed as certain Stieltjes-like transforms of $R_{>1}, S_{>1}$ if one takes a suitable limit of the expressions $(\ref{R2S2})$ and \cite[(8.3)]{BO2}.

Another (informal) explanation that we reproduce here is due to Alexei Borodin and Grigori Olshanski. If we consider the kernel $K^{\PP}$ above as  an operator on $H = L^2(\R_{>0} \setminus \{1\}, dx) = L^2((0, 1), dx)\oplus L^2((1, \infty), dx)$, then it is plausible that we can write it in the form $K^{\PP} = L(1 + L)^{-1}$, for some operator $L$ on $H$, with matrix of the form
\begin{equation*}
L = \begin{bmatrix}
    0 & A\\
    -A^* & 0
\end{bmatrix}.
\end{equation*}
(See \cite{Ol2} for a proof in an easier model coming from the infinite symmetric group.) Moreover, since $K^{\LL^{(N)}} = L_N(1 + L_N)^{-1}$ and $K^{\LL^{(N)}}$ converges to $K^{\PP}$, it is plausible that the desired operator $L$ is a limit of the operators $L_N$, as $N$ tends to infinity. Since the matrix elements of $L_N$ involve only elementary functions, such a limit transition is very simple. Similar remarks apply to the `type A' situation, especially the fact that the corresponding matrix elements of $L_N$ (in their discrete point processes) involve only elementary functions and they have a simple limit transition. Thus the presence of the hypergeometric kernel in both situations can be attributed to the equality of limits of the $L_N$ operators on the corresponding discrete spaces.
\end{rem}

\subsection{Limit of the kernel $K^{\LL^{(N)}}$ as $N$ tends to infinity}

Given Proposition $\ref{thm:scalinglimit}$ and the expressions for $K^{\PP}(x, y)$ in the statement of Theorem $\ref{maintheorem}$, the difficult part is in proving the uniform convergence on compact subsets, $\lim_{N\rightarrow\infty}{N\cdot K^{\LL^{(N)}}}(\widehat{x_N}, \widehat{y_N}) = (2x^{1/4}y^{1/4})\cdot K^{\PP}(x, y)$, which is the second item of Proposition $\ref{thm:scalinglimit}$.

Let us state three propositions which show that each of the functions $\psi_{\geq N}, \psi_{<N}, R_{\geq N}, R_{<N}, S_{\geq N}, S_{<N}$ converges, under suitable normalization, uniformly on compact subsets to the corresponding functions $\psi_{>1}, \psi_{<1}, R_{>1}, R_{<1}, S_{>1}, S_{<1}$.

\begin{prop}\label{psilimit}
Let $(z, z')\in\C^2$. The following limits
$$\lim_{N\rightarrow\infty}{N^{2\Sigma - 1}\psi_{\geq N}(\widehat{x_N})} = 2x\cdot\psi_{>1}(x^2), \hspace{.1in}\lim_{N\rightarrow\infty}{N^{- 2\Sigma-1}\psi_{<N}(\widehat{x_N})} = 2x\cdot\psi_{<1}(x^2),$$
hold uniformly for $x$ in compact subsets of $(1, \infty)$ and $(0, 1)$ respectively, where $\widehat{x_N}$ is the element of $\Zpe$ that is closest to $\left((N + \epsilon - \frac{1}{2})x\right)^2$ (if there are two elements of $\Zpe$ equally close to $\left((N + \epsilon - \frac{1}{2})x\right)^2$ then $\widehat{x_N}$ is either of them). The functions $\psi_{\geq N}, \psi_{<N}$ are as defined in $(\ref{eqn:psigreaterN})$ and $(\ref{eqn:psismallerN})$.
\end{prop}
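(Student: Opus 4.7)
The plan is to apply Stirling's asymptotic formula to each Gamma factor in the definitions of $\psi_{\geq N}$ and $\psi_{<N}$, using along the way the rewriting of $\psi_{\geq N}$ from Section~7.1 that extracts the sine factors via $\Gamma(w)\Gamma(1-w)=\pi/\sin(\pi w)$. Write $\widehat{x_N}=(u+\epsilon)^2$ with $u=u_N\in\Zp$; by construction $u=(N+\epsilon-1/2)x-\epsilon+O(1)$, so $u\sim Nx$ and $u>N$ (resp.\ $u<N$) for $N$ large, uniformly for $x$ in a compact subset of $(1,\infty)$ (resp.\ $(0,1)$). In particular, all Gamma arguments appearing below will be of order $N$ and bounded away from the nonpositive reals.

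For $\psi_{\geq N}$, using the rewriting in Section~7.1, split the product as
\[
\psi_{\geq N}(\widehat{x_N})=\tfrac{\sin\pi z\sin\pi z'}{\pi^2}\,(u+\epsilon)\cdot R_1\cdot R_2\cdot R_3,
\]
where $R_1=\Gamma(u+N+2\epsilon)^2/\Gamma(u-N+1)^2$, the factor $R_2$ is the ratio of four Gammas whose arguments differ from $u$ by $O(1)$, and $R_3$ collects the Gammas involving $z,z',N$. The factor $R_2\sim u^{-2b}$ is immediate from $\Gamma(u+\alpha)/\Gamma(u+\beta)\sim u^{\alpha-\beta}$. For $R_1$ and $R_3$ the Gamma arguments are linear in $N$, so one applies Stirling in the form $\log\Gamma(w+c)=(w+c-\tfrac12)\log w-w+\tfrac12\log 2\pi+O(1/|w|)$ with $w=u\pm N$. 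The key observation is that in $\log R_1+\log R_3$ the $\Theta(N\log N)$ and $\Theta(N)$ terms cancel (as do the $\log(u\pm N)$ prefactors of order $N$), leaving the clean identity
\[
R_1R_3=(u^2-N^2)^{-(z+z')}\bigl(1+O(1/N)\bigr).
\]
Since $u^2-N^2=N^2(x^2-1)+O(N)$ and $u\sim Nx$, multiplying by $N^{2\Sigma-1}$ and using $\Sigma=z+z'+b$ yields the claimed limit, which matches $2x\,\psi_{>1}(x^2)$ exactly.

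The analysis of $\psi_{<N}$ is parallel and in fact simpler: here $u<N$ so $N-u\sim N(1-x)>0$, and all the Gammas in the defining formula~$(\ref{eqn:psismallerN})$ have arguments of size $\Theta(N)$ with positive real part, so no sine-extraction is needed. A completely analogous Stirling computation (this time the main cancellation gives $\Gamma(N-u)^{-2}\Gamma(u+N+2\epsilon)^{-2}\cdot\prod_{\star}\Gamma(\pm u+\star+N)\sim (N^2-u^2)^{z+z'}$, while the remaining ratio contributes $u^{2b}$) gives
\[
\psi_{<N}(\widehat{x_N})\sim 4u\cdot u^{2b}\cdot (N^2-u^2)^{z+z'},
\]
which after multiplication by $N^{-2\Sigma-1}$ matches $2x\psi_{<1}(x^2)=4x^{1+2b}(1-x^2)^{z+z'}$.

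Uniformity on compact subsets follows because the error term in Stirling's formula is $O(1/|w|)$ uniformly in any closed subsector of $\{\Re w>0\}$ bounded away from the origin, and for $x$ in a compact subset of $(1,\infty)$ (resp.\ $(0,1)$) each Gamma argument appearing is of order $N$ with constants uniform in $x$; the $O(1/N)$ error in the logarithm exponentiates to a multiplicative $1+O(1/N)$ once the leading behavior is bounded, which it is on compacts. The main obstacle in executing the plan is purely bookkeeping: tracking the many Gamma factors and verifying the cancellation of the $\Theta(N\log N)$ and $\Theta(N)$ contributions. No new ideas beyond Stirling and the reflection identity for $\Gamma$ are required.
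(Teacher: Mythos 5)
Your proposal is correct and follows essentially the same route as the paper: the paper likewise uses the sine-extracted form of $\psi_{\geq N}$, pairs the Gamma functions into ratios with arguments of order $N$ differing by $O(1)$, and applies $\Gamma(w+\alpha)/\Gamma(w+\beta)=w^{\alpha-\beta}(1+O(1/w))$ to each pair, which is exactly what your Stirling bookkeeping amounts to (your identity $R_1R_3=(u^2-N^2)^{-(z+z')}(1+O(1/N))$ is just the product of the paper's four ratio estimates). The claimed cancellations and the final matching with $2x\,\psi_{>1}(x^2)$ and $2x\,\psi_{<1}(x^2)$ all check out.
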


\begin{prop}\label{RS1limit}
Let $(z, z')\in\U_0\cap\{\Sigma\neq 0\}$. Let $I\subset (1, \infty)$ be any compact subset. For any $\delta > 0$, define
\begin{equation*}
I_{\delta} = \{\zeta\in\C  : \Re\zeta\in I, \ |\Im\zeta| \leq \delta\}.
\end{equation*}
Then there exists $\delta_0 > 0$ small enough such that for any $\zeta\in I_{\delta_0}$, we have
\begin{equation*}
\lim_{N\rightarrow\infty}{\widetilde{R}_{\geq N}(N\zeta)} = R_{>1}(\zeta^2), \hspace{.1in}\lim_{N\rightarrow\infty}{N^{-2\Sigma}\TS_{\geq N}(N\zeta)} = S_{>1}(\zeta^2),
\end{equation*}
and the convergence is uniform on $I_{\delta_0}$. The functions $\TR_{\geq N}, \TS_{\geq N}$ are defined in $(\ref{RlargerN})$ and $(\ref{SlargerN})$.
\end{prop}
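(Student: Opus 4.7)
The plan is to analyze each of $\TR_{\geq N}(\zeta)$ and $\TS_{\geq N}(\zeta)$ under the scaling $\zeta = N\xi$ by working from expressions in which the $N$-dependence is concentrated in only a few Pochhammer symbols, rather than spread across many Gamma factors as in $(\ref{RlargerN})$ and $(\ref{SlargerN})$. For $\TR_{\geq N}$ the convenient form is already available as $(\ref{R1N})$, and for $\TS_{\geq N}$ an analogous rewriting can be obtained by combining $(\ref{RSgeqN})$ with $(\ref{almostpN1})$ and simplifying $\prod_{j=0}^{N-1}(\widehat\zeta-(j+\epsilon)^2) = \Gamma[\zeta+1,\zeta+N+2\epsilon]/\Gamma[\zeta-N+1,\zeta+2\epsilon]$; this yields
\begin{equation*}
\TS_{\geq N}(\zeta) \;=\; \frac{1}{h_{N-1}\,(\zeta-N+1)(\zeta+N+2\epsilon-1)}\cdot\pFq{4}{3}{1-N ,,, 1-N-a ,,, z+1 ,,, z'+1}{\Sigma+2 ,,, \zeta-N+2 ,,, 1-\zeta-N-a-b}{1}.
\end{equation*}

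Next, I would perform termwise asymptotics under the scaling $\zeta = N\xi$. The elementary relations $(-N)_k \sim (-N)^k$ and $(cN+d)_k \sim (cN)^k$ as $N\to\infty$ show that for each fixed $k$ the $k$-th summand of $\TR_{\geq N}(N\xi)$ tends to $\frac{(z)_k(z')_k}{(\Sigma)_k\,k!}\bigl(\frac{-1}{\xi^2-1}\bigr)^k$, i.e.\ to the $k$-th term of ${}_2F_1\bigl(z,z';\Sigma;\frac{-1}{\xi^2-1}\bigr)$; a single Pfaff transformation then rewrites this limit series as $(1-1/\xi^2)^{z'}\,{}_2F_1(z'+b,z';\Sigma;1/\xi^2) = R_{>1}(\xi^2)$. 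For $\TS_{\geq N}$, a parallel termwise analysis, combined with Stirling's formula applied to the expression $(\ref{hN1})$ to obtain $h_{N-1} \sim \tfrac{1}{2}N^{-2\Sigma - 2}\,\Gamma[\Sigma + 1, \Sigma + 2]/\Gamma[1+z,1+z',1+z+b,1+z'+b]$, produces the limit $\frac{2\,\Gamma[z+1,z'+1,z+b+1,z'+b+1]}{(\xi^2-1)\,\Gamma[\Sigma+1,\Sigma+2]}\,{}_2F_1(z+1,z'+1;\Sigma+2;-1/(\xi^2-1))$, which a second Pfaff transformation identifies with $S_{>1}(\xi^2)$.

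To promote the pointwise limits to uniform convergence on $I_{\delta_0}$, I would invoke the Vitali--Porter theorem. Once $\delta_0$ is chosen so small that $I_{\delta_0}$ stays uniformly away from $[-1,1]$ (and in particular from the rescaled pole sets $\tfrac{1}{N}\Y_{<N}$ for all large $N$), the scaled functions $\xi \mapsto \TR_{\geq N}(N\xi)$ and $\xi \mapsto N^{-2\Sigma}\TS_{\geq N}(N\xi)$ are holomorphic on a complex neighborhood of $I_{\delta_0}$. Local boundedness can be verified termwise: for $\xi$ in such a neighborhood, the denominators $|N\xi - N + 1|$ and $|{-N\xi - N - a - b}|$ are at least $c_1 N$ and $c_2 N$ uniformly, so each summand of the terminating $_4F_3$ series is dominated by a geometric term whose ratio is independent of $N$ and $\xi$. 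Combined with direct dominated convergence on the open subregion $\{\xi \in I_{\delta_0} : |\xi^2 - 1| > 1\}$, where the limit $_2F_1$ series converges absolutely, Vitali--Porter then delivers uniform convergence on the full set $I_{\delta_0}$. \emph{The main obstacle} lies precisely in the complementary subregion $|\xi^2 - 1| \leq 1$ (i.e., $\xi$ close to $1$), where the limit $_2F_1$ series exists only by analytic continuation and a naive termwise interchange of limit and sum is not directly legitimate; the complex-analytic Vitali--Porter argument, together with its accompanying locally uniform boundedness estimates, is what bridges this gap.
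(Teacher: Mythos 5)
Your pointwise computation in the region $|\xi^2-1|>1$ is correct (the rewriting of $\TS_{\geq N}$, the limit of each fixed term, the asymptotics of $h_{N-1}$, and the Pfaff transformations identifying the limits with $R_{>1}(\xi^2)$ and $S_{>1}(\xi^2)$ all check out), and you have correctly located the difficulty in the subregion where $\Re\xi$ is close to $1$. But the proposed resolution has a genuine gap: the Vitali--Porter theorem requires locally uniform boundedness of the sequence on a domain containing that subregion, and the only justification you offer for it is the same termwise estimate that fails there. Concretely, bounding the $k$-th summand of $(\ref{R1N})$ at $\zeta=N\xi$ gives a ratio of order $\bigl(N\cdot N\bigr)/\bigl(N(\Re\xi-1)\cdot N(\Re\xi+1)\bigr)\approx 1/|\xi^2-1|$, which exceeds $1$ precisely when $|\xi^2-1|\le 1$; a bound of the form $\mathrm{const}\cdot k^{-b-1}\rho^k$ with $\rho>1$ summed over the $N+1$ terms of the terminating series grows like $\rho^N$ and is useless for local boundedness. (Indeed, for $\xi$ near $1$ the individual terms of $(\ref{R1N})$ do become large for $k$ of order $N$, and the sum is controlled only through sign cancellation, which a termwise absolute-value estimate cannot see.) So the "geometric domination" establishes local boundedness exactly and only where dominated convergence already works, and the bridge to the region $|\xi^2-1|\le 1$ is not built. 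There is also a secondary issue that if $I\subset(1,\sqrt2)$ then $I_{\delta_0}$ does not meet $\{|\xi^2-1|>1\}$ at all, so the Vitali domain must be enlarged to a connected set reaching $\Re\xi>\sqrt2$, which only increases the region on which the unproven boundedness is needed.

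The paper avoids this entirely by first transforming the $_4F_3$: using a Bailey-type identity (proved in the Appendix as the equalities $(\ref{eq:RgeqN})$ and $(\ref{eq:SgeqN})$), $\TR_{\geq N}(N\zeta)$ is rewritten as explicit Gamma-ratios times
$\pFq{4}{3}{-N, N+\Sigma+a, z'+b, z'}{-N\zeta+z', N\zeta+z'+2\epsilon, \Sigma}{1}$,
in which only one non-terminating numerator parameter grows like $N$ while two denominator parameters grow like $N\zeta$. The effective limiting argument is then $1/\zeta^2$, of modulus strictly less than $1$ uniformly on $I_{\delta_0}$, and Lemma $\ref{limithypergeom1}$ supplies an honest $N$-independent geometric majorant $s^{-k}$ with $s>1$, so dominated convergence works on all of $I_{\delta_0}$ at once. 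If you want to keep your route through $(\ref{R1N})$, you would need to supply an independent proof of uniform boundedness of $\TR_{\geq N}(N\xi)$ near $\Re\xi=1$; the cleanest fix is to adopt the hypergeometric transformation, which is exactly the step your argument is missing.
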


\begin{prop}\label{RS2limit}
Let $(z, z')\in\U_0\cap\{\Sigma \neq 0\}\cap\{z-z'\notin\Z\}$. Let $J\subset (0, 1)$ be any compact subset. For any $\delta > 0$, define
\begin{equation*}
J_{\delta} = \{\zeta\in\C  : \Re\zeta\in I, \ |\Im\zeta| \leq \delta\}.
\end{equation*}
Then there exists $\delta_0 > 0$ small enough such that for any $\zeta\in J_{\delta_0}$, we have
\begin{equation*}
\lim_{N\rightarrow\infty}{\widetilde{R}_{<N}(N\zeta)} = R_{<1}(\zeta^2), \hspace{.1in}\lim_{N\rightarrow\infty}{N^{2\Sigma}\TS_{<N}(N\zeta)} = S_{<1}(\zeta^2),
\end{equation*}
and the convergence is uniform on $J_{\delta_0}$. The functions $\TR_{<N}, \TS_{<N}$ are given in $(\ref{tildeRsmallerN})$ and $(\ref{tildeSsmallerN})$.
\end{prop}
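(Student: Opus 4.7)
The strategy is to feed $\zeta \mapsto N\zeta$ into the explicit formulas $(\ref{tildeRsmallerN})$ and $(\ref{tildeSsmallerN})$ and analyze each of the two $z\leftrightarrow z'$-symmetric summands separately. The hypothesis $z-z'\notin\Z$ guarantees that each summand individually is meromorphic with poles avoiding $J_{\delta_0}$ for small $\delta_0$, so there is no need to rely on the pole cancellation between the two summands noted in the remark following $(\ref{tildeSsmallerN})$. In each summand the $(N,\zeta)$-dependent prefactor splits into a Gamma ratio depending only on $N$ and a Gamma ratio depending on $N(1\pm\zeta)$, both controlled by Stirling, while the trailing terminating ${}_4F_3$ collapses to a ${}_2F_1$ by a termwise limit.

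For the first summand of $\widetilde{R}_{<N}(N\zeta)$, Stirling's formula $\Gamma(N+A)/\Gamma(N+B)\sim N^{A-B}$ gives
\[
\Gamma\left[ \begin{split}
N+\Sigma+a+1 &,& N+\Sigma+1\\
N+z+b+1 &,& N+z+2\epsilon
\vphantom{\frac12}\end{split}\right] \sim N^{2z'},
\]
\[
\Gamma\left[ \begin{split}
N(1+\zeta)+2\epsilon &,& N(1-\zeta)\\
N(1+\zeta)+z'+2\epsilon &,& N(1-\zeta)+z'
\vphantom{\frac12}\end{split}\right] \sim N^{-2z'}(1-\zeta^2)^{-z'},
\]
where I used $2\epsilon = a+b+1$ and $\Sigma - z - b = z'$; the two powers of $N$ cancel and produce precisely the factor $(1-\zeta^2)^{-z'}$ appearing in $R_{<1}(\zeta^2)$. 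Each term of the remaining ${}_4F_3$ satisfies
\[
\frac{(N(1+\zeta)+z+2\epsilon)_k (N(1-\zeta)+z)_k}{(N+z+b+1)_k (N+z+2\epsilon)_k} \xrightarrow{N\to\infty} (1-\zeta^2)^k
\]
for each fixed $k$, so the termwise limit of the ${}_4F_3$ is $\pFq{2}{1}{1+z+b ,,, -z'}{1+z-z'}{1-\zeta^2}$, the ${}_2F_1$ appearing in the first summand of $R_{<1}(\zeta^2)$. The $z\leftrightarrow z'$ analogue produces the second summand. For $\widetilde{S}_{<N}(N\zeta)$, the corresponding $N$-only Gamma ratio scales instead like $N^{2z'-2\Sigma}$, and the extra $N^{-2\Sigma}$ is compensated exactly by the prescribed renormalization $N^{2\Sigma}$; the parallel ${}_2F_1$ limit (now $\pFq{2}{1}{z+b ,,, 1-z'}{1+z-z'}{1-\zeta^2}$) then produces $S_{<1}(\zeta^2)$.

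The hard part is upgrading the termwise limit of the ${}_4F_3$ to a limit of the full series uniform on $J_{\delta_0}$. First shrink $\delta_0>0$ so that $J_{\delta_0}$ is bounded away from $\{0,\pm 1\}$, so that $\sup_{\zeta\in J_{\delta_0}} |1-\zeta^2|\leq 1-\eta$ for some $\eta>0$, and so that $J_{\delta_0}\cap N^{-1}\Y_{\geq N}=\emptyset$ for all $N\geq N_0$; under these choices the Stirling asymptotics above hold uniformly in $\zeta$ on $J_{\delta_0}$ by the standard argument. The obstacle is then the estimate
\[
|T_k(N,\zeta)|\leq C\, k^M\, (1-\eta)^k\qquad\forall\, k\geq 0,\ N\geq N_0,\ \zeta\in J_{\delta_0},
\]
on the $k$-th summand $T_k(N,\zeta)$ of the ${}_4F_3$, enabling dominated convergence in $\sum_k$. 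The polynomial factor $k^M$ follows routinely from Pochhammer asymptotics of $(1+z+b)_k(-z')_k/((1+z-z')_k\, k!)$. The geometric factor $(1-\eta)^k$ is the delicate point, since the naive expansion $\prod_{j=0}^{k-1}(1+O(1/N))=1+o(1)$ breaks down when $k$ is comparable to $N$; I would resolve this either by rewriting the Pochhammer ratio as a closed quotient of Gamma functions and invoking uniform Stirling, or by splitting the product at $j=\sqrt{N}$ and bounding the two blocks separately. Once this bound is established, dominated convergence delivers uniform convergence of $\widetilde{R}_{<N}(N\zeta)$ on $J_{\delta_0}$, and an identical argument handles $N^{2\Sigma}\widetilde{S}_{<N}(N\zeta)$; a final appeal to Vitali's theorem, applied to the meromorphic families on the pole-free region $J_{\delta_0}$, could alternatively be used to pass from pointwise to uniform convergence.
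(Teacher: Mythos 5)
Your reduction to a termwise limit, the Stirling asymptotics for the two Gamma-ratios (the $N^{2z'}$ and $N^{-2z'}(1-\zeta^2)^{-z'}$ factors), and the identification of the limiting $_2F_1$'s all agree with the paper's proof, as does the observation that the $\widetilde S_{<N}$ case follows by the substitution $z\mapsto z-1$, $z'\mapsto z'-1$, $N\mapsto N+1$. The gap is in the dominating bound. The $_4F_3$ in $\widetilde R_{<N}(N\zeta)$ is a \emph{non-terminating} series over all $k\geq 0$, and the majorant you propose, $|T_k(N,\zeta)|\leq C\,k^M(1-\eta)^k$ uniformly in $N$, is false in the regime $k\gg N$. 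For $i\gg N$ the individual factor satisfies
\begin{equation*}
\frac{\bigl(N(1+\zeta)+z+2\epsilon+i\bigr)\bigl(N(1-\zeta)+z+i\bigr)}{\bigl(N+z+b+1+i\bigr)\bigl(N+z+2\epsilon+i\bigr)} \;=\; 1-\frac{b+1}{i}+O\!\left(\frac{N^2}{i^2}\right),
\end{equation*}
so it is no longer bounded away from $1$, and the product over $i\leq k$ decays only polynomially, roughly like $d^{\,c_1N}(N/k)^{b+1}$; for $k$ of order $N^2$ this is vastly larger than $(1-\eta)^k$. Neither of your proposed fixes can repair this, because the bound itself is wrong: rewriting the Pochhammer ratio as a quotient of Gamma functions and applying uniform Stirling shows precisely that the large-$k$ decay is $k^{-(b+1)}$ up to $N$-dependent constants (polynomial, not geometric), and splitting the product at $j=\sqrt N$ does not touch the problematic regime $k\gg N$.

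The correct majorant, as in the paper, has two regimes. For $k\leq c_1N$ each factor has modulus at most some $d<1$ (uniformly for $\zeta\in J_{\delta_0}$ after shrinking $\delta_0$), giving the bound $\bigl|\tfrac{(1+z+b)_k(-z')_k}{(1+z-z')_k k!}\bigr|\,d^{\,k}$. For $k>c_1N$ one keeps the accumulated $d^{\,c_1N}$ from the first block and bounds the tail by $\prod_{i=c_1N}^{k}\bigl(1-\tfrac{c_4c_5(b+1)}{N+i}\bigr)\leq \mathrm{const}\cdot\bigl(\tfrac{N}{N+k}\bigr)^{c_4c_5(b+1)}$; the polynomially growing $N^{c_4c_5(b+1)}$ is absorbed into the exponentially small $d^{\,c_1N}$, and choosing $c_4,c_5$ close enough to $1$ that $b-1-c_4c_5(b+1)<-\tfrac32$ yields a summable majorant $\mathrm{const}\cdot k^{-3/2}$ independent of $N$ and of $\zeta\in J_{\delta_0}$. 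Your appeal to Vitali's theorem does not circumvent this: pointwise convergence of the infinite series for fixed $\zeta$, and the local uniform boundedness Vitali requires, both already demand this two-regime control of the tail.
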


\begin{proof}[Proof of Proposition \ref{psilimit}]
Rewrite the expression for $\psi_{\geq N}(\widehat{x_N})$, coming from $(\ref{eqn:psigreaterN})$, as
\begin{equation*}
\begin{gathered}
\psi_{\geq N}(\widehat{x_N}) = \frac{\sin\pi(z-x_N+N)\sin\pi(z'-x_N+N)}{\pi^2}\cdot(x_N + \epsilon)\cdot\frac{\Gamma(x_N + N + 2\epsilon)^2}{\Gamma(x_N - N + 1)^2}\\
\times\Gamma\left[ \begin{split}
x_N + 1 &,& x_N + a + 1 &,& x_N-z-N+1 &,& x_N-z'-N+1\\
x_N + 2\epsilon &,& x_N + b + 1 &,& x_N+z+N+2\epsilon &,& x_N+z'+N+2\epsilon
\vphantom{\frac12}
\end{split}
\right].
\end{gathered}
\end{equation*}
Since $x_N\in\Zp$, it is clear the constant factor in front of $\psi_{\geq N}(\widehat{x_N})$ is $\sin(\pi z)\sin(\pi z')/\pi^2$. Finally the result for $\psi_{\geq N}$ follows from the following uniform estimates as $N$ tends to infinity, when $x > 1$:
\begin{equation*}
\begin{gathered}
\frac{\Gamma(x_N + 1)}{\Gamma(x_N + b + 1)} = N^{-b}x^{-b}(1 + O(N^{-1})) ,\hspace{.1in} \frac{\Gamma(x_N + a + 1)}{\Gamma(x_N + 2\epsilon)} = N^{-b}x^{-b}(1 + O(N^{-1}))\\
x_N + \epsilon = Nx(1 + O(N^{-1}))\\
\frac{\Gamma(x_N + N + 2\epsilon)}{\Gamma(x_N + u + N + 2\epsilon)} = N^{-u}(x + 1)^{-u}(1 + O(N^{-1})), \textrm{ for }u = z, z',\\
\frac{\Gamma(x_N -  u - N + 1)}{\Gamma(x_N - N + 1)} = N^{-u}(x - 1)^{-u}(1 + O(N^{-1})), \textrm{ for }u = z, z'.
\end{gathered}
\end{equation*}
The expression $(\ref{eqn:psismallerN})$ for $\psi_{<N}(\widehat{x_N})$ can be rewritten as
\begin{equation*}
\begin{gathered}
\psi_{<N}(\widehat{x_N}) = 4(x_N + \epsilon)\cdot\Gamma\left[ \begin{split}
x_N + 2\epsilon &,& x_N + b + 1\\
x_N + 1 &,& x_N + a + 1
\vphantom{\frac12}
\end{split}
\right]\cdot\frac{1}{\Gamma(N - x_N)^2\Gamma(x_N + N + 2\epsilon)^2}\\
\times\Gamma(z - x_N + N)\Gamma(z' - x_N + N)\Gamma(z + x_N + N + 2\epsilon)\Gamma(z' +x_N + N + 2\epsilon).
\end{gathered}
\end{equation*}
The result for $\psi_{<N}$ then follows from the following uniform estimates as $N\rightarrow\infty$  when $0<x<1$:
\begin{equation*}
\begin{gathered}
\frac{\Gamma(x_N + b + 1)}{\Gamma(x_N + 1)} = N^{b}x^{b}(1 + O(N^{-1})) , \hspace{.1in} \frac{\Gamma(x_N + 2\epsilon)}{\Gamma(x_N + a + 1)} = N^{b}x^{b}(1 + O(N^{-1})),\\
x_N + \epsilon = Nx(1 + O(N^{-1})),\\
\frac{\Gamma(u + x_N + N + 2\epsilon)}{\Gamma(x_N + N + 2\epsilon)} = N^u(x+1)^u(1 + O(N^{-1})),  \textrm{ for }u = z, z',\\
\frac{\Gamma(u - x_N + N)}{\Gamma(N - x_N)} = N^u(1 - x)^u(1 + O(N^{-1})),  \textrm{ for }u = z, z'.
\end{gathered}
\end{equation*}
\end{proof}

As a preliminary to the proof of Proposition $\ref{RS1limit}$, we show the following lemma.

\begin{lem}\label{limithypergeom1}
Let $\{A^{(1)}_N\}_{N \geq 1}, \{B^{(1)}_N\}_{N \geq 1},\{ B^{(2)}_N\}_{N \geq 1}$ be sequences of complex numbers and let $A^{(2)}, A^{(3)}, B^{(3)}$ be complex numbers, independent of $N$, such that $B^{(3)} \notin \Z_{\leq 0}$. Assume that the sequences satisfy
\begin{equation}\label{eq:sequences}
\begin{gathered}
\lim_{N\rightarrow\infty}{\frac{NA^{(1)}_N}{B^{(1)}_NB^{(2)}_N}} = \eta\in\{z\in\C : |z| < 1\}, \hspace{.1in}\lim_{N\rightarrow\infty}{\left|A_N^{(1)}\right|} = \infty,\\
\textrm{for large enough $N$: }\Re B_N^{(1)} > Ns \textrm{ for some fixed }s>1, \hspace{.1in} \Re B_N^{(2)} > \left| A_N^{(1)} \right|,\\
-B_N^{(1)} + B_N^{(2)} + B^{(3)} + N - A_N^{(1)} - A^{(2)} - A^{(3)} = 1.
\end{gathered}
\end{equation}
Then
\begin{equation*}
\lim_{N\rightarrow\infty}{\pFq{4}{3}{-N ,,, A^{(1)}_N ,,, A^{(2)} ,,, A^{(3)}}{-B^{(1)}_N ,,, B^{(2)}_N ,,, B^{(3)}}{1}} = \pFq{2}{1}{A^{(2)} ,,, A^{(3)}}{B^{(3)}}{\eta}.
\end{equation*}
Moreover the limit above is uniform over sequences $\{A_N^{(1)}\}, \{B_N^{(1)}\}, \{B_N^{(2)}\}$ such that they satisfy the conditions $(\ref{eq:sequences})$ above and $\left\{\frac{NA_N^{(1)}}{B_N^{(1)}B_N^{(2)}} - \lim_{N\rightarrow\infty}{ \frac{NA_N^{(1)}}{B_N^{(1)}B_N^{(2)}} }\right\}$ converges uniformly to $0$ as $N$ tends to infinity.
\end{lem}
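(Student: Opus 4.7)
My plan is to apply dominated convergence to the finite series expansion
\[
\pFq{4}{3}{-N ,,, A^{(1)}_N ,,, A^{(2)} ,,, A^{(3)}}{-B^{(1)}_N ,,, B^{(2)}_N ,,, B^{(3)}}{1} \;=\; \sum_{k=0}^{\infty} c_k^{(N)},
\]
where
\[
c_k^{(N)} \;:=\; \frac{(A^{(2)})_k (A^{(3)})_k}{(B^{(3)})_k\, k!} \prod_{j=0}^{k-1} \frac{(N-j)(A^{(1)}_N+j)}{(B^{(1)}_N - j)(B^{(2)}_N + j)},
\]
with the convention $c_k^{(N)} := 0$ for $k > N$ (forced by the factor $(-N)_k$). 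Two ingredients are needed: (a) termwise convergence $c_k^{(N)} \to \frac{(A^{(2)})_k(A^{(3)})_k}{(B^{(3)})_k k!}\,\eta^k$ for each fixed $k$; (b) a summable bound $|c_k^{(N)}| \leq M_k$ valid for all sufficiently large $N$. Given both, the limit equals $\sum_{k\geq 0} \frac{(A^{(2)})_k(A^{(3)})_k}{(B^{(3)})_k k!}\eta^k = {}_2F_1(A^{(2)}, A^{(3)}; B^{(3)}; \eta)$, which converges on account of $|\eta| < 1$ and $B^{(3)} \notin \Z_{\leq 0}$.

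For (a), I would factor each term in the product as
\[
\frac{(N-j)(A^{(1)}_N + j)}{(B^{(1)}_N - j)(B^{(2)}_N + j)} \;=\; \frac{NA^{(1)}_N}{B^{(1)}_N B^{(2)}_N}\cdot \frac{(1 - j/N)(1 + j/A^{(1)}_N)}{(1 - j/B^{(1)}_N)(1 + j/B^{(2)}_N)}.
\]
The hypotheses $\Re B^{(1)}_N > Ns$, $\Re B^{(2)}_N > |A^{(1)}_N|$, and $|A^{(1)}_N| \to \infty$ force $|B^{(1)}_N|, |B^{(2)}_N| \to \infty$, so the second fraction tends to $1$ for each fixed $j$; the first tends to $\eta$ by assumption. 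Hence each of the $k$ factors converges to $\eta$, yielding $c_k^{(N)} \to \frac{(A^{(2)})_k(A^{(3)})_k}{(B^{(3)})_k k!}\,\eta^k$.

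For (b), the natural tool is the consecutive-term ratio
\[
\frac{c_{k+1}^{(N)}}{c_k^{(N)}} \;=\; \frac{(k-N)(A^{(1)}_N+k)(A^{(2)}+k)(A^{(3)}+k)}{(k-B^{(1)}_N)(B^{(2)}_N+k)(B^{(3)}+k)(k+1)}.
\]
The condition $\Re B^{(1)}_N > Ns$ yields $|k-B^{(1)}_N| \geq Ns - k$, whence $|(k-N)/(k-B^{(1)}_N)| \leq (N-k)/(Ns-k) \leq 1/s$ for $0 \leq k \leq N$; the condition $\Re B^{(2)}_N > |A^{(1)}_N|$ together with the triangle inequality yields $|B^{(2)}_N + k| \geq |A^{(1)}_N| + k \geq |A^{(1)}_N + k|$. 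Combining,
\[
\left|\frac{c_{k+1}^{(N)}}{c_k^{(N)}}\right| \;\leq\; \frac{1}{s}\cdot\left|\frac{(A^{(2)}+k)(A^{(3)}+k)}{(B^{(3)}+k)(k+1)}\right|.
\]
Since $s > 1$ and the bracketed factor tends to $1$ as $k \to \infty$, I can fix $K_0 \in \N$ and $\rho \in (1/s, 1)$ independent of $N$ with $|c_{k+1}^{(N)}/c_k^{(N)}| \leq \rho$ for $k \geq K_0$. Combined with the uniform-in-$N$ boundedness of $|c_k^{(N)}|$ for $k < K_0$ (a consequence of (a)), this supplies the required dominant $M_k$, and dominated convergence delivers the claimed limit.

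Uniformity over families of sequences transfers automatically: the dominant $M_k$ depends only on $s$ and $\rho$, both of which are controlled uniformly, while the pointwise rate in (a) inherits the uniform rate of $NA^{(1)}_N/(B^{(1)}_N B^{(2)}_N) \to \eta$. The main obstacle I anticipate is step (b), since both $|A^{(1)}_N|$ and $|B^{(2)}_N|$ grow without bound and are complex-valued; the saving observation is that the hypothesis $\Re B^{(2)}_N > |A^{(1)}_N|$ coupled with $|A^{(1)}_N + k| \leq |A^{(1)}_N| + k$ forces $|(A^{(1)}_N+k)/(B^{(2)}_N+k)| \leq 1$ for \emph{every} $k$, not merely in the asymptotic regime. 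Interestingly, the Saalsch\"utz-type relation in the third line of $(\ref{eq:sequences})$ appears not to be used in the convergence argument itself; it plays only a structural role in parametrizing the ${}_4F_3(1)$ at hand.
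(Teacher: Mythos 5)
Your proof is correct and follows essentially the same route as the paper: expand the terminating series, take the limit term by term after factoring out $NA^{(1)}_N/(B^{(1)}_NB^{(2)}_N)$, and justify the interchange by dominated convergence using precisely the two inequalities $\Re B^{(1)}_N>Ns$ and $\Re B^{(2)}_N>|A^{(1)}_N|$. The only cosmetic difference is that you package the dominant via a consecutive-term ratio bound, whereas the paper bounds each term directly by $\bigl|\tfrac{(A^{(2)})_k(A^{(3)})_k}{(B^{(3)})_k k!}\bigr|s^{-k}$ (a bound your per-factor estimates in fact already yield), so the two arguments are interchangeable.
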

\begin{proof}
The power series expansion of the generalized hypergeometric function is
\begin{equation}\label{eq:4F3expansion}
\pFq{4}{3}{-N ,,, A^{(1)}_N ,,, A^{(2)} ,,, A^{(3)}}{-B^{(1)}_N ,,, B^{(2)}_N ,,, B^{(3)}}{1} = \sum_{k=0}^N{\frac{(-N)_k(A_N^{(1)})_k(A^{(2)})_k(A^{(3)})_k}{(-B_N^{(1)})_k(B_N^{(2)})_k(B^{(3)})_kk!}}.
\end{equation}
The $k$-th term in the sum above can be written as
\begin{equation*}
\frac{(A^{(2)})_k(A^{(3)})_k}{(B^{(3)})_k k!}\left( \frac{NA_N^{(1)}}{B_N^{(1)}B_N^{(2)}} \right)^k\cdot\prod_{i=0}^{k-1}{\frac{(1 - i/N)(1 + i/A_N^{(1)})}{(1 - i/B_N^{(1)})(1 + i/B_N^{(2)})}}
\end{equation*}
The conditions $(\ref{eq:sequences})$ guarantee that $\lim_{N\rightarrow\infty}{\left|A_N^{(1)}\right|} = \lim_{N\rightarrow\infty}{\left|B_N^{(1)}\right|} = \lim_{N\rightarrow\infty}{\left|B_N^{(2)}\right|} = \infty$, so it is clear now that the $k$-th term above ($k$ does not depend on $N$) converges to the $k$-th term of the power series expansion of the hypergeometric function
\begin{equation*}
\pFq{2}{1}{A^{(2)} ,,, A^{(3)}}{B^{(3)}}{\eta} = \sum_{k=0}^{\infty}{\frac{(A^{(2)})_k(A^{(3)})_k}{(B^{(3)})_k k!} \eta^k}.
\end{equation*}
To guarantee that the limit can be taken term-by-term, we need to use the dominated convergence theorem. For large enough $N$, and any $0\leq k < N$, we have the estimates
\begin{equation*}
\begin{gathered}
\left|\frac{(-N)_k(A_N^{(1)})_k}{(-B_N^{(1)})_k(B_N^{(2)})_k}\right| \leq \frac{N(N-1)\cdots(N-k+1)|A_N^{(1)}|(|A_N^{(1)}| + 1)\cdots (|A_N^{(1)}| + k - 1)}{(\Re B_N^{(1)})\cdots (\Re B_N^{(1)} - k + 1)(\Re B_N^{(2)})(\Re B_N^{(2)} + 1)\cdots(\Re B_N^{(2)} + k - 1)}\\
\leq \frac{N(N-1)\cdots(N-k+1)}{(\Re B_N^{(1)})(\Re B_N^{(1)} - 1)\cdots (\Re B_N^{(1)} - k + 1)}\\
\leq \left( \frac{N}{\Re B_N^{(1)}} \right)^k < s^{-k},
\end{gathered}
\end{equation*}
where the inequality between the first and second lines assumes $\Re B_N^{(2)} > \left| A_N^{(1)} \right|$, the one between lines two and three is obvious, and the inequality in the third line assumes $\Re B_N^{(1)} > Ns$. Therefore we have bounded the absolute value of the $k$-th term in the sum $(\ref{eq:4F3expansion})$ above by the $k$-th term in the following series which does not depend on $N$:
\begin{equation}\label{eq:2F1sum}
\sum_{k=0}^{\infty}{\left|\frac{(|A^{(2)}|)_k (|A^{(3)}|)_k}{(\Re B^{(3)})_k}\right|\frac{s^{-k}}{k!}} = \pFq{2}{1}{|A^{(2)}| ,,, |A^{(3)}|}{\Re B^{(3)}}{s^{-1}}.
\end{equation}
Since $(\ref{eq:2F1sum})$ is convergent, we conclude that we can take the limit of the $_4F_3$ terminating series expansion term-by-term, so the limit in the statement of Lemma $\ref{limithypergeom1}$ is proved. To show the uniform convergence, observe that we have bounded the series of absolute values corresponding to $(\ref{eq:4F3expansion})$ by the same convergent series $(\ref{eq:2F1sum})$, as long as the sequences $\{A_N^{(1)}, B_N^{(1)}, B_N^{(2)}\}$ satisfy the conditions stated in the Lemma. Along with the uniform convergence of $\left\{\frac{NA_N^{(1)}}{B_N^{(1)}B_N^{(2)}} - \lim_{N\rightarrow\infty}{ \frac{NA_N^{(1)}}{B_N^{(1)}B_N^{(2)}} }\right\}$, the final statement is clear.
\end{proof}

\begin{proof}[Proof of Proposition $\ref{RS1limit}$]
We need the following new formulas for the functions $\widetilde{R}_{\geq N}$ and $\widetilde{S}_{\geq N}$.

\begin{equation}\label{eq:RgeqN}
\begin{gathered}
\widetilde{R}_{\geq N}(N\zeta) = \Gamma\left[ \begin{split}
N\zeta + 1 - z' &,& N\zeta + z' + N + 2\epsilon &,& N\zeta-N+1 &,& N\zeta + 2\epsilon\\
N\zeta-N+1-z' &,& N\zeta + z' + 2\epsilon &,& N\zeta+1 &,& N\zeta + N + 2\epsilon
\vphantom{\frac12}
\end{split}
\right]\\
\times\pFq{4}{3}{-N,, N + \Sigma + a ,, z' + b ,, z'}{-N\zeta+z' ,, N\zeta + z' + 2\epsilon ,, \Sigma}{1},
\end{gathered}
\end{equation}

\begin{equation}\label{eq:SgeqN}
\begin{gathered}
\widetilde{S}_{\geq N}(N\zeta) = 2\cdot\Gamma\left[ \begin{split}
1+z &,& 1+z' &,& 1+z+b &,& 1+z'+b\\
&& \Sigma+1 &,& \Sigma+2
\vphantom{\frac12}\end{split}
\right]\\
\Gamma\left[ \begin{split}
N+\Sigma+a+1,\hspace{.1in} N+\Sigma+1,\hspace{.1in} N\zeta - z',\hspace{.1in} N\zeta + N + z' + 2\epsilon\\
N+a,\hspace{.1in} N,\hspace{.1in} N\zeta - N - z' + 1,\hspace{.1in} N\zeta + z' + 2\epsilon + 1
\vphantom{\frac12}
\end{split}
\right]\\
\times\Gamma\left[ \begin{split}
N\zeta - N + 1 &,& N\zeta + 2\epsilon\\
N\zeta + 1 &,& N\zeta + N + 2\epsilon
\vphantom{\frac12}
\end{split}
\right]\times\pFq{4}{3}{-N+1,, N+\Sigma+a+1 ,, z'+b+1  ,, z'+1}{1-N\zeta+z' ,, N\zeta+z'+2\epsilon+1 ,, \Sigma+2}{1}.
\end{gathered}
\end{equation}

The proofs of the formulas above are given in the Appendix. Let us next study the limit of the functions $\widetilde{R}_{\geq N}(N\zeta)$. The Gamma functions are easy to handle: for $\Re\zeta > 1$ we have
\begin{equation*}
\begin{gathered}
\frac{\Gamma(N\zeta + 1 - z')}{\Gamma(N\zeta + 1)} = (N\zeta)^{-z'}(1 + O(1/N)), \ \frac{\Gamma(N\zeta + 2\epsilon)}{\Gamma(N\zeta + z' + 2\epsilon)} = (N\zeta)^{-z'}(1 + O(1/N)),\\
\frac{\Gamma(N\zeta + z' + N + 2\epsilon)}{\Gamma(N\zeta + N + 2\epsilon)} = (N(\zeta + 1))^{z'}(1 + O(1/N)), \ \frac{\Gamma(N\zeta - N + 1)}{\Gamma(N\zeta - N + 1 - z')} = (N(\zeta - 1))^{z'}(1 + O(1/N)),
\end{gathered}
\end{equation*}
and all estimates are uniform for $\zeta$ belonging to compact subsets of $\{z\in\C : \Re z > 1\}$.

Next we can apply Lemma $\ref{limithypergeom1}$ above to the sequences $A_N^{(1)} = N + \Sigma + a, B_N^{(1)} = N\zeta - z', B_N^{(2)} = N\zeta + z' + 2\epsilon$ and the complex numbers $A^{(2)} = z' + b, A^{(3)} = z', B^{(3)} = \Sigma$ to obtain the limit
\begin{equation*}
\lim_{N\rightarrow\infty}{\pFq{4}{3}{-N,, N + \Sigma + a ,, z' + b ,, z'}{-N\zeta+z' ,, N\zeta + z' + 2\epsilon ,, \Sigma}{1}} = \pFq{2}{1}{z' + b ,, z'}{\Sigma}{\frac{1}{\zeta^2}}.
\end{equation*}
The lemma moreover implies that the limit above is uniform for $\zeta$ in any compact subset $K\subset\{z\in\C : \Re z > 1\}$ since it is clear that the conditions imposed in the lemma are satisfied, say for $s = \frac{1 + \inf_{\zeta\in K}{\Re\zeta}}{2}$.

Thus we conclude that $\lim_{N\rightarrow\infty}{\widetilde{R}_{\geq N}(N\zeta)} = R_{>1}(\zeta^2)$, uniformly for $\zeta\in I_{\delta_0}$ and any $\delta_0 > 0$. A similar analysis easily shows the second desired limit $\lim_{N\rightarrow\infty}{N^{-2\Sigma}\widetilde{S}_{\geq N}(N\zeta)} = S_{>1}(\zeta^2)$, uniformly for $\zeta\in I_{\delta_0}$ and any $\delta_0 > 0$.
\end{proof}

\begin{proof}[Proof of Proposition $\ref{RS2limit}$]
Let us begin with the first limit in the proposition. The prelimit expression $\widetilde{R}_{<N}(N\zeta)$ is a sum of two terms, and both terms differ from each other by the swap of variables $z\leftrightarrow z'$, so we only need to deal with one of them. The first term is
\begin{equation*}
\begin{gathered}
-\frac{\sin\pi z}{\pi}\cdot\Gamma\left[ \begin{split}
z' - z &,& z+b+1 &,& z+1\\
&& \Sigma+1
\vphantom{\frac12}\end{split}\right]\times\Gamma\left[ \begin{split}
N+\Sigma+a+1 &,& N+\Sigma+1\\
N+z+b+1 &,& N+z+2\epsilon
\vphantom{\frac12}\end{split}\right]\\
\times\Gamma\left[ \begin{split}
N(1+\zeta)+2\epsilon &,& N(1-\zeta)\\
N(1+\zeta)+z'+2\epsilon &,& N(1-\zeta)+z'
\vphantom{\frac12}\end{split}\right]\pFq{4}{3}{1+z+b ,, -z' ,, N(1+\zeta)+z+2\epsilon ,, N(1-\zeta)+z}{1+z-z' ,,, N+z+b+1 ,,, N+z+2\epsilon}{1}.
\end{gathered}
\end{equation*}
 We show that it has a uniform limit to the first term of $R_{<1}(\zeta^2)$, which is
\begin{equation*}
\begin{gathered}
-\frac{\sin\pi z}{\pi}\cdot\Gamma\left[ \begin{split}
z' - z &,& z+b+1 &,& z+1\\
&& \Sigma+1
\vphantom{\frac12}\end{split}\right]\times (1 - \zeta^2)^{-z'}\times \pFq{2}{1}{1+z+b ,, -z'}{1+z-z'}{1 - \zeta^2}.
\end{gathered}
\end{equation*}
By the asymptotics of Gamma functions, we have
\begin{equation*}
\begin{gathered}
\Gamma\left[ \begin{split}
N+\Sigma+a+1 &,& N+\Sigma+1\\
N+z+b+1 &,& N+z+2\epsilon
\vphantom{\frac12}\end{split}\right] = N^{2z'}(1 + O(1/N)),\\
\frac{\Gamma(N(1+\zeta)+2\epsilon)}{\Gamma(N(1+\zeta) + z'+2\epsilon)} = (N(1+\zeta))^{-z'}(1 + O(1/N)), \ \frac{\Gamma(N(1-\zeta))}{\Gamma(N(1-\zeta) + z')} = (N(1-\zeta))^{-z'}(1 + O(1/N)).
\end{gathered}
\end{equation*}
The limits above are all uniform for $\zeta$ belonging to $J_{\delta_0}$, for any $\delta_0 > 0$. We are left to prove the uniform limit
\begin{equation}\label{eqn:limithypergeos}
\lim_{N\rightarrow\infty}{ \pFq{4}{3}{1+z+b ,, -z' ,, N(1+\zeta)+z+2\epsilon ,, N(1-\zeta)+z}{1+z-z' ,,, N+z+b+1 ,,, N+z+2\epsilon}{1} } = \pFq{2}{1}{1+z+b ,, -z'}{1+z-z'}{1 - \zeta^2}
\end{equation}
for $\zeta$ belonging to $J_{\delta_0}$ and some small $\delta_0 > 0$.
The $_4F_3$ generalized hypergeometric function above can be written as the absolutely convergent power series
\begin{equation}\label{eqn:powerseries4F3}
\sum_{k=0}^{\infty}{\frac{(1+z+b)_k(-z')_k(N(1+\zeta) + z + 2\epsilon)_k(N(1 - \zeta) + z)_k}{(1 + z - z')_k(N + z + b + 1)_k(N+z+2\epsilon)_k k!}}.
\end{equation}
For any $k\in\N$ fixed, we have the evident limit
\begin{equation}\label{eqn:productk}
\frac{(N(1+\zeta)+z+2\epsilon)_k (N(1 - \zeta) + z)_k}{(N + z + b + 1)_k(N + z + 2\epsilon)_k} = \prod_{i=0}^{k-1}{\frac{\left( 1 + \zeta + \frac{z + 2\epsilon + i}{N} \right)\left( 1 - \zeta + \frac{z + i}{N} \right) }{\left( 1 + \frac{z + b + 1 + i}{N} \right) \left( 1 + \frac{z + 2\epsilon + i}{N} \right)}} \xrightarrow{N\rightarrow\infty} (1 - \zeta^2)^k.
\end{equation}
Therefore, for each $k\in\N$, the $k$-th term of the series $(\ref{eqn:powerseries4F3})$ converges to the $k$-th term of the power series expansion of $_2F_1(1+z+b, \ -z';\ 1 + z - z';\ 1 - \zeta^2)$. Thus the limit $(\ref{eqn:limithypergeos})$ holds term-by-term in their power series expansion. To complete the proof, we make use of the dominated convergence theorem, but we need some uniform estimates of the terms in the series $(\ref{eqn:powerseries4F3})$.

For any $i\geq 0$, and using that $b+1 > 0, \ 2\epsilon \geq 0$, the absolute value of the $i$-th term in the product $(\ref{eqn:productk})$ is bounded by
\begin{equation}\label{eqn:bound1}
\left| \frac{\left( N(1 + \zeta) + z + 2\epsilon + i \right)\left( N(1 - \zeta) + z + i \right) }{\left( N + z + b + 1 + i \right) \left( N + z + 2\epsilon + i \right)} \right| \leq \frac{(N|1 + \zeta| + |z| + 2\epsilon + i)(N|1 - \zeta| + |z| + i)}{(N + \Re z + i)^2},
\end{equation}
for all $N$ large enough, say for $N > |\Re z|$. Let $c_1 > 0$ and $c_2 > 1$ be arbitrary real numbers. It is easy to see that there exists $N_1\in\N$ large enough so that the right-hand side of $(\ref{eqn:bound1})$ is upper bounded by
\begin{equation}\label{eqn:bound2}
\leq c_2\frac{(N|1 + \zeta| + i)(N|1 - \zeta| + i)}{(N+i)^2} \ \forall \ N \geq N_1, \ \forall \ i \leq c_1N \ \forall \ \zeta\in J_{\delta},
\end{equation}
and any $\delta > 0$. Let $c_3 > 1$ be any real number. There exists $\delta_1 > 0$ small enough such that $|1 - \zeta| < c_3 (1 - |\zeta|)$ for all $\zeta\in J_{\delta_1}$. Therefore we can bound $(\ref{eqn:bound2})$ by
\begin{equation*}
\begin{gathered}
\leq c_2c_3 \frac{(N(1 + |\zeta|) + i)(N(1 - |\zeta|) + i)}{(N+i)^2} \ \forall N \geq N_1, \ \forall \ i \leq c_1N, \ \forall  \ \zeta\in J_{\delta_1}\\
= c_2c_3 \left( 1 - \frac{N^2|\zeta|^2}{(N+i)^2} \right) \leq c_2c_3 \left( 1 - \left(\frac{N\inf_{\zeta\in J}{|\zeta|}}{N+i}\right)^2 \right) \leq c_2c_3 \left( 1 - \left(\frac{\inf_{\zeta\in J}{|\zeta|}}{1 + c_1}\right)^2 \right).
\end{gathered}
\end{equation*}
Thus for any $c_1 > 0$, let $c_2, c_3 > 1$ be any real numbers such that $d \myeq c_2c_3 \left( 1 - \left(\frac{\inf_{\zeta\in J}{|\zeta|}}{1 + c_1}\right)^2 \right)\in(0, 1)$ and fix them. Let $J\subset (0, 1)$ be any compact subset. The estimates above show that there exists $N_1\in\N$ large enough, $\delta_1 > 0$ small enough such that we have the bound
\begin{equation*}
\left| \frac{\left( N(1 + \zeta) + z + 2\epsilon + i \right)\left( N(1 - \zeta) + z + i \right) }{\left( N + z + b + 1 + i \right) \left( N + z + 2\epsilon + i \right)} \right| \leq d, \ \forall \ N \geq N_1, \ \forall \ \zeta\in J_{\delta_1}, \ \forall \ i\leq c_1N.
\end{equation*}

For any $k \leq c_1N$ and $N > N_1$, the estimate above yields the bound
\begin{equation}\label{eqn:bound4}
\begin{gathered}
\left| \frac{(1+z+b)_k(-z')_k(N(1+\zeta) + z + 2\epsilon)_k(N(1 - \zeta) + z)_k}{(1 + z - z')_k(N + z + b + 1)_k(N+z+2\epsilon)_k k!} \right|\\
\leq \left| \frac{(1+z+b)_k(-z')_k}{(1+z-z')_kk!} \right| d^k \ \forall k \leq c_1N, \ \forall N>N_1.
\end{gathered}
\end{equation}

We need also similar estimates for $k>c_1N$ and $N$ large enough. Let us begin by estimating the left-hand side of $(\ref{eqn:bound1})$ for $i \geq c_1N$ and large $N$. We claim that for any $c_4 \in (0, 1)$, there exists $N_2\in\N$ large enough and $\delta_2 > 0$ small enough such that
\begin{equation}\label{eqn:bound3}
\begin{gathered}
\left| \frac{\left( N(1 + \zeta) + z + 2\epsilon + i \right)\left( N(1 - \zeta) + z + i \right) }{( N + z + b + 1 + i ) ( N + z + 2\epsilon + i )} \right| = \left| 1 - \frac{N^2\zeta^2 + 2N\zeta \epsilon + (b+1)(N+z+i+2\epsilon)}{( N + z + b + 1 + i ) ( N + z + 2\epsilon + i )}  \right|\\
\leq 1 - c_4\cdot\Re\left(\frac{N^2\zeta^2 + 2N\zeta \epsilon + (b+1)(N+z+i+2\epsilon)}{( N + z + b + 1 + i ) ( N + z + 2\epsilon + i )}\right) \forall N > N_2, \ \forall \zeta\in J_{\delta_2}, \ \forall i \geq c_1N.
\end{gathered}
\end{equation}
The reader can supply a formal proof, but let us give a sketch. If we let $\frac{N^2\zeta^2 + 2N\zeta \epsilon + (b+1)(N+z+i+2\epsilon)}{( N + z + b + 1 + i ) ( N + z + 2\epsilon + i )} = A + B\sqrt{-1}$, then inequality $(\ref{eqn:bound3})$ is equivalent to $B^2 \leq (1 - c_4)A(2 - (1+c_4)A)$. We can estimate $A \sim \frac{N^2(\Re\zeta)^2 + N^2(\Im\zeta)^2 + (b+1)(N+i)}{(N+i)^2}$ and $B\sim \frac{2N^2(\Re\zeta)(\Im\zeta)}{(N+i)^2}$ when $N$ is very large. Also $\Re\zeta\in J\subset(0, 1)$ and $J$ is compact, so $\inf_{\zeta\in J}{\Re\zeta},\ \sup_{\zeta\in J}{\Im\zeta} \in (0, 1)$. It should then be clear that we can choose $\delta_2>0$ very small such that  $B^2 \leq (1 - c_4)A(2 - (1+c_4)A)$ holds when $A$ and $B$ are replaced by their large $N$ estimates. If we choose $N_2$ large enough, then for $N>N_2$, the inequality itself also holds.

By the same principle, for any $c_5\in (0, 1)$, we can increase the value of $N_2\in\N$ if necessary so we can upper bound the last expression of $(\ref{eqn:bound3})$ by
\begin{equation*}
\begin{gathered}
\leq 1 - c_4 c_5\cdot\Re\left(\frac{N^2\zeta^2 + 2N\zeta \epsilon + (b+1)(N + i)}{( N + i )^2}\right)\\
\leq 1 - c_4 c_5\cdot\frac{b+1}{N + i} \ \forall N > N_2, \ \forall \zeta\in J_{\delta_2}, \ \forall i \geq c_1N.
\end{gathered}
\end{equation*}
Now if $k > c_1N$, we have obtained the following bound
\begin{eqnarray}
\prod_{i=c_1N}^k{\left| \frac{\left( N(1 + \zeta) + z + 2\epsilon + i \right)\left( N(1 - \zeta) + z + i \right) }{\left( N + z + b + 1 + i \right) \left( N + z + 2\epsilon + i \right)} \right|} \leq \prod_{i=c_1N}^{k}{\left( 1 - \frac{c_4c_5(b+1)}{N+i} \right)}\nonumber\\
= \exp\left( \sum_{i=c_1N}^k{\ln{\left( 1 - \frac{c_4c_5(b+1)}{N+i} \right)}} \right) \leq \exp\left( -c_4c_5(b+1) \sum_{i=c_1N}^k{{\frac{1}{N+i} }} \right)\nonumber\\
\leq \exp\left( -c_4c_5(b+1) \left( \ln{(N + k)} - \ln{(N(1 + c_1))} - 1 \right) \right) = const\times\frac{N^{c_4c_5(b+1)}}{(N+k)^{c_4c_5(b+1)}}.\label{eqn:bound5}
\end{eqnarray}
From $(\ref{eqn:bound4})$ applied to $c_1N$, we also have that $N>N_1$ implies
\begin{equation}\label{eqn:bound6}
\left| \frac{( N(1 + \zeta) + z + 2\epsilon)_{c_1N} ( N(1 - \zeta) + z)_{c_1N} }{( N + z + b + 1)_{c_1N} (N + z + 2\epsilon)_{c_1N}} \right| \leq d^{c_1N},
\end{equation}
for some constant $d\in(0, 1)$. Since $d^{c_1N}$ is exponentially decreasing in $N$ and $N^{c_4c_5(b+1)}$ has polynomial growth, there exists $N_3\in\N$ such that $N > N_3$ implies $const\times N^{c_4c_5(b+1)}\times d^{c_1N}\leq 1$, where $const$ is the same constant as in $(\ref{eqn:bound5})$. We conclude from $(\ref{eqn:bound5})$ and $(\ref{eqn:bound6})$ that
\begin{equation}\label{eqn:bound7}
\begin{gathered}
\left| \frac{( N(1 + \zeta) + z + 2\epsilon)_k ( N(1 - \zeta) + z)_k }{( N + z + b + 1)_k (N + z + 2\epsilon)_k} \right|\\
\leq \frac{1}{(N+k)^{c_4c_5(b+1)}} \leq \frac{1}{k^{c_4c_5(b+1)}}, \forall k > c_1N, \ N > \max\{N_1, N_2, N_3\}.
\end{gathered}
\end{equation}
Finally we estimate the remaining factors in the general $k$-th term of the power series $(\ref{eqn:powerseries4F3})$. For large $k$, we can deduce from the asymptotics of Gamma functions that
\begin{equation}\label{eqn:bound8}
\begin{gathered}
\frac{(1+z+b)_k(-z')_k}{(1+z-z')_kk!} = \frac{\Gamma(1+z-z')}{\Gamma(1+z+b)\Gamma(-z')}\frac{\Gamma(k+z+b+1)\Gamma(k-z')}{\Gamma(k+1+z-z')\Gamma(k+1)}\\
= \frac{\Gamma(1+z-z')}{\Gamma(1+z+b)\Gamma(-z')}k^{b-1}(1 + O(1/k)), \ k\rightarrow\infty.
\end{gathered}
\end{equation}
So far, we chose $c_4, c_5\in (0, 1)$ to be arbitrary constants. Let us require them to be any real numbers for which $b - 1 - c_4c_5(b+1) < -\frac{3}{2}$; this is possible if we pick $c_4$ and $c_5$ to be very close to $1$. From the latter estimates $(\ref{eqn:bound7})$ and $(\ref{eqn:bound8})$, we have that the absolute value of the $k$-th term in the power series $(\ref{eqn:powerseries4F3})$ is upper-bounded by
\begin{equation}\label{eqn:bound9}
\begin{gathered}
\left| \frac{(1+z+b)_k(-z')_k(N(1+\zeta) + z + 2\epsilon)_k(N(1 - \zeta) + z)_k}{(1 + z - z')_k(N + z + b + 1)_k(N+z+2\epsilon)_k k!} \right|\\
\leq const\times k^{b - 1 - c_4c_5(b+1)} < const\times k^{-\frac{3}{2}} \ \forall \ k > c_1N, \ \forall \ N > \max\{N_1, N_2, N_3\}, \ \forall \ \zeta\in J_{\delta_2}.
\end{gathered}
\end{equation}
Let $N_0 = \max\{N_1, N_2, N_3\}$ and $\delta_0 = \min\{\delta_1, \delta_2\}$. We have proved above that for $N > N_0$, the absolute value of the $k$-th term in the power series $(\ref{eqn:powerseries4F3})$ is upper bounded by the absolute value of the $k$-th term of the power series of $_2F_1(1+z+b, \ -z'; \ 1 + z - z'; \ d)$, for some $d\in (0, 1)$ when $k \leq c_1N$ and by $const\times k^{-3/2}$ if $k>c_1N$. Since the power series of the hypergeometric function $_2F_1(1+z+b, \ -z'; \ 1 + z - z'; \ d)$ is absolutely convergent and the sum $\sum_{k\geq 1}{k^{-3/2}}$ is convergent, it follows that we can apply the uniform convergence theorem to obtain the desired uniform limit $(\ref{eqn:limithypergeos})$.

Therefore we are finished proving the first limit in the statement of Proposition $\ref{RS2limit}$. The second limit $\lim_{N\rightarrow\infty}{N^{-2\Sigma}\widetilde{S}_{<N}(N\zeta)} = S_{<1}(\zeta^2)$ can be proved similarly, so we leave it as an exercise to the reader. Notice that the complicated part is to prove an statement analogous to $(\ref{eqn:limithypergeos})$. Such statement can in fact be deduced from the proof above, since the $_4F_3$ function appearing in $\widetilde{S}_{<N}(N\zeta)$ can be obtained from the $_4F_3$ function in $\widetilde{R}_{<N}(N\zeta)$ after the change of variables $z\mapsto z-1$, $z'\mapsto z' - 1$, $N\mapsto N+1$. The proof of Proposition $\ref{RS2limit}$ is now complete.
\end{proof}

\subsection{Proof of the main theorem}

As we have mentioned already, the key is to use Proposition $\ref{thm:scalinglimit}$ to approximate the point process $\PP$ on $\X$ by point processes $\LL^{(N)}$ on $\Zpe$. Clearly condition (2) always holds by Theorem $\ref{thm:Lkernel}$.

Now let us assume that $(z, z')\in\U_{\adm}$ is such that $\Sigma = z+z'+b \neq 0$ and $z-z'\notin\Z$. Later we remove this restriction. Clearly each of the functions $\psi_{<1}, \psi_{>1}, R_{<1}, S_{<1}, R_{>1}, S_{>1}$ is continuous on their respective domains. In fact, they are all actually holomorphic on neighborhoods of their respective domains. It follows from $(\ref{KernelP})$ that $K^{\PP}$ is holomorphic on a neighborhood of $\X\times\X\setminus\{(x, x) : x\in\X\}$. From $(\ref{KernelP})$, we also deduce that $K^{\PP}$ admits an analytic continuation to the diagonal $\{(x, x) : x\in\X\}$ and that continuation is given by $(\ref{KernelP2})$. In particular, $K^{\PP}$ is continuous on $\X\times\X$, thus (1) is satisfied.

Let us verify condition (3). For $x, y\in\X$, let $\widehat{x_N}, \widehat{y_N}$ be the points in $\Zpe$ that are closest to $\left((N+\epsilon-\frac{1}{2})x\right)^2$ and $\left((N+\epsilon - \frac{1}{2})y\right)^2$, respectively. Thanks to Propositions $\ref{psilimit}$, $\ref{RS1limit}$ and $\ref{RS2limit}$, we have the uniform limit
\begin{equation}\label{eqn:uniformlimit}
\lim_{N\rightarrow\infty}{N\cdot K^{\LL^{(N)}}(\widehat{x_N}, \widehat{y_N})} = (2x^{1/2}y^{1/2})\cdot K^{\PP}(x^2, y^2)
\end{equation}
on compact subsets of $\X\times\X$. Observe that we made use of the \textit{uniform} convergence of Propositions $\ref{RS1limit}$ and $\ref{RS2limit}$ to deal with the singularities on the diagonal (of $K^{\LL^{(N)}}(\widehat{x_N}, \widehat{x_N})$ and $K^{\PP}(x, x)$). The conditions of Proposition $\ref{thm:scalinglimit}$ have been verified, so we can conclude that $\PP$ is a determinantal process and $K^{\PP}$ is a correlation kernel for it.

The final task is to remove the restrictions $\Sigma \neq 0$ and $z-z' \notin \Z$, by means of analytic continuation. More precisely we show that $K^{\PP}(x, y)$ admits an analytic continuation to $(z, z')\in\U_0$, and that the uniform limit $(\ref{eqn:uniformlimit})$ still holds for any $(z, z')$ in the domain $\U_0$ and in particular for any $(z, z')\in\U_{\adm}\subset\U_0$.

We observe that the limits of Propositions $\ref{psilimit}$, $\ref{RS1limit}$ and $\ref{RS2limit}$ are all uniform on compact subsets of $(z, z')\in\U_0\cap\{\Sigma \neq 0\}\cap\{z-z'\notin\Z\}$. Therefore the limit $(\ref{eqn:uniformlimit})$ holds uniformly on compact subsets of the domain $(x, y, (z, z'))\in\X\times\X\times\left( \U_0\cap\{\Sigma \neq 0\}\cap\{z-z'\notin\Z\} \right)$.
It is implied that the functions $(2x^{1/2}y^{1/2})\cdot K^{\PP}(x^2, y^2)$, for fixed $x, y$, are such that any point $(z, z')$ in one of the singular hyperplanes $\{\Sigma = 0\}$ or $\{z - z' = k\in\Z\}$ has a neighborhood $U\subset\U_0$ such that $(2x^{1/2}y^{1/2})\cdot K^{\PP}(x^2, y^2)$ is bounded on $U\setminus\left( \{\Sigma = 0\}\cup\{z - z'\in\Z\} \right)$. Thus $(2x^{1/2}y^{1/2})\cdot K^{\PP}(x^2, y^2)$ admits an analytic continuation to $\U_0$, e.g. see \cite[Thm. 8]{Rcx}. Therefore $K^{\PP}(x, y)$ also has an analytic continuation to $\U_0$.

We have mentioned that the limit $(\ref{eqn:uniformlimit})$ holds uniformly on compact subsets of $(x, y, (z, z'))\in\X\times\X\times\left( \U_0\cap\{\Sigma \neq 0\}\cap\{z-z'\notin\Z\} \right)$. Moreover we just showed that $K^{\PP}(x, y)$, as a function of $(z, z')$, has an analytic continuation to $\U_0$. In the proof of Theorem $\ref{thm:Lkernel}$ we also showed that $K^{\LL^{(N)}}$ is holomorphic on $\U_0$. Hence the limit $(\ref{eqn:uniformlimit})$ also holds uniformly on compact subsets of $(x, y, (z, z')) \in \X\times\X\times\U_0$ (as an application of Cauchy's integral formula). In particular, if $(z, z')\in\U_{\adm}\subset\U_0$ is fixed, the limit $(\ref{eqn:uniformlimit})$ holds uniformly on compact subsets of $\X\times\X$, concluding the proof.

\begin{appendix}

\section{Identities for generalized hypergeometric functions}

For simplicity in notation, denote $\textrm{Sin}[a_1, a_2, \ldots] = \sin\pi a_1 \sin\pi a_2\cdots$, in addition to the notations of Section $\ref{sec:notation}$. In this appendix, we prove several statements left out in the main body of the text. All the proofs here involve long computations with identities involving generalized hypergeometric functions.

Many of the arguments below involve the following well known identities in several places
\begin{equation*}
\begin{gathered}\Gamma(t)\Gamma(1-t) = \frac{\pi}{\sin\pi t} \ \forall t\notin\Z,\\
\sin \pi(M + \eta) = (-1)^M \sin \pi \eta, \ \sin \pi(M - \eta) = (-1)^{M+1} \sin \pi \eta \ \forall  M\in\Z.
\end{gathered}
\end{equation*}
Whenever we mention `simplifications' or `algebraic manipulations' below, we mean that we use the identities above, possibly many times.

\begin{proof}[Proof of equalities $(\ref{pNfirst}) = (\ref{almostpN})$ and $(\ref{pNfirst1}) = (\ref{almostpN1})$]
Observe that $(\ref{pNfirst1}) = (\ref{almostpN1})$ is equivalent to $(\ref{pNfirst}) = (\ref{almostpN})$ after the change of variables $N\mapsto N-1, z\mapsto z+1, z'\mapsto z'+1$, so it suffices to prove $(\ref{pNfirst}) = (\ref{almostpN})$.

In \cite[Sec. 7]{B}, there is a convenient notation for writing fundamental identities for generalized hypergeometric functions $_4F_3$; let us recall here such notation. Given any $r_1, r_2, \ldots, r_6\in\C$ and $n\in\N$ satisfying $\sum_i{r_i}=0$, define
\begin{eqnarray*}
\phi &=& \frac{1}{3}(n-1),\\
\epsilon_{ij} &=& r_i + r_j - \phi \ \forall i\neq j,
\end{eqnarray*}
and the following expression involving a generalized hypergeometric function
\begin{equation*}
\begin{gathered}
S(1, 2, 3) = (-1)^n\cdot\Gamma\left[ \begin{split}
1-\epsilon_{56} &,& 1-\epsilon_{46} &,& 1-\epsilon_{45} \\
1-n-\epsilon_{56} &,& 1-n-\epsilon_{46} &,& 1-n-\epsilon_{45}
\vphantom{\frac12}
\end{split}\right]\\
\times\pFq{4}{3}{-n ,,, \epsilon_{12} ,,, \epsilon_{23} ,,, \epsilon_{13}}{1-n-\epsilon_{45} ,,, 1-n-\epsilon_{56} ,,, 1-n-\epsilon_{46}}{1}.
\end{gathered}
\end{equation*}
Analogously, we can define $S(a, b, c)$ for any $a, b, c\in\{1, 2, 3, 4, 5, 6\}$. In particular, it is true that $S(a, b, c)$ is symmetric with respect to its arguments $a, b, c$. Less obvious is \cite[Sec. 7.3 (6)]{B}, which states
\begin{equation}\label{123456}
S(1, 2, 3) = S(4, 5, 6).
\end{equation}
For $n = N$ and the choice of parameters
\begin{equation*}
\begin{gathered}
r_1 = -x-b-\frac{a+1+z+z'}{2} - \frac{N-1}{3}, \hspace{.2in} r_2 = x+\frac{a+1-z-z'}{2} - \frac{N-1}{3},\\
r_3 = b + \frac{a+1+z+z'}{2} + \frac{2(N-1)}{3}, \hspace{.2in} r_4 = \frac{z'-z-a-1}{2} - \frac{N-1}{3},\\
r_5 = \frac{z-z'-a-1}{2} - \frac{N-1}{3}, \hspace{.2in} r_6 = \frac{z+z'+a-1}{2} + \frac{2N+1}{3},
\end{gathered}
\end{equation*}
$(\ref{123456})$ yields the identity
\begin{equation*}
\begin{gathered}
\Gamma\left[ \begin{split}
1-z &,& 1-z' &,& N+a+1\\
1-z-N &,& 1-z'-N &,& 1+a
\vphantom{\frac12}
\end{split}\right]\pFq{4}{3}{-N ,, -N+1-\Sigma ,, x+2\epsilon ,, -x}{1+a ,,, 1-N-z ,,, 1-N-z'}{1}\\
= \Gamma\left[ \begin{split}
N+\Sigma &,& -x-a-b &,& x+1\\
\Sigma &,& -x-N-a-b &,& x-N+1
\vphantom{\frac12}
\end{split}\right]\pFq{4}{3}{-N ,,, -N-a ,,, z ,,, z'}{x-N+1 ,, -x-N-a-b ,, \Sigma}{1}.
\end{gathered}
\end{equation*}
Recall that $\Sigma = z + z' + b$ in equations $(\ref{pNfirst})$ and $(\ref{almostpN})$. Then the identity above yields the equality between $(\ref{pNfirst})$ and $(\ref{almostpN})$, after several algebraic manipulations.
\end{proof}

\begin{proof}[Proof of equalities $(\ref{eq:RgeqN})$ and $(\ref{eq:SgeqN})$]

Observe that $(\ref{eq:SgeqN})$ can be deduced from $(\ref{eq:RgeqN})$ (and from the formulas $(\ref{SlargerN}), (\ref{RlargerN})$) after the change of parameters $z\mapsto z+1$, $z'\mapsto z'+1$, $N\mapsto N-1$. Thus we only prove $(\ref{eq:RgeqN})$. Use $(\ref{RlargerN})$ for the definition of $\TR_{\geq N}(\zeta)$; by simple cancellations, the equality is equivalent to (use $x$ instead of $N\zeta$):
\begin{equation}\label{eqn:appendix1}
\begin{gathered}
\Gamma\left[ \begin{split}
1-z &,& 1-z' &,& N+a+1\\
1-z-N &,& 1-z'-N &,& 1+a
\vphantom{\frac12}
\end{split}\right]\pFq{4}{3}{-N ,, -N+1-\Sigma ,, x+2\epsilon ,, -x}{1+a ,,, 1-N-z ,,, 1-N-z'}{1}\\
= \Gamma\left[ \begin{split}
1-\Sigma &,& x+z'+N+2\epsilon &,& x+1-z'\\
1-N-\Sigma &,& x+z'+2\epsilon &,& x-N+1-z'
\vphantom{\frac12}
\end{split}\right]\pFq{4}{3}{-N ,,, N+\Sigma+a ,,, z'+b ,,, z'}{-x+z' ,, x+z'+2\epsilon ,, \Sigma}{1}.
\end{gathered}
\end{equation}
As the previous two proofs in the Appendix, the latter equality is a consequence of one of the identities in \cite[Sec. 5.3]{B}, namely
\begin{equation}\label{1234561}
S(1, 2, 3) = S(3, 5, 6).
\end{equation}
For $n = N$ and the choice of parameters
\begin{equation*}
\begin{gathered}
r_1 = -x-b-\frac{a+1+z+z'}{2} - \frac{N-1}{3}, \hspace{.2in} r_2 = x+\frac{a+1-z-z'}{2} - \frac{N-1}{3},\\
r_3 = b + \frac{a+1+z+z'}{2} + \frac{2(N-1)}{3}, \hspace{.2in} r_4 = \frac{z-z'-a-1}{2} - \frac{N-1}{3},\\
r_5 = \frac{z'-z-a-1}{2} - \frac{N-1}{3}, \hspace{.2in} r_6 = \frac{z+z'+a-1}{2} + \frac{2N+1}{3},
\end{gathered}
\end{equation*}
equality $(\ref{1234561})$ yields the desired $(\ref{eqn:appendix1})$.
\end{proof}

For the proof of Lemma $\ref{R2S2rewritten}$, we shall use several identities of $L$-functions. For example, it is clear that the function $L(A, B, C, D; E; F, G)$ does not depend on the order of its top arguments $A, B, C, D$ or the order of its bottom arguments $F, G$; we use such independence on the ordering of the arguments without further mention. We also need the following propositions, which are proved in \cite{Mi1}.

\begin{prop}[$L$-fundamental transformations; \cite{Mi1}, (4.7) \& (4.8)]\label{prop:fundamental}
If $E + F + G - A - B - C - D = 1$, then
\begin{eqnarray*}
L\left[ \begin{split}
A , \ B &,& C &,& D;\\
E &;& F &,& G
\vphantom{\frac12}
\end{split}\right] &=& L\left[ \begin{split}
1+A-E, \hspace{.2in} A,\hspace{.2in} G-C,\hspace{.2in} F-C;\\
1+A-C;\ 1+A+B-E,\ 1+A+D-E
\vphantom{\frac12}
\end{split}\right]\\
&=& L\left[ \begin{split}
A, \hspace{.3in} B,\hspace{.3in} G-C,\hspace{.3in} G-D;\\
1+A+B-F;\ 1+A+B-E,\ G
\vphantom{\frac12}
\end{split}\right].
\end{eqnarray*}
\end{prop}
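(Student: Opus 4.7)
My plan is to reduce both identities, via analytic continuation, to an identity between terminating Saalsch\"utzian $_4F_3(1)$ series, which is classical (a consequence of Bailey's $24$-term relation among well-poised, terminating $_4F_3(1)$ series, see \cite[Sec.~7.5]{B}).

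First, I would fix the Saalsch\"utzian relation $E+F+G-A-B-C-D=1$ and regard both sides of each identity as meromorphic functions of, say, $(B,C,D,E,F,G)\in\C^{6}$ (with $A = E+F+G-B-C-D-1$). Both sides are analytic off the singular set $\{E\in\Z\}$, by the definition of the $L$-function in $(\ref{Lfunction})$. Hence it suffices to establish the identities on the slice $A=-n$, for every $n\in\N$, and then invoke the standard identity-theorem argument to conclude.

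Next, when $A=-n$, the reduction $(\ref{eqn:4F3L})$ collapses each $L$-function into a single terminating Saalsch\"utzian $_4F_3(1)$ series times an explicit ratio of Gamma functions. After carrying out this substitution, the desired identities become equalities of the form
\begin{equation*}
c_{1}(\textrm{Gamma ratio})\cdot {}_4F_3\!\left[\genfrac..{0pt}{}{-n,\,B,\,C,\,D}{E,\,F,\,G};1\right] = c_{2}(\textrm{Gamma ratio})\cdot {}_4F_3\!\left[\genfrac..{0pt}{}{-n,\,\bullet,\,\bullet,\,\bullet}{\bullet,\,\bullet,\,\bullet};1\right],
\end{equation*}
with the arguments of the right-hand $_4F_3$ series obtained from $A,B,C,D,E,F,G$ by the explicit parameter substitutions of the statement. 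These equalities are precisely of the shape of Whipple's symmetry relations $S(a,b,c) = S(a',b',c')$ in the notation of \cite[Sec.~7.2]{B}: the parameters $r_{1},\ldots,r_{6}$ can be chosen so that $(A,B,C,D;E,F,G)$ match up with pairwise sums $\epsilon_{ij} = r_{i}+r_{j}-\phi$, and the indices $(1,2,3)$ vs.\ $(4,5,6)$ relabelings correspond to the two transformations in Proposition $\ref{prop:fundamental}$. Each identity will then follow from one appropriate choice of permutation of the indices $\{1,\ldots,6\}$.

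The main obstacle, as I see it, is purely computational: correctly aligning the six $r_{i}$ parameters so that Whipple's relation produces exactly the Gamma factors appearing in $(\ref{Lfunction})$ after the reduction $(\ref{eqn:4F3L})$, and tracking the resulting Gamma ratios using $\Gamma(t)\Gamma(1-t)=\pi/\sin\pi t$. Similar calculations already appear in the ``Proof of equalities $(\ref{pNfirst})=(\ref{almostpN})$'' and ``$(\ref{eq:RgeqN})$'' in the Appendix, and the same manipulations should carry over verbatim, only with different choices of the $r_{i}$. Once the $n\to\infty$-extension by analytic continuation is in place, one obtains Proposition $\ref{prop:fundamental}$ in the full stated generality $\{E\notin\Z\}$.
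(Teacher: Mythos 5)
The paper does not actually prove this proposition: it is quoted from \cite{Mi1}, (4.7)--(4.8), where the argument runs through the very-well-poised $_7F_6(1)$ representation (cf.\ $(\ref{eqn:LWidentity})$) and the symmetry of $W(A;C,D,E,F,G)$ under permutations of its last five arguments. So your route — verify the identities on the terminating slice and extend — is genuinely different from the source. The terminating half of your plan is sound: when $A=-n$ every $L$-function in sight retains $-n$ among its top arguments, $(\ref{eqn:4F3L})$ collapses each side to a single terminating Saalsch\"utzian $_4F_3(1)$ with an explicit Gamma prefactor, and the resulting two-term relations are indeed instances of Whipple's $S(p,q,r)$ symmetries, of the same kind as the Appendix computations you cite. (A small correction: the two right-hand sides are analytic off $\{A-C\in\Z\}$ and $\{A+B-F\in\Z\}$ respectively, not off $\{E\in\Z\}$, so the common domain of analyticity is smaller than you state.)

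The genuine gap is the extension step. The slices $\{A=-n\}$, $n\in\N$, form a countable union of parallel hyperplanes with no accumulation inside the domain, and such a set is not a set of uniqueness for holomorphic functions: the difference of the two sides could a priori be a nonzero multiple of $\sin(\pi A)$, which vanishes on every one of these hyperplanes. The ``standard identity-theorem argument'' therefore does not apply. The standard repair is Carlson's theorem in the continued variable, but that requires the difference to be holomorphic of exponential type less than $\pi$ in a half-plane, whereas the $1/\Gamma(\cdot)$ factors in $(\ref{Lfunction})$ that contain the continued parameter grow like $e^{|t|\log|t|}$; so Carlson's theorem cannot be applied to the raw difference, and one must first normalize by suitable Gamma factors and then establish genuine growth estimates for the nonterminating $_4F_3(1)$ sums. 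None of this is addressed, and it is precisely the content of the nonterminating statement. Either supply these estimates, or abandon the terminating reduction and derive both equalities from $(\ref{eqn:LWidentity})$ by permuting the last five arguments of the $_7F_6$, which is how \cite{Mi1} proceeds.
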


\begin{prop}[$L$-incoherent identity (of type $\{6, 5, \overline{6}\}$)]\label{incoherent}
If $E + F + G - A - B - C - D = 1$, then
\begin{equation*}
\begin{gathered}
\frac{\sin \pi(F - G)\sin \pi G \cdot L\left[ \begin{split}
A , \ B ,\ C ,\ D;\\
E ;\hspace{.1in} F, \hspace{.1in} G
\vphantom{\frac12}
\end{split}\right]}{\Gamma\left[1 + A - F, \ 1 + B - F, \ 1 + C - F, \ 1 + D - F, \ E - A, \ E - B, \ E - C, \ E - D\right]}\\
+ \frac{1}{\pi^4}\left\{ \textrm{Sin}[G, \ G - E,  \ F - A. \ F - B, \  F - C, \ F - D] \right.\\
\left. + \textrm{Sin}[F, \ E - F, \ G - A, \ G - B, \ G - C, \ G - D] \right\}\cdot L\left[ \begin{split}
A , \ B ,\ C ,\ D;\\
F ;\hspace{.1in} E ,\hspace{.1in} G
\vphantom{\frac12}
\end{split}\right]\\
+ \frac{\sin \pi(E - F)\sin \pi(F - G)}{\Gamma\left[ A, \ B, \ C, \ D, \ G - A, \ G - B, \ G - C, \ G - D\right]}\cdot L\left[ \begin{split}
1 - A , \ 1 - B ,\ 1 - C ,\ 1 - D;\\
2 - E ;\hspace{.1in} 2 - F ,\hspace{.1in} 2 - G
\vphantom{\frac12}
\end{split}\right] = 0
\end{gathered}
\end{equation*}
\end{prop}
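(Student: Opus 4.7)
The plan is to deduce this three-term identity by passing from $L$-functions to very-well-poised ${}_7F_6(1)$ series, for which Bailey's classical three-term relation is available. Concretely, I would first fix parameters in a subdomain where $\Re(F-D) > 0$, $E, F \notin \Z$, and the various non-integrality conditions needed to apply $(\ref{eqn:LWidentity})$ are all satisfied; the full identity will then extend to the stated generality by analytic continuation in $A, B, C, D, E, F, G$ (respecting the Saalschützian constraint $E+F+G-A-B-C-D=1$), since all three $L$-functions are meromorphic on $\{E, F \notin \Z\}$.

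Step 1 is to use $(\ref{eqn:LWidentity})$ to rewrite each of the three $L$-functions in the identity as an explicit Gamma/$\pi$ prefactor times a very-well-poised $W(A'; C', D', E', F', G')$. For $L[A,B,C,D; E; F,G]$ the resulting $W$ has argument $D+G-E$ with lower parameters obtained from permutations of $\{G-A, G-B, G-C, D, 1+D-E\}$. For the middle term $L[A,B,C,D; F; E,G]$ one obtains a similar $W$ with the roles of $E$ and $F$ interchanged. For the reflected $L[1-A, 1-B, 1-C, 1-D; 2-E; 2-F, 2-G]$ the $W$ that appears is the one produced by Bailey's "reverse" Whipple transformation, because the substitution $X \mapsto 1-X$ exchanges the two series in the defining formula $(\ref{Lfunction})$.

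Step 2 is to invoke Bailey's three-term relation for very-well-poised ${}_7F_6(1)$ series (see \cite[\S4.3--4.4]{B}): any three such $W$-series whose lower parameters are related by the group generated by the Thomae-Whipple permutations and the reversal symmetry satisfy a linear relation whose coefficients are rational in Gamma functions. After matching the parameters from Step 1 into Bailey's relation, the coefficient of each $W$-term decomposes into a product of Gamma functions, and we then apply $\Gamma(z)\Gamma(1-z)=\pi/\sin(\pi z)$ repeatedly to convert these products into the sine-heavy coefficients $\mathrm{Sin}[\,\cdot\,]$ appearing in the stated identity. Using Proposition~\ref{prop:fundamental} we may further move any residual mismatch by rewriting $L$-functions that disagree only by a fundamental transformation.

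The main obstacle is the combinatorial bookkeeping in Step 3: the incoherent identity carries roughly a dozen sine factors and eight Gamma factors in each of its three coefficients, and matching these to Bailey's prefactors (which come in a different normalization) requires careful accounting of parameter permutations, of the $(-1)^n$ signs produced by $\sin\pi(n+\eta) = (-1)^n\sin\pi\eta$, and of the two terms in the definition $(\ref{Lfunction})$ that get reshuffled under the reflection $X \mapsto 1-X$. A convenient sanity check to pin down the overall normalization is to specialize so that one of $A, B, C, D$ is a nonpositive integer, which by $(\ref{eqn:4F3L})$ reduces every $L$-function to a single terminating Saalschützian ${}_4F_3(1)$ series and transforms the incoherent identity into a classical three-term relation among terminating Saalschützian series, which can be verified directly from the well-known three-term contiguous relations.
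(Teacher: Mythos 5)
First, a point of comparison: the paper does not prove this proposition at all --- it is imported verbatim from \cite{Mi1} (see also \cite{GMS}), where the ``incoherent'' three-term relations are derived systematically. So there is no in-paper argument to measure your proposal against; what you have written is a plan for reproving a cited result. In spirit your route (convert each $L$-function to a very-well-poised $_7F_6(1)$ via $(\ref{eqn:LWidentity})$, invoke a three-term relation among such series, then translate the Gamma-prefactors into the sine-laden coefficients) is the same circle of ideas used in \cite{Mi1}, so the approach is not wrong-headed.

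However, as a proof the proposal has a genuine gap at Step 2, which is where the entire content of the proposition lives. The assertion that ``any three such $W$-series whose lower parameters are related by the Thomae--Whipple group and the reversal symmetry satisfy a linear relation with Gamma-function coefficients'' is exactly the theorem to be proved, not a quotable fact: Bailey's book contains only a handful of specific three-term relations (they are in Chapter 7, not \S4.3--4.4, which treat well-poised $_5F_4$'s and Dougall's theorem), and none of them is literally the $\{6,5,\overline{6}\}$ relation above. The correct underlying principle is that the $W(D_5)$-orbit of these functions spans a two-dimensional solution space of the contiguity system, so any three members are linearly dependent; but one must then actually \emph{compute} the two coefficients, e.g.\ by comparing residues or asymptotics in a parameter, and that computation is precisely what you defer to ``bookkeeping.'' Your proposed sanity check does not close this gap: setting $A=-n$ makes $1/\Gamma(A)=0$, so the third term vanishes identically and the identity degenerates to a \emph{two}-term relation between terminating Saalsch\"utzian series --- it gives no information about the coefficient of the reflected $L[1-A,\dots;2-E;2-F,2-G]$ term, and in any case verifying an identity on the countable union of hyperplanes $\{A\in\Z_{\leq 0}\}$ does not extend to generic parameters by analytic continuation (compare $1/\Gamma(A)$, which vanishes there without vanishing identically). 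To make the argument complete you would need either to exhibit an explicit chain of Bailey/Whipple transformations producing this particular relation with its coefficients, or to carry out the two-dimensionality-plus-coefficient-determination argument; as it stands, the safest course is to do what the paper does and cite \cite{Mi1}.
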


\begin{proof}[Proof of Lemma $\ref{R2S2rewritten}$]

The proof consists of putting together several identities for generalized hypergeometric functions and other special functions. We begin with the expression for $\TR_{<N}(\zeta)$ in Lemma $\ref{R2S2rewritten}$ and after applying several identities, we arrive at the expression for $\TR_{<N}(\zeta)$ in Proposition $\ref{prop:R2S2limit}$.

Because of $(\ref{eqn:4F3L})$ and the first equality of Proposition $\ref{prop:fundamental}$ applied to the parameters $A = 1+z,\ B = -N+1,\ C = 1+z',\ D = -N+1-a,E = \zeta-N+2,\ F = 1-\zeta-N-a-b,\ G = z+z'+b+2$, we have
\begin{equation*}
\begin{gathered}
\frac{\Gamma(N-\zeta-1)\cdot\pFq{4}{3}{-N+1 ,, -N+1-a ,, 1+z ,, 1+z'}{\zeta-N+2 ,, 1-\zeta-N-a-b ,, \Sigma+2}{1}}{\pi\Gamma[-\zeta, -\zeta-a, z-\zeta+N, z'-\zeta+N, 1-\zeta-N-a, \Sigma+2]} = L\left[ \begin{split}
-N+1, -N+1-a, 1+z, 1+z';\\
\zeta-N+2; 1-\zeta-N-a-b, \Sigma+2
\vphantom{\frac12}
\end{split}\right]\\
= L\left[ \begin{split}
1+z, \ -N+1, \ 1+z', \ -N+1-a;\\
\zeta-N+2;\ 1-\zeta-N-a-b,\ \Sigma+2
\vphantom{\frac12}
\end{split}\right] = L\left[ \begin{split}
z-\zeta+N, \ 1+z+b, \ 1+z, \ -\zeta-N-z'-a-b;\\
1+z-z';\ -\zeta+z+1,\ -\zeta+z+1-a
\vphantom{\frac12}
\end{split}\right]
\end{gathered}
\end{equation*}
By using the definition of $h_{N-1}$ in $(\ref{hN1})$, and several algebraic manipulations, it follows that the first term of $\TR_{<N}(\zeta)$ in Lemma $\ref{R2S2rewritten}$ equals
\begin{equation}\label{eqn:firstterm}
\begin{gathered}
\frac{\pi^3 G(\zeta)\cdot\Gamma\left[ 1+z, \ 1+z', \ 1+z+b, \ 1+z'+b, \ N+\Sigma+1, \ N+\Sigma+a+1, \ N - \zeta, \ N+\zeta+2\epsilon \right]}{\textrm{Sin}[\zeta+a, \ \zeta+a+b]\cdot\Gamma\left[ N+a, \ N, \ \Sigma + 1, \ \zeta+b+1, \ \zeta+2\epsilon, \ \zeta+N+z+2\epsilon, \ \zeta+N+z'+2\epsilon \right]}\\
\times L\left[ \begin{split}
-\zeta+z+N ,\ 1+z ,\ 1+z+b ,\  -\zeta-N-z'-a-b;\\
1+z-z' ;\hspace{.2in} -\zeta+z+1 ,\hspace{.2in} -\zeta+z+1-a
\vphantom{\frac12}
\end{split}\right].
\end{gathered}
\end{equation}

As for the second term of $\TR_{<N}(\zeta)$ in Lemma $\ref{R2S2rewritten}$, by the evident symmetries of the top arguments of the $L$-function, it is equal to
\begin{equation}\label{eqn:secondterm}
\begin{gathered}
\frac{\pi\cdot\Gamma[-\Sigma,\ \zeta+1,\ \zeta+a+1,\ \zeta+N+2\epsilon,\ \zeta+1-z-N,\ \zeta+1-z'-N]}{\Gamma(\zeta-N+1)}\\
\times L\left[ \begin{split}
\zeta-N-z+1 ,\ -z ,\ -z-b ,\ \zeta+N+z'+2\epsilon;\\
1+z'-z ;\hspace{.1in} \zeta+1-z ,\hspace{.1in} \zeta+a+1-z
\vphantom{\frac12}
\end{split}\right].
\end{gathered}
\end{equation}

Let us apply the $L$-incoherent identity, Proposition $\ref{incoherent}$, to the parameters
\begin{equation*}
\begin{gathered}
A = -\zeta + z + N, \ B = 1 + z, \ C = 1 + z + b, \ D = -\zeta - N - z' - a - b,\\
E = 1 + z - z', \ F = -\zeta + 1 + z, \ G = -\zeta + 1 + z - a.
\end{gathered}
\end{equation*}
Since $F - A = 1 - N$ implies $\sin \pi (F - A) = 0$, in the second term of Proposition $\ref{incoherent}$ one of the products of six sine functions vanishes. After some simplifications, we deduce the following identity
\begin{equation}\label{eqn:incoherentapp1}
\begin{gathered}
\Gamma\left[ \zeta +1 - z -N,\  \zeta + 1 - z' - N,\  \zeta + a + 1,\  \zeta + 1 \right]\cdot L\left[ \begin{split}
\zeta-N-z+1 ,\ -z ,\ -z-b ,\ \zeta+N+z'+2\epsilon;\\
1+z'-z ;\ \zeta+1-z ,\ \zeta+a+1-z
\vphantom{\frac12}
\end{split}\right]\\
= \frac{\pi^2\cdot \textrm{Sin}[\zeta+a-z, \Sigma+a, z', z'+b]\cdot \Gamma[N+\Sigma+a+1,  N + \Sigma + 1, 1+z, 1+z+b, 1+z', 1+z'+b]}
{\textrm{Sin}[\zeta-z, \ \zeta-z',\ \zeta+z'+a+b, \ a, \ \zeta+a, \ \zeta+a+b]\cdot \Gamma[N+a, \ N]}\\
\times \frac{L\left[ \begin{split}
-\zeta+z+N ,\ 1+z ,\ 1+z+b ,\ -\zeta-N-z'-a-b;\\
1+z-z' ;\hspace{.1in} -\zeta+1+z ,\hspace{.1in} -\zeta-a+1+z
\vphantom{\frac12}
\end{split}\right]}{\Gamma[\zeta+2\epsilon,\ \zeta+b+1,\ \zeta+N+z+2\epsilon,\ \zeta+N+z'+2\epsilon]}\\
+ \frac{(-1)^N\pi\sin(\pi\Sigma)}{\sin\pi(\zeta + z' + a + b)\sin\pi a}\cdot \Gamma\left[ \begin{split}
1+z &,& 1+z+b &,& N+\Sigma+1 &,& \zeta+1-z'-N &,& \zeta+1\\
&& \zeta+2\epsilon &,& N+a &,& \zeta+N+z'+2\epsilon
\vphantom{\frac12}
\end{split}\right]\\
\times L\left[ \begin{split}
-\zeta+z+N ,\ 1+z ,\ 1+z+b ,\ -\zeta-N-z'-a-b;\\
-\zeta+1+z ;\hspace{.1in} 1+z-z' ,\hspace{.1in} -\zeta+1+z-a
\vphantom{\frac12}
\end{split}\right].
\end{gathered}
\end{equation}

Then we can rewrite $(\ref{eqn:secondterm})$ above by using the $L$-incoherent relation above (the $L$-function in $(\ref{eqn:secondterm})$ and that in the left-hand side of $(\ref{eqn:incoherentapp1})$ have all the same arguments). It follows that $\TR_{<N}(\zeta)$ is the sum of the following two terms:

\begin{equation*}
\begin{gathered}
(\textrm{Term }1) \ \frac{\pi^3\cdot\Gamma[N+\Sigma+a+1, \ N+\Sigma+1, \ 1+z, \ 1+z+b, \ 1+z', \ 1+z'+b, \ \zeta+N+2\epsilon]}{\Gamma[N+a, \ N, \ \zeta+2\epsilon, \ \zeta+b+1, \ \zeta+z+N+2\epsilon, \ \zeta+z'+N+2\epsilon]}\\
\times L\left[ \begin{split}
-\zeta+N+z ,\ 1+z ,\ 1+z+b ,\ -\zeta-N-z'-a-b;\\
1+z-z' ;\ -\zeta+1+z ,\ 1-\zeta-a+z
\vphantom{\frac12}
\end{split}\right]
\cdot \left\{ \frac{G(\zeta)\cdot\Gamma(N-\zeta)}{\Gamma(\Sigma+1)\textrm{Sin}[\zeta+a,\ \zeta+a+b]} \right.\\
\left. + \frac{\Gamma(-\Sigma)}{\Gamma(\zeta-N+1)}\frac{\textrm{Sin}[\zeta+a-z, \ \Sigma+a, \ z', \ z'+b]}{\textrm{Sin}[\zeta-z, \ \zeta-z', \ \zeta+z'+a+b, \ a, \ \zeta+a, \ \zeta+a+b]}
\right\},
\end{gathered}
\end{equation*}

\begin{equation*}
\begin{gathered}
(\textrm{Term }2) \ \frac{(-1)^N\pi^2\sin(\pi\Sigma)\Gamma[\zeta+1-z'-N, \zeta+1, 1+z, 1+z+b, N+\Sigma+1, \zeta+N+2\epsilon, -\Sigma]}
{\sin\pi(\zeta+z'+a+b)\sin(\pi a)\cdot\Gamma[\zeta+N+z'+2\epsilon, \ \zeta+2\epsilon, \ \zeta-N+1, \ N+a]}\\
\times L\left[ \begin{split}
-\zeta+N+z ,\ 1+z ,\ 1+z+b ,\ -\zeta-N-z'-a-b;\\
-\zeta+1+z ;\hspace{.15in} 1-z'+z ,\ 1-\zeta-a+z
\vphantom{\frac12}
\end{split}\right].
\end{gathered}
\end{equation*}

By using the definition of $G(\zeta)$ in $(\ref{Gdef})$, the sum inside brackets of (Term 1) equals
\begin{equation}\label{eqn:insideTerm}
\begin{gathered}
\frac{\Gamma(N-\zeta)}{\Gamma(\Sigma+1)}\frac{(-1)^N\sin \pi z'}{\textrm{Sin}[\zeta-z, \ \zeta-z', \ \zeta+a, \ \zeta+a+b, \ \Sigma]}\\
\times\left\{ \sin\pi z \sin \pi(\zeta-\Sigma) - \frac{\sin \pi\zeta\sin\pi(\zeta+a-z)\sin\pi(\Sigma+a)\sin\pi(-z'-b)}{\sin\pi(\zeta+z'+a+b)\sin\pi a} \right\}.
\end{gathered}
\end{equation}

To simplify the trigonometric expression that appears inside the brackets of equation $(\ref{eqn:insideTerm})$, use
\begin{equation}\label{eq:trigonometric}
\begin{gathered}
\sin A_1 \sin B_1 \sin C_1 \sin D_1 - \sin A_2  \sin B_2 \sin C_2 \sin D_2\\
= \sin\left( \frac{X+Y}{2} \right)\sin\left( \frac{Y-X}{2} \right)\sin\left( \frac{Z+W}{2} \right)\sin\left( \frac{W-Z}{2} \right),
\end{gathered}
\end{equation}
valid whenever
\begin{equation*}
\begin{gathered}
X = A_1 + B_1 = A_2 + B_2, \hspace{.1in}Y = C_1 + D_1 = C_2 + D_2,\\
Z = A_1 - B_1 = C_2 - D_2, \hspace{.1in}W = C_1 - D_1 = A_2 - B_2.
\end{gathered}
\end{equation*}
Specifically we can use $(\ref{eq:trigonometric})$ above for the choice of parameters
\begin{equation*}
\begin{gathered}
A_1 = \pi(\zeta-\Sigma), \ B_1 = \pi z, \ C_1 = \pi(\zeta+z'+a+b), \ D_1 = \pi a,\\
A_2 = \pi\zeta, \ B_2 = \pi(-z'-b),, \ C_2 = \pi(\zeta+a-z), \ D_2 = \pi(\Sigma+a),
\end{gathered}
\end{equation*}
which allows us to simplify the expression inside brackets of $(\ref{eqn:insideTerm})$ as $\frac{\textrm{Sin}[\zeta+a, \ z'+a+b, \ \zeta-z, \ \Sigma]}{\textrm{Sin}[\zeta+z'+a+b, \ a]}$ and therefore $(\ref{eqn:insideTerm})$ simplifies to
\begin{equation}\label{eqn:insideTerm2}
\frac{(-1)^N\pi^3 \Gamma[\zeta+N+2\epsilon, \ N-\zeta] \cdot \textrm{Sin}[z', z'+a+b]}{\Gamma(\Sigma+1)\cdot\textrm{Sin}[\zeta-z', \ \zeta+a+b, \ \zeta+z'+a+b, \ a]}.
\end{equation}
It follows that (Term 1) equals
\begin{equation*}
\begin{gathered}
(\textrm{Term }1')\ \frac{(-1)^N\pi^3 \textrm{Sin}[z', \ z'+a+b]}{\textrm{Sin}[\zeta-z', \ \zeta+a+b, \ \zeta+z'+a+b, \ a]}\cdot \Gamma\left[ \begin{split}
N+\Sigma+a+1 ,\ N+\Sigma+1 ,\ 1+z ,\ 1+z'\\
N+a ,\hspace{.4in} N ,\hspace{.4in} \Sigma+1
\vphantom{\frac12}
\end{split}\right]\\
\times \frac{\Gamma[1+z+b ,\ 1+z'+b ,\ \zeta+N+2\epsilon ,\ N-\zeta]}
{\Gamma[\zeta+2\epsilon ,\ \zeta+b+1 ,\ \zeta+N+z+2\epsilon, \ \zeta+N+z'+2\epsilon]}\\
\times\L\left[ \begin{split}
-\zeta+N+z ,\ 1+z ,\ 1+z+b ,\ -\zeta-N-z'-a-b;\\
1+z-z' ;\hspace{.15in} -\zeta+z+1 ,\ -\zeta+z+1-a
\vphantom{\frac12}
\end{split}\right]
\end{gathered}
\end{equation*}
From $(\ref{eqn:4F3L})$ and the first equality of Proposition $\ref{prop:fundamental}$ applied to the parameters $A = 1+z, \ B = -N+1, \ C = 1+z', \ D = -N+1-a, \ E = \zeta - N+2, \ F = \Sigma+2, \ G = 1-\zeta-N-a-b$, we have
\begin{equation*}
\begin{gathered}
\frac{\Gamma(N-\zeta-1)\pFq{4}{3}{1+z ,,, -N+1 ,,, 1+z' ,,, -N+1-a}{\zeta-N+2 ,,, \Sigma+2 ,,, 1-\zeta-N-a-b}{1}}{\pi\cdot\Gamma[-\zeta, \ -\zeta-a, \ N-\zeta+z, \ N-\zeta+z', \ \Sigma+2, \ 1-\zeta-N-a-b]}\\
= L\left[ \begin{split}
1+z,\ -N+1,\ 1+z',\ -N+1-a;\\
\zeta-N+2;\ \Sigma+2,\ 1-\zeta-N-a-b
\vphantom{\frac12}
\end{split}\right]
= L\left[ \begin{split}
N-\zeta+z,\ 1+z,\ 1+z+b,\ -\zeta-N-z'-a-b;\\
1+z-z';\ -\zeta+z+1,\ -\zeta+z+1-a
\vphantom{\frac12}
\end{split}\right]
\end{gathered}
\end{equation*}
By using the same identities, we can obtain
\begin{equation*}
\begin{gathered}
\frac{\Gamma(-\Sigma-1)\pFq{4}{3}{1+z ,,, -N+1 ,,, 1+z' ,,, -N+1-a}{\Sigma+2 ,,, \zeta-N+2 ,,, 1-\zeta-N-a-b}{1}}{\pi\cdot\Gamma[-N-\Sigma, \ -N-\Sigma-a, \ -z-b, \ -z'-b, \ \zeta-N+2, \ 1-\zeta-N-a-b]}\\
= L\left[ \begin{split}
1+z, -N+1, 1+z', -N+1-a;\\
\Sigma+2; \zeta-N+2, 1-\zeta-N-a-b
\vphantom{\frac12}
\end{split}\right]
= L\left[ \begin{split}
-z'-b, 1+z,\ -\zeta-N-z'-a-b,\ \zeta-N+1-z';\\
1+z-z';\ 1-N-z'-b,\ 1-N-z'-a-b
\vphantom{\frac12}
\end{split}\right]
\end{gathered}
\end{equation*}
The $_4F_3$ generalized hypergeometric functions in the identities above have the same parameters, therefore by equating them we obtain the following identity
\begin{equation*}
\begin{gathered}
L\left[ \begin{split}
N-\zeta+z, 1+z, 1+z+b,\ -\zeta-N-z'-a-b;\\
1+z-z';\ -\zeta+z+1,\ -\zeta+z+1-a
\vphantom{\frac12}
\end{split}\right] = \frac{(-1)^N \sin \pi\Sigma}{\sin \pi\zeta}\cdot \Gamma\left[ \begin{split}
-N-\Sigma-a,\ -N-\Sigma\\
-\zeta+z+N,\ -\zeta+z'+N
\vphantom{\frac12}
\end{split}\right]\\
\times \Gamma\left[ \begin{split}
-z-b,\ -z'-b\\
-\zeta,\ -\zeta-a
\vphantom{\frac12}
\end{split}\right]
L\left[ \begin{split}
1+z,\ -z'-b,\ \zeta-N+1-z',\ -\zeta-N-z'-a-b;\\
1+z-z';\ 1-N-z'-b,\ -N-z'-a-b+1
\vphantom{\frac12}
\end{split}\right].
\end{gathered}
\end{equation*}
By using the identity above $(\textrm{Term }1')$ can be rewritten as
\begin{equation*}
\begin{gathered}
(\textrm{Term }1'')\ \frac{\pi^7 \cdot\textrm{Sin}[z', \ z'+a+b]\cdot\Gamma[1+z, \ 1+z', \ \zeta+N+2\epsilon, \ N-\zeta]}{\textrm{Sin}[\zeta-z', \ \zeta+a+b, \ \zeta+z'+a+b, \ a, \ \zeta, \ \Sigma+a, \ z+b, \ z'+b]\cdot\Gamma[N+a, \ N, \ \Sigma+1]}\\
\times\frac{L\left[ \begin{split}
1+z,\ -z'-b,\ \zeta-N+1-z',\ -\zeta-N-z'-a-b;\\
1+z-z';\ 1-N-z'-b,\ -N-z'-a-b+1
\vphantom{\frac12}
\end{split}\right]}
{\Gamma[-\zeta, \ -\zeta-a, \ \zeta+2\epsilon, \zeta+b+1, \ z-\zeta+N, \ z'-\zeta+N, \ \zeta+N+z+2\epsilon, \ \zeta+N+z'+2\epsilon]}.
\end{gathered}
\end{equation*}

We now make some transformations to (Term 2). We have $\sin(\pi\Sigma)\cdot \Gamma(-\Sigma) = -\frac{\pi}{\Gamma(\Sigma+1)}$ and $\sin\pi(N+a) = (-1)^N\sin\pi a$; if we combine these with one of the fundamental transformation for $L$-functions, the second equality of Proposition $\ref{prop:fundamental}$ applied to $A = 1+z, \ B = -\zeta-N-z'-a-b, \ C = -\zeta+N+z, \ D = 1+z+b, E = 1-\zeta+z, \ F = 1 - \zeta - a + z, \ G = 1+z-z'$, we find that $(\textrm{Term }2)$ equals
\begin{equation*}
\begin{gathered}
(\textrm{Term }2')\ \frac{\pi^3 (-1)^{N-1}}{\textrm{Sin}[\zeta+z'+a+b, \ a]}\cdot
\Gamma\left[ \begin{split}
\zeta+1-z'-N,\ \zeta+1,\ \zeta+N+2\epsilon, \ 1+z ,\ 1+z+b, \ N+\Sigma+1\\
\zeta+N+z'+2\epsilon,\hspace{.1in} \zeta-N+1,\hspace{.1in} \zeta+2\epsilon,\hspace{.1in} N+a, \hspace{.1in} \Sigma+1
\vphantom{\frac12}
\end{split}\right]\\
\times L\left[ \begin{split}
-z'-b,\ 1+z,\ -\zeta-N-z'-a-b,\  \zeta-N+1-z';\\
-N-z'-b+1 ;\ -N- z' - a - b + 1,\ 1+z-z'
\vphantom{\frac12}
\end{split}\right].
\end{gathered}
\end{equation*}

Next apply the $L$-incoherent relation, Proposition $\ref{incoherent}$, to $A = -z'-b, \ B = 1+z, \ C = -\zeta-N-z'-a-b, \ D=  \zeta-N+1-z', \ E = 1+z-z', \ F = -N-z'-b+1, \ G = -N-z'-a-b+1$. Again since $F - A = 1 - N$, then $\sin \pi (F - A) = 0$, so in the second term of Proposition $\ref{incoherent}$ one of the products of six sine functions vanishes. As before, after several algebraic manipulations, we obtain
\begin{equation*}
\begin{gathered}
\frac{\sin\pi(z'+a+b)}{\sin\pi(z+b)\cdot\Gamma[N, \ -\zeta-a, \ \zeta+b+1, \ \zeta+N+z+2\epsilon, \ -\zeta+N+z]}\times\\
L\left[ \begin{split}
-z'-b, 1+z, -\zeta-N-z'-a-b, \zeta-N+1-z'\\
1+z-z' ;\ 1-N-z'-b,\ 1-N-z'-a-b
\vphantom{\frac12}
\end{split}\right] = -\frac{\Gamma[N+\Sigma+1, 1+z+b, -z']}{\Gamma[-z'-b, 1+z, 1-N-a, -N-\Sigma-a]}\\
\times\frac{L\left[ \begin{split}
1+z'+b,\ -z,\ \zeta+N+z'+2\epsilon, \ -\zeta+N+z'\\
1+z'-z ;\hspace{.1in} N+z'+b+1,\hspace{.1in} N+ z' + 2\epsilon
\vphantom{\frac12}
\end{split}\right]}{\Gamma[-\zeta-N-z'-a-b, \ \zeta-N+1-z', \ \zeta+1, \ -\zeta-a-b]} + \frac{(-1)^N\Gamma[N+\Sigma+1, 1+z+b, -z']}{\pi^4}\\
\times \textrm{Sin}[z'+b,\ \zeta, \ \zeta+a+b,\ \Sigma+a]\cdot
L\left[ \begin{split}
-z'-b,\ 1+z,\ -\zeta-N-z'-a-b,\  \zeta-N+1-z'\\
-N-z'-b+1 ;\hspace{.1in} 1+z-z',\hspace{.1in} -N- z' - a - b + 1
\vphantom{\frac12}
\end{split}\right].
\end{gathered}
\end{equation*}
By applying the latter identity, we can rewrite (Term 1'') as a linear combination of the two $L$-functions
\begin{equation*}
\begin{gathered}
L\left( 1+z'+b,\ -z, \ \zeta+N+z'+2\epsilon,\ -\zeta+N+z'; \ 1+z'-z;\ N+z'+b+1,\ N+z'+2\epsilon \right),\\
L\left( -z'-b, 1+z, -\zeta-N-z'-a-b, \zeta-N+1-z';\ 1-N-z'-b;\ 1+z-z', 1-N-z'-a-b \right).
\end{gathered}
\end{equation*}
More explicitly, (Term 1'') is the sum of the expressions
\begin{equation*}
\begin{gathered}
(\textrm{Term }a') \ \pi \cdot \frac{\Gamma[N+\Sigma+a+1, \ N+\Sigma+1, \ 1+z+b, \ 1+z'+b, \ \zeta+N+2\epsilon, \ N-\zeta]}{\Gamma(\Sigma+1)}\\
\times L\left[ \begin{split}
1+z'+b,\ -z,\ \zeta+N+z'+2\epsilon,\  N-\zeta+z';\\
1+z'-z ;\hspace{.1in} N+z'+b,\hspace{.1in} N+z'+a+b
\vphantom{\frac12}
\end{split}\right],
\end{gathered}
\end{equation*}
\begin{equation*}
\begin{gathered}
(\textrm{Term }b') \ \frac{(-1)^{N-1}\pi^4}{\textrm{Sin}[\zeta-z', \ \zeta+z'+a+b, \ a]}\cdot\Gamma\left[ \begin{split}
\zeta+N+2\epsilon,\ N-\zeta,\ 1+z,\ N+\Sigma+1, \ 1+z+b\\
\Sigma+1,\ N+a,\ -\zeta,\ \zeta+2\epsilon, \ \zeta+N+z'+2\epsilon, \ -\zeta+N+z'
\vphantom{\frac12}
\end{split}\right]\\
\times L\left[ \begin{split}
-z'-b,\ 1+z,\ -\zeta-N-z'-a-b,\  \zeta-N+1-z';\\
-N-z'-b+1 ;\hspace{.1in} 1+z-z',\hspace{.1in} -N-z'-a-b+1
\vphantom{\frac12}
\end{split}\right].
\end{gathered}
\end{equation*}
We have $\TR_{<N}(\zeta) = (\textrm{Term }1) + (\textrm{Term }2) = (\textrm{Term }1') + (\textrm{Term }2') = (\textrm{Term }1'') + (\textrm{Term }2') = (\textrm{Term }a') + (\textrm{Term }b') + (\textrm{Term }2')$. We claim that $(\textrm{Term }2') + (\textrm{Term }b') = 0$ and $(\textrm{Term }a')$ is the expression of $\TR_{<N}(\zeta)$ given in Proposition $\ref{prop:R2S2limit}$. We will be done once we prove this claim.

The fact that $(\textrm{Term }a')$ is equal to the expression of $\TR_{<N}(\zeta)$ given in Proposition $\ref{prop:R2S2limit}$ follows simply from the definition of the $L$-function as a sum of two $_4F_3$ generalized hypergeometric functions, $(\ref{Lfunction})$.

Now we check that $(\textrm{Term }2') + (\textrm{Term }b') = 0$. Each of the expressions $(\textrm{Term }2'),  (\textrm{Term }b')$ is a product of ratios of Gamma and sine functions times an $L$-function. The $L$-functions in both terms are exactly the same, thus we simply need to check that the prefactors of these terms (consisting of Gamma and sine functions) are negative to each other. The latter task is a simple exercise.
\end{proof}

\end{appendix}

\end{document}